\newcommand{\hidelinks}{\hypersetup{linkcolor=Black}}
\newcommand{\restorelinks}{\hypersetup{allcolors=Navy!75!DodgerBlue,citecolor=DarkGreen!75!Green}}
\newcommand{\fullcref}[1]{\hyperref[#1]{\cref{#1}}}
\setlist{topsep=.5ex, itemsep=.5ex, parsep=0ex, partopsep=0ex}
\newcommand{\CC}{\mathbb{C}}
\newcommand{\ZZ}{\mathbb{Z}}
\newcommand{\RR}{\mathbb{R}}
\newcommand{\PP}{\mathbb{P}}
\newcommand{\RP}{\mathbb{RP}}
\newcommand{\calB}{\mathcal{B}}
\newcommand{\calC}{\mathcal{C}}
\newcommand{\calD}{\mathcal{D}}
\newcommand{\calG}{\mathcal{G}}
\newcommand{\calI}{\mathcal{I}}
\newcommand{\calP}{\mathcal{P}}
\newcommand{\calT}{\mathcal{T}}
\newcommand{\calX}{\mathcal{X}}
\newcommand{\bolda}{\mathbf{a}}
\newcommand{\boldb}{\mathbf{b}}
\newcommand{\boldx}{\mathbf{x}}
\newcommand{\boldz}{\mathbf{z}}
\newcommand{\boldlambda}{{\mathchoice
{\mbox{\boldmath{$\lambda$}}}
{\mbox{\boldmath{$\lambda$}}}
{\mbox{\scriptsize\boldmath{$\lambda$}}}
{\mbox{\tiny\boldmath{$\lambda$}}}
}}
\newcommand{\boldmu}{{\mathchoice
{\mbox{\boldmath{$\mu$}}}
{\mbox{\boldmath{$\mu$}}}
{\mbox{\scriptsize\boldmath{$\mu$}}}
{\mbox{\tiny\boldmath{$\mu$}}}
}}
\newcommand{\boldpsi}{{\mathchoice
{\mbox{\boldmath{$\psi$}}}
{\mbox{\boldmath{$\psi$}}}
{\mbox{\scriptsize\boldmath{$\psi$}}}
{\mbox{\tiny\boldmath{$\psi$}}}
}}
\newcommand{\Plucker}{Pl\"ucker\xspace}
\newcommand{\PGL}{\mathrm{PGL}}
\newcommand{\gl}{\mathfrak{gl}}
\newcommand{\GL}{\mathrm{GL}}
\newcommand{\Wr}{\mathrm{Wr}}
\newcommand{\Gr}{\mathrm{Gr}}
\newcommand{\Proj}{\mathop{\mathrm{Proj}}}
\newcommand{\Spec}{\mathop{\mathrm{Spec}}}
\newcommand{\codim}{\mathop{\mathrm{codim}}}
\newcommand{\End}{\mathrm{End}}
\newcommand{\Hom}{\mathrm{Hom}}
\newcommand{\sgn}{\mathrm{sgn}}
\newcommand{\inv}{\mathrm{inv}}
\newcommand{\ev}{\mathrm{ev}}
\newcommand{\numsyt}[1]{\mathsf{f}^{#1}}
\newcommand{\rectangle}{{\mathchoice%
{\scalebox{1.35}{\raisebox{-0.02em}{$\sqsubset\!\!\sqsupset$}}}
{\scalebox{1.35}{\raisebox{-0.02em}{$\sqsubset\!\!\sqsupset$}}}
{\sqsubset\!\!\sqsupset}
{\sqsubset\!\!\sqsupset}
}}
\newtheorem{lemma}{Lemma}[section]
\newtheorem{theorem}[lemma]{Theorem}
\newtheorem{proposition}[lemma]{Proposition}
\newtheorem{corollary}[lemma]{Corollary}
\newtheorem{problem}[lemma]{Problem}
\newtheorem{conjecture}[lemma]{Conjecture}
\theoremstyle{definition}
\newtheorem{example}[lemma]{Example}
\newtheorem{remark}[lemma]{Remark}
\numberwithin{equation}{section}
\numberwithin{table}{section}
\renewcommand{\eqref}[1]{\hyperref[#1]{\textup{(\ref*{#1})}}}
\definecolor{DarkBlue}{rgb}{0, 0.1, 0.55}
\definecolor{DarkRed}{rgb}{0.45, 0, 0}
\newcommand{\defn}[1]{\textbf{\textit{#1}}}
\crefname{lemma}{Lemma}{Lemmas}
\crefname{theorem}{Theorem}{Theorems}
\crefname{fact}{Fact}{Facts}
\crefname{proposition}{Proposition}{Propositions}
\crefname{corollary}{Corollary}{Corollaries}
\crefname{problem}{Problem}{Problems}
\crefname{conjecture}{Conjecture}{Conjectures}
\crefname{example}{Example}{Examples}
\crefname{definition}{Definition}{Definitions}
\crefname{ack}{Acknowledgements}{Acknowledgements}
\crefname{remark}{Remark}{Remarks}
\crefname{condition}{Condition}{Conditions}
\crefname{convention}{Convention}{Conventions}
\crefname{algorithm}{Algorithm}{Algorithms}
\title{Universal Pl\"ucker coordinates for the Wronski map and positivity in real Schubert calculus}
\date{}
\author{Steven N.\ Karp and Kevin Purbhoo}
\newcommand{\scell}[1]{\calX^{#1}}
\newcommand{\svar}[1]{\smash{\overline{\calX}}^{#1}}
\newcommand{\scellnu}{\scell{\nu}}
\newcommand{\svarnu}{\svar{\nu}}
\newcommand{\svarrect}{\svar{\rectangle}}
\newcommand{\monics}{\calP_n}
\newcommand{\bethe}{\calB_n(z_1, \dots, z_n)}
\newcommand{\bethezero}{\calB_n(0, \dots, 0)}
\newcommand{\bethenu}{\calB_\nu(z_1, \dots, z_n)}
\newcommand{\altbethe}{\overline{\calB}_n(z_1, \dots, z_n)}
\newcommand{\altbethenu}{\overline{\calB}_\nu(z_1, \dots, z_n)}
\newcommand{\translatebethe}[1]{\calB_n(z_1+#1, \dots, z_n+#1)}
\newcommand{\translatebethenu}[1]{\calB_\nu(z_1+#1, \dots, z_n+#1)}
\newcommand{\translatealtbethe}[1]{\overline{\calB}_n(z_1+#1, \dots, z_n+#1)}
\newcommand{\custombethe}[1]{\calB_{#1}}
\newcommand{\multibethe}[1]{\calB_{#1}(\boldz_\kappa)}
\newcommand{\fullbethe}{\widehat{\calB}_n}
\newcommand{\fullbethesing}{\fullbethe^\mathrm{sing}}
\newcommand{\fullbetheweight}[1]{\widehat{\calB}^\mathrm{sing}_{n,#1}}
\newcommand{\symgrp}[1]{\mathfrak{S}_{#1}}
\newcommand{\Sn}{\symgrp{n}}
\newcommand{\Snidentity}{\mathbbm{1}_{\Sn}}
\newcommand{\Sidentity}[1]{\mathbbm{1}_{\symgrp{#1}}}
\newcommand{\SP}{\mathrm{SP}}
\newcommand{\specht}[1]{M^{#1}}
\newcommand{\spechtnu}{\specht{\nu}}
\newcommand{\multispecht}[1]{\mathbf{M}^{#1}}
\newcommand{\h}{\mathsf{h}}
\newcommand{\e}{\mathsf{e}}
\newcommand{\m}{\mathsf{m}}
\newcommand{\sm}{\widetilde{\mathsf{m}}}
\newcommand{\p}{\mathsf{p}}
\newcommand{\s}{\mathsf{s}}
\newcommand{\z}{\mathsf{z}}
\newcommand{\cyc}{\mathsf{cyc}}
\newcommand{\sort}{\mathsf{sort}}
\newcommand{\fls}{{\CC((t^{-1}))}}
\newcommand{\E}{\mbox{\small\sf{E}}}
\renewcommand{\H}{\mbox{\small\sf{H}}}
\newcommand{\B}{\mbox{\small\sf{B}}}
\newcommand{\ssum}{\mbox{\small\sf{S}}}
\newcommand{\Bresidue}{[t^{-1}]\B^\perp(t^{-1})}
\newcommand{\alphaminus}[1]{\alpha^{1^{\smash{#1}}}}
\newcommand{\betaminus}[1]{\beta^{1^{\smash{#1}}}}
\newcommand{\glbeta}[1]{\smash{\widehat{\beta}}_{#1}}
\newcommand{\du}{\partial_u}
\newcommand{\RF}{\mathrm{RF}}
\newcommand{\exterior}[1]{\mathsf{\Lambda}^{#1}}
\newcommand{\extalg}{\exterior{\bullet}}
\newcommand{\polyD}{\overline{\calD}}
\newcommand{\translatematrix}[1]{(\begin{smallmatrix} 1 & #1 \\ 0 & 1 \end{smallmatrix})}
\newcommand{\scalematrix}[1]{(\begin{smallmatrix} #1 & 0 \\ 0 & 1 \end{smallmatrix})}
\newcommand{\inversionmatrix}{(\begin{smallmatrix} 0 & 1 \\ 1 & 0 \end{smallmatrix})}
\newcommand{\pker}{\mathop{\mathrm{pker}}}
\newcommand{\multipartition}{\mathrel{\,\Vdash}}
\newcommand{\exspec}{\mathop{\mathrm{ex}}}
\newcommand{\Vmuz}{\mathbf{V}^\boldmu(z_1, \dots, z_s)}
\newcommand{\Vone}{\mathbf{V}^{(1,\dots, 1)}(z_1, \dots, z_n)}
\newcommand{\conjugate}[1]{#1^*}
\newcommand{\trans}[2]{\sigma_{#1,#2}}
\newcommand{\GT}{\calG\calT_n}
\begin{document}

\maketitle

\begin{abstract}
\noindent Given a $d$-dimensional vector space $V \subset \mathbb{C}[u]$ of polynomials, its Wronskian is the polynomial $(u + z_1) \cdots (u + z_n)$ whose zeros $-z_i$ are the points of $\mathbb{C}$ such that $V$ contains a nonzero polynomial with a zero of order at least $d$ at $-z_i$. Equivalently, $V$ is a solution to the Schubert problem defined by osculating planes to the moment curve at $z_1, \dots, z_n$. The \emph{inverse Wronski problem} involves finding all $V$ with a given Wronskian $(u + z_1) \cdots (u + z_n)$. We solve this problem by providing explicit formulas for the Grassmann--Pl\"ucker coordinates of the general solution $V$, as commuting operators in the group algebra $\mathbb{C}[\mathfrak{S}_n]$ of the symmetric group. The Pl\"ucker coordinates of individual solutions over $\mathbb{C}$ are obtained by restricting to an eigenspace and replacing each operator by its eigenvalue. This generalizes work of Mukhin, Tarasov, and Varchenko (2013) and of Purbhoo (2022), which give formulas in $\mathbb{C}[\mathfrak{S}_n]$ for the differential equation satisfied by $V$. Moreover, if $z_1, \dots, z_n$ are real and nonnegative, then our operators are positive semidefinite, implying that the Pl\"ucker coordinates of $V$ are all real and nonnegative. This verifies several outstanding conjectures in real Schubert calculus, including the positivity conjectures of Mukhin and Tarasov (2017) and of Karp (2021), the disconjugacy conjecture of Eremenko (2015), and the divisor form of the secant conjecture of Sottile (2003). The proofs involve the representation theory of $\mathfrak{S}_n$, symmetric functions, and $\tau$-functions of the KP hierarchy.
\end{abstract}


\setcounter{tocdepth}{2}
\hidelinks
\tableofcontents
\restorelinks

\section{Introduction}

For a system of real polynomial equations with finitely many solutions,
we normally expect that some --- but not all --- of the solutions are real,
while the remaining solutions come in complex-conjugate pairs.
The precise number of real solutions usually depends in a complicated way
on the coefficients of the equations.
However, in some rare cases, it is possible to obtain a better
understanding of the real solutions.
A remarkable example occurs in the Schubert calculus of
the Grassmannian $\Gr(d,m)$,
for Schubert problems defined by flags osculating
a rational normal curve.  In 1993, Boris and Michael Shapiro conjectured
that all such Schubert problems with real parameters have only real 
solutions.  The corresponding systems of equations arise in various
guises throughout mathematics, 
from algebraic curves \cite{eisenbud_harris83,kharlamov_sottile03} 
to differential equations \cite{mukhin_varchenko04} to pole-placement problems \cite{rosenthal_sottile98,eremenko_gabrielov02a}.
The conjecture was eventually proved by Mukhin, Tarasov, and Varchenko \cite{mukhin_tarasov_varchenko09a},
using a reformulation in terms of Wronski maps, 
and machinery from quantum integrable systems and representation
theory.

While the details of the Mukhin--Tarasov--Varchenko proof are rather
intricate, the basic idea is relatively straightforward.  
They consider a family of commuting linear operators arising from the Gaudin model,
and show that they satisfy algebraic equations defining a Schubert problem.
Hence, by considering the spectra of
these operators, they are able to infer some basic properties of
the solutions to the Schubert problem.  In this paper we extend these
results, making the connection between the commuting operators and the corresponding solutions more explicit and concrete.
Consequently, we obtain stronger results
in real algebraic geometry, including several generalizations of the Shapiro--Shapiro conjecture. Namely, we resolve the divisor form of the secant conjecture of Sottile (2003), the disconjugacy conjecture of Eremenko \cite{eremenko15}, and the positivity conjectures of Mukhin--Tarasov (2017) and Karp \cite{karp}.


\subsection{The Wronski map and the Bethe algebra}
\label{sec:wronskiintro}

Let $\Gr(d,m)$ denote the Grassmannian of all $d$-dimensional linear subspaces of $\CC^m$. It is often more convenient to work with the $m$-dimensional 
vector space $\CC_{m-1}[u]$, of
univariate polynomials of degree at most $m-1$, rather than $\CC^m$.
We explicitly identify $\CC^m$ with $\CC_{m-1}[u]$, via the isomorphism
\begin{equation}
\label{eq:isomorphism}
     (a_1, \dots, a_m)  \leftrightarrow  \sum_{j=1}^m a_j \frac{u^{j-1}}{(j-1)!}
\,.
\end{equation}
In particular, we also view $\Gr(d,m)$ as the space of 
all $d$-dimensional subspaces of $\CC_{m-1}[u]$.

Now fix a nonnegative integer $n$, and let $\nu$ be a 
partition of $n$ with at most $d$ parts;
that is, $\nu = (\nu_1, \dots, \nu_d)$ is a tuple of nonnegative 
integers such that $\nu_1 \geq \dots \geq \nu_d \geq 0$, and 
$|\nu| := \nu_1 + \dots + \nu_d = n$.  
The \defn{Schubert cell}
$\scellnu$ is the space of all $d$-dimensional linear subspaces of $\CC[u]$
that have a basis $(f_1, \dots, f_d)$, with
$\deg(f_i) = \nu_i+d-i$.  As a scheme, $\scellnu$ is isomorphic to
$n$-dimensional affine space.
We take $m \geq d+\nu_1$, so that $\scellnu \subseteq \Gr(d,m)$.

Let $\monics \subseteq \CC[u]$ denote the $n$-dimensional affine space of
monic polynomials of degree $n$.
Given $V \in \scellnu$, choose any basis $(f_1, \dots, f_d)$ for $V$.
We define $\Wr(V)$ to be the unique monic polynomial which is a scalar
multiple of the Wronskian $\Wr(f_1, \dots, f_d)$.  It is not hard to see that 
$\Wr(V) \in \monics$ is a polynomial of degree $n$, and 
is independent of the choice of basis.  Thus we
obtain a map $\Wr : \scellnu \to \monics$, called the \defn{Wronski map}
on $\scellnu$.  Abstractly, this is a finite morphism 
from $n$-dimensional affine
space to itself.

Suppose $g(u) = (u+z_1)\dotsm (u+z_n) \in \monics$,
where $z_1, \dots, z_n$ are complex numbers.   The 
\defn{inverse Wronski problem} is to compute the fibre 
$\Wr^{-1}(g) \subseteq \scellnu$.

In their study of the Gaudin model for $\gl_n$,
Mukhin, Tarasov, and 
Varchenko \cite{mukhin_varchenko04,mukhin_varchenko05,mukhin_tarasov_varchenko06,mukhin_tarasov_varchenko09a,mukhin_tarasov_varchenko09b} discovered a connection between the inverse Wronski problem,
and the problem of diagonalizing the Gaudin Hamiltonians \cite{gaudin76}. 
We will focus on the version of this story from \cite{mukhin_tarasov_varchenko13}, in which the Gaudin Hamiltonians generate the \emph{Bethe algebra (of Gaudin type)} $\bethe \subseteq \CC[\Sn]$, which is a commutative subalgebra of the group algebra of the symmetric group.

Let $\spechtnu$ be the Specht module (i.e.\ irreducible $\Sn$-representation) associated to the partition $\nu$. 
Then $\bethe$ acts on $\spechtnu$, and the image of 
this action defines a commutative subalgebra 
$\bethenu \subseteq \End(\spechtnu)$.
The following result is
stated more precisely as \cref{thm:precise}:

\begin{theorem}[Mukhin, Tarasov, and Varchenko \cite{mukhin_tarasov_varchenko13}]
\label{thm:vague}
The eigenspaces $E \subseteq \spechtnu$ of the algebra 
$\bethenu$ are in one-to-one 
correspondence with the points $V_E \in \Wr^{-1}(g)$.
The eigenvalues of the generators of $\bethenu$ are
coordinates for $V_E$ in some coordinate system.
\end{theorem}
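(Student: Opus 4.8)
The plan is to reconstruct, and then exploit, an explicit \emph{universal differential operator} whose coefficients generate $\bethenu$; both the bijection and the coordinate statement fall out of it. First, using Schur--Weyl duality I would embed $\spechtnu$ (tensored with the Schur-functor image $\mathbb{S}^\nu(\CC^d)$, nonzero since $\nu$ has at most $d$ parts) into $(\CC^d)^{\otimes n}$, and transport the Gaudin Bethe algebra of $\gl_d$ acting on the tensor product of evaluation modules $\CC^d(z_1)\otimes\cdots\otimes\CC^d(z_n)$ to the commutative algebra $\bethe\subseteq\CC[\Sn]$. The output of this step is a differential operator
\[
  \mathcal{D}(u) = \du^{\,d} + \sum_{i=1}^{d} B_i(u)\,\du^{\,d-i},
\]
with $B_i(u)\in\bethe\otimes\CC(u)$ rational in $u$, with poles only among $-z_1,\dots,-z_n$ and indicial exponents at $\infty$ determined by $\nu$, whose coefficient elements generate $\bethe$. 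This packages all the Gaudin Hamiltonians at once.

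Next, since $\bethenu$ is commutative, $\spechtnu$ is the direct sum of its generalized eigenspaces $E$. For each $E$, replacing every $B_i(u)$ by its eigenvalue on $E$ yields a scalar Fuchsian operator $\mathcal{D}_E(u)$, and I would set $V_E:=\ker\mathcal{D}_E$. The tasks here are: (a) $V_E$ is $d$-dimensional and consists of polynomials, which follows from analysing the indicial equation at $\infty$ (the exponents force the degree conditions of $\scellnu$), so $V_E\in\scellnu$; and (b) $\Wr(V_E)=g$ up to scalar, because a simple pole of $\mathcal{D}_E$ at $-z_j$ contributes exactly one factor $u+z_j$ to the Wronskian and a generic eigenvalue makes all such poles simple. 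Part (b) is the Bethe-ansatz content: eigenvalues of the Gaudin Bethe algebra are precisely the Fuchsian data of a $d$-dimensional space of polynomials with the prescribed Wronskian.

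For the bijection, injectivity of $E\mapsto V_E$ is immediate: distinct eigenspaces carry distinct tuples of eigenvalues of a generating set of $\bethenu$, hence distinct operators $\mathcal{D}_E$, hence distinct kernels. Surjectivity I would get by counting: $\dim\spechtnu=\numsyt{\nu}$ equals the degree of the Wronski map $\Wr\colon\scellnu\to\monics$ (classical Schubert calculus: both are the number of standard Young tableaux of shape $\nu$, via $\sigma_{(1)}^n\cdot\sigma_{\nu^c}$), so for generic $g$ the set of eigenspaces and the fibre $\Wr^{-1}(g)$ each have exactly $\numsyt{\nu}$ simple points and the map is a bijection. For arbitrary $g$ one deforms: the family $\bethe$ is flat in the $z_i$, so the isomorphism of commutative algebras $\bethenu\xrightarrow{\ \sim\ }\calO(\Wr^{-1}(g))$ — matching generalized eigenspaces with the multiplicities of the fibre — persists in the limit. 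The coordinate claim is then built in: the coefficients of the monic order-$d$ Fuchsian operator annihilating a space $V\in\scellnu$ form a coordinate system on the cell (finitely many parameters, since the poles are confined to $-z_1,\dots,-z_n$), and by construction the value of this system at $V_E$ is the tuple of eigenvalues of the generators $B_i(u)$.

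The main obstacle is the assertion underlying the whole count: that $\Spec\bethenu$ is, scheme-theoretically, the Wronski fibre — equivalently, that $\bethenu$ acts with simple spectrum for generic $z$ and has no extra commuting symmetries. This is where the representation theory does the real work: one identifies $\bethenu$ with the algebra of functions on an intersection of Schubert varieties defined by flags osculating the moment curve at $-z_1,\dots,-z_n$, and must show that intersection is reduced of the expected dimension (transversality of osculating flags) and that the universal operator induces an isomorphism between that function algebra and $\bethenu$. Establishing this transversality, together with the flatness needed to pass from generic to special $z$, is the technically demanding core of the argument; everything else is bookkeeping around the operator $\mathcal{D}(u)$.
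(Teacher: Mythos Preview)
Your outline is essentially the Mukhin--Tarasov--Varchenko argument that the paper cites: the precise form (\cref{thm:precise}) is exactly that the scalar operator $\calD_E$ obtained from the universal differential operator~\eqref{eq:diffop} equals the fundamental differential operator $D_{V_E}$, and the coordinate system you describe is what the paper calls the FDO coordinates. You have correctly located the hard step, though its content is slightly different from what you write: the indicial equation at $\infty$ gives the right exponents, but showing that $\ker\calD_E$ actually consists of \emph{polynomials} (i.e.\ that the monodromy is trivial) does not follow from local data alone and is itself part of the Bethe-ansatz machinery. Also, the family $\bethe$ is \emph{not} flat in the $z_i$ --- its dimension jumps when parameters collide (\cref{rmk:parameterdependence}) --- so the algebra isomorphism cannot be propagated to repeated $z_i$ by flatness, and indeed fails scheme-theoretically there (\cref{cor:schemeequality}); the set-theoretic bijection of \cref{thm:vague} still survives by continuity of eigenvalues and of the Wronski fibre, which is how both MTV and the paper handle it.

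The paper also supplies a \emph{new} proof by a different route: rather than going through the differential operator and FDO coordinates, it defines explicit elements $\beta^\lambda\in\CC[\Sn]$ (\cref{thm:main}) whose eigenvalues on $E$ are directly the \Plucker coordinates of $V_E$, and only afterward recovers $\calD_E = D_{V_E}$ as a corollary (\cref{lem:FDOcoords}, \cref{rmk:newproof}). What this buys is that $V_E$ is produced in the standard projective coordinates of the Grassmannian without solving the differential equation --- avoiding the nonlinear passage from FDO to \Plucker coordinates --- which is precisely what makes the positivity applications in \cref{sec:conjectures} possible.
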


(There are also scheme-theoretic analogues of \cref{thm:vague}, which we discuss in \cref{sec:schemeintro}.)
Unfortunately, \cref{thm:vague}/\ref{thm:precise} 
is poorly suited to studying certain properties of the Wronski map.  This is 
because the generators of $\bethe$ 
correspond to a somewhat unusual coordinate system for $\scellnu$.  
Namely, given $V \in \scellnu$, there is a unique \defn{fundamental differential operator} 
\[
    D_V = \partial_u^d + \psi_1(u) \partial_u^{d-1} + \dots + \psi_d(u)
\]
with coefficients $\psi_j(u) \in \CC(u)$,
such that $V$ is the space of solutions to the differential equation 
$D_V f(u) = 0$.
The coefficients of $D_V$ can be 
regarded as a coordinate system on $\scellnu$.  In the precise formulation (see \cref{thm:precise}), the point
$V_E\in\Wr^{-1}(g)$ is computed in these coordinates. In order to express $V_E$ in standard coordinates, we need to solve a differential equation, resulting in highly non-linear formulas.

The main result of this paper is \cref{thm:main} below, which is a new version of \cref{thm:vague}. Rather than using the fundamental differential operator coordinates, it computes $V_E\in\Wr^{-1}(g)$ in the \defn{\Plucker coordinates}, which are the $d\times d$ minors of a $d\times m$ matrix whose rows form a basis for $V_E$.  We introduce (by explicit formulas)
a new set of generators $\beta^\lambda$ for $\bethe$, 
which are indexed by partitions $\lambda$.
For any eigenspace 
$E \subseteq \spechtnu$, the corresponding eigenvalues of the $\beta^\lambda$'s
are the \Plucker coordinates of $V_E$.

There are three major advantages of this formulation.  First, we obtain a more direct description of $V_E$ which does not require solving a differential equation; the implicit part of our construction lies entirely in understanding the representation theory of $\Sn$. Second, many
natural objects of interest are given by \emph{linear} 
functions of the \Plucker 
coordinates. For example, we readily obtain explicit bases for $V_E$; the Wronskian and
the fundamental differential operator coordinates are given as 
linear functions of the \Plucker coordinates; and Schubert varieties and 
Schubert intersections are defined by linear equations in the \Plucker 
coordinates.
Third, basic properties of the operators $\beta^\lambda$ imply positivity results about the \Plucker coordinates of $V_E$. This 
enables us to resolve several conjectures in real algebraic geometry, as we explain in \cref{sec:conjectures}.

We mention that after a preliminary version of this paper appeared, John Harnad informed us of work of Alexandrov, Leurent, Tsuboi, and Zabrodin \cite{alexandrov_leurent_tsuboi_zabrodin14} which studies operators $T_\lambda$ acting on $(\CC^d)^{\otimes n}$, called \emph{higher Gaudin Hamiltonians}. The definition of $T_\lambda$ involves taking iterated matrix derivatives of $d\times d$ matrices, and at first glance appears quite different from the definition of $\beta^\lambda$. However, since $\beta^\lambda$ and $T_\lambda$ satisfy some of the same algebraic properties, it was natural to consider if they are related. The follow-up paper \cite{karp_mukhin_tarasov25} shows that, despite the disparate formulas, the operators $\beta^\lambda$ and $T_\lambda$ acting on $(\CC^d)^{\otimes n}$ are equal (under a certain choice of auxiliary parameters), which leads to a generalization of some of the main results of this paper from spaces of polynomials to spaces of \emph{quasi-exponentials} (polynomials rescaled by exponential functions). The two formulas each have their own advantages: our formula for $\beta^\lambda$ is concrete and combinatorial, and is useful for doing calculations, while the definition of $T_\lambda$ is natural from the perspective of integrable systems.
An important benefit of our formula for $\beta^\lambda$ is that it makes manifest that $\beta^\lambda$ is positive semidefinite (see \cref{prop:betapsd}), which is entirely opaque from the definition of $T_\lambda$. This positivity is key to the applications in \cref{sec:conjectures}, and was one of our main motivations for studying $\beta^\lambda$.
For further discussion on \cite{alexandrov_leurent_tsuboi_zabrodin14}, we refer to \cite[Section 8]{karp_mukhin_tarasov25}.


\subsection{Universal \Plucker coordinates}
\label{sec:universalintro}

We now state our main theorem.
For every partition $\lambda$, define 
\begin{equation}
\label{eq:betadef}
  \beta^\lambda(t) := \sum_{\substack{X \subseteq [n], \\ |X| = |\lambda|}} \,
   \sum_{\sigma \in \symgrp{X}}
     \chi^\lambda(\sigma) \sigma \prod_{i \in [n] \setminus X} (z_i + t)
\,.
\end{equation}
Here $[n] = \{1, \dots, n\}$, $\symgrp{X} \subseteq \Sn$ is the group of 
permutations of $X$, and
$\chi^\lambda : \symgrp{X} \to \CC$ is the character of the Specht module $\specht{\lambda}$.
We note that $\chi^\lambda$ is integer-valued, so $\beta^\lambda(t)$ is in fact defined over $\ZZ$.
Also, $\beta^\lambda(t)$ is nonzero if and only if $|\lambda| \leq n$.
Set $\beta^\lambda := \beta^\lambda(0)$.
\begin{example}\label{ex:betadef}
If $\lambda = (1,1)$, then $\chi^\lambda$ is the sign character on $\symgrp{2}$. When $n=3$, we get
\[
\beta^{11} = (\Sidentity{3} - \trans{1}{2})z_3 + (\Sidentity{3} - \trans{1}{3})z_2 + (\Sidentity{3} - \trans{2}{3})z_1\,,
\]
where $\Sidentity{3}$ denotes the identity element of $\symgrp{3}$, 
and $\trans{i}{j} := (i \;\; j)$ is the transposition swapping $i$ and $j$.
\end{example}

\begin{theorem}
\label{thm:main}
Let $z_1, \dots, z_n \in \CC$, and set $g(u) := (u+z_1) \dotsm (u+z_n) \in \CC[u]$.
The operators $\beta^\lambda(t) \in \CC[\Sn]$ satisfy the following 
algebraic identities:
\begin{enumerate}[(i)]
\item\label{main_commutativity} Commutativity relations:
\begin{equation}
\label{eq:commutativity}
      \beta^\lambda(s) \beta^\mu(t) = \beta^\mu(t) \beta^\lambda(s)
   \qquad \text{for all partitions $\lambda$ and $\mu$}\,.
\end{equation}
\item\label{main_translation} Translation identity:
\begin{equation}
\label{eq:translationidentity}
    \beta^\mu(s+t) = \sum_{\lambda \supseteq \mu} 
      \frac{\numsyt{\lambda/\mu}}{|\lambda/\mu|!} t^{|\lambda/\mu|} 
      \beta^\lambda(s) \qquad \text{for all partitions } \mu\,,
\end{equation}
where $\numsyt{\lambda/\mu}$ denotes the number of standard Young tableaux 
of shape $\lambda/\mu$.
\item\label{main_pluckers} The quadratic \Plucker relations \eqref{eq:prelspartitions}.
\end{enumerate}
Furthermore:
\begin{enumerate}[(i)] \setcounter{enumi}{3}
\item\label{main_generates} For every partition $\lambda$ and $t \in \CC$, 
      we have $\beta^\lambda(t) \in \bethe$.
      The set $\{\beta^\lambda \mid |\lambda| \leq n\}$ generates
      $\bethe$ as an algebra.
\item\label{main_eigenspace} If $E \subseteq \spechtnu$ is any eigenspace 
      of $\bethenu$, then the corresponding eigenvalues 
      of the operators $\beta^\lambda$ are the \Plucker coordinates of 
      a point $V_E \in \scellnu \subseteq \Gr(d,m)$ such that $\Wr(V_E) = g$.  
      Every point of $\Wr^{-1}(g)$ 
      corresponds in this way to some eigenspace $E \subseteq \spechtnu$ 
      of $\bethenu$.  
\item\label{main_multiplicity} The multiplicity of $V_E$ as a point of 
      $\Wr^{-1}(g)$ is equal to $\dim \widehat{E}$, where 
      $\widehat{E} \subseteq \spechtnu$ is the generalized eigenspace of
      $\bethenu$ containing $E$.
\end{enumerate}
\end{theorem}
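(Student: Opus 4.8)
The plan is to separate the formal identities (commutativity, the translation identity, and the quadratic \Plucker relations), which I would verify inside the commutative algebra $\bethe \subseteq \CC[\Sn]$, from the geometric assertions (iv)--(vi), which I would deduce by matching against the Mukhin--Tarasov--Varchenko description of $\bethenu$ in \cref{thm:precise} together with Purbhoo's explicit presentation of $\bethe$ by the coefficients of a universal fundamental differential operator $\mathcal D(u)$. I would dispose of the translation identity \eqref{eq:translationidentity} first, as it needs no deep input. Writing $c^\mu_X := \sum_{\sigma \in \symgrp X}\chi^\mu(\sigma)\sigma \in \CC[\symgrp X]$, so that $\beta^\mu(t) = \sum_{|X|=|\mu|} c^\mu_X \prod_{i \notin X}(z_i + t)$, I expand each $\prod_{i \notin X}(z_i + s + t)$ binomially in $t$ and reindex the resulting double sum by the enlarged set $X' = X \sqcup S$; matching coefficients of $t^k \prod_{i\notin X'}(z_i+s)$, the claim reduces to $\sum_{X \subseteq X',\, |X| = |\mu|} c^\mu_X = \tfrac{1}{k!}\sum_{|\lambda| = |\mu|+k} \numsyt{\lambda/\mu}\, c^\lambda_{X'}$ in $\CC[\symgrp{X'}]$ with $k = |X'| - |\mu|$, and taking the coefficient of an arbitrary $\tau \in \symgrp{X'}$ this becomes $\sum_{Y \subseteq \Fix(\tau),\, |Y| = k} \chi^\mu(\tau|_{X'\setminus Y}) = \tfrac{1}{k!}\sum_{|\lambda|=|\mu|+k} \numsyt{\lambda/\mu}\, \chi^\lambda(\tau)$, which is the character-theoretic form of $s_\mu \cdot s_{(1)}^{\,k} = \sum_\lambda \numsyt{\lambda/\mu}\, s_\lambda$ (equivalently, the value at $\tau$ of $\mathrm{Ind}_{\symgrp{|\mu|}\times\symgrp{k}}^{\symgrp{|\mu|+k}}(\chi^\mu \otimes \CC[\symgrp k])$ via the induced-character formula).

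Next I would prove $\beta^\lambda(t) \in \bethe$, which makes \eqref{eq:commutativity} automatic since $\bethe$ is commutative (alternatively, \eqref{eq:translationidentity} with $s = 0$ expresses every $\beta^\mu(t)$ as a $\CC[t]$-linear combination of the $\beta^\lambda := \beta^\lambda(0)$, so \eqref{eq:commutativity} reduces to pairwise commutativity of those). For membership I would package the operators into the generating series $\sum_\lambda \beta^\lambda(t)\, s_\lambda(\boldx)$ in auxiliary variables $\boldx$, and using $\sum_\sigma \chi^\lambda(\sigma)\sigma = \tfrac{|\lambda|!}{\dim\lambda}e_\lambda$ together with the Frobenius character formula and the Jacobi--Trudi identity, identify this series with a symmetric function in $\boldx$ of the coefficients of $\mathcal D(u)$; this is where Purbhoo's formulas enter. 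Granting membership, I would deduce the quadratic \Plucker relations by realizing the $\beta^\lambda := \beta^\lambda(0)$ as the \Plucker coordinates of a genuine point of the Grassmannian over the commutative ring $\bethe$: the \emph{universal solution} $\mathcal V \subseteq \bethe \otimes \CC_{m-1}[u]$ cut out by $\mathcal D(u)$, with $\Wr(\mathcal V) = g$. The \Plucker coordinates of any rank-$d$ direct summand of a free module over a commutative ring automatically satisfy the \Plucker relations --- equivalently, the $\tau$-function $\sum_\lambda \beta^\lambda\, s_\lambda(\boldx)$ of $\mathcal V$ satisfies the Hirota bilinear equations of the KP hierarchy, whose coefficientwise form is \eqref{eq:prelspartitions} --- so the real content is the verification that the \Plucker coordinates of $\mathcal V$ are precisely the explicit sums \eqref{eq:betadef}, a computation in symmetric functions and the representation theory of $\Sn$, matching a Jacobi--Trudi/Giambelli expansion of the $d \times d$ minors of a basis of $\mathcal V$ against the central elements $\sum_\sigma \chi^\lambda(\sigma)\sigma$ weighted by the products $\prod_{i\notin X} z_i$.

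The remaining statements follow by specialization. That $\{\beta^\lambda : |\lambda| \le n\}$ generates $\bethe$ holds because, once the Wronskian is fixed to $g$, the coefficients of $\mathcal D(u)$ are linear in the \Plucker coordinates, so Purbhoo's generators are $\CC$-linear combinations of the $\beta^\lambda$ and the subalgebra they generate is all of $\bethe$. For (v), restricting $\mathcal V$ along the eigenvalue homomorphism $\bethenu \to \CC$ attached to an eigenspace $E \subseteq \spechtnu$ yields an honest $d$-dimensional $V_E \subseteq \CC_{m-1}[u]$ whose \Plucker coordinates are the $\beta^\lambda$-eigenvalues; since $\sum_\sigma \chi^\lambda(\sigma)\sigma$ acts on $\spechtnu$ as $0$ whenever $\lambda \not\subseteq \nu$ (by the Littlewood--Richardson rule applied to $\spechtnu$ restricted to $\symgrp X \times \symgrp{[n]\setminus X}$) and as the nonzero scalar $\tfrac{n!}{\dim\nu}$ when $\lambda = \nu$, the point $V_E$ lies in $\scellnu$, and $\Wr(V_E) = g$ by specializing the corresponding identity over $\bethe$. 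That $E \mapsto V_E$ is a bijection onto $\Wr^{-1}(g)$ agreeing with the correspondence of \cref{thm:precise} follows because $V_E$ has the same fundamental differential operator as the MTV point --- the two are related by the linear passage from \Plucker coordinates to the coefficients of $D_V$ on the fibre --- and (vi) is then inherited from the scheme-theoretic form of \cref{thm:precise}: under the isomorphism $\bethenu \cong \mathcal O(\Wr^{-1}(g))$, the generalized eigenspace $\widehat E$ at $V_E$ is the local ring there, whose dimension is the intersection multiplicity of $V_E$.

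I expect the main obstacle to be the step identifying the \Plucker coordinates of the universal solution $\mathcal V$ with the explicit group-algebra sums \eqref{eq:betadef} --- equivalently, relating formula \eqref{eq:betadef} to Purbhoo's presentation of $\mathcal D(u)$ through the classical dictionary between \Plucker coordinates and fundamental differential operators. This is the one place where the representation theory of $\Sn$, symmetric functions, and the $\tau$-function formalism of the KP hierarchy all have to be deployed together; by contrast, everything else is bookkeeping around \cref{thm:precise} or elementary character theory.
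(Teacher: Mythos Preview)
Your proof of the translation identity \eqref{eq:translationidentity} is correct and essentially the same as the paper's: both reduce to the induced-character formula for $\mathrm{Ind}_{\symgrp{|\mu|}}^{\symgrp{|\mu|+k}}\chi^\mu$ (the paper's \cref{lem:induced}).

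The central gap is in your proposed order of operations. You want to prove $\beta^\lambda \in \bethe$ first, deduce commutativity from the commutativity of $\bethe$, and then obtain the \Plucker relations by identifying the $\beta^\lambda$ with the \Plucker coordinates of a universal solution $\mathcal V = \pker \mathcal D_n$ over $\bethe$. But your proposed verification that the \Plucker coordinates of $\mathcal V$ equal the sums \eqref{eq:betadef} --- via a Jacobi--Trudi expansion of the minors of a basis of $\mathcal V$ --- presupposes having an explicit basis of $\pker\mathcal D_n$ over $\bethe$. This is exactly the step of \emph{solving} the differential equation $\mathcal D_n f = 0$, which the paper explicitly identifies (immediately after \cref{thm:vague}) as leading to ``highly non-linear formulas'' and avoids. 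Relatedly, your hope of writing $\sum_\lambda \beta^\lambda s_\lambda$ as a symmetric function in the coefficients of $\mathcal D(u)$ cannot work uniformly in $\CC[\Sn]$: the paper's \cref{lem:FDOplucker2} shows $\beta^\lambda_\nu = p^\lambda_\nu(\boldpsi_\nu)$ with polynomials $p^\lambda_\nu$ that genuinely depend on $\nu$, so there is no universal formula at the level of $\CC[\Sn]$. Finally, the paper only proves \cref{lem:FDOplucker2} \emph{after} the \Plucker relations are established, since its proof uses that $(\beta^\lambda_E)$ are the normalized \Plucker coordinates of a point $V_E \in \scellnu$.

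The paper breaks this circularity by inverting your logic. Commutativity is proved \emph{directly}, not via membership: one reduces (\cref{lem:commutativityreduction}) to showing $[\varepsilon^\mu_\ell, \betaminus{k}] = 0$, which is established by an explicit bijection from \cite{purbhoo}. The \Plucker relations are then proved in two stages: the single-column relations hold by a $\tau$-function computation (\cref{lem:part1}, via \cref{lem:Bp-identity,lem:Bm-identity} and a combinatorial analysis of $Z$-factorizations), and these together with the translation identity force $(\polyD_n)_\#\Omega = 0$, so that on each eigenspace $\Omega_E$ is a pure wedge by \cref{cor:exterior} --- this gives all \Plucker relations without ever writing down a basis of $\pker\mathcal D_n$. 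Only then does one identify $V_E$, compute $D_{V_E} = \calD_E$ (\cref{lem:FDOcoords}), and finally deduce $\beta^\lambda_\nu = p^\lambda_\nu(\boldpsi_\nu)$, which closes the loop and proves membership.

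A smaller issue: your argument for (vi) invokes an isomorphism $\bethenu \cong \mathcal O(\Wr^{-1}(g))$, but this fails when the $z_i$ are not distinct (see \cref{cor:schemeequality}); the two schemes can have different lengths. The paper instead obtains (vi) by a limiting argument from the generic (distinct-$z_i$) case, using that both multiplicities and generalized-eigenspace dimensions behave additively under degeneration.
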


We note that while the translation identity in part \ref{main_translation} is linear, 
parts \ref{main_commutativity} and \ref{main_pluckers} both involve quadratic expressions in 
$\bethe$,
making them intractable to prove directly.  In both of these cases we proceed by reducing the problem to --- and then proving --- an easier identity, using a diverse set of algebraic tools. For part \ref{main_commutativity}, we use properties of $\bethe$ and combinatorial ideas which appeared in \cite{purbhoo}. For part \ref{main_pluckers}, we employ the translation identity, properties of the exterior algebra, new combinatorial identities of symmetric functions, and the theory of $\tau$-functions of the KP hierarchy.  Once
identities \ref{main_commutativity}--\ref{main_pluckers}
are established, parts \ref{main_generates}--\ref{main_multiplicity} are relatively straightforward
consequences.

While the proof of \cref{thm:main} uses some of the same mathematical 
constructions and foundational results as the proof of \cref{thm:vague}/\ref{thm:precise} in \cite{mukhin_tarasov_varchenko13}, it does not use
the result itself.  In fact, our proof of \cref{thm:main} simultaneously proves \cref{thm:precise} (see \cref{rmk:newproof}).
The two arguments are interconnected, which plays a key role in
establishing the \Plucker relations.
\begin{example}
\label{ex:main}
We illustrate \cref{thm:main} in the case $n=2$, for the Grassmannian $\Gr(2,4)$. Writing $\symgrp{2} = \{\Sidentity{2}, \trans{1}{2}\}$, we have
\[
\beta^0 = \Sidentity{2} \, z_1z_2\,, \qquad \beta^1 = \Sidentity{2} (z_1 +  z_2)\,, \qquad \beta^2 = \Sidentity{2} + \trans{1}{2}\,, \qquad \beta^{11} = \Sidentity{2} - \trans{1}{2}\,,
\]
and $\beta^\lambda = 0$ for all other partitions $\lambda$. Note that the $\beta^\lambda$'s satisfy the equation
\[
- \beta^0\beta^{22} + \beta^1\beta^{21} - \beta^{11}\beta^2 = 0\,,
\]
which is the first non-trivial \Plucker relation \eqref{eq:firstplucker}.

There are two Specht modules for $\symgrp{2}$, namely $\specht{2}$ and $\specht{11}$, which are both $1$-dimensional. In $\specht{2}$, both $\Sidentity{2}$ and $\trans{1}{2}$ act with eigenvalue $1$, and so
\begin{equation}
\label{eq:examplepluckers}
\beta^0 \rightsquigarrow z_1z_2\,, \qquad \beta^1 \rightsquigarrow z_1 + z_2\,, \qquad \beta^2 \rightsquigarrow 2\,, \qquad \beta^{11} \rightsquigarrow 0\,.
\end{equation}
These are the \Plucker coordinates (under the identification \eqref{eq:identification}) of the element
\[
V = \left\langle 1,\, z_1z_2u + \frac{z_1 + z_2}{2}u^2 + \frac{1}{3}u^3 \right\rangle \in \scell{2} \subseteq \Gr(2,4)\,;
\]
see \cref{ex:pluckers} for further explanation. On the other hand, in $\specht{11}$, the element $\Sidentity{2}$ acts with eigenvalue $1$ and $\trans{1}{2}$ acts with eigenvalue $-1$, giving the solution
\[
V = \left\langle \frac{z_1 + z_2}{2} + u,\, -z_1z_2 + u^2 \right\rangle \in \scell{11} \subseteq \Gr(2,4)\,.
\]

We can check that both elements of $\Gr(2,4)$ have Wronskian $g(u) = (u+z_1)(u+z_2)$.
\end{example}

\begin{example}
\label{ex:22}
We illustrate parts \ref{main_commutativity} and \ref{main_pluckers} of \cref{thm:main} in the case $n=4$. Consider the $2$-dimensional representation $\spechtnu$ of $\symgrp{4}$, $\nu = (2,2)$. Following the conventions used by \texttt{Sage} \cite{sagemath}, the simple transpositions $\trans{1}{2}$ and $\trans{3}{4}$ both act as $(\begin{smallmatrix}1 & 0 \\ 1 & -1\end{smallmatrix})$, and $\trans{2}{3}$ acts as $(\begin{smallmatrix}0 & -1 \\ -1 & 0\end{smallmatrix})$. Let $\beta^\lambda_\nu \in \End(\spechtnu)$ denote the operator $\beta^\lambda$ acting on $\spechtnu$, which we regard as a $2\times 2$ matrix. Then
\begin{gather*}
\beta^0_\nu = z_1z_2z_3z_4\,\scalebox{0.85}{$\begin{pmatrix}1 & 0 \\ 0 & 1\end{pmatrix}$}\,,
\qquad
\beta^1_\nu = (z_1z_2z_3 + z_1z_2z_4 + z_1z_3z_4 + z_2z_3z_4)\,\scalebox{0.85}{$\begin{pmatrix}1 & 0 \\ 0 & 1\end{pmatrix}$}\,,
\\[2pt]
\beta^2_\nu = \scalebox{0.85}{$\begin{pmatrix}
2z_1z_2 + z_1z_4 + z_2z_3 + 2z_3z_4 & z_1z_3 - z_1z_4 - z_2z_3 + z_2z_4 \\
z_1z_2 - z_1z_4 - z_2z_3 + z_3z_4 & 2z_1z_3 + z_1z_4 + z_2z_3 + 2z_2z_4
\end{pmatrix}$}\,,
\\[2pt]
\beta^{11}_\nu = \scalebox{0.85}{$\begin{pmatrix}
2z_1z_3 + z_1z_4 + z_2z_3 + 2z_2z_4 & -z_1z_3 + z_1z_4 + z_2z_3 - z_2z_4 \\
-z_1z_2 + z_1z_4 + z_2z_3 - z_3z_4 & 2z_1z_2 + z_1z_4 + z_2z_3 + 2z_3z_4
\end{pmatrix}$}\,,
\\[2pt]
\beta^{21}_\nu = 3(z_1 + z_2 + z_3 + z_4)\,\scalebox{0.85}{$\begin{pmatrix}1 & 0 \\ 0 & 1\end{pmatrix}$}\,,
\qquad
\beta^{22}_\nu = 12\,\scalebox{0.85}{$\begin{pmatrix}1 & 0 \\ 0 & 1\end{pmatrix}$}\,,
\end{gather*}
and $\beta^\lambda_\nu = 0$ for all other partitions $\lambda$ 
(see \cref{prop:betapsd}\ref{betapsd4}). 
We can see that the $\beta^\lambda_\nu$'s pairwise commute and satisfy the \Plucker relation \eqref{eq:firstplucker}: $- \beta^0_\nu\beta^{22}_\nu + \beta^1_\nu\beta^{21}_\nu - \beta^{11}_\nu\beta^2_\nu = 0\,.$
\end{example}

We now briefly discuss several additional results related to \cref{thm:main}, before going into detail about applications to real algebraic geometry in \cref{sec:conjectures}.

\subsubsection{Scheme-theoretic results}
\label{sec:schemeintro}

The eigenspaces of $\bethenu$ can also be regarded as the points
of the spectrum of the algebra, $\Spec \bethenu$. 
\cref{thm:vague,thm:main} both set-theoretically identify $\Spec \bethenu$ 
with the fibre of the Wronski map $\Wr^{-1}(g) \subseteq \scellnu$.
We discuss a more precise scheme-theoretic version of this 
correspondence in \cref{sec:scheme}.

Namely, in the case where $z_1, \dots, z_n$ are distinct, 
Mukhin, Tarasov, and Varchenko \cite{mukhin_tarasov_varchenko13}
prove that $\Spec \bethenu$ and $\Wr^{-1}(g)$ are in fact 
isomorphic as schemes; equivalently, $\bethenu$
is isomorphic to the coordinate ring of $\Wr^{-1}(g)$.  
We reformulate this result as \cref{thm:distinct}, expressing the isomorphism 
in terms of \Plucker coordinates and
the operators $\beta^\lambda$.

If $z_1, \dots, z_n$ are not distinct, then $\Spec \bethenu$ and
$\Wr^{-1}(g)$ are not necessarily isomorphic as schemes.
Instead, $\Spec \bethenu$ is scheme-theoretically identified
with a particular union of Schubert intersections, involving
flags osculating a rational normal curve; the latter
is set-theoretically (but not always scheme-theoretically)
the same as $\Wr^{-1}(g)$. 
We formulate this scheme-theoretic 
isomorphism precisely as \cref{thm:nondistinct}, 
and prove it using results from 
\cite{mukhin_tarasov_varchenko09b}.

Together, \cref{thm:distinct,thm:nondistinct} give the precise
scheme-theoretic formulation of \cref{thm:main}\ref{main_eigenspace}.

\subsubsection{Bases for \texorpdfstring{$V$}{V} in a fibre of the Wronski map}
\label{sec:basesintro}

Using \cref{thm:main}, we obtain two explicit bases for any element $V \in \Wr^{-1}(g)$, in terms of our operators $\beta^\lambda(t)$ acting on the associated eigenspace $E$. The first basis (see \cref{thm:betabasis}) depends on the Schubert cell $\scellnu$ containing $V$, corresponding to a matrix representative in reduced row-echelon  form. The second basis (see \cref{thm:generalbasis}) is independent of the Schubert cell, and only involves the operators $\beta^k(t)$ associated to single-row partitions (i.e.\ where the corresponding character $\chi^k$ is trivial).

\subsubsection{Geometric transformations}
\label{sec:transformationsintro}

The Grassmannians $\Gr(d,m)$ and $\Gr(m-d,m)$ are dual to each other.
In \cref{sec:transformations}, we use \cref{thm:main} to show that this 
duality corresponds to an automorphism of the algebra $\bethe$
(see \cref{prop:dualityautomorphism}), 
which recovers a result from \cite{purbhoo}.  We also show that when
the partition $\nu$ is a rectangle, the 
Bethe algebra $\bethenu \subseteq \End(\spechtnu)$ is invariant 
under the action of $\PGL_2$ on the parameters $z_1, \dots, z_n$
(see \cref{cor:invariance}).  This also follows from \cref{thm:main},
and corresponds to the fact that there is a natural $\PGL_2$-action
on $\Gr(d,m)$.
This action characterizes the Wronski map \cite[Section 3.2]{gillespie_levinson_purbhoo23} and is also related to the combinatorics of promotion and evacuation on standard Young tableaux \cite{purbhoo13}.

\subsubsection{A \texorpdfstring{$\tau$}{tau}-function of the KP hierarchy}

The \emph{KP hierarchy} is a system of differential equations which arose out of the study of solitary waves. As we recall in \cref{sec:kpbackground}, its solutions are encoded by \emph{$\tau$-functions}. These are symmetric functions satisfying the Hirota equation \eqref{eq:KP}, or equivalently, functions whose coefficients in the Schur basis satisfy the \Plucker relations \eqref{eq:prelspartitions}. We can then equivalently rephrase \cref{thm:main}\ref{main_pluckers} as the statement that the symmetric function
\begin{equation}
\label{eq:tauintro}
     \sum_{X \subseteq [n]} \, \sum_{\sigma \in \symgrp{X}}
     \sigma \otimes \p_{\mu_1}\dotsm \,\p_{\mu_s}
     \cdot\prod_{i \in [n] \setminus X}z_i\, \in \CC[\Sn] \otimes \Lambda
\end{equation}
is a $\tau$-function of the KP hierarchy, where $\mu_1, \dots, \mu_s$ denote the lengths of the cycles of $\sigma \in \symgrp{X}$, $\Lambda$ denotes the $\CC$-algebra of symmetric functions, and $\p_k\in\Lambda$ is the $k$th power sum. (See \cref{thm:taufunction,thm:tauinfinity} for precise statements.) As we mentioned above, our proof of \cref{thm:main}\ref{main_pluckers} in fact uses the theory of symmetric functions and $\tau$-functions.


\subsection{Conjectures in real algebraic geometry}
\label{sec:conjectures}

We now discuss several applications of our results in real algebraic geometry. We encourage the reader to consult \cref{fig:implications} to keep track of the implications between the various main results discussed here.

We continue to assume that $\nu$ is a partition of 
$n$ with at most $d$ parts, and $m \geq d+\nu_1$, so that the
Schubert cell $\scellnu$ is contained in $\Gr(d,m)$.
The Schubert variety $\svarnu \subseteq \Gr(d,m)$ is
the closure of $\scellnu$.
We write $\rectangle$ for the rectangular partition $(m-d)^d = (m-d, \dots, m-d)$.
In this case, $\svarrect = \Gr(d,m)$.

We will be mainly concerned with the following Schubert problem. 
Given $W_1, \dots, W_n$ in $\Gr(m-d,m)$, determine
all $d$-planes $V$ such that
\begin{equation}
\label{eq:schubertproblem}
V \in \svarnu \qquad \text{and} \qquad
  V \cap W_i \neq \{0\}\; \text{ for all $i =1, \dots, n$}
\,.
\end{equation}
When $W_1, \dots, W_n$ are sufficiently general, the number of distinct
solutions $V$ to the Schubert problem \eqref{eq:schubertproblem} is exactly $\numsyt{\nu} = \dim \spechtnu$.

We will be concerned with solving \eqref{eq:schubertproblem} over the real numbers when $W_1, \dots, W_n$ are real, and especially with instances for which \emph{all} the solutions are real. The interest in algebraic problems with only real solutions dates back at least to Fulton \cite[Section 7.2]{fulton84}, who wrote, ``The question of how many solutions of real equations can be real is still very much open, particularly for enumerative problems.'' Note that the property of having only real solutions is extremely rare; for example, for a `random' Schubert problem on $\Gr(d,m)$ defined over $\RR$, the number of real solutions is roughly the square root of the number of complex solutions \cite{burgisser_lerario20}. We refer to \cite{sottile11} for a detailed survey of real enumerative geometry.
\begin{figure}[t]
\hidelinks
\begin{center}
\begin{tikzpicture}[baseline=(current bounding box.center),implies/.style={double,double equal sign distance,-implies},equiv/.style={double,double equal sign distance,implies-implies}]
\tikzstyle{block}=[rectangle,draw,text width=48mm,align=flush center,rounded corners]
\tikzstyle{out1}=[inner sep=0,minimum size=1.2mm,circle,draw=black,fill=black]
\tikzstyle{in1}=[inner sep=0,minimum size=1.2mm,circle,draw=black,fill=white]
\pgfmathsetmacro{\unit}{0.922};
\pgfmathsetmacro{\r}{7.5};
\pgfmathsetmacro{\s}{3};
\pgfmathsetmacro{\t}{4.6};
\node[block](main)at(0,0){{\bfseries\fullcref{thm:main}}\\{\footnotesize Algebraic properties of $\beta^\lambda$}};
\node[block](positiveSS)at(0,-\s){{\bfseries\fullcref{thm:positive}}\\[2pt]{\footnotesize Positive Shapiro--Shapiro\\ conjecture}};
\node[block](positivesecant)at(0,-\s-\t){{\bfseries\fullcref{thm:positivesecant}}\\[2pt]{\footnotesize Positive secant conjecture\\ (divisor form)}};
\node[block](SS)at(\r,-\s){{\bfseries\fullcref{thm:ssc}}\\[2pt]{\footnotesize Shapiro--Shapiro\\ conjecture \cite{mukhin_tarasov_varchenko09a}}};
\node[block](secant)at(\r,-\s-\t){{\bfseries\fullcref{thm:secant}}\\[2pt]{\footnotesize Secant conjecture\\ (divisor form)}};
\node[block](disconjugacy)at(\r,-\s-0.42*\t){{\bfseries\fullcref{thm:disconj}}\\{\footnotesize Disconjugacy conjecture}};
\draw(main)edge[implies] node[right=1pt,align=left]{{\footnotesize positivity properties of $\beta^\lambda$}\\{\footnotesize (\fullcref{prop:betapsd})}}(positiveSS);
\draw(positiveSS)edge[implies] node[above=1pt]{{\footnotesize $\PGL_2$-action}}(SS);
\draw(positivesecant)edge[implies] node[above=1pt]{{\footnotesize $\PGL_2$-action}}(secant);
\draw(disconjugacy)edge[implies] node[right=1pt,align=left]{{\footnotesize \cite{eremenko15}, using}\\{\footnotesize \fullcref{thm:ssc}}}(secant);
\draw(positiveSS)edge[equiv] node[right=1pt]{{\footnotesize \cite{karp}}}(positivesecant);
\draw(positiveSS.south east)edge[equiv] node[below=4pt]{{\footnotesize \cite{karp}}}(disconjugacy.north west);
\end{tikzpicture}
\caption{Implications between various results of the paper.}\label{fig:implications}
\end{center}
\restorelinks
\end{figure}

\subsubsection{The Shapiro--Shapiro conjecture}
\label{sec:shapiroshapiro}
The \defn{moment curve}
$\gamma : \CC \to \CC_{m-1}[u]$ is the parametric curve
\begin{equation}
\label{eq:moment}
   \gamma(t) := \frac{(u+t)^{m-1}}{(m-1)!}
\,.
\end{equation}
The closure of the image of $\gamma$ in $\PP^{m-1}$ is 
a rational normal curve.
A $d$-plane $V \in \Gr(d,m)$ \defn{osculates} $\gamma$ at 
$w \in \CC$ if 
$(\gamma(w), \gamma'(w), \gamma''(w), \dots, \gamma^{(d-1)}(w))$
is a basis for $V$. 
Osculating planes to the moment curve are related to the Wronski map 
by the following fact (see \cref{prop:schubertwronskian2} for a 
more detailed formulation):
\begin{proposition}
\label{prop:schubertwronskian}
Suppose $W \in \Gr(m-d,m)$ and $V \in \Gr(d,m)$.
If $W$ osculates $\gamma$ at $w$, then $V \cap W \neq \{0\}$ 
if and only if $-w$ is a zero of $\Wr(V)$.
\end{proposition}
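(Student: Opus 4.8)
The plan is to establish \cref{prop:schubertwronskian} by unwinding the definitions of osculating plane and Wronskian in terms of polynomials, reducing the incidence condition $V \cap W \neq \{0\}$ to an order-of-vanishing condition. First I would use the identification \eqref{eq:isomorphism} of $\CC^m$ with $\CC_{m-1}[u]$ to describe $W$ explicitly: since $W$ osculates $\gamma$ at $w$, a basis for $W$ is $(\gamma(w), \gamma'(w), \dots, \gamma^{(m-d-1)}(w))$, and differentiating $\gamma(t) = (u+t)^{m-1}/(m-1)!$ with respect to $t$ shows that $\gamma^{(k)}(w)$ corresponds to the polynomial $(u+w)^{m-1-k}/(m-1-k)!$. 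Hence, as a subspace of $\CC_{m-1}[u]$,
\[
W = \vspan\bigl( (u+w)^{m-d}, (u+w)^{m-d+1}, \dots, (u+w)^{m-1} \bigr)
 = \{\, f \in \CC_{m-1}[u] : (u+w)^{m-d} \mid f \,\}\,,
\]
i.e.\ $W$ consists precisely of the degree-$\leq m-1$ polynomials vanishing to order at least $m-d$ at $u = -w$.

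Next I would translate the condition $V \cap W \neq \{0\}$: it holds if and only if $V$ contains a nonzero polynomial $f$ with a zero of order at least $m-d$ at $-w$. Here one must be slightly careful about degrees. A nonzero $f \in V$ has $\deg f \leq m-1$ (since $V \subseteq \Gr(d,m)$), so $f \in W$ as subspaces of $\CC_{m-1}[u]$ exactly when $(u+w)^{m-d}$ divides $f$; and since $\dim V = d$, the order of vanishing at $-w$ of a nonzero element of $V$ can be at most $d-1$ generically but at most $m-1$ in any case — the point is that $V \cap W \neq \{0\}$ is literally the statement that some nonzero element of $V$ vanishes to order $\geq m-d$ at $-w$. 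Now I invoke the standard characterization of the Wronskian: for a $d$-dimensional space $V \subseteq \CC[u]$, the multiplicity of a point $-w$ as a zero of $\Wr(V)$ equals $\sum_{i=1}^{d}\bigl( a_i - (i-1) \bigr)$, where $a_1 > a_2 > \dots > a_d \geq 0$ are the orders of vanishing at $-w$ of the elements of a suitably chosen (osculating) basis of $V$ — equivalently, $-w$ is a zero of $\Wr(V)$ of multiplicity at least $\nu_1 + 1$ (with $\nu$ the shape giving the ramification) if and only if $V$ contains a nonzero polynomial vanishing to order $\geq d$ at $-w$. For the proposition as stated we only need the qualitative version: $-w$ is a zero of $\Wr(V)$ (of any positive multiplicity) if and only if $V$ contains a nonzero polynomial vanishing to order at least $d$ at $-w$.

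There is a gap between ``order at least $m-d$'' (the condition coming from $W$) and ``order at least $d$'' (the condition coming from $\Wr(V)$), and bridging it is what I expect to be the main obstacle. The resolution is that $V \in \Gr(d,m)$, so $m \geq d$, and more to the point the relevant constraint is really about the jump sequence: if a nonzero $f \in V$ vanishes to order $k$ at $-w$, then $f, f', \dots$ or rather the osculating flag of $V$ at $-w$ forces the vanishing-order multiset of $V$ at $-w$ to be $\{0, 1, \dots, d-1\}$ unless there is a jump, and a jump of size $\geq 1$ occurs precisely when $-w$ is a root of $\Wr(V)$. One shows $V \cap W \neq \{0\}$ $\iff$ the largest vanishing order attained in $V$ at $-w$ is $\geq m-d$ $\iff$ (using $\dim V = d$ and $V \subseteq \CC_{m-1}[u]$, so the vanishing orders lie in $\{0,\dots,m-1\}$ and are $d$ distinct values) the $d$ vanishing orders are not $\{0,1,\dots,d-1\}$ $\iff$ some nonzero element of $V$ vanishes to order $\geq d$ at $-w$ $\iff$ $\Wr(-w) = 0$. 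I would carry this out by the pigeonhole observation that $d$ distinct integers in $\{0,\dots,m-1\}$ with maximum $< m-d$ must be exactly $\{0,\dots,d-1\}$, and conversely. This is the content of \cref{prop:schubertwronskian2}, whose more detailed formulation tracks the multiplicities and the partition $\nu$; the qualitative statement here follows immediately, and I would simply cite the degree/jump-sequence bookkeeping as the routine part, highlighting the pigeonhole step as the one genuine idea.
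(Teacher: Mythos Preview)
Your approach via vanishing orders is sound in spirit, but there is a computational error that creates an artificial obstacle which your proposed pigeonhole argument does not actually resolve.

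The error is in identifying $W$. Differentiating $\gamma(t) = (u+t)^{m-1}/(m-1)!$ with respect to $t$ gives $\gamma^{(k)}(w) = (u+w)^{m-1-k}/(m-1-k)!$; running $k$ from $0$ to $m-d-1$ yields the span of $(u+w)^{m-1}, (u+w)^{m-2}, \dots, (u+w)^{d}$. Thus
\[
W \;=\; \{\, f \in \CC_{m-1}[u] : (u+w)^{d} \mid f \,\}\,,
\]
not $(u+w)^{m-d}\mid f$ as you wrote. (A quick sanity check: your description has dimension $d$, not $m-d$.) With the correct $W$, the condition $V \cap W \neq \{0\}$ says exactly that some nonzero element of $V$ vanishes to order at least $d$ at $-w$, which is precisely the standard characterization of $-w$ being a zero of $\Wr(V)$. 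There is no ``gap between $m-d$ and $d$'' to bridge. In fact, your pigeonhole claim as written (``$d$ distinct integers in $\{0,\dots,m-1\}$ with maximum $< m-d$ must be exactly $\{0,\dots,d-1\}$'') is false whenever $d < m-d$; with the correct exponent it becomes the tautology ``$d$ distinct integers in $\{0,\dots,d-1\}$ must be exactly $\{0,\dots,d-1\}$,'' and is not needed.

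For comparison, the paper proves this differently: it establishes the refined statement (\cref{prop:schubertwronskian2}) via the \Plucker-coordinate formula for the Wronskian (\cref{prop:pluckerwronskian}) and the description of $X_\mu(0)$ by vanishing of \Plucker coordinates (\cref{rmk:0schubertconditions}), after translating to $w=0$. Your direct vanishing-order argument (once corrected) is a perfectly good alternative and is arguably more elementary for the qualitative $k=1$ statement; the paper's \Plucker-coordinate route has the advantage of immediately giving the multiplicity version.
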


The \defn{Shapiro--Shapiro conjecture} can be stated as follows:
\begin{theorem}[Mukhin, Tarasov, and Varchenko \cite{mukhin_tarasov_varchenko09a}]
\label{thm:ssc}
Let $z_1, \dots, z_n$ be distinct real numbers.
For $i =1, \dots, n$, let $W_i \in \Gr(m-d,m)$ be the osculating 
$(m-d)$-plane to $\gamma$ at $z_i$.
Then there are exactly $\numsyt{\nu}$ distinct solutions to the Schubert
problem \eqref{eq:schubertproblem}, and all solutions are real.
\end{theorem}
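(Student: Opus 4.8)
The plan is to obtain \cref{thm:ssc} as a consequence of \cref{thm:main}, together with the scheme-theoretic refinement for distinct parameters (\cref{thm:distinct}). The first step is to reinterpret the Schubert problem \eqref{eq:schubertproblem} as an inverse Wronski problem. Set $g(u) := (u+z_1)\dotsm(u+z_n)$. By \cref{prop:schubertwronskian} (in the detailed form \cref{prop:schubertwronskian2}), a $d$-plane $V \in \svarnu$ satisfies $V \cap W_i \neq \{0\}$ for every $i$ if and only if each $-z_i$ is a zero of $\Wr(V)$; since the $z_i$ are distinct, the only way $\Wr(V)$ can vanish at all $n$ of these points is for it to have degree $n$, which forces $V \in \scellnu$ and $\Wr(V) = g$. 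Thus the solution set of \eqref{eq:schubertproblem} is exactly $\Wr^{-1}(g) \subseteq \scellnu$, and by \cref{thm:main}\ref{main_eigenspace},\ref{main_multiplicity} it is in bijection with the set of eigenspaces $E \subseteq \spechtnu$ of $\bethenu$, the point $V_E$ occurring with multiplicity $\dim \widehat{E}$; since $\spechtnu = \bigoplus_E \widehat{E}$, the total multiplicity is $\numsyt{\nu}$.

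The key observation is that, when $z_1,\dots,z_n$ are real, the operators $\beta^\lambda = \beta^\lambda(0)$ are \emph{self-adjoint}. Let $*$ be the conjugate-linear anti-automorphism of $\CC[\Sn]$ determined by $\sigma^* = \sigma^{-1}$ for $\sigma \in \Sn$. Because each symmetric-group character is real-valued and satisfies $\chi^\lambda(\sigma^{-1}) = \chi^\lambda(\sigma)$, reindexing the sum in \eqref{eq:betadef} by $\sigma \mapsto \sigma^{-1}$ shows $(\beta^\lambda)^* = \beta^\lambda$ whenever $z_1,\dots,z_n \in \RR$. The Specht module $\spechtnu$ carries a positive-definite $\Sn$-invariant Hermitian inner product (average any positive-definite form over $\Sn$), with respect to which the Hermitian adjoint of $a \in \CC[\Sn]$ is $a^*$; hence each $\beta^\lambda$ acts on $\spechtnu$ as a self-adjoint operator. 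By \cref{thm:main}\ref{main_commutativity} these operators commute, so by the spectral theorem they are simultaneously unitarily diagonalizable with real eigenvalues. Consequently every generalized eigenspace $\widehat{E}$ of $\bethenu$ coincides with the eigenspace $E$, and the eigenvalues of the $\beta^\lambda$ on $E$ --- which by \cref{thm:main}\ref{main_eigenspace} are the \Plucker coordinates of $V_E$ --- are all real. Therefore every solution $V_E$ of \eqref{eq:schubertproblem} is a real point of $\Gr(d,m)$.

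It remains to count the distinct solutions, i.e.\ to show $\dim E = 1$ for every eigenspace $E$ (equivalently, that the $\numsyt{\nu}$ solutions-with-multiplicity are all simple). Diagonalizability makes $\bethenu$ a semisimple, hence reduced, commutative $\CC$-algebra. When $z_1,\dots,z_n$ are distinct, \cref{thm:distinct} identifies $\bethenu$ with the coordinate ring of the scheme $\Wr^{-1}(g)$; reducedness therefore passes to $\Wr^{-1}(g)$. Since this scheme has length $\numsyt{\nu}$ (the total multiplicity computed above) and is zero-dimensional and reduced over $\CC$, it consists of exactly $\numsyt{\nu}$ points. Tracing back through the first paragraph, \eqref{eq:schubertproblem} has exactly $\numsyt{\nu}$ distinct solutions, all real, which is \cref{thm:ssc}.

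I expect essentially all of the difficulty to be front-loaded into \cref{thm:main} --- particularly the commutativity relations \ref{main_commutativity} and the identification of eigenvalues with \Plucker coordinates --- and into the MTV scheme isomorphism \cref{thm:distinct}. Granting those, the only genuinely new ingredient is the self-adjointness of the $\beta^\lambda$, after which the reality statement is just the spectral theorem for commuting self-adjoint operators. Within the present argument the points requiring the most care are the reduction of \eqref{eq:schubertproblem} to $\Wr^{-1}(g) \subseteq \scellnu$ (ruling out putative solutions on the boundary $\svarnu \setminus \scellnu$, which is where distinctness of the $z_i$ enters) and the passage between ``distinct joint eigenspaces'', ``reducedness of $\bethenu$'', and ``exactly $\numsyt{\nu}$ solutions''.
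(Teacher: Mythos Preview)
Your argument is correct and matches the paper's framework: reality via self-adjointness of the $\beta^\lambda_\nu$ (this is exactly \cref{prop:betapsd}\ref{betapsd1}) together with \cref{thm:main}\ref{main_eigenspace}, and the count via semisimplicity of $\bethenu$ combined with the scheme-theoretic identification; this is precisely how the paper proves the general form \cref{thm:generalssc} (using \cref{thm:nondistinct}, of which \cref{thm:distinct} is the distinct-parameter case). One small remark: the paper also notes, just after the statement of \cref{thm:ssc}, that the count of $\numsyt{\nu}$ distinct solutions already follows from the reality statement alone by a result of Sottile, so invoking \cref{thm:distinct} for the count is not strictly necessary --- but your route through it is perfectly valid and is the one the paper itself uses in the scheme-theoretic section.
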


\cref{thm:ssc} was conjectured by Boris and Michael Shapiro in 1993, and extensively tested and popularized by Sottile \cite{sottile00}. It was proved in the cases $d \le 2$ and $m-d \le 2$ by Eremenko and Gabrielov \cite{eremenko_gabrielov02}, and in general by Mukhin, Tarasov, and Varchenko \cite{mukhin_tarasov_varchenko09a}. Their proof was later restructured and simplified in \cite{purbhoo}.  A very different proof, based on geometric and topological arguments, is given in \cite{levinson_purbhoo21}.

A notable feature of \cref{thm:ssc} is that the Schubert problem
has $\numsyt{\nu}$ distinct solutions, despite the fact that $W_1, \dots, W_n$ are
explicitly specified, and hence not assumed to be general.  
In fact, this follows from the claim that all 
solutions are real (see \cite[Theorem 13.2]{sottile11}).
A more general form of \cref{thm:ssc} describes how the story changes 
in the limit as $z_1, \dots, z_n$ become non-distinct,
as we discuss in \cref{sec:dimensions,sec:generalssc}.

For the situation described in \cref{thm:ssc}, the 
Schubert problem~\eqref{eq:schubertproblem} is
equivalent (by 
\cref{prop:schubertwronskian}/\ref{prop:schubertwronskian2}) to $V\in\scellnu$
and $\Wr(V) = g$, where $g(u) = (u+z_1) \dotsm (u+z_n)$. 
Thus, \cref{thm:ssc} can be rephrased as follows:
if $V \subseteq \CC[u]$ is a finite-dimensional vector space of polynomials,
and $\Wr(V)$ has only real roots, then $V$ is real. 
Mukhin, Tarasov and Varchenko deduce \cref{thm:ssc} from 
\cref{thm:vague} using this reformulation.  
The key point is that when $z_1, \dots, z_n \in \RR$,
the Gaudin Hamiltonians are self-adjoint operators with respect
to a Hermitian inner product.  Hence their eigenvalues,
which determine the points of $\Wr^{-1}(g)$, are all real.

Using \cref{thm:main}, we obtain a number of generalizations of \cref{thm:ssc}:

\subsubsection{The divisor form of the secant conjecture}
\label{sec:secantconjecture}
Let $I \subseteq \RR$ be an interval.  An $(m-d)$-plane $W \in \Gr(m-d,m)$ 
is a \defn{secant} to $\gamma$ along $I$ if there exist distinct points 
$w_1, \dots, w_{m-d} \in I$ such that $(\gamma(w_1), \dots, \gamma(w_{m-d}))$
is a basis for $W$.
More generally, $W$ is a \defn{generalized secant} to $\gamma$ along $I$ 
if there
exist distinct points $w_1, \dots, w_k \in I$ and positive integers 
$m_1, \dots, m_k$, such that $m_1+ \dots + m_k = m-d$ and
\[
   \big(\gamma(w_1), \gamma'(w_1), \dots, \gamma^{(m_1-1)}(w_1),
    \ \dots\ , \gamma(w_k), \gamma'(w_k), \dots, \gamma^{(m_k-1)}(w_k)\big)
\]
is a basis for $W$.  Working projectively, these definitions naturally
extend to cyclic intervals of $\RP^1 = \RR \cup \{\infty\}$, 
where the interval is allowed to wrap around infinity.  When
one of the points $w_i\in I$ is $\infty$, 
$\gamma^{(j)}(w_i)$ is replaced by $u^j$.

Around 2003, Frank Sottile formulated the \defn{secant conjecture}, which asserts in particular that \cref{thm:ssc} remains true when $W_1, \dots, W_n$ are generalized secants to $\gamma$ along disjoint intervals of $\RR$. This statement is what we call the \defn{divisor form} of the secant conjecture, since it arises from intersecting Schubert varieties of codimension one, i.e., \emph{Schubert divisors}; the general form of the secant conjecture involves intersecting Schubert varieties of arbitrary codimension, as we discuss in \cref{sec:generalsecant}. Note that this case of the secant conjecture is a generalization of the Shapiro--Shapiro conjecture, since an osculating plane to $\gamma$ is a special case of a generalized secant.

The secant conjecture appeared in \cite{ruffo_sivan_soprunova_sottile06} (cf.\ \cite[Section 13.4]{sottile11}), and it was extensively tested experimentally in a project led by Sottile \cite{garcia-puente_hein_hillar_martin_del_campo_ruffo_sottile_teitler12}, as described in \cite{hillar_garcia-puente_martin_del_campo_ruffo_teitler_johnson_sottile10}. It has also been proved in special cases: Eremenko, Gabrielov, Shapiro, and Vainshtein \cite[Section 3]{eremenko_gabrielov_shapiro_vainshtein06} established the case $m-d\le 2$; and Mukhin, Tarasov, and Varchenko \cite{mukhin_tarasov_varchenko09c} (cf.\ \cite[Section 3.1]{garcia-puente_hein_hillar_martin_del_campo_ruffo_sottile_teitler12}) verified the case of the divisor form when there exists $r > 0$ such that every $W_i$ is a (non-generalized) secant where $w_1, \dots, w_{m-d} \in I_i$ are an arithmetic progression of step size $r$.

We show that the divisor form of the secant conjecture is true in general:
\begin{theorem}[Secant conjecture, divisor form]
\label{thm:secant}
Let $I_1, \dots, I_n  \subseteq \RR$ be pairwise disjoint real intervals.
For $i =1, \dots, n$, let $W_i \in \Gr(m-d,m)$ be a generalized secant
to $\gamma$ along $I_i$.
Then there are exactly $\numsyt{\nu}$ distinct solutions to the Schubert
problem \eqref{eq:schubertproblem}, and all solutions are real.
\end{theorem}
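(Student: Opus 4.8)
The plan is to identify the Schubert problem \eqref{eq:schubertproblem} with a single fibre $\Wr^{-1}(g^{*})$ of the Wronski map, for a monic polynomial $g^{*}$ that will turn out to have all real roots, and then to apply \cref{thm:main}. As in the discussion following \cref{thm:ssc}, it suffices to prove that every complex solution $V$ of \eqref{eq:schubertproblem} is real: the assertion that there are exactly $\numsyt{\nu}$ distinct solutions then follows from \cite[Theorem 3.2]{sottile11}. (If one first translates $I_1,\dots,I_n$ into $[0,\infty)$ one can invoke the positivity in \cref{thm:main}, but self-adjointness will be all that is needed.)

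The first step is a secant-flag refinement of the Wronskian dictionary. Fix $i$ and write $W_i$ as a generalized secant to $\gamma$ along $I_i$ with distinct support points $w_{i,1},\dots,w_{i,k_i}\in I_i$ and multiplicities $m_{i,1},\dots,m_{i,k_i}$ summing to $m-d$. By the detailed form of the dictionary (\cref{prop:schubertwronskian2}), for a basis $v_\bullet$ of $V$ one has $\det[\,v_\bullet\mid\gamma(t),\gamma'(t),\dots,\gamma^{(m-d-1)}(t)\,]=c\,\Wr(V)(-t)$ with $c\neq 0$. Taking the confluent limit in which the $m-d$ osculation points are distributed among $w_{i,1},\dots,w_{i,k_i}$ with multiplicities $m_{i,j}$, the determinant $\det[\,v_\bullet\mid(\text{basis of }W_i)\,]$ factors as a nonzero constant times $\prod_{j<j'}(w_{i,j}-w_{i,j'})^{m_{i,j}m_{i,j'}}$ (nonzero, since the support points are distinct) times a linear functional $\delta_i$ applied to $\Wr(V)$ --- namely the Hermite divided-difference functional of order $m-d-1$ at the nodes $-w_{i,1},\dots,-w_{i,k_i}$ with multiplicities $m_{i,j}$, evaluated on a fixed $(m{-}d{-}1)$-st antiderivative of $\Wr(V)$; this is well defined since divided differences of order $m-d-1$ kill polynomials of degree $<m-d-1$. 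The upshot is
\[
   V\cap W_i\neq\{0\}\ \iff\ \delta_i\big(\Wr(V)\big)=0\qquad(1\le i\le n),
\]
valid for every $V\in\svarnu$, with $\Wr(V)$ read in its appropriate degree.

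Next I would use that $\delta_1,\dots,\delta_n$ are \emph{real} functionals that are \emph{linearly independent} on the $n$-dimensional space $\CC_{n-1}[u]$ of polynomials of degree $<n$; independence holds because the nodes of $\delta_i$ lie in the pairwise disjoint intervals $I_i$, so any prescribed values can be realized by Hermite interpolation. Two things follow. First, no solution of \eqref{eq:schubertproblem} lies on the boundary $\svarnu\setminus\scellnu$: a point of a smaller cell $\scell{\mu}$ with $\mu\subsetneq\nu$ would have $\Wr(V)$ a nonzero monic polynomial of degree $|\mu|<n$ annihilated by every $\delta_i$, contradicting independence. Second, on the affine space $\monics=u^n+\CC_{n-1}[u]$ the system $\delta_1(g)=\dots=\delta_n(g)=0$ has a unique solution $g^{*}$, which is real. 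Since $\Wr(V)$ is monic of degree exactly $n$ for $V\in\scellnu$, the equivalence above shows that the solution set of \eqref{eq:schubertproblem} is exactly $\Wr^{-1}(g^{*})$. Now apply the Hermite--Genocchi mean value theorem to each equation $\delta_i(g^{*})=0$: as $g^{*}$ is real, $\delta_i(g^{*})$ is a nonzero multiple of $g^{*}(-\xi_i)$ for some $\xi_i$ in the convex hull of $\{w_{i,1},\dots,w_{i,k_i}\}\subseteq I_i$, so $g^{*}$ vanishes at $-\xi_i\in-\overline{I_i}$. Since $-\overline{I_1},\dots,-\overline{I_n}$ are pairwise disjoint and $\deg g^{*}=n$, all $n$ roots of $g^{*}$ are real, one in each $-\overline{I_i}$; write $g^{*}(u)=(u+z_1)\dotsm(u+z_n)$ with $z_1,\dots,z_n$ distinct and real. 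By \cref{thm:main}\ref{main_eigenspace}, the \Plucker coordinates of every point of $\Wr^{-1}(g^{*})$ are eigenvalues of the operators $\beta^\lambda$ on $\spechtnu$; since the $z_i$ are real, the formula \eqref{eq:betadef} makes each $\beta^\lambda$ self-adjoint for the standard inner product on $\spechtnu$, so these eigenvalues are real. Hence every solution of \eqref{eq:schubertproblem} is real, and the proof is complete.

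The main obstacle is the secant-flag refinement of the second paragraph: promoting the osculating dictionary of \cref{prop:schubertwronskian2} to generalized secants, and in particular recognizing the residual functional $\delta_i$ precisely as a divided difference, requires a careful confluent-Vandermonde computation. Once that identity is available the rest is comparatively routine, resting on the linear independence of the $\delta_i$ for disjoint intervals and on the Hermite--Genocchi localization of their zeros, both standard facts about divided differences.
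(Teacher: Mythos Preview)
Your argument hinges on the claimed ``secant dictionary'' $V\cap W_i\neq\{0\}\iff \delta_i(\Wr(V))=0$, but this equivalence fails whenever $W_i$ is not an osculating plane. The Schubert condition $V\cap W_i\neq\{0\}$ is a linear equation in the \Plucker coordinates of $V$, and for a genuine secant $W_i$ this linear functional does \emph{not} lie in the $(d(m-d)+1)$-dimensional span of the Wronskian coefficients (cf.\ \eqref{eq:pluckerwronskian}); it depends on the remaining \Plucker coordinates as well. The confluent-Vandermonde computation you invoke runs in the wrong direction: it shows that the secant determinant specializes to $c\cdot\Wr(V)(-t)$ as all the nodes coalesce, not that the secant determinant is a function of $\Wr(V)$ when the nodes are distinct.

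Here is a concrete failure in $\Gr(2,4)$. Take $V_1=\langle 1,u^3\rangle$ and $V_2=\langle u,u^2\rangle$; both lie in $\svar{22}=\Gr(2,4)$ with the same monic Wronskian $u^2$. For the secant $W=\langle\gamma(0),\gamma(1)\rangle$ one has $\gamma(0)=\tfrac{1}{6}u^3\in V_1$, so $V_1\cap W\neq\{0\}$, whereas a direct check gives $V_2\cap W=\{0\}$. Thus no functional of $\Wr(V)$ can detect the condition $V\cap W\neq\{0\}$. Since both sides of your proposed identity are linear in the \Plucker coordinates, an identity valid on the open cell would extend to its closure; the failure at $V_1,V_2$ therefore rules it out on $\scell{22}$ as well. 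Consequently your exclusion of boundary solutions, the uniqueness of $g^{*}$, and the reduction of \eqref{eq:schubertproblem} to a single Wronskian fibre all collapse.

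The paper proceeds quite differently: it first proves \cref{thm:positive} directly from \cref{thm:main} and \cref{prop:betapsd}, deduces the disconjugacy conjecture (\cref{thm:disconj}), and then applies the topological deformation argument of Eremenko and Karp encapsulated in \cref{lem:topological}. That lemma does yield, a posteriori, that every solution $V$ has $\Wr(V)=(u+z_1)\dotsm(u+z_n)$ with one $z_i$ in each $I_i$ --- the very localization you tried to obtain algebraically via Hermite--Genocchi --- but it reaches this by a homotopy from the osculating case, using disconjugacy to guarantee that solutions remain distinct and real throughout the deformation.
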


This verifies the secant conjecture in the first non-trivial case of interest for a Schubert problem on an arbitrary Grassmannian. As we discuss in \cref{sec:generalsecant}, we do not yet know how to address the general form of the secant conjecture with our methods.

We mention that when $\nu = \rectangle$ (and $n = d(m-d)$), \cref{thm:secant} remains true 
if $I_1, \dots, I_n$ are cyclic intervals of
$\RP^1$, i.e., one of the intervals is allowed to wrap
around infinity.  The secant conjecture is sometimes phrased in this way.
This follows from \cref{thm:secant} as stated, using the $\PGL_2$-action on $\Gr(d,m)$.

\subsubsection{The disconjugacy conjecture}

Suppose that $V$ is a $d$-dimensional vector space of real analytic functions, 
defined on an interval $I\subseteq\RR$.  Disconjugacy is concerned with 
the question of how many zeros a function in $V$ can have.
By linear algebra, there always exists a nonzero function 
$f \in V$ such that $f$ has at least $d-1$ zeros on $I$.  We say that $V$ is \defn{disconjugate}
on $I$ if every nonzero function in $V$ has at most $d-1$ 
zeros on $I$ (counted with multiplicities). Disconjugacy has long been studied because it is related to explicit solutions for linear differential equations; see \cite{coppel71}, as well as \cite[Section 4.1]{karp} and the references therein.

It is not always straightforward to decide if $V$ is disconjugate on $I$.
However, a necessary condition is that $\Wr(V)$ has no zeros on $I$.
This is because $\Wr(V)$ has a zero at $w$ if and only if
there exists a nonzero $f \in V$ such that $f$ has a zero at $w$ of 
multiplicity at least $d$.  In general, the converse is false; for example, $V = \langle \cos u, \sin u \rangle$ is not disconjugate on $I = \RR$, and $\Wr(V) = 1$. Eremenko \cite{eremenko15,eremenko19} conjectured that the converse statement is actually correct
under very special circumstances.  This is known as the 
\defn{disconjugacy conjecture}, which we state now as a theorem:

\begin{theorem}[Disconjugacy conjecture]
\label{thm:disconj}
Let $V \subseteq \RR[u]$ be a finite-dimensional vector space of polynomials such that
$\Wr(V)$ has only real zeros.  Then $V$ is disconjugate on
every interval which avoids the zeros of $\Wr(V)$.
\end{theorem}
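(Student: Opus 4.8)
The plan is to deduce \cref{thm:disconj} from the nonnegativity of the Pl\"ucker coordinates of $V$ furnished by \cref{thm:main}, together with the classical total positivity of the moment curve, after a projective normalization. First, disconjugacy on an interval is preserved by the $\PGL_2$-action on $\Gr(d,m)$: if $\phi$ is a real M\"obius transformation whose pole lies off $\overline{I}$, a nonzero $f\in V$ has exactly the same zeros on $I$, with multiplicities, as the corresponding function of $\phi\cdot V$ has on $\phi(I)$; and the Wronski map intertwines the $\PGL_2$-action on $\Gr(d,m)$ with the action on the divisor of zeros of the Wronskian, so the zeros of $\Wr(\phi\cdot V)$ are the $\phi$-images of those of $\Wr(V)$ (a zero may escape to, or emerge from, $\infty$, which merely moves $\phi\cdot V$ into a smaller Schubert cell). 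Since the zeros of $\Wr(V)$ lie in the closed arc of $\RP^1$ complementary to $I$, I can choose $\phi$ carrying $I$ into $(0,\infty)$ and all zeros of $\Wr(V)$ into $(-\infty,0]$; finitely many genericity conditions keep the zeros and the endpoints of $\overline{I}$ clear of the pole of $\phi$. Replacing $V$ by $\phi\cdot V$ and $I$ by $\phi(I)$, I may therefore assume $\Wr(V)=(u+z_1)\dotsm(u+z_n)$ with $z_1,\dots,z_n\ge 0$ and $I\subseteq(0,\infty)$ (the case $n=0$, i.e.\ $V=\CC_{d-1}[u]$, is trivial).

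Next, by \cref{thm:main}\ref{main_eigenspace}, $V=V_E$ for an eigenspace $E\subseteq\spechtnu$ of $\bethenu$, and the Pl\"ucker coordinates of $V$ are the corresponding eigenvalues of the operators $\beta^\lambda$. When $z_1,\dots,z_n\ge 0$ these operators are positive semidefinite for a suitable Hermitian form on $\spechtnu$, hence their eigenvalues are real and nonnegative; so $V$ admits a matrix representative all of whose maximal minors $p_S(V)$ ($S\subseteq[m]$, $|S|=d$) are nonnegative and not all zero, i.e.\ $V$ is totally nonnegative. (This is the positivity statement of the abstract, which also settles Karp's conjecture; the semidefiniteness of the $\beta^\lambda(t)$ is the real content, and is proved elsewhere in the paper from \cref{thm:main} and the structure of $\bethe$.)

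Finally, fix a basis $f_1,\dots,f_d$ of $V$ realizing a nonnegative Pl\"ucker vector. A nonzero $f=\sum_i c_if_i\in V$ has at least $d$ zeros on $I$, counted with multiplicity, exactly when there are $p_1\le\dots\le p_d$ in $I$ with $(c_1,\dots,c_d)$ in the kernel of the $d\times d$ collocation matrix $M(p_1,\dots,p_d)$ imposing the conditions $f^{(a)}(p)=0$ at each repeated node; equivalently, with $W(p_1,\dots,p_d):=\det M(p_1,\dots,p_d)=0$. Writing $f_i=\sum_l(f_i)_l\,u^{l-1}/(l-1)!$ and applying Cauchy--Binet,
\[
  W(p_1,\dots,p_d)=\sum_{\substack{S\subseteq[m]\\ |S|=d}} p_S(V)\,v_S(p_1,\dots,p_d)\,,
\]
where $v_S$ is the corresponding minor of the confluent Vandermonde matrix built from the $u^{l-1}/(l-1)!$. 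Since $1,u,u^2,\dots$ form an extended complete Chebyshev system on $(0,\infty)$ --- for distinct positive nodes such minors are products of Schur polynomials and Vandermonde factors, and they stay strictly positive under confluence --- every $v_S(p_1,\dots,p_d)>0$ once $0<p_1\le\dots\le p_d$. As $I\subseteq(0,\infty)$, all $p_S(V)\ge 0$, and some $p_S(V)>0$, we get $W(p_1,\dots,p_d)>0$ for every admissible tuple; hence no nonzero $f\in V$ has $d$ zeros on $I$, i.e.\ $V$ is disconjugate on $I$.

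Apart from the semidefiniteness input --- the crux, handled separately --- the delicate points are the projective normalization (needed because Pl\"ucker positivity requires nonpositive Wronskian zeros while $I$ is arbitrary) and the confluent case of the last step, handled via the Chebyshev-system property of the monomials. One could alternatively try to route \cref{thm:disconj} through \cref{thm:secant}, since non-disconjugacy of $V$ on $I$ means $V^{\perp}$ meets an extra generalized secant plane along the reflection of $I$; but this gives an over-determined Schubert intersection and seems less direct.
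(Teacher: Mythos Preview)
Your argument is correct and follows essentially the same route as the paper: apply a real $\PGL_2$ transformation to move $I$ into $(0,\infty)$ and the zeros of $\Wr(V)$ into $(-\infty,0]$, then invoke \cref{thm:main}\ref{main_eigenspace} together with the positive semidefiniteness of the $\beta^\lambda_\nu$ (\cref{prop:betapsd}\ref{betapsd2}) to conclude that $V$ is totally nonnegative.

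The only difference is in the final step, where you deduce disconjugacy from total nonnegativity. The paper quotes the Gantmakher--Krein characterization (total nonnegativity of $V$ is equivalent to every nonzero $f\in V$ having at most $d-1$ sign changes in its coefficient sequence) and then applies Descartes's rule of signs. You instead expand the confluent collocation determinant by Cauchy--Binet and use the positivity of generalized (confluent) Vandermonde minors for positive nodes, i.e., the extended Chebyshev property of $1,u,u^2,\dots$ on $(0,\infty)$. Both arguments are standard and closely related --- they are two expressions of the same total-positivity fact --- so neither buys anything substantially new over the other; yours is perhaps slightly more self-contained, while the paper's is a one-line citation.

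One small correction of attribution: the positive semidefiniteness of the $\beta^\lambda_\nu$ is not deduced from \cref{thm:main}; it is \cref{prop:betapsd}, proved directly from representation theory (the $\alpha^\lambda_X$ act as scaled orthogonal projections). Your proof uses it correctly as a black box, but the parenthetical ``proved elsewhere in the paper from \cref{thm:main}'' misstates the logical dependence.
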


The disconjugacy conjecture was previously verified in the case that $\dim V \le 2$ \cite{eremenko_gabrielov_shapiro_vainshtein06} (cf.\ \cite[p.\ 341]{eremenko15}). Eremenko furthermore showed that the disconjugacy conjecture (along with the Shapiro--Shapiro conjecture) implies \cref{thm:secant}; in fact, his motivation was to generalize the argument used to prove the $m-d \le 2$ case of the secant conjecture \cite[Section 3]{eremenko_gabrielov_shapiro_vainshtein06}.
The main idea is encapsulated in \cref{lem:topological}, and explained
further in \cref{sec:conjectureproofs}.

\subsubsection{Positivity conjectures}\label{sec:positivity_conjectures}

A $d$-plane $V \in \Gr(d,m)$ is called \defn{totally nonnegative} if all of its Pl\"ucker coordinates are real and nonnegative (up to rescaling). Similarly, $V$ is called \defn{totally positive in $\scellnu$} if $V\in\scellnu$ and all of its \Plucker coordinates which are not trivially zero on $\scellnu$ are positive, i.e.,
\begin{equation}
\label{eq:positivenupluckers}
\Delta^\lambda > 0 \;\text{ for all } \lambda\subseteq\nu \qquad \text{and} \qquad \Delta^\lambda = 0 \;\text{ for all } \lambda\not\subseteq\nu\,.
\end{equation}
For example, each element $V\in\Gr(2,4)$ from \cref{ex:main} is totally nonnegative if and only if $z_1, z_2 \ge 0$, and is totally positive in its Schubert cell if and only if $z_1, z_2 > 0$.

The totally nonnegative part of $\Gr(d,m)$ is a totally nonnegative partial flag variety in the sense of Lusztig \cite{lusztig94,lusztig98} (see \cite[Section 1]{bloch_karp23} for further discussion), and was studied combinatorially by Postnikov \cite{postnikov06}. Total positivity in Schubert cells was considered by Berenstein and Zelevinsky \cite{berenstein_zelevinsky97}. These and similar totally positive spaces have been extensively studied in the past few decades, with connections to representation theory \cite{lusztig94}, combinatorics \cite{postnikov06}, cluster algebras \cite{fomin_williams_zelevinsky}, soliton solutions to the KP equation \cite{kodama_williams14}, scattering amplitudes \cite{arkani-hamed_bourjaily_cachazo_goncharov_postnikov_trnka16}, positive geometries \cite{arkani-hamed_bai_lam17}, Schubert calculus \cite{knutson14}, topology \cite{galashin_karp_lam22}, and many other topics. Total positivity also provided one of the original motivations for the Shapiro--Shapiro conjecture, since the moment curve $\gamma$ is an example of a \emph{totally positive} (or \emph{convex}) \emph{curve}; see \cref{sec:totalreality} and cf.\ \cite[Section 4]{sottile00}.

Mukhin--Tarasov and Karp conjectured that the reality statements discussed in \cref{sec:shapiroshapiro,sec:secantconjecture} have totally positive analogues. We verify these conjectures in slightly greater generality:
\begin{theorem}[Positive Shapiro--Shapiro conjecture]
\label{thm:positive}
Let $z_1, \dots, z_n$ and $W_1, \dots, W_n$ be as in \cref{thm:ssc}.
\begin{enumerate}[(i)]
\item\label{positive1} If $z_1, \dots, z_n \in [0,\infty)$, then all solutions to the Schubert
problem \eqref{eq:schubertproblem} are real and totally nonnegative.

\item\label{positive2} If $z_1, \dots, z_n \in (0,\infty)$, then
all solutions to the Schubert problem \eqref{eq:schubertproblem} 
are real and totally positive in $\scellnu$.
\end{enumerate}
\end{theorem}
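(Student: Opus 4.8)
The plan is to deduce \cref{thm:positive} from \cref{thm:main} together with elementary facts about the central idempotents of the group algebras $\CC[\symgrp{X}]$ for $X \subseteq [n]$. First I would invoke \cref{prop:schubertwronskian} at each point $z_i$ to rewrite the Schubert problem \eqref{eq:schubertproblem}, for the osculating planes $W_1, \dots, W_n$ of \cref{thm:ssc}, as the problem of finding $V \in \scellnu$ with $\Wr(V) = g$, where $g(u) = (u+z_1) \dotsm (u+z_n)$. By \cref{thm:main}\ref{main_eigenspace}, every such $V$ is the point $V_E$ associated to an eigenspace $E \subseteq \spechtnu$ of $\bethenu$, and $\Delta^\lambda(V_E)$ is the eigenvalue of $\beta^\lambda$ on $E$; by \cref{thm:main}\ref{main_generates} the operator $\beta^\lambda$ lies in $\bethe$, so on $E$ it acts as a genuine scalar. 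Thus everything reduces to two statements about the action of $\beta^\lambda$ on $\spechtnu$: (I) when every $z_i \geq 0$, the operator $\beta^\lambda$ is positive semidefinite for every $\lambda$; and (II) when every $z_i > 0$, it is positive definite for $\lambda \subseteq \nu$ and is zero for $\lambda \not\subseteq \nu$. Granting these, statement (I) gives part \ref{positive1}: all \Plucker coordinates of $V_E$ are then real and nonnegative, and they are not all zero because $\beta^\nu = \sum_{\sigma \in \Sn} \chi^\nu(\sigma)\, \sigma$ is a positive multiple of the central idempotent of $\CC[\Sn]$ indexed by $\nu$ and so acts on $\spechtnu$ as the scalar $n!/\numsyt{\nu} > 0$. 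Statement (II) is precisely the condition \eqref{eq:positivenupluckers} that $V_E$ be totally positive in $\scellnu$, giving part \ref{positive2}. In both parts, reality of the coordinates is automatic once $\beta^\lambda$ is known to be self-adjoint.

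The key input I would establish next is an expansion of $\beta^\lambda$ into idempotents. Equip $\spechtnu$ with its $\Sn$-invariant inner product, so that $\sigma^* = \sigma^{-1}$ for every $\sigma \in \Sn$. For $X \subseteq [n]$ with $|X| = |\lambda|$, let $e^X_\lambda := \tfrac{\numsyt{\lambda}}{|\lambda|!} \sum_{\sigma \in \symgrp{X}} \chi^\lambda(\sigma)\, \sigma$ be the central primitive idempotent of $\CC[\symgrp{X}]$ corresponding to $\lambda$. Because $\chi^\lambda(\sigma^{-1}) = \chi^\lambda(\sigma)$ for the symmetric group, $e^X_\lambda$ is self-adjoint, hence acts on $\spechtnu$ as the orthogonal projection onto the $\specht{\lambda}$-isotypic part of the restriction of $\spechtnu$ to $\symgrp{X}$; in particular it is positive semidefinite. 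Comparing with the definition \eqref{eq:betadef}, on $\spechtnu$ we have
\[
\beta^\lambda \;=\; \frac{|\lambda|!}{\numsyt{\lambda}} \sum_{\substack{X \subseteq [n],\ |X| = |\lambda|}} \Big( \prod_{i \in [n] \setminus X} z_i \Big)\, e^X_\lambda \,.
\]
When every $z_i \geq 0$, each coefficient $\prod_{i \notin X} z_i$ is nonnegative, so $\beta^\lambda$ is a nonnegative linear combination of positive semidefinite operators and hence positive semidefinite; this proves (I).

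For (II), I would assume every $z_i > 0$, so that each coefficient $\prod_{i \notin X} z_i$ is strictly positive, and then use two standard representation-theoretic facts. First, $e^X_\lambda$ acts by zero on $\spechtnu$ unless $\specht{\lambda}$ occurs in the restriction of $\spechtnu$ to $\symgrp{X}$, which by iterating the branching rule happens if and only if $\lambda \subseteq \nu$; hence $\beta^\lambda = 0$ on $\spechtnu$ whenever $\lambda \not\subseteq \nu$. Second, for $\lambda \subseteq \nu$ the conjugation identity $\sigma\, e^X_\lambda\, \sigma^{-1} = e^{\sigma X}_\lambda$ shows that the sum over $X$ of the images of $e^X_\lambda$ on $\spechtnu$ is an $\Sn$-submodule; it is nonzero by the first fact, so by irreducibility of $\spechtnu$ it is all of $\spechtnu$. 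Since the $e^X_\lambda$ are self-adjoint projections, this says that $\bigcap_X \ker\!\big( e^X_\lambda|_{\spechtnu} \big) = 0$, and combined with the strict positivity of the coefficients it makes $\beta^\lambda$ positive definite on $\spechtnu$. Hence its eigenvalue on $E$, namely $\Delta^\lambda(V_E)$, is strictly positive, which finishes (II).

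The genuine difficulty in \cref{thm:positive} is of course buried in \cref{thm:main}\ref{main_eigenspace} itself, whose proof is the technical heart of the paper; given that, the argument above is short and soft. Within the argument, the only non-formal point is the step showing $\beta^\lambda$ is positive \emph{definite} (not merely semidefinite) for part \ref{positive2}: one must identify exactly which idempotents $e^X_\lambda$ survive on $\spechtnu$ and check that, for $\lambda \subseteq \nu$, enough of them survive to span the module. This is where the hypothesis $V \in \scellnu$ feeds into the positivity story, through the branching condition $\lambda \subseteq \nu$, and it is the step I would verify most carefully.
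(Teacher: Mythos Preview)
Your proposal is correct and follows essentially the same approach as the paper: reformulate the Schubert problem via \cref{prop:schubertwronskian} as $\Wr(V)=g$, invoke \cref{thm:main}\ref{main_eigenspace}, and then reduce to the positivity properties of $\beta^\lambda$ on $\spechtnu$ (stated in the paper as \cref{prop:betapsd}) proved via the idempotent decomposition $\beta^\lambda = \tfrac{|\lambda|!}{\numsyt{\lambda}}\sum_X (\prod_{i\notin X} z_i)\,e^X_\lambda$ together with the branching rule. The only cosmetic difference is in the positive-definiteness step for part~\ref{positive2}: the paper observes that $\bigcap_X \ker e^X_\lambda$ is independent of the $z_i$, sets $z_1=\dots=z_n=1$ so that $\beta^\lambda$ becomes central, and applies Schur's lemma; you instead use the conjugation identity $\sigma e^X_\lambda \sigma^{-1}=e^{\sigma X}_\lambda$ to see directly that $\sum_X \mathrm{im}\,e^X_\lambda$ is a nonzero $\Sn$-submodule of the irreducible $\spechtnu$, hence everything --- these are two phrasings of the same irreducibility argument.
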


\begin{theorem}[Positive secant conjecture, divisor form]
\label{thm:positivesecant}
Let $I_1, \dots, I_n$ and $W_1, \dots, W_n$ be as in \cref{thm:secant}.
\begin{enumerate}[(i)]
\item\label{positivesecant1} If $I_1, \dots, I_n  \subseteq [0,\infty)$, 
then there are exactly $\numsyt{\nu}$ distinct solutions to the Schubert
problem \eqref{eq:schubertproblem}, and all solutions are real and totally nonnegative.

\item\label{positivesecant2} If $I_1, \dots, I_n \subseteq (0,\infty)$, 
then there are exactly $\numsyt{\nu}$ distinct solutions to the Schubert
problem \eqref{eq:schubertproblem}, and all solutions are real and totally positive in $\scellnu$.
\end{enumerate}

\end{theorem}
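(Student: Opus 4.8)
The plan is to deduce \cref{thm:positivesecant} from \cref{thm:secant} together with the positive Shapiro--Shapiro theorem \cref{thm:positive}, by reducing a generalized-secant solution to a solution of an osculating Schubert problem with nonnegative parameters. By \cref{thm:secant} the problem \eqref{eq:schubertproblem} already has exactly $\numsyt{\nu}$ distinct solutions, all of them real, so the only additional content is total nonnegativity in part~\ref{positivesecant1} and total positivity in $\scellnu$ in part~\ref{positivesecant2}.

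First I would locate the Wronskian of a solution $V$. For each $i$ we have $V \cap W_i \neq \{0\}$ with $W_i$ a generalized secant to $\gamma$ along $I_i$, so the topological lemma \cref{lem:topological} --- the crux of the proof of \cref{thm:secant} --- shows that $\Wr(V)$ has a zero in the reflected interval $-I_i := \{-t : t \in I_i\}$. Since $\svarnu$ is the union of the Schubert cells $\scell{\mu}$ over $\mu \subseteq \nu$, on which $\Wr$ has degree $|\mu| \leq n$, we get $\deg \Wr(V) \leq n$; as the intervals $I_1, \dots, I_n$ are pairwise disjoint, $\Wr(V)$ must then have exactly one zero in each $-I_i$ and none elsewhere. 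Writing $\Wr(V) = (u+z_1)\dotsm(u+z_n)$, we conclude that $z_1, \dots, z_n$ are distinct real numbers with $z_i \in I_i$, and that $\deg \Wr(V) = n$, which forces $V \in \scellnu$.

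Next I would transfer to the osculating setting. By \cref{prop:schubertwronskian}/\ref{prop:schubertwronskian2}, the conditions $V \in \scellnu$ and $\Wr(V) = (u+z_1)\dotsm(u+z_n)$ say exactly that $V$ is a solution of the Schubert problem \eqref{eq:schubertproblem} in which each $W_i$ is replaced by the osculating $(m-d)$-plane to $\gamma$ at $z_i$. In part~\ref{positivesecant1} the $z_i$ are distinct reals in $[0,\infty)$, so \cref{thm:positive}\ref{positive1} applies to this osculating problem and shows that $V$ is totally nonnegative; in part~\ref{positivesecant2} the $z_i$ lie in $(0,\infty)$, so \cref{thm:positive}\ref{positive2} shows that $V$ is totally positive in $\scellnu$. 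Since $V$ was an arbitrary solution, both parts follow. (When $\nu = \rectangle$, combining this with the $\PGL_2$-equivariance of $\Wr$ lets one of the intervals wrap around $\infty$, exactly as in the remark after \cref{thm:secant}.)

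The main obstacle is the step supplied by \cref{lem:topological}: showing that $V \cap W_i \neq \{0\}$ forces $\Wr(V)$ to have a zero in $-I_i$. This is precisely where disconjugacy (\cref{thm:disconj}) --- and hence, through \cref{thm:main}, the positive semidefiniteness of the operators $\beta^\lambda$ at nonnegative parameters --- genuinely enters; the remainder is a formal passage through \cref{prop:schubertwronskian2}. If one prefers not to cite \cref{thm:secant}, note that reality and the count $\numsyt{\nu}$ also fall out of the argument above: reality from \cref{thm:ssc} applied to the distinct real numbers $z_1, \dots, z_n$, and the count from the fact --- established along the way --- that every solution of \eqref{eq:schubertproblem} is a reduced point of the solution scheme.
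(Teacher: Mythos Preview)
Your proof is correct and follows essentially the same route as the paper: invoke \cref{lem:topological} (whose hypothesis, the disconjugacy conjecture, holds because \cref{thm:positive} has already been established) to locate the roots of $\Wr(V)$ as distinct real numbers $z_i \in I_i$, and then apply \cref{thm:positive} to the resulting osculating Schubert problem. Your separate citation of \cref{thm:secant} for the count and reality is harmless but redundant, since \cref{lem:topological} already delivers both; the paper's proof simply cites \cref{lem:topological} and \cref{thm:positive} directly.
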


In the special case $\nu = \rectangle$, \cref{thm:positive}\ref{positive1} was conjectured by Evgeny Mukhin and Vitaly Tarasov in 2017, and \cref{thm:positive,thm:positivesecant} were conjectured independently in \cite{karp}. It was shown in \cite{karp} that the four statements in \cref{thm:positive,thm:positivesecant} in the case $\nu=\rectangle$ are all pairwise equivalent, and that they are moreover equivalent to the disconjugacy conjecture (\cref{thm:disconj}).

We briefly recall from \cite{karp} why \cref{thm:positive}\ref{positive1} implies the disconjugacy conjecture (the converse is much more subtle, but we do not need it here). Let $V\in\Gr(d,m)$ be such that $\Wr(V)$ has only real zeros, and let $I \subseteq \RR$ be an interval which avoids the zeros of $\Wr(V)$, which we may assume is closed. We apply the $\PGL_2$-action so that $I\subseteq (0,\infty)$ and the zeros of $\Wr(V)$ are all negative. Then by \cref{thm:positive}\ref{positive1}, $V$ is totally nonnegative. Equivalently, by a classical result of Gantmakher and Krein \cite[Theorem V.3]{gantmaher_krein50}, the sequence of coefficients of every $f\in V$ changes sign at most $d-1$ times. By Descartes's rule of signs, $f$ has at most $d-1$ zeros on $(0,\infty)$, as required.

As we have mentioned, the disconjugacy conjecture in turn implies the divisor form of the secant conjecture (\cref{thm:secant}). Therefore to prove all of these statements, it suffices to establish \cref{thm:positive,thm:positivesecant}. We now explain how to do so.

\subsubsection{Proof of conjectures}
\label{sec:conjectureproofs}

We give the proofs of \cref{thm:positive,thm:positivesecant}. We begin with the former, which is a direct corollary of our main result (\cref{thm:main}). We need the following properties of the operators $\beta^\lambda$, which we will prove in \cref{sec:repthy}. They are straightforward consequences of the definitions and some
well-known results in representation theory.

Recall that $\spechtnu$ can be equipped with a Hermitian inner product,
such that every $\sigma \in \Sn$ acts as a unitary operator.

\begin{proposition}
\label{prop:betapsd}
Let $\beta^\lambda_\nu \in \End(\spechtnu)$ denote the operator 
$\beta^\lambda$ acting on $\spechtnu$.  
\begin{enumerate}[(i)]
\item\label{betapsd1}
If $z_1, \dots, z_n \in \RR$, then $\beta^\lambda_\nu$ is a self-adjoint operator.
\item\label{betapsd2}
If $z_1, \dots, z_n \in [0,\infty)$, then $\beta^\lambda_\nu$ is positive semidefinite
for all $\lambda, \nu$.
\item\label{betapsd3}
If $z_1, \dots, z_n \in (0,\infty)$ and $\lambda \subseteq \nu$, then 
$\beta^\lambda_\nu$ is positive definite.
\item\label{betapsd4}
If $\lambda \not \subseteq \nu$, then $\beta^\lambda_\nu = 0$.
\end{enumerate}
\end{proposition}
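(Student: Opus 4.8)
The plan is to rewrite $\beta^\lambda_\nu$ as an explicit linear combination of orthogonal projections, after which all four parts become short representation-theoretic arguments. Recall that for a subset $X \subseteq [n]$ with $|X| = |\lambda|$, the primitive central idempotent of $\CC[\symgrp{X}]$ attached to $\specht{\lambda}$ is
\[
  e^\lambda_X \ :=\ \frac{\numsyt{\lambda}}{|X|!}\sum_{\sigma \in \symgrp{X}} \chi^\lambda(\sigma)\,\sigma
\,,
\]
where we have used that a permutation is conjugate to its inverse, so $\chi^\lambda$ is real-valued and $\chi^\lambda(\sigma^{-1}) = \chi^\lambda(\sigma)$. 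Comparing with \eqref{eq:betadef} gives the key formula
\[
  \beta^\lambda_\nu \ =\ \sum_{\substack{X \subseteq [n] \\ |X| = |\lambda|}} c_X\,(e^\lambda_X)_\nu
  \,, \qquad c_X \ :=\ \frac{|X|!}{\numsyt{\lambda}}\prod_{i \in [n]\setminus X} z_i
\,,
\]
where $(e^\lambda_X)_\nu \in \End(\spechtnu)$ denotes $e^\lambda_X$ acting on $\spechtnu$. Since $\Sn$ acts unitarily on $\spechtnu$ we have $\sigma^* = \sigma^{-1}$, so the representation $\CC[\symgrp{X}] \to \End(\spechtnu)$ carries the involution $\sum a_\sigma \sigma \mapsto \sum \overline{a_\sigma}\,\sigma^{-1}$ to the adjoint; as $e^\lambda_X$ is a self-adjoint idempotent for this involution, $(e^\lambda_X)_\nu$ is an orthogonal projection, namely onto the $\specht{\lambda}$-isotypic component of $\spechtnu$ regarded as an $\symgrp{X}$-module.

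Parts \ref{betapsd1} and \ref{betapsd2} then follow at once: if $z_1, \dots, z_n \in \RR$ the coefficients $c_X$ are real, so $\beta^\lambda_\nu$ is a real combination of self-adjoint operators; if moreover $z_1, \dots, z_n \in [0,\infty)$ then the $c_X$ are nonnegative, so $\beta^\lambda_\nu$ is a nonnegative combination of orthogonal projections and hence positive semidefinite. For part \ref{betapsd4}, the iterated branching rule for $\Sn$ shows that the multiplicity of $\specht{\lambda}$ in the restriction of $\spechtnu$ to $\symgrp{X}$ (for any $X$ with $|X| = |\lambda|$, all of which are conjugate in $\Sn$) equals $\numsyt{\nu/\lambda}$, which vanishes when $\lambda \not\subseteq \nu$; so in that case every $(e^\lambda_X)_\nu$ is zero, and therefore $\beta^\lambda_\nu = 0$.

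For part \ref{betapsd3}, assume $z_1, \dots, z_n \in (0,\infty)$ and $\lambda \subseteq \nu$, so every $c_X > 0$. For $v \in \spechtnu$, the key formula together with self-adjoint idempotency gives $\langle \beta^\lambda_\nu v, v\rangle = \sum_X c_X \, \|(e^\lambda_X)_\nu v\|^2 \ge 0$, with equality only if $v \in \bigcap_X \ker (e^\lambda_X)_\nu$. I would finish by showing this intersection is $\{0\}$: conjugation by $\Sn$ permutes the idempotents $e^\lambda_X$ transitively (sending $e^\lambda_X$ to $e^\lambda_{\tau(X)}$), so $\sum_X \operatorname{im}(e^\lambda_X)_\nu$ is an $\Sn$-submodule of $\spechtnu$; it is nonzero because $\lambda \subseteq \nu$ forces each $(e^\lambda_X)_\nu$ to be nonzero, so by irreducibility of $\spechtnu$ it is all of $\spechtnu$, and taking orthogonal complements gives $\bigcap_X \ker (e^\lambda_X)_\nu = \{0\}$. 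Hence $\beta^\lambda_\nu$ is positive definite.

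The only steps requiring genuine care are the branching-rule count in part \ref{betapsd4} (and the resulting non-vanishing of $(e^\lambda_X)_\nu$ reused in part \ref{betapsd3}) and the transitivity of the conjugation action on the subsets $X$; neither is a real obstacle, in line with the remark that this proposition follows from the definitions together with standard representation theory.
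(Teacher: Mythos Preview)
Your proof is correct and matches the paper's approach almost exactly: both rewrite $\beta^\lambda_\nu$ as a linear combination of the orthogonal projections $\frac{\numsyt{\lambda}}{|\lambda|!}\alpha^\lambda_X$ acting on $\spechtnu$, invoke the branching rule to get nonvanishing precisely when $\lambda \subseteq \nu$, and read off parts \ref{betapsd1}, \ref{betapsd2}, \ref{betapsd4} immediately. The only difference is in part \ref{betapsd3}: the paper observes that $\bigcap_X \ker \alpha^\lambda_{X,\nu}$ is independent of the $z_i$, specializes to $z_1 = \dots = z_n = 1$ so that $\beta^\lambda$ becomes central, and then applies Schur's lemma to conclude $\beta^\lambda_\nu$ is a nonzero scalar; you instead argue directly that $\sum_X \operatorname{im}(e^\lambda_X)_\nu$ is an $\Sn$-submodule (since conjugation permutes the $X$'s) and invoke irreducibility. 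These are two phrasings of the same idea---both ultimately use that the family $\{e^\lambda_X\}_X$ is $\Sn$-conjugation-stable together with irreducibility of $\spechtnu$---and either is perfectly fine.
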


\begin{remark}
We point out that \cref{prop:betapsd}\ref{betapsd1} is consistent with \cref{ex:22}, despite the fact that $\beta^2_\nu$ and $\beta^{11}_\nu$ are not Hermitian matrices (for $z_1, \dots, z_4 \in \RR$). This is because in \cref{ex:22}, we are not working with an orthonormal basis of $\spechtnu$. We can change to such a basis in which $\trans{1}{2}$ and $\trans{3}{4}$ act as the unitary matrix $(\begin{smallmatrix} 1 & 0 \\ 0 & -1\end{smallmatrix})$, and $\trans{2}{3}$ acts as $\frac{1}{2}(\begin{smallmatrix}-1 & \sqrt{3} \\ \sqrt{3} & 1\end{smallmatrix})$; then every $\beta^\lambda_\nu$ is Hermitian.
\end{remark}

\begin{proof}[Proof of \cref{thm:positive}]
Let $V$ be a solution to the Schubert problem \eqref{eq:schubertproblem}.
Equivalently, by \cref{prop:schubertwronskian}, we have
$V \in \scellnu$ and $\Wr(V) = g$.  
By \cref{thm:main}\ref{main_eigenspace}, we can write 
$V = V_E$ for some eigenspace $E \subseteq \spechtnu$ of $\bethenu$.
This means that the \Plucker coordinates 
$[\Delta^\lambda : \lambda \subseteq \rectangle]$ of $V$ 
are the eigenvalues of the operators $\beta^\lambda_\nu$ 
on $E$. If $z_1, \dots, z_n \in [0,\infty)$, then \cref{prop:betapsd}\ref{betapsd2} implies that the eigenvalues of $\beta^\lambda_\nu$ are real and nonnegative, so $V$ is totally nonnegative. This proves part \ref{positive1}. Similarly, if $z_1, \dots, z_n\in (0,\infty)$, then parts \ref{betapsd3} and \ref{betapsd4} of \cref{prop:betapsd} imply that \eqref{eq:positivenupluckers} holds, so $V$ is totally positive in $\scellnu$. This proves part \ref{positive2}.
\end{proof}

\cref{thm:positivesecant} now follows from topological arguments used in \cite{eremenko15,karp}, which we apply in the following form:
\begin{lemma}
\label{lem:topological}
Let $I_1, \dots, I_n$ and $W_1, \dots, W_n$ be as in \cref{thm:secant}, and suppose that the disconjugacy conjecture is true. Then there are exactly $\numsyt{\nu}$ distinct solutions to the Schubert problem \eqref{eq:schubertproblem}, and all solutions are real. Moreover, for each solution $V$, we can write $\Wr(V) = (u+z_1) \dotsm (u+z_n)$ for some real numbers $z_1\in I_1, \dots, z_n\in I_n$.
\end{lemma}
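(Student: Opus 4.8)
The plan is to adapt the topological continuation argument of Eremenko \cite{eremenko15}, as packaged for $\nu = \rectangle$ in \cite[Section~4]{karp}, which realizes the secant problem as a deformation of the osculating (Shapiro--Shapiro) problem. The geometric input is \cref{prop:schubertwronskian}/\ref{prop:schubertwronskian2}: when $W$ osculates $\gamma$ at $w$, the condition $V \cap W \neq \{0\}$ is exactly that $-w$ is a zero of $\Wr(V)$, and a generalized secant along an interval degenerates to an osculating plane as the interval collapses to a point. So for each $i$ I would fix a point $p_i$ in the interior of $I_i$, with $p_1, \dots, p_n$ distinct (using that the $I_i$ are disjoint), and a continuous family of generalized secants $W_i^{(t)}$ along $I_i$, $t \in [0,1]$, with $W_i^{(1)} = W_i$ and $W_i^{(0)}$ the osculating $(m-d)$-plane to $\gamma$ at $p_i$ --- concretely, slide the points defining $W_i$ inside a fixed compact subinterval of $I_i$ so that they all merge to $p_i$ at $t = 0$. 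At $t = 0$, the Schubert problem \eqref{eq:schubertproblem} becomes the Shapiro--Shapiro problem at the distinct real points $p_1, \dots, p_n$, which by \cref{thm:ssc} has exactly $\numsyt\nu$ distinct real solutions and a transverse (reduced) intersection.

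I would then set $T \subseteq [0,1]$ to be the set of $t$ for which the problem defined by $W_1^{(t)}, \dots, W_n^{(t)}$ has exactly $\numsyt\nu$ distinct solutions, all real, with $\Wr(V) = (u+z_1) \dotsm (u+z_n)$ for each solution $V$ and some $z_1 \in I_1, \dots, z_n \in I_n$. By the previous paragraph $0 \in T$; and $T$ is open, since a transverse $0$-dimensional real intersection persists under small perturbations with the same cardinality, still real, and with Wronskians varying continuously (hence staying in the open intervals). The heart of the matter is that $T$ is closed. Take $t_k \to t_\ast$ with $t_k \in T$. Because the Schubert class product has degree $\numsyt\nu$ and the relevant intersections are $0$-dimensional (as holds for flags osculating or secant to the rational normal curve, a fact one can also extract from the argument at hand), at each $t_k$ there are exactly $\numsyt\nu$ solutions counted with multiplicity, hence --- being $\numsyt\nu$ distinct and all real --- the intersection at $t_k$ is reduced with no complex solutions. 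By properness of $\svarnu$ and continuity of the incidence conditions, every solution $V$ at $t_\ast$ is a limit of solutions at the $t_k$, hence real; and by continuity of $\Wr$ its Wronskian is a limit $(u+z_1^{(t_\ast)}) \dotsm (u+z_n^{(t_\ast)})$ with $z_i^{(t_\ast)}$ a limit of points of $I_i$, hence real. In particular $\Wr(V)$ has only real zeros, so by the disconjugacy conjecture (\cref{thm:disconj}, which we are assuming) $V$ is disconjugate on every interval avoiding its zeros; feeding this into \cref{prop:schubertwronskian2} applied to each $W_i^{(t_\ast)}$ shows, as in \cite{eremenko15,karp}, that each $z_i^{(t_\ast)}$ lies in the open interval $I_i$ --- so $\Wr(V)$ has $n$ distinct real zeros and $V \in \scellnu$ --- and, using the transversality of the Wronski map over polynomials with distinct real zeros (again \cref{thm:ssc}), that the intersection at $t_\ast$ is reduced at $V$. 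As its total multiplicity is $\numsyt\nu$, it then has exactly $\numsyt\nu$ distinct solutions, so $t_\ast \in T$. Hence $T = [0,1]$, and $t = 1$ gives the lemma.

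I expect the main obstacle to be the last step: converting the qualitative disconjugacy property of a limiting solution $V$ (available from \cref{thm:disconj} once $V$ is known to be real with $\Wr(V)$ real) into the quantitative statement that the $n$ secant hypersurfaces $\{V' \in \svarnu : V' \cap W_i^{(t_\ast)} \neq \{0\}\}$ meet transversally at $V$, equivalently that no two of the $\numsyt\nu$ real solutions collide in the limit. This is precisely the delicate point in \cite{eremenko15,karp}: one must compare these hypersurfaces, locally near $V$ and via \cref{prop:schubertwronskian2}, with the conditions governing the zeros of the Wronski map near $\Wr(V)$, which are transverse by \cref{thm:ssc}, thereby transporting that transversality to the secant problem. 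A secondary point is to verify that the arguments of \cite{eremenko15,karp}, phrased there for $\nu = \rectangle$ and for honest secants, go through for an arbitrary partition $\nu$ and for generalized secants; the inputs they require --- \cref{thm:ssc}, \cref{thm:disconj}, and \cref{prop:schubertwronskian2} --- are all available in this generality, so this part should be routine.
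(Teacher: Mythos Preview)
Your proposal is correct and takes essentially the same approach as the paper: both rely on the topological continuation argument of \cite{eremenko15,karp}. The paper's own proof is a one-line citation to \cite[Lemma~4.15]{karp} together with the observation that the argument there, stated for $\nu=\rectangle$, goes through verbatim for arbitrary $\nu$ once \cref{thm:ssc} is available as the base case; your sketch unpacks precisely that argument, correctly identifies its structure (homotopy to the osculating problem, open--closed continuation, disconjugacy used in the closedness step to prevent collisions of Wronskian zeros), and honestly flags the transversality-at-the-limit step as the point requiring the detailed analysis from those references. One small imprecision: your appeal to \cref{prop:schubertwronskian2} ``applied to each $W_i^{(t_\ast)}$'' is loose, since that proposition treats osculating planes rather than secants; the actual mechanism in \cite{eremenko15,karp} that forces a zero of $\Wr(V)$ into $I_i$ is the disconjugacy of $V$ on $I_i$ combined with the secant incidence, not \cref{prop:schubertwronskian2} directly.
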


\begin{proof}
In the case that $\nu = \rectangle$, this is precisely \cite[Lemma 4.15]{karp}. In fact, the proof of \cite[Lemma 4.15]{karp} applies to an arbitrary $\nu \subseteq \rectangle$, using \cref{thm:ssc}.
\end{proof}

\begin{proof}[Proof of \cref{thm:positivesecant}]
We have proved \cref{thm:positive}, so the disconjugacy conjecture is true. Hence we can apply \cref{lem:topological}, which along with \cref{thm:positive} yields the result.
\end{proof}


\subsection{Outline}

The remainder of this paper is organized as follows.
In \cref{sec:background}, we recall background required for the
proof of \cref{thm:main}.  Our discussion spans several topics, 
including: \Plucker coordinates and the \Plucker relations; the Wronski map,
and its relationship to Schubert varieties, 
the $\PGL_2$-action on $\Gr(d,m)$,
and differential operators; 
applications of symmetric function theory, 
including representation theory of symmetric groups, 
the proof of \cref{prop:betapsd}, 
$\tau$-functions of the KP hierarchy, and several
symmetric function identities and their proofs;
and Bethe subalgebras of $\CC[\Sn]$, including the definition
of $\bethe$ and the precise statement
of \cref{thm:vague} (\cref{thm:precise}).

\cref{sec:commutativitytranslation,sec:pr} are devoted to the
proof of \cref{thm:main}, which we structure as follows.  In
\cref{sec:commutativitytranslation} we establish some basic properties
of the operators $\beta^\lambda(t)$.  We prove part 
\ref{main_translation} (the translation identity), followed by part \ref{main_commutativity} (the commutativity relations).
Combining these parts and some of the arguments involved in their proofs, 
we also establish \cref{lem:comparealgebras},
which is related to --- but slightly weaker than ---
part \ref{main_generates}.  
We use all of these basic properties, 
in \cref{sec:pr}, to prove the remaining parts of \cref{thm:main}.
\cref{sec:part1proof,sec:part2} contain the proof of part \ref{main_pluckers} (the \Plucker relations),
which is the most technical part
of the proof of \cref{thm:main}.
We then deduce parts \ref{main_eigenspace} and \ref{main_multiplicity},
which we use to establish part \ref{main_generates}, in 
\cref{sec:finalsteps}.

Finally, in \cref{sec:discussion}, we discuss several results related
to \cref{thm:main} and its consequences in real algebraic geometry,
as well as a variety of open problems.  We give the precise
scheme-theoretic version of \cref{thm:main}\ref{main_eigenspace},
as discussed in \cref{sec:schemeintro};
in particular, this yields a general formula for the dimension of 
the Bethe algebras $\bethe$ and $\bethenu$.   
As discussed in \cref{sec:basesintro}, we use \cref{thm:main} to
exhibit two different bases of the solutions $V_E \in \Wr^{-1}(g)$
to the inverse Wronski problem.
We explain how Grassmann duality and the $\PGL_2$-action on
$\Gr(d,m)$ are reflected in the structure of $\bethe$ and $\bethenu$.
We discuss the combinatorial meaning of the commutativity relations 
\eqref{eq:commutativity}, and an extension of the $\tau$-function of
the KP hierarchy \eqref{eq:tauintro} to the infinite symmetric group
$\symgrp{\infty}$.  Finally, we discuss open problems and longstanding
conjectures relating to \cref{thm:ssc,thm:secant,thm:positive},
including the general form of the secant conjecture and 
the total reality conjecture for convex curves.

\paragraph*{Acknowledgements.}
We thank Evgeny Mukhin and Frank Sottile for helpful discussions, John Harnad for informing us of the paper \cite{alexandrov_leurent_tsuboi_zabrodin14}, David Speyer for providing \cref{ex:zerobasis} and allowing us to include it in our paper, and the reviewers for constructive feedback (including suggesting the addition of \cref{fig:implications} and \cref{thm:comparemultiplicity}).  Calculations for
this project were carried out using \texttt{Sage} \cite{sagemath}, including
verification of \cref{thm:main}\ref{main_pluckers} up to $n=7$; 
we thank Mike Zabrocki for assistance with some parts of the code.


\section{Background}
\label{sec:background}

We recall some background on \Plucker coordinates \cite[Chapter 9]{fulton97}, Schubert varieties \cite[Chapter 9]{fulton97}, Wronskians \cite{karp,purbhoo10,sottile11}, symmetric functions \cite[Chapter 7]{stanley24}, representation theory \cite{sagan01,serre98}, and Bethe algebras \cite{mukhin_tarasov_varchenko13,purbhoo}. See the listed references for further details.


\subsection{\Plucker coordinates}
\label{sec:plucker}

For a $d$-plane $V \in \Gr(d,m)$, we can represent $V$ as the row
space of a $d \times m$ matrix $A$, which is unique up to left multiplication by $\GL_d$. Recall that $[m] = \{1, 2, \dots, m\}$, and define $\binom{[m]}{d}$ to be the set of $d$-element subsets of $[m]$.  We also write such subsets as tuples $(i_1, \dots, i_d)$, with
$1 \leq i_1 < \dots <i_d \leq m$.  For
each $I \in {[m] \choose d}$, let $\Delta_I$
denote the $d \times d$ minor of $A$ with column set $I$.
The projective coordinates 
$\big[\Delta_I : I \in {[m] \choose d}\big]$
are (up to a scalar multiple) independent of the choice of matrix $A$, 
and are called
the \defn{\Plucker coordinates} of $V$.

\begin{example}
\label{ex:pluckers}
Let $V := \left\langle 1,\, z_1z_2u + \frac{z_1 + z_2}{2}u^2 + \frac{1}{3}u^3 \right\rangle \in \Gr(2,4)$, as in \cref{ex:main}. Recalling the isomorphism \eqref{eq:isomorphism}, we can represent $V$ by the $2\times 4$ matrix
\[
A := \begin{pmatrix}
1 & 0 & 0 & 0 \\
0 & z_1z_2 & z_1+z_2 & 2
\end{pmatrix}\,.
\]
The \Plucker coordinates of $V$ are the $2\times 2$ minors of $A$:
\[
\Delta_{(1,2)} = z_1z_2, \qquad \Delta_{(1,3)} = z_1+z_2, \qquad \Delta_{(1,4)} = 2, \qquad \Delta_{(2,3)} = \Delta_{(2,4)} = \Delta_{(3,4)} = 0\,,
\]
in agreement with \cref{ex:main}. (In general, we can construct a matrix $A$ from the \Plucker coordinates of $V$ using \cref{prop:pluckerbasis}.)
\end{example}

\subsubsection{\Plucker relations}

The \Plucker coordinates define
an embedding $\delta : \Gr(d,m) \hookrightarrow \PP^{{m \choose d} -1}$
of the Grassmannian into projective space.
To describe the image of $\delta$,
it is useful to extend the indexing set 
for $\Delta_I$ from ${[m] \choose d}$
to $[m]^d$ by the alternating property.  That is, if $i_1 < \dots < i_d$,
put 
\[ \Delta_{(i_{\sigma(1)}, \dots, i_{\sigma(d)})} 
:= \sgn(\sigma) \Delta_{(i_1, \dots, i_d)} \qquad \text{for all } \sigma \in \symgrp{d};
\]
if $j_1, \dots, j_d$ are
not distinct, put $\Delta_{(j_1, \dots, j_d)} := 0$.
Thus for every $I \in [m]^d$, $\Delta_I$ is either zero, or plus or minus
some \Plucker coordinate.

If $I = (i_1, \dots, i_{d+1}) \in [m]^{d+1}$, 
$J = (j_1, \dots, j_{d-1}) \in [m]^{d-1}$, and $k \in [m]$,
write 
\begin{equation}
\label{eq:deletesubscript}
    \Delta_{I-k} 
     := \begin{cases}
        (-1)^{d+1-s}\Delta_{(i_1, \dots, \widehat{i_s},  \dots i_{d+1})},
       &\quad \text{if $k = i_s$ for a unique $s \in [d+1]$;}
     \\
       0,
       &\quad \text{otherwise}
     \end{cases}
\end{equation}
and
\begin{equation}
\label{eq:addsubscript}
\Delta_{J+k} := \Delta_{(j_1, \dots, j_{d-1},k)}
\,.
\end{equation}
The \defn{\Plucker relations} for $\Gr(d,m)$ \cite[Section 9.1]{fulton97} are the equations
\begin{equation}
\label{eq:prels}
     \sum_{k = 1}^m \Delta_{I-k} \Delta_{J+k} = 0
\qquad \text{for $I \in [m]^{d+1}$ and $J \in [m]^{d-1}$}\,.
\end{equation}
The equations \eqref{eq:prels} define the image of $\Gr(d,m)$ in
$\PP^{{m \choose d} -1}$ under the embedding $\delta$, as a scheme.
In particular, for every $V \in \Gr(d,m)$, the \Plucker coordinates
of $V$ satisfy the equations \eqref{eq:prels}.

\begin{example}
Taking $I = (1,2,3)$ and $J = (4)$, we obtain the unique non-trivial
\Plucker relation for $\Gr(2,4)$:
\begin{equation}
\label{eq:gr24plucker}
    - \Delta_{(1,2)}\Delta_{(3,4)}
   + \Delta_{(1,3)}\Delta_{(2,4)}
   - \Delta_{(2,3)}\Delta_{(1,4)} = 0
\,.
\end{equation}
Other choices for $I,J$ give the same equation (up to sign), 
or the trivial equation $0=0$.  
\end{example}

The preceding facts can be reformulated in terms of the exterior algebra
of $\CC^m$.  Let $(e_1, \dots, e_m)$ denote the standard basis for
$\CC^m$, and for $I = (i_1, \dots, i_d) \in [m]^d$, 
write $e_I := e_{i_1} \wedge \dots \wedge e_{i_d} \in \exterior{d} \CC^m$.
If $(v_1, \dots, v_d)$ is a basis for $V\in\Gr(d,m)$, then we have
\[
v_1 \wedge \dots \wedge v_d = \sum_{I \in {[m] \choose d}} \Delta_I e_I,
\]
where the $\Delta_I$'s are the \Plucker coordinates of $V$. 

\begin{proposition}[{\cite[Section 9.1]{fulton97}}]
\label{prop:exteriorplucker}
Let $\omega := \sum_{I \in {[m] \choose d}} \Delta_I e_I \in \exterior{d} \CC^m$, where $\Delta_I\in\CC$ for $I\in\binom{[m]}{d}$. Then the following are equivalent:
\begin{enumerate}[(a)]
\item\label{exteriorplucker1} the coefficients $\Delta_I$ of $\omega$ 
satisfy the \Plucker relations \eqref{eq:prels};
\item\label{exteriorplucker2} $\omega = v_1 \wedge \dots \wedge v_d$
for some $v_1, \dots, v_d \in \CC^m$.
\end{enumerate}
Furthermore, if \ref{exteriorplucker2} holds and $\omega \neq 0$, then 
$\big[\Delta_I : I \in {[m] \choose d}\big]$
are the \Plucker coordinates of $V = \langle v_1, \dots, v_d\rangle$.
\end{proposition}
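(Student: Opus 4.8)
The statement is the classical characterization of the image of the \Plucker embedding, and the plan is to deduce it from the standard fact that an element of $\exterior{d}\CC^m$ satisfies the \Plucker relations if and only if it is \emph{decomposable}, i.e.\ equal to a single wedge of $d$ vectors. For $\xi\in\exterior{d-1}(\CC^m)^*$, write $\iota_\xi\colon\exterior{d}\CC^m\to\exterior{1}\CC^m=\CC^m$ for the contraction (interior product). \emph{Step 1 (coordinate-free form of the relations).} I would first check that condition (a) is equivalent to the single condition
\[
  (\iota_\xi\omega)\wedge\omega=0 \qquad\text{for every } \xi\in\exterior{d-1}(\CC^m)^*.
\]
It suffices to test $\xi=e_{j_1}^*\wedge\dots\wedge e_{j_{d-1}}^*$ for $J=(j_1,\dots,j_{d-1})\in\binom{[m]}{d-1}$, since these span $\exterior{d-1}(\CC^m)^*$; then $\iota_\xi\omega$ equals, up to an overall sign, the vector $\sum_{k}\Delta_{J+k}e_k$, and the coefficient of $e_I$ in $\bigl(\sum_{k}\Delta_{J+k}e_k\bigr)\wedge\omega$ works out to $\pm\sum_{k}\Delta_{I-k}\Delta_{J+k}$, the signs of \eqref{eq:deletesubscript} being exactly those produced when reordering $e_k\wedge e_{I\setminus k}$ into $e_I$. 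This is routine sign bookkeeping (alternatively, see \cite[Section 9.1]{fulton97}).

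\emph{Step 2 ((b)$\Rightarrow$(a)).} If $\omega=v_1\wedge\dots\wedge v_d$, then for any $\xi$ the contraction $\iota_\xi\omega$ is a linear combination of $v_1,\dots,v_d$, by the antiderivation property of $\iota_\xi$. Hence $(\iota_\xi\omega)\wedge\omega$ is a combination of terms $v_i\wedge v_1\wedge\dots\wedge v_d=0$ and so vanishes; by Step 1 the \Plucker relations hold. (This also covers $\omega=0$, taking all $v_i=0$.)

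\emph{Step 3 ((a)$\Rightarrow$(b)).} Assume (a); the case $\omega=0$ being trivial, suppose $\omega\neq0$. Set $U:=\{v\in\CC^m:v\wedge\omega=0\}$ and $W_\omega:=\vspan\{\iota_\xi\omega:\xi\in\exterior{d-1}(\CC^m)^*\}$. I would establish three facts. (1) $\dim U\le d$: if $v_1,\dots,v_{d+1}\in U$ were independent, extend them to a basis and expand $\omega$ in it; each relation $v_i\wedge\omega=0$ kills every term of $\omega$ not involving $v_i$, forcing $\omega=0$, a contradiction. (2) $\omega\in\exterior{d}W_\omega$: choose a basis $e_1,\dots,e_m$ of $\CC^m$ with $e_1,\dots,e_r$ a basis of $W_\omega$; if $\omega$ had a nonzero coefficient on some $e_K$ with $t\in K$, $t>r$, then contracting $\omega$ with $e_{K\setminus t}^*$ yields an element with nonzero $e_t$-component, contradicting $\iota_\xi\omega\in W_\omega$. (3) $W_\omega\subseteq U$: this is precisely Step 1 under hypothesis (a), since $(\iota_\xi\omega)\wedge\omega=0$ says $\iota_\xi\omega\in U$. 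Combining these: $0\neq\omega\in\exterior{d}W_\omega$ forces $\dim W_\omega\ge d$, while (2)–(3) give $\dim W_\omega\le\dim U\le d$; hence $\dim W_\omega=d$, so $\exterior{d}W_\omega$ is one-dimensional and $\omega$, being a nonzero element of it, equals $w_1\wedge\dots\wedge w_d$ for any basis $w_1,\dots,w_d$ of $W_\omega$ (after rescaling one of them). This proves (b), and moreover identifies $\langle v_1,\dots,v_d\rangle$ with $W_\omega$, independent of the factorization.

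\emph{Step 4 (the final assertion), and the main obstacle.} If (b) holds with $\omega\neq0$, then $v_1,\dots,v_d$ are independent and form the rows of a $d\times m$ matrix $A$ representing $V=\langle v_1,\dots,v_d\rangle\in\Gr(d,m)$; by the definition of the wedge product, the coefficient $\Delta_I$ of $e_I$ in $v_1\wedge\dots\wedge v_d$ is exactly the $d\times d$ minor of $A$ on columns $I$, i.e.\ the $I$-th \Plucker coordinate of $V$. The only substantive point is Step 3 — in particular fact (2), that $W_\omega$ is the smallest subspace whose $d$-th exterior power contains $\omega$, together with the ensuing dimension count; the remaining steps, including the sign matching in Step 1, are routine.
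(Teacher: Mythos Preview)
Your argument is correct and is essentially the standard proof of this classical fact. Note, however, that the paper does not actually give its own proof of this proposition: it is stated with a citation to \cite[Section~9.1]{fulton97} and used as background. So there is no ``paper's proof'' to compare against; your write-up supplies the details that the paper chose to outsource, along the same lines as the reference.
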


\subsubsection{A basis from the \Plucker coordinates}

Given a basis for $V \in \Gr(d,m)$, by definition we obtain the \Plucker coordinates
as minors of the matrix of coefficients in the standard basis
$(e_1, \dots, e_m)$.  Conversely,
if we know the \Plucker coordinates of $V$,
there is a straightforward way to obtain a basis. Namely, for $I= (i_1, \dots, i_d) \in [m]^d$ and $j, k \in [m]$, we define
\[
    \Delta_{(I-j)+k} 
     := \begin{cases}
             (-1)^{d-s}\Delta_{(i_1, \dots, \widehat{i_s},  \dots i_d, k)},
            &\quad \text{if $j = i_s$ for a unique $s \in [d]$;}
          \\
            0,
            &\quad \text{otherwise.}
          \end{cases}
\]
Then we have:
\begin{proposition}
\label{prop:pluckerbasis}
Suppose that $V \in \Gr(d,m)$ has \Plucker coordinates
$\big[\Delta_I : I \in {[m] \choose d}\big]$, and take $J \in {[m] \choose d}$ such that $\Delta_J \neq 0$. Then
    $\big( \sum_{k=1}^m \Delta_{(J-j)+k} e_k : j \in J \big)$
is a basis for $V$.
\end{proposition}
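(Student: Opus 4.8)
The plan is to reduce to the case of a matrix representative in reduced row-echelon form with respect to the columns indexed by $J$, and then to identify the rows of that matrix directly with the claimed vectors by a cofactor expansion.

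First I would note that the assertion is invariant under simultaneously rescaling all the Pl\"ucker coordinates $\Delta_I$ by a common nonzero scalar, since this rescales each vector $\sum_{k=1}^m \Delta_{(J-j)+k} e_k$ by that same scalar and hence does not change their span. So, using $\Delta_J \neq 0$, I may choose a $d \times m$ matrix $A$ representing $V$ whose submatrix on the columns indexed by $J$ is the identity matrix: concretely, take any representative and left-multiply by the inverse of its $J$-submatrix. Index the rows of $A$ by the elements $j \in J$, and let $a_{jk}$ denote the entry of $A$ in row $j$ and column $k$. With this normalization $\Delta_J = 1$.

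Then I would show that $a_{jk} = \Delta_{(J-j)+k}$ for every $j \in J$ and $k \in [m]$. Write $J = (i_1 < \dots < i_d)$ and $j = i_s$. By definition $\Delta_{(J-j)+k} = (-1)^{d-s}\Delta_{(i_1, \dots, \widehat{i_s}, \dots, i_d, k)}$, and the latter is the determinant of the $d \times d$ matrix whose columns are columns $i_1, \dots, \widehat{i_s}, \dots, i_d, k$ of $A$, in that order. By the normalization, the first $d-1$ of these columns are the standard basis vectors of the row space $\CC^J$ corresponding to $i_1, \dots, \widehat{i_s}, \dots, i_d$, so the only row index not ``used up'' is $i_s$. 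Performing iterated cofactor expansion along these $d-1$ standard-basis columns collapses the determinant to $\pm a_{i_s,k}$, and one checks that the accumulated sign is exactly $(-1)^{d-s}$; combined with the leading factor $(-1)^{d-s}$ this yields $\Delta_{(J-j)+k} = a_{jk}$. (It is worth sanity-checking the special cases $k = j$, giving $a_{jj} = 1$, and $k \in J \setminus \{j\}$, giving $a_{jk} = 0$, both consistent with the reduced form.) Finally, since $A$ represents a point of $\Gr(d,m)$ it has rank $d$, so its rows are linearly independent and span $V$; under the identification $\CC^m = \langle e_1, \dots, e_m \rangle$, the row indexed by $j$ is $\sum_{k=1}^m a_{jk} e_k = \sum_{k=1}^m \Delta_{(J-j)+k} e_k$, which gives the claimed basis.

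The only delicate point is the sign bookkeeping in the iterated cofactor expansion: one must confirm that the cumulative cofactor sign matches the $(-1)^{d-s}$ in the definition of $\Delta_{(J-j)+k}$. This is careful but routine --- expanding successively along the columns corresponding to $i_1, \dots, i_{s-1}$ contributes sign $+1$ each time, while expanding along those corresponding to $i_{s+1}, \dots, i_d$ contributes $-1$ each time, for a total of $(-1)^{d-s}$ --- and everything else is immediate.
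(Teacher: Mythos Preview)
Your proof is correct and follows essentially the same approach as the paper: normalize the matrix representative so that its $J$-submatrix is the identity, and then identify its rows with the claimed vectors. The paper's proof simply asserts that one ``can verify that the rows (up to rescaling) of $g^{-1}A$ form the desired basis,'' whereas you have carried out this verification explicitly via cofactor expansion; your sign bookkeeping is correct.
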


\begin{proof}
Take a $d\times m$ matrix $A$ whose row span is $V$, and let $g$ be the $d\times d$ submatrix of $A$ with column set $J$. Since $\Delta_J\neq 0$, we have $g\in\GL_d$, so $g^{-1}A$ also represents $V$. We can verify that the rows (up to rescaling) of $g^{-1}A$ form the desired basis.
\end{proof}

\subsubsection{Indexing by partitions}
\label{sec:partitions}

For our purposes, it is also useful to index \Plucker coordinates
by partitions.  A \defn{partition} $\lambda = (\lambda_1, \dots, \lambda_s)$
is a weakly decreasing sequence of positive integers
$\lambda_1 \geq \dots \geq \lambda_s > 0$.  The \defn{length} and
\defn{size} of $\lambda$ are $\ell(\lambda)=s$ and 
$|\lambda| = \lambda_1 + \dots + \lambda_s$, respectively, and
the notation $\lambda \vdash k$ means $|\lambda| = k$.
When convenient, we use exponential notation, e.g.,
$4^321^4 = (4,4,4,2,1,1,1,1)$.
We adopt the convention that $\lambda_j = 0$ for all $j > \ell(\lambda)$, 
and partitions may be written with any number of trailing zeros, e.g., $(3,3,1)$ and $(3,3,1,0,0,0,0)$ are considered to be
the same partition. 

The \defn{diagram} of a partition $\lambda$ is the array of $|\lambda|$ left-justified boxes with $\lambda_i$ boxes in row $i$ for all $i \ge 1$; see \cref{fig:partition}.
If $\lambda$ and $\mu$ are partitions,
we write $\mu \subseteq \lambda$ if the diagram of $\mu$ is contained in the diagram of $\lambda$, i.e., $\mu_i \leq \lambda_i$ for all $i \ge 1$.
\begin{figure}[t]
\begin{center}
\begin{tikzpicture}[baseline=(current bounding box.center)]
\tikzstyle{out1}=[inner sep=0,minimum size=1.2mm,circle,draw=black,fill=black]
\tikzstyle{in1}=[inner sep=0,minimum size=1.2mm,circle,draw=black,fill=white]
\pgfmathsetmacro{\unit}{0.922};
\pgfmathsetmacro{\s}{0.86}
\useasboundingbox(0,1*\unit)rectangle(5*\unit,-3*\unit);
\coordinate (vstep)at(0,-0.22);
\coordinate (hstep)at(0.16,0);
\coordinate (vepsilon)at(0,-0.02*\unit);
\coordinate (hepsilon)at(0.02*\unit,0);
\draw[thick](0,0)--(5*\unit,0)--(5*\unit,-3*\unit)--(0,-3*\unit)--cycle;
\node[inner sep=0]at(0,0){\scalebox{1.6}{\begin{ytableau}
\none \\
\none \\
\none \\
\none & \none & \none & *({black!20!}) & *({black!20!}) & *({black!20!}) \\
\none & \none & \none & *({black!20!}) & *({black!20!}) \\
\none & \none & \none
\end{ytableau}}};
\node[inner sep=0]at($(0,-2.5*\unit)+(hstep)$){\scalebox{\s}{$1$}};
\node[inner sep=0]at($(2*\unit,-1.5*\unit)+(hstep)$){\scalebox{\s}{$4$}};
\node[inner sep=0]at($(3*\unit,-0.5*\unit)+(hstep)$){\scalebox{\s}{$6$}};
\node[inner sep=0]at($(0.5*\unit,-2*\unit)+(vstep)$){\scalebox{\s}{$2$}};
\node[inner sep=0]at($(1.5*\unit,-2*\unit)+(vstep)$){\scalebox{\s}{$3$}};
\node[inner sep=0]at($(2.5*\unit,-1*\unit)+(vstep)$){\scalebox{\s}{$5$}};
\node[inner sep=0]at($(3.5*\unit,0)+(vstep)$){\scalebox{\s}{$7$}};
\node[inner sep=0]at($(4.5*\unit,0)+(vstep)$){\scalebox{\s}{$8$}};
\node[inner sep=0]at(0,-1.5*\unit)[label={[left=2pt]$d=3$}]{};
\node[inner sep=0]at(2.5*\unit,0)[label={[above=2pt]$m-d=5$}]{};
\end{tikzpicture}
\caption{The partition $\lambda = (3,2)$ corresponds to the set $I = (1,4,6)\in\binom{[8]}{3}$, where $d = 3$ and $m = 8$. When we label the edges of the border of the diagram of $\lambda$ by $1, \dots, m$ from southwest to northeast, the elements of $I$ are the labels of the vertical edges.}\label{fig:partition}
\end{center}
\end{figure}

We introduce a variable $\Delta^\lambda$ for every 
partition $\lambda$ (using a superscript to distinguish $\Delta^\lambda$ from $\Delta_I$).  If $I = (i_1, \dots, i_d) \in {[m] \choose d}$,
we identify
\begin{equation}
\label{eq:identification}
   \Delta_I \equiv \Delta^\lambda, \qquad \text{where } \lambda = (i_d-d, \dots, i_2-2, i_1-1)
\,;
\end{equation}
see \cref{fig:partition}. We emphasize that this identification depends on the choice of $d$ (but not of $m$). Thus, a partition $\lambda$ indexes a \Plucker coordinate of
$\Gr(d,m)$ if and only if $\lambda \subseteq \rectangle$, where
$\rectangle = (m-d)^d$. For example, indexing by partitions, the \Plucker relation
\eqref{eq:gr24plucker}
can be rewritten as
\begin{equation}
\label{eq:firstplucker}
   - \Delta^0 \Delta^{22} + \Delta^{1} \Delta^{21} - \Delta^{11}\Delta^{2} = 0
\,.
\end{equation}

When indexed by partitions, the \Plucker relations 
\eqref{eq:prels} are stable.
This means that if $d' \geq d$ and $m'-d' \geq m-d$, then the \Plucker
relations for $\Gr(d,m)$ are a subset of the \Plucker relations for
$\Gr(d',m')$.  Furthermore, if we set $\Delta^\lambda$ to $0$ for all $\lambda \nsubseteq \rectangle = (m-d)^d$, then every \Plucker relation for 
$\Gr(d',m')$ becomes a (possibly trivial) \Plucker relation for $\Gr(d,m)$.
Hence taking the union of all non-trivial \Plucker relations for all 
Grassmannians gives
the complete list of \defn{all \Plucker relations}, which
are valid for all $\Gr(d,m)$'s.  

Explicitly, for a partition $\lambda$ and $i \geq 1$, let $c = 0$ if $i > \lambda_1$,
and otherwise let $c$ be the unique
positive integer such that $\lambda_c \geq i > \lambda_{c+1}$.
Let $\lambda^{(i)}$ and $\lambda^{(-i)}$ denote the partitions
\begin{align*}
  \lambda^{(i)} &:= (\lambda_1-1, \dots, \lambda_c-1,\, i-1,\,
                  \lambda_{c+1}, \lambda_{c+2}, \dots ) \,,
\\
  \lambda^{(-i)} &:= (\lambda_1+1, \dots, \lambda_{i-1}+1, 
                  \, \lambda_{i+1}, \lambda_{i+2}, \dots)
\,;
\end{align*}
see \cref{fig:operations}. With this notation, the complete list of \Plucker relations can be written as follows \cite[Theorem 4.1]{carrell_goulden10}:
\begin{figure}[t]
\begin{center}
$\lambda = \,\ydiagram{3,2}$ \qquad\qquad $\lambda^{(3)} = \,\ydiagram{2,2,2}$ \qquad\qquad $\lambda^{(-2)} = \,\ydiagram{4}$
\caption{When $\lambda = (3,2)$, we have $\lambda^{(3)} = (2,2,2)$ and $\lambda^{(-2)} = (4)$. In general, $\lambda^{(i)}$ changes the $i$th horizontal edge from the left into a vertical edge, and $\lambda^{(-j)}$ changes the $j$th vertical edge from the top into a horizontal edge.}\label{fig:operations}
\end{center}
\end{figure}
\begin{equation}
\label{eq:prelspartitions}
   \sum_{\substack{i,j \geq 1, \\ 
    |\lambda^{(-i)}| + |\mu^{(j)}| = |\lambda| + |\mu|+ 1}}
    (-1)^{|\mu|-|\mu^{(j)}|+i+j}
      \Delta^{\lambda^{(-i)}} \Delta^{\mu^{(j)}} = 0
  \qquad{\text{for all partitions $\lambda$ and $\mu$}}
\,.
\end{equation}
(The condition $|\lambda^{(i)}| + |\mu^{(-j)}| = |\lambda| + |\mu|+ 1$ implies that the sum is finite.) For example, taking $\lambda = 0$ and $\mu = 3$ yields the \Plucker relation \eqref{eq:firstplucker}.

\cref{thm:main}\ref{main_pluckers} asserts that for all $t \in \CC$,
the operators
$\beta^\lambda(t)$ satisfy all of the equations \eqref{eq:prelspartitions}.
Since $\beta^\lambda(t) = 0$ for $|\lambda| > n$, this is equivalent
to asserting that they satisfy the \Plucker relations \eqref{eq:prels} 
for $\Gr(n,2n)$.
Furthermore, it suffices to prove these for $t=0$, since
$\beta^\lambda \mapsto \beta^\lambda(t)$ under the change of parameters
$(z_1, \dots, z_n) \mapsto (z_1+t, \dots, z_n+t)$.

\subsubsection{Single-column and single-row \Plucker relations}
\label{sec:scprels}

The \Plucker relations~\eqref{eq:prels} corresponding
to $I = (1,2,\dots, d+1)$ for some Grassmannian $\Gr(d,m)$ will
play a special role. We refer to these as the \defn{single-column \Plucker relations}, since when they are rewritten in terms of partitions, the first of the two indexing partitions has at most one column.
Equivalently, these are the \Plucker relations \eqref{eq:prelspartitions}
corresponding to $\lambda = 0$.
For example, \eqref{eq:firstplucker} is a
single-column relation.  One of the main steps in the proof of 
\cref{thm:main}\ref{main_pluckers} will be to show explicitly that the operators 
$\beta^\lambda$ satisfy all of the single-column \Plucker relations.

Similarly, the \defn{single-row \Plucker relations} are the relations
\eqref{eq:prels} corresponding to
$J = (1,2, \dots, d-1)$ for some Grassmannian $\Gr(d,m)$, or equivalently, 
the relations \eqref{eq:prelspartitions} corresponding to $\mu = 0$.
The single-row \Plucker relations will play a role in our discussion
of bases for the spaces $V_E$ in \cref{sec:bases}.


\subsection{Schubert varieties and the Wronski map}

For partitions $\mu \subseteq \lambda$, put $|\lambda/\mu| := |\lambda| - |\mu|$, and let $\lambda/\mu$ be the array of $|\lambda/\mu|$ boxes formed by the set difference of the diagrams of $\lambda$ and $\mu$. We define $\numsyt{\lambda/\mu}$ as the number of \defn{standard Young tableaux} of shape $\lambda/\mu$, that is, the number of ways to fill the boxes of $\lambda/\mu$ with the numbers $1, \dots, |\lambda/\mu|$ (each used exactly once) such that numbers increase along both rows (left to right) and columns (top to bottom). In particular, $\numsyt{\lambda} := \numsyt{\lambda/0}$ is the number of standard Young tableaux of shape $\lambda$. These numbers play a prominent role in describing 
the geometry of the Wronski map,
arising as both degrees of Wronski maps, 
and as coefficients in explicit formulas in
terms of \Plucker coordinates.

For any $d \ge \ell(\lambda)$, we have the following formula for $\numsyt{\lambda}$ \cite[Exercise 3.20]{sagan01}:
\begin{equation}
\label{eq:hookformula}
\frac{\numsyt{\lambda}}{|\lambda|!} = \frac{\prod_{1 \le i < j \le d}(\lambda_i - i - \lambda_j + j)}{\prod_{i=1}^d(\lambda_i - i + d)!}\,.
\end{equation}
We also have a determinantal formula for $\numsyt{\lambda/\mu}$ \cite[Corollary 7.16.3]{stanley24}:
\begin{equation}
\label{eq:numsyt}
   \frac{\numsyt{\lambda/\mu}}{|\lambda/\mu|!} = \det 
   \left( \frac{1}{(\lambda_i -i -\mu_j+j)!} \right)_{1 \leq i,j \leq d}
\,,
\end{equation}
where by convention, 
$\frac{1}{k!} := 0$ if $k$ is a negative integer.

\subsubsection{Schubert varieties}
\label{sec:schubert}

A \defn{complete flag} in $\CC_{m-1}[u]$ is a tuple $F_\bullet : F_0 \subsetneq \dots \subsetneq F_m$ of nested subspaces, where $\dim F_j = j$ for all $j$.  
For each partition $\lambda \subseteq \rectangle$,
we have a \defn{Schubert variety} of $\Gr(d,m)$ relative to 
$F_\bullet$:
\[
    X_\lambda F_\bullet := 
   \{V \in \Gr(d,m) \mid \dim(V \cap F_{m-d+i-\lambda_i}) \geq i) 
     \text{ for all $i \in [d]$}\}
\,.
\]

A \defn{Schubert problem} on $\Gr(d,m)$ is to find the points
(or just the number of points) in an intersection of the form
\begin{equation}
\label{eq:generalschubertintersection}
     \svarnu
     \cap X_{\mu_1} F^{(1)}_\bullet \cap \dots \cap X_{\mu_s} F^{(s)}_\bullet \,,
\end{equation}
where $F^{(1)}_\bullet, \dots, F^{(s)}_\bullet$ are flags,
and $\nu, \mu_1, \dots, \mu_s$ are partitions such that $|\nu| = |\mu_1| + \cdots + |\mu_s|$. 
When the intersection \eqref{eq:generalschubertintersection} is transverse, 
the number of points in the intersection
is the coefficient of the Schur function $\s_\nu$ in the
product $\s_{\mu_1} \dotsm \s_{\mu_s}$ \cite[Section 9.4]{fulton97}. 
The Schubert problem \eqref{eq:schubertproblem} corresponds to the
intersection
\begin{equation}
\label{eq:schubertintersection}
    \svarnu
     \cap X_1 F^{(1)}_\bullet \cap \dots \cap X_1 F^{(n)}_\bullet,
\end{equation}
where $n = |\nu|$ and the flags $F^{(1)}_\bullet, \dots, F^{(n)}_\bullet$ are 
such that $F^{(i)}_{m-d} = W_i$.  When the intersection 
\eqref{eq:schubertintersection} is transverse, it contains $\numsyt{\nu}$ points.

For $w \in \CC$, let 
\[
F_j(w) := \langle \gamma(w), \gamma'(w), \dots, \gamma^{(j-1)}(w) \rangle = \langle (u+w)^{m-1}, (u+w)^{m-2}, \dots, (u+w)^{m-j} \rangle
\]
be the osculating $j$-plane to the moment curve \eqref{eq:moment} at $w$.
Then $F_\bullet(w) : F_0(w) \subsetneq \dots \subsetneq F_m(w)$ is a
complete flag, called the \defn{osculating flag} to $\gamma$ at $w$.
For these flags, we use the shorthand notation
\[
    X_\lambda(w) := X_\lambda F_\bullet(w)
\,.
\]
We also put
$F_\bullet(\infty) := \lim_{w \to \infty} F_\bullet(w)$, which
is just the \defn{standard flag}, with
$F_j(\infty) = \langle 1, u, \dots, u^{j-1} \rangle$.
The Schubert variety $\svarnu$ is in fact
\[
    \svarnu = X_{\nu^\vee} (\infty),
\]
where $\nu^\vee := (m-d-\nu_d, \dots, m-d-\nu_1)$ is the complement of $\nu$ inside $\rectangle$.
Our conventions are such that $\codim X_\lambda F_\bullet = |\lambda|$
for any flag $F_\bullet$, whereas 
$\dim \svarnu = \dim \scellnu = |\nu|$.

In terms of \Plucker coordinates, $\scellnu$ and $\svarnu$
have straightforward descriptions:
\begin{proposition}[{\cite[Section 9.4]{fulton97}}]
\label{thm:schubertplucker}
The Schubert variety $\svarnu$ is the closed subscheme of $\Gr(d,m)$ defined by 
$\Delta^\lambda = 0$ for all $\lambda \not \subseteq \nu$. The Schubert cell $\scellnu$ is the open subscheme of $\svarnu$ defined by 
$\Delta^\nu \neq 0$.
\end{proposition}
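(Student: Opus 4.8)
The plan is to reduce to the classical description of Schubert varieties in Pl\"ucker coordinates and then translate between the subset indexing $\binom{[m]}{d}$ and the partition indexing~\eqref{eq:identification}. Recall that $\svarnu = X_{\nu^\vee}(\infty)$, where $F_\bullet(\infty)$ is the standard flag, so $F_j(\infty) = \langle 1, u, \dots, u^{j-1}\rangle = \langle e_1, \dots, e_j\rangle$ under~\eqref{eq:isomorphism}. Under~\eqref{eq:identification} a subset $J = \{j_1 < \dots < j_d\}$ corresponds to the partition $\lambda$ with $j_k = \lambda_{d+1-k} + k$; hence for subsets $J \leftrightarrow \lambda$ and $I \leftrightarrow \mu$ one has $j_k \leq i_k$ for all $k$ exactly when $\lambda \subseteq \mu$. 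With this dictionary, the first assertion is \cite[Section~9.4]{fulton97}: there it is shown that $X_{\nu^\vee}(\infty)$ is the closed subscheme of $\Gr(d,m)$ whose homogeneous ideal is generated by the Pl\"ucker coordinates $\Delta_J$ with $J$ \emph{not} componentwise below the jump sequence of the corresponding Schubert cell. The jump sequence of $\scellnu$ is the subset $I_\nu \leftrightarrow \nu$, since a $V \in \scellnu$ has a basis of polynomials of degrees $\nu_i + d - i$, so $\dim(V \cap F_j(\infty))$ increases precisely at $j = \nu_i + d - i + 1$. Translating, the ideal of $\svarnu$ is generated by $\{\Delta^\lambda : \lambda \not\subseteq \nu\}$.

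I would also spell out the set-theoretic content directly. For $V \in \Gr(d,m)$ let $I_0(V) = \{\,j : \dim(V \cap F_j(\infty)) > \dim(V \cap F_{j-1}(\infty))\,\} \in \binom{[m]}{d}$ be its jump sequence and let $\mu_0 \leftrightarrow I_0(V)$. Using the elementary fact that $\Delta_J(V) \neq 0$ if and only if $V \cap \langle e_l : l \notin J\rangle = \{0\}$ (equivalently $V \oplus \langle e_l : l \notin J\rangle = \CC^m$), together with dimension counts inside the $F_j(\infty)$, one checks: (a) choosing a vector in $(V \cap F_i(\infty)) \setminus F_{i-1}(\infty)$ for each $i \in I_0(V)$ gives a basis of $V$ of degrees $(\mu_0)_l + d - l$, so $V \in \scell{\mu_0}$; (b) $\Delta^{\mu_0}(V) \neq 0$, since a nonzero element of $V$ vanishing in every coordinate of $I_0(V)$ would have its top nonzero coordinate forced into $I_0(V)$; (c) if $\Delta^\lambda(V) \neq 0$ then $\lambda \subseteq \mu_0$, because the subspaces $V \cap F_{j_k-1}(\infty)$ and $\langle e_l : l \leq j_k-1,\ l \notin J\rangle$ of $F_{j_k-1}(\infty)$ meet trivially and the latter has dimension $j_k-k$, forcing $\dim(V \cap F_{j_k-1}(\infty)) \leq k-1$ and hence $i_k \geq j_k$ for every $k$; and (d) unwinding $X_{\nu^\vee}(\infty) = \{V : \dim(V \cap F_{i+\nu_{d+1-i}}(\infty)) \geq i \text{ for } i \in [d]\}$ shows $V \in \svarnu$ if and only if $\mu_0 \subseteq \nu$. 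Combining (b)--(d): if $V \in \svarnu$ then $\mu_0 \subseteq \nu$, so every $\lambda \not\subseteq \nu$ has $\lambda \not\subseteq \mu_0$ and thus $\Delta^\lambda(V) = 0$; conversely, $\Delta^\lambda(V) = 0$ for all $\lambda \not\subseteq \nu$ forces $\mu_0 \subseteq \nu$ by (b), i.e.\ $V \in \svarnu$.

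For the second assertion, the first gives the cell decomposition $\svarnu = \bigsqcup_{\mu \subseteq \nu} \scell{\mu}$: each $\scell{\mu}$ with $\mu \subseteq \nu$ is contained in $\svarnu$ (immediate from the first assertion), and conversely any $V \in \svarnu$ lies in $\scell{\mu_0(V)}$ with $\Delta^{\mu_0(V)}(V) \neq 0$ by (a) and (b), forcing $\mu_0(V) \subseteq \nu$. A $V \in \scell{\nu}$ has jump sequence $I_\nu$, so $\Delta^\nu(V) \neq 0$ by (b); and on each $\scell{\mu}$ with $\mu \subsetneq \nu$ we have $\nu \not\subseteq \mu$, so $\Delta^\nu$ vanishes identically there by the first assertion. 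Hence the open subscheme of $\svarnu$ defined by $\Delta^\nu \neq 0$ has underlying set $\scell{\nu}$; being open in the reduced scheme $\svarnu$ it is itself reduced, and therefore coincides with the reduced scheme $\scell{\nu}$ as a subscheme of $\Gr(d,m)$. The only step carrying real content is the scheme-theoretic half of the first assertion --- that the listed Pl\"ucker coordinates generate the \emph{entire} homogeneous ideal of $\svarnu$, not just an ideal with the correct radical --- which is \cite[Section~9.4]{fulton97} (equivalently, standard monomial theory for Grassmannians); everything else is elementary linear algebra and bookkeeping with the partition/subset dictionary.
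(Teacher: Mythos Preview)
Your argument is correct. The paper itself gives no proof of this proposition: it is simply stated with the citation to \cite[Section~9.4]{fulton97} and used as a black box. You have taken the same route---invoking Fulton for the scheme-theoretic assertion that the named Pl\"ucker coordinates generate the full homogeneous ideal---but you have additionally unpacked the set-theoretic content via the jump-sequence dictionary and verified the translation between the subset and partition indexings carefully. Your points~(a)--(d) are all sound; in particular the dimension count in~(c) and the unwinding of $X_{\nu^\vee}(\infty)$ in~(d) are exactly right. So there is no divergence in approach, only in level of detail: the paper defers entirely to the reference, while you spell out what that reference actually says in the paper's notation.
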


Henceforth, when we refer to the \Plucker coordinates of 
a $d$-plane $V \in \svarnu$, we will frequently consider only 
the \Plucker coordinates 
indexed by partitions $\lambda \subseteq \nu$, and
write these as $[\Delta^\lambda : \lambda \subseteq \nu]$.
By \cref{thm:schubertplucker} all other \Plucker coordinates 
are zero.  If $V \in \scellnu$, define the 
\defn{normalized \Plucker coordinates} of $V$
to be the unique scaling 
$(\Delta^\lambda : \lambda \subseteq \nu)$
of the \Plucker coordinates such that 
$\Delta^\nu = \frac{|\nu|!}{\numsyt\nu}$.

\begin{remark}
\label{rmk:0schubertconditions}
More generally, every Schubert variety $X_\lambda F_\bullet$ is defined
as a scheme by a system of linear equations in the \Plucker
coordinates (we omit the proof).  For example, $X_\mu(0)$ is defined by the
equations $\Delta^\lambda = 0$ for all $\lambda \not \supseteq \mu$.
\end{remark}

\subsubsection{The Wronski map on \texorpdfstring{$\Gr(d,m)$}{Gr(d,m)}}

Recall that if $V$ is a finite-dimensional vector space of polynomials, $\Wr(V)$ is defined 
to the monic polynomial which is a scalar multiple of
\[
    \Wr(f_1, \dots, f_d) := 
   \begin{vmatrix}
    f_1 & f_1' & f_1'' & \dots & f_1^{(d-1)} \\
    f_2 & f_2' & f_2'' & \dots & f_2^{(d-1)} \\
    \vdots & \vdots & \vdots & \ddots & \vdots \\\
    f_d & f_d' & f_d'' & \dots & f_d^{(d-1)} \\
   \end{vmatrix}
\,,
\]
where $(f_1, \dots, f_d)$ is any basis for $V$.
If $V \in \Gr(d,m)$, then $\Wr(V)$ is a polynomial of degree
at most $d(m-d)$, and we obtain
a projective morphism $\Wr : \Gr(d,m) \to \PP^{d(m-d)}$
called the \defn{Wronski map} on $\Gr(d,m)$.
 Here $\PP^{d(m-d)}$ is identified 
with the projectivization of $\CC_{d(m-d)}[u]$ via \eqref{eq:isomorphism}.
Eisenbud and Harris \cite[Theorem 2.3]{eisenbud_harris83} showed that this is a finite morphism 
of degree $\numsyt{\rectangle}$, the number of standard Young tableaux
of shape $\rectangle$.

Restricting the Wronski map to the Schubert cell $\scellnu$, we obtain
a finite morphism of affine schemes $\Wr : \scellnu \to \monics$,
where $\monics$ is the space of monic polynomials of degree $n = |\nu|$.
In this case, both the domain and codomain are 
isomorphic to $n$-dimensional affine space, and the restricted
Wronski map has degree $\numsyt{\nu}$.

We have an explicit formula for $\Wr(V)$ in terms of \Plucker
coordinates:
\begin{proposition}
\label{prop:pluckerwronskian}
Let $V \in \scellnu$, where $\nu \vdash n$.  
If $(\Delta^\lambda : \lambda \subseteq \nu)$
are the normalized \Plucker coordinates of $V$, then
\begin{equation}
\label{eq:pluckerwronskian}
   \Wr(V) = \sum_{\lambda \subseteq \nu} 
    \frac{\numsyt{\lambda}}{|\lambda|!} \Delta^\lambda u^{|\lambda|}  
\,.
\end{equation}
\end{proposition}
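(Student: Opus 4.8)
The plan is to verify the Wronskian formula \eqref{eq:pluckerwronskian} by a direct computation, reducing to the case $V\in\scellrect$ and then to a single row-reduced representative. First I would recall the relationship between the diagram notation for partitions and column sets under the identification \eqref{eq:identification}: a partition $\lambda\subseteq\nu$ with $|\lambda| = k$ corresponds to a column set $I_\lambda = (i_1, \dots, i_d)\in\binom{[m]}{d}$ with $i_1 + \dots + i_d = k + \binom{d+1}{2}$. Since $\Wr$ is a polynomial of degree $n = |\nu|$ and the coefficient of $u^k$ in $\Wr(V)$ should be $\sum_{|\lambda|=k,\,\lambda\subseteq\nu}\frac{\numsyt{\lambda}}{k!}\Delta^\lambda$, the claim is really a statement about expanding a determinant of derivatives in the standard basis $(1, u, u^2, \dots)$.

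The key computational step is as follows. By \cref{prop:pluckerbasis}, after fixing the row-echelon representative of $V\in\scellnu$ (whose pivot columns correspond to $\nu$), a basis for $V$ can be written with coefficient vectors expressible via the Plücker coordinates. Equivalently, take any $d\times m$ matrix $A$ representing $V$, write $f_i(u) = \sum_j A_{ij}\frac{u^{j-1}}{(j-1)!}$ for the corresponding polynomials under \eqref{eq:isomorphism}, and expand $\Wr(f_1, \dots, f_d) = \det\big(f_i^{(\ell-1)}\big)_{1\le i,\ell\le d}$ by the Cauchy--Binet formula. Each $f_i^{(\ell-1)}(u) = \sum_{j\ge\ell} A_{ij}\frac{u^{j-\ell}}{(j-\ell)!}$, so Cauchy--Binet over the column index set $J = (j_1 < \dots < j_d)$ gives
\[
\Wr(f_1, \dots, f_d) = \sum_{J\in\binom{[m]}{d}} \Delta_J(A)\cdot \det\left(\frac{u^{j_a - \ell}}{(j_a-\ell)!}\right)_{1\le a,\ell\le d}\,.
\]
The inner determinant is a monomial $u^{(\sum_a j_a) - \binom{d+1}{2}}$ times $\det\big(\frac{1}{(j_a-\ell)!}\big)_{a,\ell}$. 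Writing $J = I_\lambda$ for the partition $\lambda$ with $\lambda_a = j_{d+1-a} - (d+1-a)$, one checks that $\sum_a j_a - \binom{d+1}{2} = |\lambda|$ and that the determinant $\det\big(\frac{1}{(j_a - \ell)!}\big)$ equals $\det\big(\frac{1}{(\lambda_a - a + \ell)!}\big)$ after reindexing rows/columns; by the determinantal formula \eqref{eq:numsyt} with $\mu = 0$, this is precisely $\frac{\numsyt{\lambda}}{|\lambda|!}$ (using the hook-content form \eqref{eq:hookformula} or \eqref{eq:numsyt} directly, noting the empty partition $\mu$ makes $\mu_j = 0$). Summing over $J$ and using \cref{thm:schubertplucker} to drop the terms with $\lambda\not\subseteq\nu$ yields
\[
\Wr(f_1, \dots, f_d) = \sum_{\lambda\subseteq\nu} \frac{\numsyt{\lambda}}{|\lambda|!}\,\Delta^\lambda\, u^{|\lambda|}\,.
\]
Finally, the normalization: the coefficient of the top term $u^{|\nu|} = u^n$ is $\frac{\numsyt{\nu}}{n!}\Delta^\nu$, which equals $1$ exactly when the Plücker coordinates are normalized so that $\Delta^\nu = \frac{n!}{\numsyt{\nu}}$; hence the right-hand side of \eqref{eq:pluckerwronskian} is already monic, so it equals $\Wr(V)$.

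The main obstacle is the bookkeeping in the reindexing step: correctly matching the column set $J$ to the partition $\lambda$ under \eqref{eq:identification} (in particular the reversal of indices, since $\lambda_1$ corresponds to the largest $j$), and verifying that the Vandermonde-type determinant $\det\big(\frac{1}{(j_a-\ell)!}\big)$ transforms into the Jacobi--Trudi-style determinant \eqref{eq:numsyt} for the straight shape $\lambda$. Everything else — Cauchy--Binet, extracting the monomial, and the normalization — is routine. I would also remark that one only needs to do this for the top cell $\scellrect$ and then restrict, since the Plücker coordinates and the formula are both compatible with the inclusion $\scellnu\hookrightarrow\Gr(d,m)$; but in fact the computation above works uniformly for any $\nu$, so this reduction is optional.
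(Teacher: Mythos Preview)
Your proof is correct and follows essentially the same route as the paper's: expand $\Wr(f_1,\dots,f_d)$ in \Plucker coordinates via Cauchy--Binet, then identify the resulting coefficient as $\numsyt{\lambda}/|\lambda|!$. The paper outsources the first step to \cite[Proposition 2.3]{purbhoo10} and uses the product formula \eqref{eq:hookformula} for the second, whereas you carry out the Cauchy--Binet directly and use the determinantal formula \eqref{eq:numsyt} with $\mu=0$; your version is thus more self-contained, but the underlying argument is the same.
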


\begin{proof}
By \cite[Proposition 2.3]{purbhoo10}, we have
\[
   \Wr(V) = \sum_{\lambda \subseteq \nu}\frac{\prod_{1 \le i < j \le d}(\lambda_i - i - \lambda_j + j)}{\prod_{i=1}^d(\lambda_i - i + d)!}
    \Delta^\lambda u^{|\lambda|}  
\,.
\]
(The denominator $\prod_{i=1}^d(\lambda_i - i + d)!$ above accounts for the fact that \cite{purbhoo10} uses a different convention for the isomorphism $\CC^m \xrightarrow{\simeq} \CC_{m-1}[u]$; namely, the denominator $(j-1)!$ in \eqref{eq:isomorphism} is omitted.) We obtain \eqref{eq:pluckerwronskian} by applying \eqref{eq:hookformula}.
\end{proof}

The Wronski map is closely related to Schubert varieties for
osculating flags of $\gamma$:

\begin{proposition}
\label{prop:schubertwronskian2}
Let $V \in \Gr(d,m)$.
\begin{enumerate}[(i)]
\item\label{schubertwronskian2_divides} For $w \in \CC$, $(u+w)^k$ divides $\Wr(V)$ if and only if
    $V \in X_\lambda(w)$ for some $\lambda \vdash k$, 
     $\lambda \subseteq \rectangle$.
\item\label{schubertwronskian2_infinity} We have $\deg \Wr(V) = n$ if and only if $V \in \scellnu$ 
       for some $\nu \vdash n$, $\nu \subseteq \rectangle$.
\end{enumerate}
\end{proposition}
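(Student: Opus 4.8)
The plan is to relate the roots of $\Wr(V)$ and the degree of $\Wr(V)$ to Schubert conditions via the \Plucker-coordinate formula \eqref{eq:pluckerwronskian} together with the osculating-flag description of Schubert varieties from \cref{sec:schubert}. For part \ref{schubertwronskian2_divides}, first reduce to the case $w=0$: the $\PGL_2$-action (or simply the substitution $u \mapsto u - w$ and the corresponding translation of flags) intertwines $\Wr$ with itself and carries $X_\lambda(w)$ to $X_\lambda(0)$, so it suffices to show that $u^k$ divides $\Wr(V)$ if and only if $V \in X_\lambda(0)$ for some $\lambda \vdash k$ with $\lambda \subseteq \rectangle$. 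By \cref{rmk:0schubertconditions}, $V \in X_\mu(0)$ is equivalent to $\Delta^\lambda = 0$ for all $\lambda \not\supseteq \mu$, so $V$ lies in $X_\lambda(0)$ for some $\lambda \vdash k$ precisely when every nonzero \Plucker coordinate of $V$ is indexed by a partition of size $\geq k$ (containing some size-$k$ partition); combined with the general fact that the set of partitions indexing nonzero \Plucker coordinates of a point is closed downward-connected enough that it contains a size-$k$ partition as soon as it contains one of size $\geq k$, this says exactly that $\Delta^\lambda = 0$ whenever $|\lambda| < k$. Now apply \eqref{eq:pluckerwronskian}: writing $V \in \scellnu$ with normalized \Plucker coordinates, the polynomial $\Wr(V) = \sum_{\lambda\subseteq\nu} \frac{\numsyt{\lambda}}{|\lambda|!}\Delta^\lambda u^{|\lambda|}$ is divisible by $u^k$ iff the coefficient of $u^j$ vanishes for all $j < k$, i.e.\ iff $\sum_{|\lambda|=j}\frac{\numsyt\lambda}{|\lambda|!}\Delta^\lambda = 0$ for each $j<k$. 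One then checks that this vanishing of the \emph{sums} is equivalent to the vanishing of each individual $\Delta^\lambda$ with $|\lambda| < k$: since $\numsyt\lambda > 0$, the $\Rightarrow$ direction is the subtle one, and it follows because the minimal-degree term of $\Wr(V)$ (if $\Wr(V) \neq 0$) corresponds, by the known structure of the Wronski map on Schubert cells, to a single partition — more cleanly, by intersecting with the nested Schubert conditions $X_{(j)}(0)$ for increasing $j$ and using that $\Wr$ is compatible with these. This equivalence is precisely the point where a short argument is needed rather than a one-line deduction.

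For part \ref{schubertwronskian2_infinity}, observe that $\deg \Wr(V) = n$ means the coefficient of $u^n$ in $\Wr(V)$ is nonzero and all higher coefficients vanish. Using \eqref{eq:pluckerwronskian} (applied with $\rectangle$ in place of $\nu$, since a priori $V$ is only known to lie in $\Gr(d,m) = \svarrect$), this says $\sum_{|\lambda|=n}\frac{\numsyt\lambda}{|\lambda|!}\Delta^\lambda \neq 0$ and $\Delta^\lambda = 0$ for all $|\lambda| > n$. By the same individual-vanishing argument as above (applied "from the top"), $\Delta^\lambda = 0$ for all $|\lambda| > n$ forces $V$ to lie in $\svarnu$ for some $\nu \vdash n$ (take $\nu$ to be the largest partition of size $\leq n$ with $\Delta^\nu \neq 0$; the nonvanishing of the degree-$n$ coefficient forces $|\nu| = n$), and then $\Delta^\nu \neq 0$ places $V$ in the open cell $\scellnu$ by \cref{thm:schubertplucker}. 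Conversely, if $V \in \scellnu$ with $\nu \vdash n$, then $\Delta^\nu \neq 0$ and $\Delta^\lambda = 0$ for $\lambda \not\subseteq \nu$ (in particular for $|\lambda| > n$), so by \eqref{eq:pluckerwronskian} the top coefficient of $\Wr(V)$ is $\frac{\numsyt\nu}{n!}\Delta^\nu \neq 0$ and there are no higher terms, giving $\deg\Wr(V) = n$.

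The main obstacle I anticipate is the step, used in both parts, that passes from the vanishing of the \emph{coefficient sums} $\sum_{|\lambda|=j}\frac{\numsyt\lambda}{|\lambda|!}\Delta^\lambda$ to the vanishing of the \emph{individual} \Plucker coordinates $\Delta^\lambda$ — a priori a single coefficient of $\Wr(V)$ mixes many \Plucker coordinates, so divisibility of $\Wr(V)$ by $u^k$ is weaker-looking than membership in a union of Schubert varieties. The cleanest way I see to handle this is to argue inductively on $j$: if all lower coefficients vanish and $\Delta^\lambda = 0$ is already known for $|\lambda| < j$, then intersecting with the condition "$u$ divides $\Wr(V)$ restricted to the residual cell" corresponds to a Schubert divisor condition $X_{(1)}$ relative to the osculating flag at $0$, and iterating gives $X_{(j)}(0)$; concretely this is the content of \cref{schubertwronskian2_divides} in the rank-one case, so one should set up the induction so that the $k$-fold divisibility statement is built from $k$ applications of the base case, each time reducing to a smaller Schubert cell where \eqref{eq:pluckerwronskian} applies afresh. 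I expect the bookkeeping of "which smaller cell" — i.e.\ tracking that after imposing $u \mid \Wr(V)$ the point moves into $\svar{\nu'}$ for the appropriate $\nu' \subsetneq \nu$ — to be the fiddly part, but it is routine once the inductive framework is fixed.
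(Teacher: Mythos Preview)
Your approach matches the paper's: reduce part~\ref{schubertwronskian2_divides} to $w=0$ by translation, then combine \eqref{eq:pluckerwronskian} with \cref{rmk:0schubertconditions}. The obstacle you flag is genuine but dissolves once you use the Schubert \emph{cell} decomposition at $0$ rather than an induction. Every $V \in \Gr(d,m)$ lies in a unique open Schubert cell for $F_\bullet(0)$, say indexed by $\mu$; this means $\Delta^\lambda = 0$ for all $\lambda \not\supseteq \mu$, together with $\Delta^\mu \neq 0$. In \eqref{eq:pluckerwronskian} the coefficient of $u^j$ then vanishes for every $j < |\mu|$, while the coefficient of $u^{|\mu|}$ is the \emph{single} nonzero term $\tfrac{\numsyt{\mu}}{|\mu|!}\Delta^\mu$, since the only partition of size $|\mu|$ containing $\mu$ is $\mu$ itself. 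Hence the order of vanishing of $\Wr(V)$ at $0$ is exactly $|\mu|$, and both directions of~\ref{schubertwronskian2_divides} follow at once: $u^k \mid \Wr(V)$ iff $k \le |\mu|$ iff $V \in X_{\mu'}(0)$ for some $\mu' \vdash k$ with $\mu' \subseteq \mu$. This is precisely the ``corresponds to a single partition'' observation you mention in passing; the inductive scheme in your final paragraph is unnecessary, and as written it does not actually close the gap, since knowing $\Delta^\lambda = 0$ for $|\lambda| < j$ still leaves the degree-$j$ coefficient as one linear relation among several unknowns.

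Part~\ref{schubertwronskian2_infinity} is the same argument at $\infty$: the unique $\nu$ with $V \in \scellnu$ satisfies $\Delta^\nu \neq 0$ and $\Delta^\lambda = 0$ for $\lambda \not\subseteq \nu$ (\cref{thm:schubertplucker}), so the top-degree term of \eqref{eq:pluckerwronskian} is the single term $\tfrac{\numsyt{\nu}}{|\nu|!}\Delta^\nu u^{|\nu|}$, giving $\deg \Wr(V) = |\nu|$ directly. Your phrase ``the largest partition of size $\le n$ with $\Delta^\nu \neq 0$'' is not well-defined; the correct statement is that there is a unique containment-maximal $\nu$ with $\Delta^\nu \neq 0$, namely the one indexing the Schubert cell at $\infty$.
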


\begin{proof}
By translation, it suffices to prove part \ref{schubertwronskian2_divides} when $w = 0$. This case follows from \eqref{eq:pluckerwronskian}, using \cref{rmk:0schubertconditions}. Part \ref{schubertwronskian2_infinity} follows directly from \eqref{eq:pluckerwronskian}.
\end{proof}

The $k=1$ case of part \ref{schubertwronskian2_divides} of \cref{prop:schubertwronskian2} is equivalent to 
\cref{prop:schubertwronskian}.
Part \ref{schubertwronskian2_infinity} should be regarded as the $w=\infty$ case of part \ref{schubertwronskian2_divides}.

As a consequence of \cref{prop:schubertwronskian2}, 
every fibre of the Wronski map can be described
as a union of Schubert intersections.  
Namely, suppose
$g(u) = (u+z_1)^{\kappa_1} \dotsm (u+z_s)^{\kappa_s}$,
where $z_1, \dots, z_s$ are distinct complex numbers and
$\kappa_1+ \dots + \kappa_s = n$.
Then for the Wronski map $\Wr : \scellnu \to \monics$, we have the equality of sets
\begin{equation}
\label{eq:fibre}
   \Wr^{-1}(g) = \bigcup_{\mu_1 \vdash \kappa_1, \dots, \mu_s \vdash \kappa_s}
       \svarnu \cap X_{\mu_1}(z_1) \cap \dots \cap X_{\mu_s}(z_s)
\,.
\end{equation}
This equality holds scheme-theoretically if and only if $\nu$ equals $n$ or $1^n$, or 
$\kappa_1, \dots, \kappa_s \leq 2$ (i.e.\ $g(u)$ has no roots of multiplicity
greater than $2$); see \cref{cor:schemeequality}.
As we will discuss in \cref{sec:dimensions}, 
this accounts for the discrepancy between the scheme structures 
of $\Wr^{-1}(g)$ and $\Spec \bethenu$.

\subsubsection{\texorpdfstring{$\PGL_2(\CC)$}{PGL\textunderscore{}2(C)}-action and translation action}

For all $k \geq 0$, the group $\GL_2(\CC)$ acts on 
$\CC_k[u]$ by M\"obius transformations.
If $\phi = 
\big(\begin{smallmatrix} 
\phi_{11} & \phi_{12} \\ \phi_{21} & \phi_{22}
\end{smallmatrix}\big) \in \GL_2(\CC)$ and
$f(u) \in \CC_k[u]$,
the action is given by
\[
\phi f(u) := (\phi_{21} u + \phi_{11})^k
f\Big(\frac{\phi_{22} u + \phi_{12}}{\phi_{21} u + \phi_{11}}\Big)
\,.
\]
This induces a $\PGL_2(\CC)$-action on
$\Gr(d,m)$ and $\PP^{d(m-d)}$.  The Wronski map on $\Gr(d,m)$ is
$\PGL_2(\CC)$-equivariant with respect to these actions.

The additive group $(\CC,+)$ is isomorphic the
unipotent subgroup consisting of matrices of the form
$\translatematrix{t}$, and therefore also acts on $\CC_k[u]$ and $\Gr(d,m)$. We call this the \defn{translation action}, since $\translatematrix{t} f(u) = f(u+t)$. 
For $V \in \Gr(d,m)$, we write 
\[
    V(t) := \translatematrix{t} V
\,.
\]
Note that $\translatematrix{t} X_\lambda(w) = X_\lambda(w+t)$. 
Furthermore, if
$\Wr(V) = g(u)$, then $\Wr(V(t)) = g(u+t)$,
and so the translation action of gives an isomorphism 
between the fibres of the Wronski map over $g(u)$ and $g(u+t)$.

The translation action also preserves the Schubert cell $\scellnu$.
Explicitly, in terms of \Plucker coordinates, we have:

\begin{proposition}
\label{prop:geometrictranslation}
Let $V \in \scellnu$, where $\nu \vdash n$.  For $t \in \CC$, let 
$(\Delta^\lambda(t) : \lambda \subseteq \nu)$ be the 
normalized \Plucker coordinates of $V(t)$, and let $\Delta^\lambda(t) = 0$ for $\lambda \not \subseteq \nu$. Then for all $s, t \in \CC$ and all partitions $\mu$, we have
\begin{equation}
\label{eq:geometrictranslation}
    \Delta^\mu(s+t) = \sum_{\lambda \supseteq \mu} 
      \frac{\numsyt{\lambda/\mu}}{|\lambda/\mu|!}
      \Delta^\lambda(s) t^{|\lambda/\mu|}
\,.
\end{equation}
\end{proposition}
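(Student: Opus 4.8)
The plan is to compute directly how the translation action transforms \Plucker coordinates, using the functoriality of the $d$-th exterior power. First I would reduce to the case $s=0$: since $V(s+t) = (V(s))(t)$ and $V(s)$ again lies in $\scellnu$ with normalized \Plucker coordinates $(\Delta^\lambda(s):\lambda\subseteq\nu)$, it suffices to prove that for every $V\in\scellnu$ with normalized \Plucker coordinates $(\Delta^\lambda:\lambda\subseteq\nu)$, the $d$-plane $V(t)$ has normalized \Plucker coordinates $\sum_{\lambda\supseteq\mu}\frac{\numsyt{\lambda/\mu}}{|\lambda/\mu|!}t^{|\lambda/\mu|}\Delta^\lambda$; equation \eqref{eq:geometrictranslation} then follows by applying this with $V$ replaced by $V(s)$.

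Next I would pass to the exterior algebra via \eqref{eq:isomorphism}. Writing $e_\lambda\in\exterior{d}\CC^m$ for the standard basis wedge indexed by $\lambda$ under \eqref{eq:identification}, a representative of $V$ is $\omega=\sum_{\lambda\subseteq\nu}\Delta^\lambda e_\lambda$, with $\Delta^\nu=|\nu|!/\numsyt{\nu}$ by the normalization. Since $\translatematrix{t}$ acts on $\CC_{m-1}[u]$ as the linear operator $f(u)\mapsto f(u+t)$, the element $\omega(t):=(\exterior{d}\translatematrix{t})\,\omega$ (the $d$-th exterior power of this operator, applied to $\omega$) represents $V(t)$, and by \cref{prop:exteriorplucker} its coordinates in the basis $\{e_\mu\}$ are the \Plucker coordinates of $V(t)$ up to a single common scalar. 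So the heart of the argument is to compute the matrix of $\exterior{d}\translatematrix{t}$ in the basis $\{e_\lambda\}$, i.e.\ to show that the coefficient of $e_\mu$ in $(\exterior{d}\translatematrix{t})(e_\lambda)$ is $\frac{\numsyt{\lambda/\mu}}{|\lambda/\mu|!}t^{|\lambda/\mu|}$.

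For this I would use that $\translatematrix{t}=\exp(t\,\partial_u)$ on $\CC_{m-1}[u]$ (Taylor's theorem), so that $\exterior{d}\translatematrix{t}=\exp(t\,\widehat{\partial_u})$, where $\widehat{\partial_u}$ is the derivation of $\extalg\CC^m$ induced by $\partial_u$. Under \eqref{eq:isomorphism}, $\partial_u$ carries the basis vector for $u^j/j!$ to the one for $u^{j-1}/(j-1)!$, so $\widehat{\partial_u}(e_\lambda)$ is the sum of $e_\mu$ over all $\mu$ obtained from $\lambda$ by deleting one removable box, with all coefficients $+1$ --- no reordering sign appears, because decreasing one entry of the strictly increasing index tuple of $e_\lambda$ by one leaves it strictly increasing whenever the result is nonzero. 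Iterating, $\widehat{\partial_u}^{\,k}(e_\lambda)=\sum_\mu c_k(\lambda,\mu)e_\mu$ with $c_k(\lambda,\mu)$ the number of length-$k$ saturated chains from $\mu$ up to $\lambda$ in Young's lattice, which is $\numsyt{\lambda/\mu}$ when $|\lambda/\mu|=k$ and $0$ otherwise; summing the exponential series gives the desired coefficient. (A variant that avoids Young's lattice: expand each $\translatematrix{t}(u^j/j!)=(u+t)^j/j!$ in the monomial basis and collect the coefficient of $e_\mu$ directly; the Leibniz formula turns it into $t^{|\lambda/\mu|}\det\!\big(1/(\lambda_i-i-\mu_j+j)!\big)_{1\le i,j\le d}$, which equals $\frac{\numsyt{\lambda/\mu}}{|\lambda/\mu|!}t^{|\lambda/\mu|}$ by \eqref{eq:numsyt} after reversing rows and columns.)

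Combining these I would obtain $\omega(t)=\sum_\mu\big(\sum_{\lambda\supseteq\mu}\frac{\numsyt{\lambda/\mu}}{|\lambda/\mu|!}t^{|\lambda/\mu|}\Delta^\lambda\big)e_\mu$, where the inner sum effectively ranges over $\mu\subseteq\lambda\subseteq\nu$ since $\numsyt{\lambda/\mu}=0$ unless $\mu\subseteq\lambda$ and $\Delta^\lambda=0$ unless $\lambda\subseteq\nu$. The coefficient of $e_\nu$ here is exactly $\Delta^\nu=|\nu|!/\numsyt{\nu}$ (only $\lambda=\nu$ contributes), which is precisely the value the normalization convention forces on the \Plucker coordinates of $V(t)\in\scellnu$; hence the common scalar is $1$ and the displayed coefficients are literally the normalized \Plucker coordinates of $V(t)$, giving the $s=0$ case. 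I expect the only real obstacle to be bookkeeping: pinning down the exact sign conventions in $\widehat{\partial_u}(e_\lambda)$ (or equivalently in the determinant identity \eqref{eq:numsyt}) and checking that the \emph{normalization}, not merely the projective class, is preserved. There is no deeper difficulty, since the combinatorial content is entirely captured by \eqref{eq:numsyt} (equivalently, by the saturated-chain count in Young's lattice).
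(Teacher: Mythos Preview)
Your proposal is correct and essentially matches the paper's proof: the paper represents translation by the matrix $(\phi_t)_{i,j}=\frac{1}{(j-i)!}$ and applies Cauchy--Binet together with \eqref{eq:numsyt}, which is precisely your ``variant'' computation of the coefficient of $e_\mu$ in $(\exterior{d}\translatematrix{t})(e_\lambda)$. Your primary route through $\exp(t\,\partial_u)$ and saturated chains in Young's lattice is a pleasant repackaging of the same calculation, and your explicit check that the coefficient of $e_\nu$ is preserved (so the normalization is maintained) makes a point the paper leaves implicit.
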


\begin{proof}
Let $\phi_t\in\End(\CC_{m-1}[u])$ act by translation by $t$. We can represent $\phi_t$ as an $m\times m$ matrix, whose $(i,j)$-entry we calculate via \eqref{eq:isomorphism}:
\[
(\phi_t)_{i,j} = \Big[\frac{u^{i-1}}{(i-1)!}\Big]\,\frac{(u+t)^{j-1}}{(j-1)!}\, = \frac{1}{(j-i)!}\,,
\]
where the operator $[\frac{u^{i-1}}{(i-1)!}]$ extracts the coefficient of $\frac{u^{i-1}}{(i-1)!}$. If $A$ is a $d\times m$ matrix which represents $V(s)$, then $V(s+t)$ is represented by $A\phi_t^T$. We obtain \eqref{eq:geometrictranslation} by applying the Cauchy--Binet identity and \eqref{eq:numsyt}.
\end{proof}

Note that \eqref{eq:geometrictranslation} has the same form as the translation identity \eqref{eq:translationidentity}.  This
explains the significance of the translation identity: it asserts that the
operators $\beta^\lambda(t)$ behave exactly like \Plucker coordinates
under the translation action. We also note the similarity between equations \eqref{eq:pluckerwronskian}
and \eqref{eq:geometrictranslation}.  In the notation of 
\cref{prop:geometrictranslation}, equation
\eqref{eq:pluckerwronskian} says that $\Wr(V) = \Delta^0(u)$.

\subsubsection{Inclusions of Grassmannians}

Let $\nu \vdash n$ be a partition such that $\nu \subseteq \rectangle = (m-d)^d$.  Then 
$\scellnu$ can be regarded as Schubert cell of $\Gr(d,m)$, or of any
other Grassmannian $\Gr(d',m')$ with $d' \geq d$ and $m'-d' \geq m-d$.
We now argue that it does not matter which Grassmannian we choose to
work inside, as far as the geometry of the
Schubert variety, \Plucker coordinates, total positivity, the Wronski map, and 
the translation action are concerned.

In the following proposition, we write $\svarnu_{d',m'}$
to specifically mean the Schubert variety 
$\svarnu \subseteq \Gr(d',m')$,
and $\Wr_{d',m'}$ to mean the Wronski map on $\Gr(d',m')$.

\begin{proposition}
\label{prop:changegr}
Let $\imath_1 : \Gr(d,m) \hookrightarrow \Gr(d, m+1)$ and 
$\imath_2 : \Gr(d,m) \hookrightarrow \Gr(d+1, m+1)$ denote the
inclusions of Grassmannians defined by
\[
   \imath_1(V) := V
   \qquad\text{and}\qquad \imath_2(V) := \{f(u) \in \CC[u] \mid f'(u) \in V\}
\,.
\]
\begin{enumerate}[(i)]
\item\label{changegr1}
The maps $\imath_1$ and $\imath_2$ restrict to isomorphisms 
of Schubert varieties
\[
  \imath_1 : \svarnu_{d,m} \xrightarrow{\simeq} \svarnu_{d,m+1}
\qquad\text{and}\qquad
  \imath_2 : \svarnu_{d,m} \xrightarrow{\simeq} \svarnu_{d+1,m+1}
\,.
\]
\item\label{changegr2} Both isomorphisms in \ref{changegr1} are, in terms of
\Plucker coordinates, defined by the identity map:
$\Delta^\lambda \mapsto \Delta^\lambda$, $\lambda \subseteq \nu$.
That is, $V$, $\imath_1(V)$, and $\imath_2(V)$ all have the same
\Plucker coordinates.
\item\label{changegr3} If any one of the three elements $V$, $\imath_1(V)$, and $\imath_2(V)$ is totally nonnegative (respectively, totally positive in its Schubert cell), then so are the other two.
\item\label{changegr4} $\Wr_{d,m+1} \circ \imath_1 = \Wr_{d,m}$ and 
$\Wr_{d+1,m+1} \circ \imath_2 = \Wr_{d,m}$.
\item\label{changegr5} Both $\imath_1$ and $\imath_2$ are equivariant with respect 
to the translation action. 
\end{enumerate}
\end{proposition}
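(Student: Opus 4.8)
The plan is to prove part~\ref{changegr2} first by a direct matrix computation, and then obtain parts~\ref{changegr1}, \ref{changegr3}, \ref{changegr4}, and~\ref{changegr5} as formal consequences of the fact that $\imath_1$ and $\imath_2$ act as the identity on \Plucker coordinates.

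For part~\ref{changegr2}, I would represent $V \in \Gr(d,m)$ by a $d \times m$ matrix $A$ in the coordinate system~\eqref{eq:isomorphism}. The inclusion $\CC_{m-1}[u] \hookrightarrow \CC_m[u]$ merely appends a zero column, so $\imath_1(V)$ is represented by $(A \mid \mathbf{0})$; hence $\Delta_I(\imath_1(V)) = \Delta_I(V)$ for $I \subseteq [m]$ and $\Delta_I(\imath_1(V)) = 0$ whenever $m+1 \in I$. Since the identification~\eqref{eq:identification} of subsets with partitions depends only on $d$, this says exactly that $\imath_1$ preserves every $\Delta^\lambda$ (with the convention $\Delta^\lambda = 0$ for $\lambda \not\subseteq \rectangle$). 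For $\imath_2$, the key observation is that $\frac{d}{du}\frac{u^{j}}{j!} = \frac{u^{j-1}}{(j-1)!}$, so passing to antiderivatives sends the coordinate vector $(a_1, \dots, a_m)$ to $(0, a_1, \dots, a_m)$; thus $\imath_2(V)$, which is spanned by the constant function $1$ together with the antiderivatives of a basis of $V$, is represented by the $(d+1) \times (m+1)$ matrix $\left(\begin{smallmatrix} 1 & \mathbf{0} \\ \mathbf{0} & A \end{smallmatrix}\right)$. Expanding a $(d+1) \times (d+1)$ minor of this matrix along its first row, it vanishes unless the column set contains $1$, in which case it equals the $d \times d$ minor of $A$ whose column set is shifted down by one. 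What remains is a bookkeeping check via~\eqref{eq:identification}: the subset $\{1\} \cup \{j_1+1, \dots, j_d+1\} \subseteq [m+1]$ (inside $\Gr(d+1,m+1)$) and the subset $\{j_1, \dots, j_d\} \subseteq [m]$ (inside $\Gr(d,m)$) index the same partition. Granting this, $\imath_2$ likewise preserves every $\Delta^\lambda$, and in particular both $\imath_1$ and $\imath_2$ are injective morphisms acting as the identity on \Plucker coordinates.

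Part~\ref{changegr1} then follows. By \cref{thm:schubertplucker}, each Schubert variety $\svarnu$ — in all three ambient Grassmannians, since $\nu \subseteq \rectangle$ in each — is cut out by the linear equations $\Delta^\lambda = 0$ for $\lambda \not\subseteq \nu$, so part~\ref{changegr2} shows that $\imath_1$ carries $\svarnu_{d,m}$ into $\svarnu_{d,m+1}$ and $\imath_2$ carries $\svarnu_{d,m}$ into $\svarnu_{d+1,m+1}$. For surjectivity: an element of $\svarnu_{d,m+1}$ has every \Plucker coordinate $\Delta_I$ with $m+1 \in I$ equal to $0$, hence consists of polynomials of degree at most $m-1$ and lies in the image of $\imath_1$; and an element $W$ of $\svarnu_{d+1,m+1}$ has $\ell(\nu) \le d$, which forces all of its \Plucker coordinates indexed by partitions with $d+1$ parts — equivalently, all $(d+1)$-subsets not containing $1$ — to vanish, so the constant function lies in $W$, whence $V := \{f' : f \in W\}$ is $d$-dimensional with $\imath_2(V) = W$, and $V \in \svarnu_{d,m}$ by part~\ref{changegr2}. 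A bijective morphism of projective varieties that acts as the identity on \Plucker coordinates is an isomorphism, which completes part~\ref{changegr1}.

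Finally, parts~\ref{changegr3}, \ref{changegr4}, and~\ref{changegr5} are immediate once part~\ref{changegr2} is available. Total nonnegativity and total positivity in $\scellnu$ (as in~\eqref{eq:positivenupluckers}) are conditions purely on the \Plucker coordinates, giving part~\ref{changegr3}. The Wronskian is the linear functional~\eqref{eq:pluckerwronskian} of the normalized \Plucker coordinates $(\Delta^\lambda : \lambda \subseteq \nu)$ (\cref{prop:pluckerwronskian}) — or, more directly, $\imath_1$ leaves $V$ unchanged as a space of polynomials, while expanding the Wronskian determinant along the row of the constant function gives $\Wr \circ \imath_2 = \Wr$; this is part~\ref{changegr4}. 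The translation action on \Plucker coordinates is the transformation~\eqref{eq:geometrictranslation} of those same coordinates (\cref{prop:geometrictranslation}) — or, directly, translation commutes with the inclusion $\CC_{m-1}[u] \hookrightarrow \CC_m[u]$ and with $\frac{d}{du}$; this is part~\ref{changegr5}. I expect the only genuinely fiddly point to be the index bookkeeping for $\imath_2$ in part~\ref{changegr2} — verifying that adjoining the constant function is invisible in the partition picture — while everything else is a formal consequence of the \Plucker-coordinate descriptions already assembled in \cref{sec:background}.
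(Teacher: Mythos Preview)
Your proposal is correct and matches the paper's approach: the core of both arguments is the observation that $\imath_1(V)$ and $\imath_2(V)$ are represented by the block matrices $(A \mid \mathbf{0})$ and $\left(\begin{smallmatrix} 1 & \mathbf{0} \\ \mathbf{0} & A \end{smallmatrix}\right)$, from which part~\ref{changegr2} (and then parts~\ref{changegr1}--\ref{changegr3}) follow, while parts~\ref{changegr4} and~\ref{changegr5} can be checked directly. The paper states this matrix description in one line and leaves the rest to the reader; you have simply written out the verification in more detail.
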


\begin{proof}
We can verify parts \ref{changegr4} and \ref{changegr5} directly. Parts \ref{changegr1}--\ref{changegr3} follow from the fact that if $V\in\Gr(d,m)$ is represented by the $d\times m$ matrix $A$, then $\imath_1(V)$ and $\imath_2(V)$ are represented by the matrices
\[
\begin{pmatrix}
& & & 0 \\
& A & & \vdots \\
& & & 0
\end{pmatrix} \qquad\text{and}\qquad \begin{pmatrix}
1 & 0 & \cdots & 0 \\
0 & & & \\
\vdots & & A & \\
0 & & &
\end{pmatrix},
\]
respectively.
\end{proof}

Note that the maps $\imath_1$ and $\imath_2$ are not 
$\PGL_2(\CC)$-equivariant, 
and therefore if we are concerned with the full 
$\PGL_2(\CC)$-action, the choice of Grassmannian is still relevant.
However, for the most part, we will only be concerned with the
translation action.  We can then regard $\scellnu$ as a subvariety of
any sufficiently large Grassmannian $\Gr(d,m)$, and the geometry we
consider will independent of this choice.  When $\nu \vdash n$, it will
be particularly convenient to work inside $\Gr(n,2n)$.

\subsubsection{Fundamental differential operators}

We now recall some background from \cite{purbhoo}. Consider linear differential operators of the form
\[
 D = \psi_0(u) \du^d + \psi_1(u)\du^{d-1} + \dots + \psi_{d-1}(u)\du + \psi_d(u)
\,,
\]
where $\psi_0(u), \dots, \psi_d(u) \in \CC(u)$ and $\du$ is the differentiation operator.  
The operator $D$ acts on functions $f(u)$ as
\[
   Df(u) := \psi_0(u) f^{(d)}(u) + \psi_1(u) f^{(d-1)}(u)
   + \dots + \psi_{d-1}(u) f'(u) + \psi_d(u) f(u)
\,.
\]
If $\psi_0(u) \neq 0$,
we say that $D$ has \defn{order} $d$, and  $D$ is \defn{monic} if
$\psi_0(u) = 1$.

We will only be concerned with solutions to
$Df = 0$ where $f$ is a polynomial.  Define
\[
    \pker D := \{f(u) \in \CC[u] \mid Df(u) = 0\}
\,.
\]
A basic fact about a linear differential operator is that its kernel has dimension less than or equal to its order
(see, e.g., {\cite[Section 3.3.2]{ince27}}), so in particular, we have:

\begin{proposition}
\label{prop:Dnullity}
If $D$ is a nonzero linear differential operator of order $d$ with coefficients
in $\CC(u)$, then
\[
    \dim \pker D  \leq d
\,.
\]
\end{proposition}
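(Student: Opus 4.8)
\noindent The plan is to prove the proposition by establishing injectivity of an ``evaluation of jets'' map at a generic point of $\CC$; this is the elementary, polynomial incarnation of the uniqueness half of the basic theory of linear ODEs. Concretely, write $D = \psi_0(u)\du^d + \psi_1(u)\du^{d-1} + \dots + \psi_d(u)$ with $\psi_0 \not\equiv 0$. Each $\psi_j \in \CC(u)$ has only finitely many poles, and $\psi_0$ has only finitely many zeros, so we may fix $u_0 \in \CC$ at which every $\psi_j$ is regular and $\psi_0(u_0) \neq 0$. Consider the linear map
\[
  \Phi \colon \pker D \longrightarrow \CC^d, \qquad
  \Phi(f) := \big(f(u_0),\, f'(u_0),\, \dots,\, f^{(d-1)}(u_0)\big).
\]
It suffices to show that $\Phi$ is injective, for then $\dim \pker D \le \dim \CC^d = d$.

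For injectivity, I would first show that if $f \in \pker D$ and $\Phi(f) = 0$, then in fact $f^{(\ell)}(u_0) = 0$ for every $\ell \ge 0$, by strong induction on $\ell$. The cases $0 \le \ell \le d-1$ are exactly the hypothesis $\Phi(f) = 0$. For $\ell = d + r$ with $r \ge 0$, differentiate the identity $\psi_0 f^{(d)} + \psi_1 f^{(d-1)} + \dots + \psi_d f = 0$ a total of $r$ times; by the Leibniz rule, the only resulting term involving $f^{(d+r)}$ is $\psi_0 f^{(d+r)}$, so one obtains an identity $\psi_0 f^{(d+r)} + \sum_{s=0}^{d+r-1} c_s f^{(s)} = 0$ in which each $c_s \in \CC(u)$ is built from the $\psi_j$ and their derivatives, hence is regular at $u_0$. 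Evaluating at $u_0$, the inductive hypothesis kills every $f^{(s)}(u_0)$ with $s < d+r$, and since $\psi_0(u_0) \neq 0$ we conclude $f^{(d+r)}(u_0) = 0$. This completes the induction; then, since $f$ is a polynomial, its (finite, exact) Taylor expansion about $u_0$ has all coefficients equal to $0$, so $f = 0$. Hence $\Phi$ is injective, which proves the proposition.

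There is no substantive obstacle in this argument; the only point that needs a little care is that the coefficients $\psi_j$ are merely rational, so one must discard in advance the finitely many points where some $\psi_j$ has a pole or $\psi_0$ vanishes before choosing $u_0$. As an alternative, one could argue via Wronskians: a hypothetical family $f_0, \dots, f_d$ of $\CC$-linearly independent polynomial solutions would satisfy $\Wr(f_0, \dots, f_d) \not\equiv 0$ by the classical nonvanishing criterion for Wronskians of linearly independent polynomials, yet the relation $\psi_0 f_i^{(d)} = -\sum_{j=1}^d \psi_j f_i^{(d-j)}$ exhibits the column of $d$th derivatives in the Wronskian matrix as a fixed $\CC(u)$-linear combination of the earlier columns, forcing $\Wr(f_0, \dots, f_d) \equiv 0$ --- a contradiction. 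I would present the first, fully self-contained version.
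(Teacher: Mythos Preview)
Your argument is correct. The paper does not actually supply a proof of this proposition; it simply records it as a ``basic fact about a linear differential operator'' and cites Ince's textbook \cite[Section 3.3.2]{ince27}. Your jet-evaluation argument is precisely the elementary proof of the uniqueness half of the existence--uniqueness theorem for linear ODEs, specialized to polynomial solutions, and is exactly the kind of self-contained justification a reader would reconstruct from that citation. The Wronskian alternative you sketch is equally standard and slightly closer in spirit to how the paper uses $D_V$ elsewhere, but either version is fine.
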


Now suppose that $V \in \scellnu \subseteq \Gr(d,m)$, and let $(f_1, \dots, f_d)$
be any basis for $V$.  Consider
\[
     D_Vf := \frac{\Wr(f_1, \dots, f_d, f)}{\Wr(f_1, \dots, f_d)}
\,.
\]
Expanding the determinant for $\Wr(f_1, \dots, f_d, f)$ along the
bottom row, we see that $D_V$
is a monic linear differential operator of order $d$
with coefficients in $\CC(u)$.  Specifically, 
we can write $D_V$ in the form
\[
  D_V = \frac{1}{\Wr(V)} 
       \sum_{k=0}^d \sum_{\ell =0}^{n-k} (-1)^k \psi_{k,\ell} u^{n-k-\ell} \du^{d-k}
\,,
\]
where $\psi_{k,\ell} \in \CC$, and $n = |\nu|$.
$D_V$ is called the \defn{fundamental differential operator} of $V$.

\begin{proposition}
\label{prop:FDOunique}
$D_V$ is the unique monic linear operator of order $d = \dim V$ 
with the property
that $V = \pker D_V$.
\end{proposition}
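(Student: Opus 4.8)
The plan is to establish the two assertions — that $V = \pker D_V$, and that $D_V$ is the \emph{unique} monic order-$d$ operator with this kernel — in turn, using \cref{prop:Dnullity} as the main tool in both cases. First I would verify the inclusion $V \subseteq \pker D_V$. If $f \in V$, then $f_1, \dots, f_d, f$ are linearly dependent functions, so the $(d+1)\times(d+1)$ Wronskian matrix $\big(f_j^{(i)}\big)$ has linearly dependent rows; hence $\Wr(f_1, \dots, f_d, f) = 0$ and $D_V f = \Wr(f_1,\dots,f_d,f)/\Wr(f_1,\dots,f_d) = 0$. (Along the way one records that $D_V$ does not depend on the chosen basis: replacing $(f_1,\dots,f_d)$ by $M \cdot (f_1,\dots,f_d)$ for $M \in \GL_d$ multiplies both Wronskians in the defining ratio by $\det M$.)

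Next, since $D_V$ is monic of order $d$, it is in particular a nonzero linear differential operator of order $d$ with coefficients in $\CC(u)$, so \cref{prop:Dnullity} gives $\dim \pker D_V \leq d = \dim V$. Together with the inclusion $V \subseteq \pker D_V$ just proved, this forces $V = \pker D_V$, which is the existence half of the statement.

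For uniqueness, suppose $D$ is another monic linear operator of order $d$ with $\pker D = V$. Then the monic leading terms of $D$ and $D_V$ cancel, so $D - D_V$ is a linear differential operator with coefficients in $\CC(u)$ whose $\du^d$-coefficient vanishes; thus its order is at most $d-1$. On the other hand every $f \in V$ satisfies $(D - D_V)f = Df - D_V f = 0$, so $V \subseteq \pker(D - D_V)$ and $\dim \pker(D - D_V) \geq d$. If $D - D_V$ were nonzero, then applying \cref{prop:Dnullity} to it (its order being at most $d-1$) would yield $\dim \pker(D - D_V) \leq d-1$, a contradiction. Hence $D - D_V = 0$, i.e.\ $D = D_V$. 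The only point requiring any care is the bookkeeping of the notion of \emph{order} when forming the difference $D - D_V$: one must observe that cancellation of the leading (monic) terms strictly lowers the order, so that \cref{prop:Dnullity} produces a kernel-dimension bound strictly below $\dim V$; apart from this, everything follows immediately from the vanishing of Wronskians on linearly dependent tuples and the kernel-dimension bound.
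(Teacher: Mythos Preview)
Your proof is correct and follows essentially the same approach as the paper: both establish $V \subseteq \pker D_V$ via vanishing of the Wronskian on dependent tuples, and both prove uniqueness by applying \cref{prop:Dnullity} to the difference $D - D_V$. The only minor difference is in showing the reverse inclusion $\pker D_V \subseteq V$: the paper invokes the converse Wronskian criterion (vanishing Wronskian of polynomials implies linear dependence) directly, whereas you instead apply \cref{prop:Dnullity} to $D_V$ itself to bound $\dim \pker D_V \leq d$ --- a slightly more self-contained route that avoids appealing to that converse.
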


\begin{proof}
Let $(f_1, \dots, f_d)$ be a basis for $V$. We have
\[
D_V f = 0 \qquad\iff\qquad \Wr(f_1, \dots, f_d, f) = 0 \qquad\iff\qquad f\in V,
\]
so $V = \pker D_V$.
If there were another monic operator $D_V' \neq D_V$
of order $d$ such that $V = \pker D_V'$,
then we would have $V \subseteq \pker(D_V - D_V')$,  contradicting \cref{prop:Dnullity}.
\end{proof}

Thus, the numbers 
$\boldpsi := (\psi_{k,\ell} : 0 \le k \leq d,\ 0 \le \ell \leq n-k)$ 
uniquely determine (and are
uniquely determined by) $V$.  
We may regard $\boldpsi$ as a system
of affine coordinates on the Schubert cell $\scellnu$.
We refer to these as \defn{FDO coordinates}, since they come
from the fundamental differential operator.
Unlike the \Plucker coordinates, it is difficult to state
the precise relations among FDO coordinates, though in
principle it is possible. 
The main fact we will need is that we can express the normalized \Plucker coordinates of $V$ as 
polynomial functions of $\boldpsi$:

\begin{lemma}
\label{lem:FDOplucker}
Let $\nu \vdash n$.
For $V \in \scellnu$, 
let $(\Delta^\lambda_V : \lambda \subseteq \nu)$
denote the normalized \Plucker coordinates of $V$,
and let $\boldpsi_V$ denote the FDO coordinates of $V$.  
Then there exist multivariate polynomials $p^\lambda_\nu$ with rational coefficients
such that for all $V \in \scellnu$, we have
\[
     \Delta^\lambda_V = p^\lambda_\nu(\boldpsi_V)
\qquad \text{for all $\lambda \subseteq \nu$.}
\]
\end{lemma}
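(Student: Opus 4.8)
The plan is to exhibit the polynomials $p^\lambda_\nu$ by reading them off from the explicit formula for the fundamental differential operator $D_V$ in terms of a basis of $V$ built from the normalized \Plucker coordinates. Concretely, fix $\nu \vdash n$ and work inside $\Gr(d,m)$ with $m = d + \nu_1$. By \cref{prop:pluckerbasis} (applied with $J$ the column set corresponding to the partition $\nu$, so that $\Delta^\nu \neq 0$ on all of $\scellnu$), the rows
$b_j := \sum_{k=1}^m \Delta_{(J-j)+k}\, e_k$, for $j \in J$, form a basis of $V$ whose coefficients --- under the identification \eqref{eq:isomorphism} --- are fixed integer-coefficient linear combinations of the \Plucker coordinates $\Delta^\lambda$, $\lambda \subseteq \nu$. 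After rescaling so that $\Delta^\nu = \tfrac{n!}{\numsyt{\nu}}$, i.e.\ passing to the normalized \Plucker coordinates, these basis polynomials $b_1(u), \dots, b_d(u)$ have coefficients that are degree-$1$ polynomials in $(\Delta^\lambda_V : \lambda \subseteq \nu)$ with rational coefficients.

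Next I would plug this basis into the defining formula
\[
   D_V f = \frac{\Wr(b_1, \dots, b_d, f)}{\Wr(b_1, \dots, b_d)}
\]
and expand the Wronskian determinant in the numerator along its bottom row. Each coefficient $\psi_{k,\ell}$ of $D_V$ is then, up to the overall factor $\Wr(V) = \Wr(b_1,\dots,b_d)$, a polynomial in the derivatives of the $b_j(u)$ evaluated coefficient-by-coefficient --- hence a polynomial (with rational coefficients) in the normalized \Plucker coordinates. The normalization $\Delta^\nu = n!/\numsyt{\nu}$ is precisely what makes $\Wr(V)$ monic with the correct leading coefficient via \cref{prop:pluckerwronskian}, so no division is needed: $\boldpsi_V = (\psi_{k,\ell})$ is a polynomial function $q_\nu$ of $(\Delta^\lambda_V : \lambda \subseteq \nu)$. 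The scheme-theoretic content of \cref{prop:FDOunique}, combined with the fact that both $\scellnu$ and $\monics \times (\text{FDO-relation locus})$ are affine spaces of dimension $n$, guarantees that this polynomial map $\Delta \mapsto \boldpsi$ is an isomorphism of affine varieties; therefore its inverse $\boldpsi \mapsto \Delta$ is also given by polynomials, and we may take $p^\lambda_\nu$ to be the components of this inverse.

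The one genuinely delicate point is the \emph{invertibility} step: I must argue that the polynomial map $\Delta_V \mapsto \boldpsi_V$ has a polynomial inverse, not merely a rational one. The cleanest route is to invoke \cref{prop:FDOunique}: the FDO coordinates $\boldpsi$ are \emph{defined} as a system of affine coordinates on $\scellnu$ (an $n$-dimensional affine space), and the normalized \Plucker coordinates $(\Delta^\lambda : \lambda \subseteq \nu)$ are likewise regular functions on $\scellnu$; since both coordinate systems are regular and mutually determine each other, each $\Delta^\lambda$ is a regular function of $\boldpsi$ on affine space, hence a polynomial. (Regularity of $\Delta^\lambda$ as a function of $\boldpsi$ follows because $\Delta^\lambda$ is a regular function on $\scellnu \cong \mathbb{A}^n$, and $\boldpsi$ is a coordinate system, i.e.\ an isomorphism $\scellnu \xrightarrow{\sim} \mathbb{A}^n$.) This is the step I expect to require the most care to phrase correctly, but it is conceptually routine once one grants that $\scellnu$ is affine space and that $\boldpsi$ forms a coordinate system on it --- both of which are recorded above.
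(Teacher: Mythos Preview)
Your first two steps are fine and essentially match the paper's approach: construct a basis from the normalized \Plucker coordinates, then read off the FDO coordinates $\boldpsi_V$ as polynomials (with rational coefficients) in $(\Delta^\lambda_V : \lambda \subseteq \nu)$. The gap is in your invertibility argument. You assert that ``$\boldpsi$ is a coordinate system, i.e.\ an isomorphism $\scellnu \xrightarrow{\sim} \mathbb{A}^n$'', but the paper only records that $\boldpsi_V$ \emph{uniquely determines} $V$ (\cref{prop:FDOunique}) --- a set-theoretic injection, not an isomorphism of varieties. An injective morphism of affine varieties need not have a polynomial inverse: think of $t \mapsto (t^2, t^3)$, where $t$ is not a polynomial in $t^2$ and $t^3$. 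Note too that the tuple $\boldpsi = (\psi_{k,\ell} : 0 \le k \le d,\ 0 \le \ell \le n-k)$ has more than $n$ entries and satisfies relations (as the paper remarks), so ``$\boldpsi$ gives an isomorphism to $\mathbb{A}^n$'' is not even well-posed as stated. Your appeal to ``the FDO-relation locus being affine $n$-space'' is exactly what would need to be proved.

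The paper closes this gap not by abstract algebraic geometry but by citing an explicit triangularity result: \cite[Lemma 7.5]{purbhoo} shows that the map from the coefficients of a standard basis of $V$ to the FDO coordinates is (manifestly) triangular, hence polynomially invertible. Once you know basis coefficients are polynomials in $\boldpsi$, the normalized \Plucker coordinates are minors of the basis coefficient matrix and so are polynomials in $\boldpsi$ as well. So the fix is concrete: replace your abstract invertibility paragraph with an appeal to (or a reproof of) this triangularity.
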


\begin{proof}
By \cite[Lemma 7.5]{purbhoo}, there exist formulas for FDO 
coordinates as polynomial functions of coefficients of a 
basis for $V$; this change of coordinates is manifestly 
triangular and therefore polynomially invertible.  Hence there exist
polynomials which give coefficients of a basis for $V$ in terms of 
the FDO coordinates.
The normalized \Plucker coordinates are minors of the coefficient matrix for any
basis (up to a global scalar), and hence are also given by polynomials in the FDO coordinates.
\end{proof}

\begin{remark}
The polynomials $p^\lambda_\nu$ have high degree and (as described above)
depend on $\nu$. 
By contrast, the formula
for changing from \Plucker coordinates to
FDO coordinates is linear and independent of $\nu$.  Specifically, let
$(\Delta^\lambda(t) : \lambda \subseteq \nu)$ be the 
normalized \Plucker coordinates of $V(t)$,
which are given by \eqref{eq:geometrictranslation} with $s=0$.
Then the FDO coordinates of $V$ are given by the formula
\[
    \psi_{k,\ell} = [u^{n-k-\ell}]\Delta^{1^k}(u)
\,.
\]
This can be proved directly from the definition; it also follows from
\cref{lem:FDOcoords}.
\end{remark}

\begin{remark}
An explicit polynomial formula for \Plucker coordinates in terms 
FDO coordinates, which does not depend on $\nu$ (but does depend on
the choice of Grassmannian $\Gr(d,m)$), can be obtained 
using the basis in \cref{thm:generalbasis} and \cref{prop:dualityautomorphism}.
This construction is discussed in detail (and in greater generality)
in the follow-up paper \cite{karp_mukhin_tarasov25};
the resulting formula is the ``dual Jacobi--Trudi identity'' (Theorem 3.1(ii)) 
therein.
A special case of this formula (corresponding to
$\Gr(d,m)$ with $m-d = \lambda_1$) was previously obtained in 
\cite[(4.29)]{alexandrov_leurent_tsuboi_zabrodin14}.
\end{remark}


\subsection{Symmetric functions}

Let $\Lambda = \Lambda_\CC(\boldx)$ denote the algebra of symmetric
functions over $\CC$, in infinitely many variables $x_1, x_2, \dots$. We assume the reader is familiar with symmetric functions; we refer to \cite[Chapter 7]{stanley24} for background.
There are several standard bases for $\Lambda$, indexed by partitions: 
the monomial
basis $(\m_\lambda)$, the elementary basis $(\e_\lambda)$,
the homogeneous basis $(\h_\lambda)$, the power sum basis $(\p_\lambda)$,
and the Schur basis $(\s_\lambda)$.  
It is sometimes useful to regard $\Lambda$ as being a free polynomial
algebra in the power sum generators,
$\Lambda = \CC[\p_1, \p_2, \p_3, \dots]$.
We will also sometimes consider
the formal completion $\widehat \Lambda$ of $\Lambda$, 
in which infinite linear combinations of the basis elements 
of $\Lambda$ are allowed; equivalently,
$\widehat \Lambda = \CC[[\p_1, \p_2, \p_3, \dots]]$.

The Hall inner product 
$\langle \cdot, \cdot \rangle$ on $\Lambda$ is the unique Hermitian inner 
product such that the Schur functions form an orthonormal basis:
$\langle \s_\lambda, \s_\mu \rangle = \delta_{\lambda,\mu}$.
We also have
$\langle \m_\lambda, \h_\mu \rangle = \delta_{\lambda,\mu}$,
and
$\langle \p_\lambda, \p_\mu \rangle = 0$ for
$\lambda \neq \mu$.  The inner product 
$\langle \p_\lambda, \p_\lambda \rangle$ is a positive integer, denoted 
$\z_\lambda$.
We take the convention that $\langle \cdot, \cdot \rangle$ is linear
in the first component and antilinear in the second component (unlike in \cite[Section 7.9]{stanley24}, where $\langle\cdot,\cdot\rangle$ is bilinear). However, in practice, the symmetric functions appearing in the second
component will always have rational coefficients.

\subsubsection{Representation theory of \texorpdfstring{$\Sn$}{S\textunderscore{}n}}
\label{sec:repthy}

We recall some background from \cite{sagan01}. For a permutation $\sigma \in \Sn$, let $\cyc(\sigma)$ denote
the cycle type of $\sigma$, which is a partition of $n$.  For
$\lambda \vdash n$, write 
$\calC_\lambda = \{\sigma \in S_n \mid \cyc(\sigma) = \lambda\}$ for
the $\Sn$-conjugacy class consisting of all permutations of cycle 
type $\lambda$.
We have 
$\z_\lambda 
= \langle \p_\lambda, \p_\lambda \rangle = \frac{n!}{|\calC_\lambda|}$.

For $\lambda \vdash n$, we have the 
Specht module $\specht{\lambda}$, which is the irreducible
representation of $\Sn$ associated to $\lambda$.
The character of $\specht{\lambda}$ is denoted
$\chi^\lambda : \Sn \to \CC$, and $\dim \specht{\lambda} = \chi^\lambda(\Snidentity) = \numsyt{\lambda}$.
The Frobenius character formula states that 
$\chi^\lambda(\sigma) = \langle \s_\lambda, \p_{\cyc(\sigma)} \rangle \in \ZZ$.
We write $\chi^\lambda_\mu := \langle \s_\lambda, \p_\mu \rangle 
= \chi^\lambda(\sigma)$ for any
$\sigma \in \calC_\mu$, so that we have the change of basis formulas
\begin{equation}
\label{eq:spchangeofbasis}
     \s_\lambda
     = \sum_{\mu \vdash |\lambda|} \frac{\chi^\lambda_\mu}{\z_\mu}\p_\mu
\qquad \text{and} \qquad
     \p_\mu
     = \sum_{\lambda \vdash |\mu|} \chi^\lambda_\mu
        \s_\lambda
\,.
\end{equation}

More generally, if $X \subseteq [n]$ and $\lambda \vdash |X|$, we define
$\chi^\lambda : \symgrp{X} \to \CC$, by identifying $\symgrp{X}$ with 
$\symgrp{|X|}$.  We denote the Specht module of $\symgrp{X}$
corresponding to $\lambda$ as $\specht{\lambda}_X$,
which is identified
with $\specht{\lambda}$ under the isomorphism 
$\symgrp{X} \simeq \symgrp{|X|}$.
Let
\begin{equation}
\label{eq:alphadef}
  \alpha^\lambda_X := 
   \sum_{\sigma \in \symgrp{X}} \chi^\lambda(\sigma) \sigma
   \in \CC[\symgrp{X}]
\,.
\end{equation}
Properties of the $\alpha^\lambda_X$'s will allow us to prove \cref{prop:betapsd}, using the fact that
\begin{equation}
\label{eq:alphatobeta}
\beta^\lambda = \sum_{\substack{X \subseteq [n], \\ |X| = |\lambda|}}
   \alpha^\lambda_X \cdot \prod_{i \in [n]\setminus X} z_i
\,.
\end{equation}

Recall that every finite-dimensional representation $N$ of a finite group $G$ admits a Hermitian inner product such that every $g\in G$ acts as a unitary operator \cite[Section 1.3]{serre98}. The following is a result for general $G$ applied to the case $G = \symgrp{X}$ (using the fact that $\chi^\lambda$ is real-valued):
\begin{proposition}[{\cite[Theorem 8]{serre98}}]
\label{prop:projection}
Let $N$ be a finite-dimensional representation of $\symgrp{X}$ equipped with a unitary inner product. Then $\frac{\numsyt{\lambda}}{|\lambda|!}\alpha^\lambda_X$ acts on $N$ as orthogonal projection onto its $\lambda$-isotypic component (i.e.\ the sum of all submodules isomorphic to $\specht{\lambda}_X$).
\end{proposition}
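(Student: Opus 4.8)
The plan is to identify $e_\lambda := \frac{\numsyt{\lambda}}{|\lambda|!}\alpha^\lambda_X$ with the classical primitive central idempotent of $\CC[\symgrp{X}]$ associated to $\specht{\lambda}_X$, and then to observe that the unitarity hypothesis promotes the corresponding projection to an \emph{orthogonal} projection. Write $G := \symgrp{X}$, and note $|G| = |X|! = |\lambda|!$ since $\lambda \vdash |X|$, so that the normalization constant is $\frac{\numsyt{\lambda}}{|\lambda|!} = \frac{\dim \specht{\lambda}_X}{|G|}$. First I would decompose $N = \bigoplus_\mu N_\mu$ into its isotypic components, with $N_\mu$ the sum of all submodules of $N$ isomorphic to $\specht{\mu}_X$, and reduce the statement to proving: (a) $e_\lambda$ acts as the identity on $N_\lambda$ and as $0$ on each $N_\mu$ with $\mu \neq \lambda$; and (b) $e_\lambda$ is self-adjoint with respect to the given inner product. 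Granting these, (a) makes $e_\lambda$ idempotent with image $N_\lambda$, and a self-adjoint idempotent is exactly orthogonal projection onto its image.

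For (a), the key input is Schur's lemma together with the orthogonality relations for irreducible characters. Given an irreducible $(\rho_\mu, \specht{\mu}_X)$, the operator $\rho_\mu(\alpha^\lambda_X) = \sum_{\sigma \in G}\chi^\lambda(\sigma)\rho_\mu(\sigma)$ commutes with every $\rho_\mu(\tau)$ because $\chi^\lambda$ is a class function, hence is a scalar $c\,\mathrm{Id}$; comparing traces gives $c\,\numsyt{\mu} = \sum_\sigma \chi^\lambda(\sigma)\chi^\mu(\sigma) = |G|\,\delta_{\lambda,\mu}$, so $\rho_\mu(e_\lambda) = \delta_{\lambda,\mu}\,\mathrm{Id}$. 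Since $N_\mu$ is a direct sum of copies of $\specht{\mu}_X$, this yields (a). Here I use that $\chi^\lambda$ (equivalently, every character of $\symgrp{X}$) is real-valued, so that $\sum_\sigma \chi^\lambda(\sigma)\chi^\mu(\sigma)$ coincides with the standard orthogonality sum $\sum_\sigma \chi^\lambda(\sigma^{-1})\chi^\mu(\sigma) = |G|\,\delta_{\lambda,\mu}$. For (b), unitarity of the $G$-action gives $\sigma^* = \sigma^{-1}$ for every $\sigma$, so
\[
e_\lambda^* = \frac{\numsyt{\lambda}}{|\lambda|!}\sum_{\sigma\in G}\overline{\chi^\lambda(\sigma)}\,\sigma^{-1} = \frac{\numsyt{\lambda}}{|\lambda|!}\sum_{\sigma\in G}\chi^\lambda(\sigma^{-1})\,\sigma^{-1} = \frac{\numsyt{\lambda}}{|\lambda|!}\sum_{\tau\in G}\chi^\lambda(\tau)\,\tau = e_\lambda,
\]
again using that $\chi^\lambda$ is real-valued and reindexing $\tau = \sigma^{-1}$. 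This completes the argument.

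I do not anticipate a genuine obstacle: this is the standard description of the primitive central idempotents of a group algebra, and the one point meriting care is the placement of complex conjugates in the character orthogonality relations — which is precisely why the hypothesis that $\chi^\lambda$ is real-valued (noted in the text preceding the proposition) is invoked, and which makes both (a) and (b) run cleanly. As a shortcut one could instead simply cite \cite{serre98} and check that the normalization constant there matches $\frac{\dim\specht{\lambda}_X}{|G|}$.
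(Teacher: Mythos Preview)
Your proof is correct and is precisely the standard argument underlying the cited result. The paper does not give its own proof of this proposition: it simply states the result with a citation to \cite[Theorem 8]{serre98}, prefaced by the remark that this is a result for general finite groups $G$ specialized to $G = \symgrp{X}$, using that $\chi^\lambda$ is real-valued. Your write-up supplies exactly the details behind that citation --- identifying $e_\lambda$ as the primitive central idempotent via Schur's lemma and character orthogonality, and using unitarity plus real-valuedness of $\chi^\lambda$ to get self-adjointness --- and you correctly flag the one subtlety (placement of conjugates) that the paper's prefatory remark is alluding to.
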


\begin{corollary}
\label{cor:alphapsd}
For all partitions $\nu \vdash n$ and $\lambda \vdash |X|$, the element
$\frac{\numsyt{\lambda}}{|\lambda|!}\alpha^\lambda_X$ acts on $\spechtnu$ as a real orthogonal projection. In particular, $\alpha^\lambda_X$ is positive semidefinite. Also, it is 
nonzero if and only if $\lambda \subseteq \nu$.
\end{corollary}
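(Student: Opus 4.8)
The plan is to deduce the whole statement from \cref{prop:projection}, applied to $N=\spechtnu$ regarded as a $\symgrp{X}$-module by restriction along the inclusion $\symgrp{X}\hookrightarrow\Sn$. First I would note that the Hermitian inner product on $\spechtnu$ under which every $\sigma\in\Sn$ acts unitarily is, after restriction to $\symgrp{X}$, still a unitary inner product for $\symgrp{X}$; hence \cref{prop:projection} applies and shows that $\frac{\numsyt{\lambda}}{|\lambda|!}\alpha^\lambda_X$ acts on $\spechtnu$ as the orthogonal projection $\pi$ onto the $\specht{\lambda}_X$-isotypic component of $\mathrm{Res}^{\Sn}_{\symgrp{X}}\spechtnu$.

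Next I would extract the two stated consequences. Since $\chi^\lambda$ is real-valued, $\alpha^\lambda_X$ lies in the real group algebra $\RR[\symgrp{X}]$, and $\numsyt{\lambda}/|\lambda|!\in\RR$; choosing a real orthonormal basis of $\spechtnu$ on which $\Sn$ acts by real orthogonal matrices (e.g.\ Young's orthogonal form, which realizes the unitary inner product up to scalar, as $\spechtnu$ is irreducible), the operator $\frac{\numsyt{\lambda}}{|\lambda|!}\alpha^\lambda_X$ is represented by a real matrix, so $\pi$ is a real orthogonal projection. Any orthogonal projection is positive semidefinite (it is self-adjoint with eigenvalues in $\{0,1\}$), and $\alpha^\lambda_X=\frac{|\lambda|!}{\numsyt{\lambda}}\pi$ is a positive scalar multiple of $\pi$, hence positive semidefinite as well.

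Finally, $\alpha^\lambda_X$ acts as $0$ on $\spechtnu$ exactly when $\pi=0$, i.e.\ when $\specht{\lambda}_X$ does not occur in $\mathrm{Res}^{\Sn}_{\symgrp{X}}\spechtnu$. Because $\symgrp{X}$ is conjugate in $\Sn$ to the standard subgroup $\symgrp{|X|}$, this restriction and its multiplicities depend only on $|X|$, and iterating the branching rule for Specht modules shows that the multiplicity of $\specht{\lambda}$ in it equals $\numsyt{\nu/\lambda}$, the number of standard Young tableaux of shape $\nu/\lambda$ (equivalently, one may write it as $\sum_{\rho}c^\nu_{\lambda\rho}\numsyt{\rho}$ via Littlewood--Richardson coefficients and note that some term is positive iff $\lambda\subseteq\nu$). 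Since $\numsyt{\nu/\lambda}>0$ precisely when $\lambda\subseteq\nu$, this is exactly the asserted nonvanishing criterion. Every step here is routine once \cref{prop:projection} is in hand; the only non-formal ingredient is this last multiplicity computation, so getting the branching rule and its conventions straight is where I would be most careful.
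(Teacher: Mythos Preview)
Your proof is correct and follows essentially the same approach as the paper: both apply \cref{prop:projection} to $\spechtnu$ viewed as a $\symgrp{X}$-module by restriction, and then invoke the (iterated) branching rule to identify the $\specht{\lambda}_X$-isotypic component as having multiplicity $\numsyt{\nu/\lambda}$, giving the nonvanishing criterion. The paper's proof is terser, but your added detail on realness (via real characters and a real orthogonal model of $\spechtnu$) and on positive semidefiniteness is entirely in the same spirit.
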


\begin{proof}
Since $\alpha^\lambda_X \in \CC[\symgrp{X}]$, we may regard $\spechtnu$ as a representation of $\symgrp{X}$ by restriction. According to the branching rule \cite[Theorem 2.8.3]{sagan01}, this restriction decomposes into irreducibles as $\bigoplus_{\mu\vdash |X|,\mu\subseteq\nu} (\specht{\mu}_X)^{\oplus\numsyt{\nu/\mu}}$. The result follows from \cref{prop:projection}.
\end{proof}

\begin{proof}[Proof of \cref{prop:betapsd}]
Let $\alpha^\lambda_{X,\nu}$ denote the operator $\alpha^\lambda_X$ on
$\spechtnu$.  By \cref{cor:alphapsd}, $\alpha^\lambda_{X,\nu}$ is self-adjoint and positive semidefinite.
By \eqref{eq:alphatobeta}, if $z_1, \dots, z_n \in \RR$, then $\beta^\lambda_\nu$ is a real
linear combination of self-adjoint operators, and hence is itself self-adjoint.  This proves part \ref{betapsd1}. Similarly, if $z_1, \dots z_n \geq 0$, then $\beta^\lambda_\nu$ is a 
nonnegative linear combination of positive-semidefinite operators, and hence is itself positive semidefinite. This proves part \ref{betapsd2}.

If $\lambda \not \subseteq \nu$,
then by \cref{cor:alphapsd} we have
$\alpha^\lambda_{X,\nu} = 0$.  Hence $\beta^\lambda_\nu = 0$, which proves part \ref{betapsd4}.

Finally, for part \ref{betapsd3}, suppose that $z_1, \dots, z_n > 0$ and $\lambda \subseteq\nu$.
In this case, by \cref{cor:alphapsd},
the operators $\alpha^\lambda_{X,\nu}$ are nonzero.
Since $\beta^\lambda_\nu$ is positive semidefinite, it is positive
definite if and only if $\ker \beta^\lambda_\nu = \{0\}$.
Furthermore, since $\beta^\lambda_\nu$ is a positive linear combination of
positive-semidefinite operators, we have
\[
   \ker \beta^\lambda_\nu = 
    \bigcap_{\substack{X \subseteq [n], \\ |X| = |\lambda|}} 
     \ker \alpha^\lambda_{X,\nu} 
\,.
\]
Since the operators $\alpha^\lambda_{X,\nu}$ are nonzero, we have
$\beta^\lambda_\nu \neq 0$.
Note that the right-hand side above is independent of $z_1, \dots, z_n$.  
Therefore, it 
is sufficient to compute $\ker \beta^\lambda_\nu$ for $z_1 = \dots = z_n = 1$. 
In this case $\beta^\lambda$ belongs to the centre of $\CC[\Sn]$,
and so by Schur's lemma \cite[Proposition 4]{serre98}, $\beta^\lambda_\nu$ is a multiple of the identity operator
on $\spechtnu$.
Since $\beta^\lambda_\nu \neq 0$, we deduce that
$\ker \beta^\lambda_\nu = \{0\}$, as required.
\end{proof}

\subsubsection{The KP hierarchy}
\label{sec:kpbackground}

The \emph{KP equation}
\[
\frac{\partial}{\partial x}\left(-4\frac{\partial u}{\partial t} + 6u\frac{\partial u}{\partial x} + \frac{\partial^3 u}{\partial x^3}\right) + 3\frac{\partial^2 u}{\partial y^2} = 0
\]
was introduced by Kadomtsev and Petviashvili \cite{kadomtsev_petviashvili70} as a ($2$+$1$)-dimensional generalization of the KdV equation, to model solitary waves under the effect of weak transverse perturbations (such as shallow water waves). It has since been widely studied; we refer to \cite{kodama17} for further background. The KP equation is the first equation in a system known as the \emph{KP hierarchy}. Sato \cite{sato81} discovered that solutions of the KP hierarchy correspond to points in an infinite-dimensional Grassmannian. We recall an algebraic version of this correspondence, following Carrell and Goulden \cite{carrell_goulden10} (cf.\ \cite{miwa_jimbo_date00}).

We identify a symmetric function $f \in \Lambda$ 
with the multiplication operator $f : \Lambda \to \Lambda$, $g \mapsto fg$.
The linear operator $f^\perp : \Lambda \to \Lambda$ is the adjoint of $f$ under $\langle \cdot, \cdot \rangle$,
and is called a \emph{skewing operator}.
Viewing $\Lambda = \CC[\p_1, \p_2, \p_3, \dots]$ as a polynomial algebra in the power sum generators,
$f^\perp$ is a differential operator 
with constant coefficients; for example, $\p_k^\perp = k\frac{\partial}{\partial p_k}$. 
Equations involving skewing operators are 
partial differential equations.

We consider the multiplication 
operators $\H(t)$, $\E(t)$ and their adjoints
$\H^\perp(t)$, $\E^\perp(t)$, defined as follows:
\begin{align*}
  \H(t) &:= \sum_{k \geq 0} \h_k t^k\,,
&
  \E(t) &:= \sum_{k \geq 0} \e_k t^k\,,
\\[4pt]
  \H^\perp(t) &:= \sum_{k \geq 0} \h_k^\perp t^k\,, 
&
  \E^\perp(t) &:= \sum_{k \geq 0} \e_k^\perp t^k
\,,
\end{align*}
where $t$ is a formal indeterminate.  Note that
\begin{equation}
\label{eq:E}
 \E(t) = \sum_{\substack{S\subseteq\ZZ_{>0},\\ S \text{ finite}}} t^{|S|} \cdot \prod_{b\in S} x_b,
\end{equation}
where the sum is taken over all finite subsets $S$ of the positive integers.
The operator $\H^\perp(t)$ has
a particularly nice alternate description:

\begin{proposition}[cf.\ {\cite[(14)]{bergeron_haiman13}}]
\label{prop:Hperp}
For every symmetric function $f = f(x_1, x_2, \dots ) \in \Lambda$, we have
\[
  \H^\perp(t)f = f(t, x_1, x_2, \dots)
\,.
\]
In particular, $\H^\perp(t)$ is a $\CC$-algebra homomorphism.
\end{proposition}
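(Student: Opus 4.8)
The plan is to verify the identity on the Schur basis and then read off the homomorphism claim as a formal consequence. Write $\phi_t\colon\Lambda\to\Lambda[t]$ for the map $\phi_t(f):=f(t,x_1,x_2,\dots)$, i.e.\ adjoin one extra variable and set it equal to $t$; this is visibly an algebra homomorphism, so once we know $\H^\perp(t)=\phi_t$ the final assertion is automatic. Both $\H^\perp(t)$ and $\phi_t$ are $\CC$-linear, and on a homogeneous $f$ of degree $d$ both return a polynomial in $t$ of degree at most $d$ with coefficients in $\Lambda$ (for $\H^\perp(t)$ because $\h_k^\perp$ lowers degree by $k$ and annihilates forms of degree $<k$); thus it suffices to check $\H^\perp(t)\s_\lambda=\phi_t(\s_\lambda)$ for every partition $\lambda$.

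For the left-hand side, since $\h_k^\perp$ is adjoint to multiplication by $\h_k$, Pieri's rule (in adjoint form) gives $\h_k^\perp\s_\lambda=\sum_\mu\s_\mu$, the sum over $\mu\subseteq\lambda$ with $\lambda/\mu$ a horizontal strip of size $k$; hence
\[
  \H^\perp(t)\s_\lambda=\sum_{k\ge0}t^k\,\h_k^\perp\s_\lambda=\sum_{\substack{\mu\subseteq\lambda,\\ \lambda/\mu\text{ a horizontal strip}}}t^{|\lambda/\mu|}\,\s_\mu\,.
\]
For the right-hand side, the coproduct on $\Lambda$ yields the standard ``add-a-variable'' expansion $\s_\lambda(t,x_1,x_2,\dots)=\sum_\mu\s_{\lambda/\mu}(t)\,\s_\mu(x_1,x_2,\dots)$, and in a single variable $\s_{\lambda/\mu}(t)=t^{|\lambda/\mu|}$ if $\lambda/\mu$ is a horizontal strip and $0$ otherwise (a one-letter semistandard filling of $\lambda/\mu$ exists exactly when no column of $\lambda/\mu$ contains two boxes). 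So $\phi_t(\s_\lambda)$ equals the same sum, and we are done.

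I do not anticipate a genuine obstacle here: this is a known fact (cf.\ \cite[(14)]{bergeron_haiman13}), and the only points that need a little attention are that both operators really land in $\Lambda[t]$ (rather than merely a completion) and that the Hermitian nature of $\langle\cdot,\cdot\rangle$ is irrelevant in this argument, since the symmetric functions that occur ($\h_k$, $\s_\lambda$, $\s_{\lambda/\mu}$) all have integer coefficients. If one prefers a more structural route, one can instead work in $\Lambda=\CC[\p_1,\p_2,\dots]$, note that $\p_k^\perp=k\,\partial/\partial\p_k$ and $\H(t)=\exp\big(\sum_{k\ge1}\tfrac{t^k}{k}\p_k\big)$, deduce that $\H^\perp(t)=\exp\big(\sum_{k\ge1}t^k\,\partial/\partial\p_k\big)$ is the substitution $\p_k\mapsto\p_k+t^k$, and observe that this coincides with $\phi_t$ because $\p_k(t,x_1,x_2,\dots)=t^k+\p_k(x_1,x_2,\dots)$. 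Either way, the homomorphism statement follows at once from that of $\phi_t$.
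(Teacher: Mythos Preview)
Your proof is correct. Note, however, that the paper does not actually give its own proof of this proposition: it is stated with a citation (``cf.\ \cite[(14)]{bergeron\_haiman13}'') and no argument, so there is nothing to compare against. Both of your approaches---verification on the Schur basis via adjoint Pieri, and the power-sum/exponential shift argument showing $\H^\perp(t)$ acts by $\p_k\mapsto\p_k+t^k$---are standard and sound; either one fully justifies the statement.
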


The \defn{Bernstein operator} $\B(t)$ and its adjoint $\B^\perp(t)$ are 
the following operators:
\[
  \B(t) := \H(t)\E^\perp(-t^{-1}) \qquad \text{and} \qquad
   \B^\perp(t) := \E(-t^{-1})\H^\perp(t)
\,.
\]
Let $\tau \in \widehat \Lambda$ be a series in the formal completion
of the algebra of symmetric functions.
Then $\B(t) \tau$ and $\B^\perp(t^{-1}) \tau$
are series with coefficients in the field $\fls$ of
formal Laurent series in $t^{-1}$, that is,
$\B(t) \tau,\,\B^\perp(t^{-1}) \tau \in \widehat \Lambda_{\fls}$.
We say that $\tau$ is a 
\defn{$\tau$-function of the KP hierarchy} if the following \emph{Hirota bilinear equation} holds:
\begin{equation}
\label{eq:KP}
    [t^{-1}]\, \big( \B(t) \tau \big) \otimes_\fls  \big( \B^\perp(t^{-1}) \tau \big) = 0
\,,
\end{equation}
where $[t^{-1}]$ extracts the coefficient of $t^{-1}$. We can interpret the elementary tensor $f\otimes g$ as the product $f(x_1, x_2, \dots)g(y_1, y_2, \dots)$ of $f$ and $g$ in two different sets of variables.

\begin{theorem}[Sato {\cite[Section 4]{sato81}; cf.\ \cite[Chapter 10]{miwa_jimbo_date00}}]
\label{thm:KPplucker}
For $\tau \in \widehat \Lambda$, the following are equivalent:
\begin{enumerate}[(a)]
\item the coefficients of $\tau$ in the Schur basis, 
$\Delta^\lambda = \langle \tau, \s_\lambda \rangle$,
satisfy the \Plucker relations \eqref{eq:prelspartitions};
\item $\tau$ is a $\tau$-function of the KP hierarchy.
\end{enumerate}
\end{theorem}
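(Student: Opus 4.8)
The plan is to derive this equivalence from Bernstein's vertex-operator presentation of the Schur functions, following the bosonic (symmetric-function) route of Carrell and Goulden~\cite{carrell_goulden10}. Throughout, write $\tau = \sum_\nu \Delta^\nu \s_\nu \in \widehat\Lambda$, and use the operations $\nu \mapsto \nu^{(i)}$, $\nu \mapsto \nu^{(-j)}$ of \cref{sec:partitions}; recall these change one horizontal edge of the diagram of $\nu$ into a vertical edge, respectively one vertical edge into a horizontal edge. In the language of codes (Maya diagrams) of partitions, $\nu^{(i)}$ and $\nu^{(-j)}$ insert, respectively delete, a single step; they are mutually inverse in the sense that $\rho^{(-j)} = \nu \iff \rho = \nu^{(i)}$ for the matching indices; and each carries a natural $\pm 1$ reshuffling sign.

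The first step is to compute the Bernstein operators on the Schur basis. Starting from $\B^\perp(t) = \E(-t^{-1})\H^\perp(t)$, apply $\H^\perp(t)\s_\lambda = \s_\lambda(t, x_1, x_2, \dots)$ (\cref{prop:Hperp}) and the horizontal-strip Pieri rule, then multiply by $\E(-t^{-1}) = \sum_{k \ge 0} (-t^{-1})^k \e_k$ using the vertical-strip Pieri rule. The resulting double sum telescopes --- this collapse is precisely Bernstein's theorem --- leaving a single Schur function in each power of $t$:
\[
   \B^\perp(t)\,\s_\lambda \;=\; \sum_{j \ge 1} \epsilon_{\lambda,j}\, t^{\,|\lambda| - |\lambda^{(-j)}|}\, \s_{\lambda^{(-j)}}
\]
for explicit signs $\epsilon_{\lambda,j} \in \{\pm 1\}$. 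Since $\H^\perp$ and $\E$ are the Hall adjoints of $\H$ and $\E^\perp$, and adjunction reverses products, $\B^\perp(t)$ is the Hall adjoint of $\B(t)$ (with $t$ treated as a real parameter); dually, one obtains $\B(t)\,\s_\lambda = \sum_{i \ge 1} \epsilon'_{\lambda,i}\, t^{\,|\lambda^{(i)}| - |\lambda|}\, \s_{\lambda^{(i)}}$.

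Now expand $(\B(t)\tau) \otimes (\B^\perp(t^{-1})\tau)$ in the basis $\{\s_\lambda \otimes \s_\mu\}$. Using the two formulas above and the adjointness $\langle \B(t)\s_\nu, \s_\lambda \rangle = \langle \s_\nu, \B^\perp(t)\s_\lambda \rangle$, the coefficient of $\s_\lambda$ in $\B(t)\tau$ works out to $\sum_{i \ge 1} \epsilon_{\lambda,i}\, t^{\,|\lambda| - |\lambda^{(-i)}|}\, \Delta^{\lambda^{(-i)}}$, while the coefficient of $\s_\mu$ in $\B^\perp(t^{-1})\tau$ works out to $\sum_{j \ge 1} (\pm)\, t^{\,|\mu| - |\mu^{(j)}|}\, \Delta^{\mu^{(j)}}$ (the $\mu^{(j)}$ appear via $\rho^{(-j)} = \mu \iff \rho = \mu^{(j)}$). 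Multiplying and extracting $[t^{-1}]$, the condition on the exponents becomes exactly $|\lambda^{(-i)}| + |\mu^{(j)}| = |\lambda| + |\mu| + 1$, and the product of the two reshuffling signs equals $(-1)^{\,|\mu| - |\mu^{(j)}| + i + j}$ up to a sign depending only on $(\lambda, \mu)$. Hence the coefficient of $\s_\lambda \otimes \s_\mu$ in $[t^{-1}]\,(\B(t)\tau) \otimes (\B^\perp(t^{-1})\tau)$ equals, up to a nonzero scalar, the left-hand side of the \Plucker relation \eqref{eq:prelspartitions} for the pair $(\lambda, \mu)$ --- which is a finite sum. Since the $\s_\lambda \otimes \s_\mu$ are linearly independent, the Hirota equation \eqref{eq:KP} holds if and only if every relation \eqref{eq:prelspartitions} holds, as desired.

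I expect the main obstacle to be the sign and index bookkeeping: verifying that the double Pieri sum telescopes as in Bernstein's theorem, identifying the signs $\epsilon_{\lambda,j}$, and confirming that the relevant product of signs is exactly $(-1)^{|\mu| - |\mu^{(j)}| + i + j}$ while the degree count reproduces precisely the constraint in \eqref{eq:prelspartitions}. This is cleanest to organize in the Maya-diagram picture, where $\B(t)$ and $\B^\perp(t)$ act, up to sign, as insertion and deletion of a single basis vector in a semi-infinite wedge. Alternatively, one may bypass the explicit bosonic computation via the boson--fermion correspondence~\cite{miwa_jimbo_date00}: \eqref{eq:KP} becomes the fermionic bilinear identity $\sum_n \psi_n \tau \otimes \psi_n^* \tau = 0$, which holds if and only if $\tau$ is a single semi-infinite wedge, and the latter is equivalent --- by truncating to a finite-dimensional Grassmannian, i.e.\ reducing to \cref{prop:exteriorplucker} --- to the coefficients $\Delta^\lambda$ satisfying all \Plucker relations.
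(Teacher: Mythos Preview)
Your proposal is correct and follows essentially the same route as the paper: both reduce the equivalence to the explicit action of the Bernstein operators on Schur functions, so that the $\s_\lambda \otimes \s_\mu$-coefficient of the Hirota bilinear $[t^{-1}]\,(\B(t)\tau)\otimes(\B^\perp(t^{-1})\tau)$ is exactly the \Plucker expression \eqref{eq:prelspartitions}. The only difference is that the paper cites the needed formulas directly from Carrell--Goulden \cite[Corollaries 3.5 and 3.6]{carrell_goulden10}, whereas you sketch their derivation (via Pieri/Bernstein and adjointness) and flag the sign bookkeeping; the alternative boson--fermion argument you mention is also standard but not the path the paper takes.
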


Hence, \eqref{eq:KP} is a convenient way to encode all 
of the \Plucker relations into a single generating function equation.
An alternate proof of \cref{thm:KPplucker} was given by Carrell and Goulden 
\cite[Theorem 5.2]{carrell_goulden10}, based an explicit formula for
$\B(t)\s_\lambda$.
We recall the relevant details of this argument:

\begin{proof}[Proof of \cref{thm:KPplucker}]
By \cite[Corollaries 3.5 and 3.6]{carrell_goulden10},
the left-hand side of~\eqref{eq:KP} is equal to 
$\sum_{\lambda, \mu} b_{\lambda,\mu} \s_\lambda \otimes \s_\mu$, 
where
\begin{equation}
\label{eq:KPlhs}
   b_{\lambda,\mu} = 
   \sum_{\substack{i,j \geq 1, \\ 
    |\lambda^{(-i)}| + |\mu^{(j)}| = |\lambda| + |\mu|+ 1}}
    (-1)^{|\mu|-|\mu^{(j)}|+i+j}
      \Delta^{\lambda^{(-i)}} \Delta^{\mu^{(j)}}  
\,.
\end{equation}
Thus \eqref{eq:KP} holds if and only if $b_{\lambda,\mu} = 0$
for all $\lambda$ and $\mu$, i.e., if and only
if \eqref{eq:prelspartitions} holds.
\end{proof}

We will need a similar equation which encodes only the single-column
\Plucker relations. These turn out to correspond to the 
constant term in the first tensor factor of \eqref{eq:KP}:

\begin{lemma}
\label{lem:singlecolumneqs}
For $\tau \in \widehat \Lambda$, the following are equivalent:
\begin{enumerate}[(a)]
\item the coefficients of $\tau$ in the Schur basis, 
$\Delta^\lambda = \langle \tau, \s_\lambda \rangle$, satisfy all
the single-column \Plucker relations;
\item $\tau$ satisfies the equation
\begin{equation}
\label{eq:scKP}
    [t^{-1}]\, \langle \B(t) \tau , 1 \rangle \cdot \B^\perp(t^{-1}) \tau  = 0
\,.
\end{equation}
\end{enumerate}
\end{lemma}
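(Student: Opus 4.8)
The plan is to obtain \eqref{eq:scKP} from the Hirota equation \eqref{eq:KP} by applying the linear functional $\langle\,\cdot\,,1\rangle$ to the first tensor factor, and then to recognize the resulting identity as exactly the $\lambda=0$ case of the general \Plucker relations \eqref{eq:prelspartitions}. Concretely, I would reuse the computation from the proof of \cref{thm:KPplucker}: by \cite[Corollaries 3.5 and 3.6]{carrell_goulden10}, the left-hand side of \eqref{eq:KP} equals $\sum_{\lambda,\mu} b_{\lambda,\mu}\,\s_\lambda\otimes\s_\mu$, with $b_{\lambda,\mu}$ given by \eqref{eq:KPlhs}. Writing $\B(t)\tau = \sum_\lambda c_\lambda\s_\lambda$ and $\B^\perp(t^{-1})\tau = \sum_\mu d_\mu\s_\mu$ with $c_\lambda,d_\mu\in\fls$, this amounts to the statement that $[t^{-1}](c_\lambda d_\mu)=b_{\lambda,\mu}$ for all $\lambda,\mu$.

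Next I would carry out the bookkeeping step. Since $1=\s_\emptyset$, we have $\langle\B(t)\tau,1\rangle = c_\emptyset$, so
\[
  [t^{-1}]\,\langle\B(t)\tau,1\rangle\cdot\B^\perp(t^{-1})\tau
  \;=\; \sum_\mu [t^{-1}](c_\emptyset d_\mu)\,\s_\mu
  \;=\; \sum_\mu b_{\emptyset,\mu}\,\s_\mu\,.
\]
Equivalently, the left-hand side of \eqref{eq:scKP} is precisely the image of the left-hand side of \eqref{eq:KP} under the $\CC$-linear map sending an element $\sum_{\lambda,\mu}a_{\lambda,\mu}\,\s_\lambda\otimes\s_\mu$ to $\sum_\mu a_{\emptyset,\mu}\,\s_\mu$ (that is, $f\otimes g\mapsto\langle f,1\rangle\, g$); so \eqref{eq:scKP} holds if and only if $b_{\emptyset,\mu}=0$ for every partition $\mu$. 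Finally, by \eqref{eq:KPlhs} the vanishing of $b_{\emptyset,\mu}$ is exactly the \Plucker relation \eqref{eq:prelspartitions} with $\lambda=0$; by the discussion in \cref{sec:scprels} these are, as $\mu$ ranges over all partitions, precisely all the single-column \Plucker relations in the coordinates $\Delta^\lambda=\langle\tau,\s_\lambda\rangle$. This establishes the equivalence of (a) and (b).

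The only delicate point I anticipate is the bookkeeping in the middle step: one must check that applying $\langle\,\cdot\,,1\rangle$ to the first tensor factor commutes with extracting $[t^{-1}]$ and with multiplication by the second factor, so that the functional applied to \eqref{eq:KP} genuinely reproduces \eqref{eq:scKP} (rather than a reordered variant in which, say, the roles of the tensor factors are swapped). This is routine once both sides are expanded in the Schur basis as above, but it is worth spelling out since the equation \eqref{eq:scKP} is written with $\langle\B(t)\tau,1\rangle$ already pulled out as a scalar. Everything else is immediate from the formula \eqref{eq:KPlhs}, which is already available from the proof of \cref{thm:KPplucker}.
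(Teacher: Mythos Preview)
Your proposal is correct and follows essentially the same argument as the paper: apply $\langle\,\cdot\,,1\rangle$ to the first tensor factor of the Hirota equation \eqref{eq:KP}, use the expansion $\sum_{\lambda,\mu} b_{\lambda,\mu}\,\s_\lambda\otimes\s_\mu$ from \eqref{eq:KPlhs} to identify the result as $\sum_\mu b_{0,\mu}\,\s_\mu$, and observe that the vanishing of $b_{0,\mu}$ is exactly the single-column \Plucker relations. The paper's proof is terser but identical in substance; your extra bookkeeping with the coefficients $c_\lambda,d_\mu$ is a harmless elaboration.
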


\begin{proof}
In the notation of \eqref{eq:KPlhs},
the single-column \Plucker relations are the 
equations $b_{0,\mu} =0$.  Since 
$\sum_{\lambda, \mu} b_{\lambda,\mu} \s_\lambda \otimes \s_\mu$ is
the left-hand side of \eqref{eq:KP}, 
the left-hand side of \eqref{eq:scKP} is
$\sum_{\lambda, \mu} b_{\lambda, \mu} \langle 
\s_\lambda, 1 \rangle \cdot \s_\mu = 
\sum_{\mu} b_{0,\mu} \s_\mu$.
The result follows.
\end{proof}

We note that since $\H^\perp(t)1 = 1$, we have
\[
\langle \B(t) \tau , 1 \rangle = 
       \langle \tau , \B^\perp(t) 1 \rangle = 
       \langle \tau , \E(-t^{-1}) \rangle = 
       \sum_{k \geq 0} (-t)^{-k} \langle \tau, \e_k \rangle
\,,
\]
and thus \eqref{eq:scKP} can be rewritten as
\begin{equation}
\label{eq:scKP2}
    \Bresidue \sum_{k \geq 0} (-t)^{-k}
           \langle \tau, \e_k \rangle \tau  = 0
\,.
\end{equation}

\begin{remark}
\label{rmk:taufunction}
Let $V\in\scellnu \subseteq \Gr(d,m)$ have the normalized \Plucker coordinates $(\Delta^\lambda : \lambda \subseteq \nu)$. By \cref{thm:KPplucker}, we have the associated $\tau$-function of the KP hierarchy
\[
\tau := \sum_{\lambda \subseteq \nu}\Delta^\lambda \s_\lambda\in\Lambda.
\]
Then the Wronskian of $V$ is the \defn{exponential specialization} of $\tau$. Namely, define the unital $\CC$-algebra homomorphism $\exspec : \Lambda \to \CC[u]$ as follows \cite[Section 7.8]{stanley24}:
\[
\exspec(\p_1) := u \qquad \text{and} \qquad \exspec(\p_k) := 0 \;\text{ for all } k\ge 2\,.
\]
Then $\exspec(\s_\lambda) = \frac{\numsyt{\lambda}}{|\lambda|!}u^{|\lambda|}$ (cf.\ \cite[Section 7.16]{stanley24}), so $\exspec(\tau) = \Wr(V)$ by \eqref{eq:pluckerwronskian}.
\end{remark}

\subsubsection{Scaled monomial symmetric functions}

The \defn{scaled monomial symmetric functions} (cf.\ \cite{merca15})
are defined to be
$\sm_\lambda := \langle \p_\lambda, \h_\lambda \rangle \m_\lambda$,
i.e., $\sm_\lambda$ is the $\m_\lambda$-term in the
monomial expansion of $\p_\lambda$.  For example, 
$\sm_{1^{n}} = n! \, \m_{1^n}$.
Equivalently, if $\ell(\lambda) = k$,
then
\begin{equation}
\label{eq:sm}
   \sm_\lambda = \sum_{\substack{a_1, \dots, a_k \geq 1\\ \text{(distinct)}}}
      x_{a_1}^{\lambda_1} \dotsm x_{a_k}^{\lambda_k}
\,,
\end{equation}
where the sum is taken over all $k$-tuples of pairwise 
distinct positive integers.

We will sometimes index the scaled monomial symmetric functions using 
compositions instead of partitions.  A \defn{composition}
$\kappa = (\kappa_1, \dots, \kappa_k)$ is an arbitrary tuple of
positive integers.  As with partitions, we write 
$|\kappa| = \kappa_1 + \dots + \kappa_k$, $\ell(\kappa) = k$,
and $\kappa_j = 0$ for $j > k$. 
We write $\kappa \leq \lambda$ to mean $\ell(\kappa) = \ell(\lambda)$
and $\kappa_i \leq \lambda_i$ for all $i$.
Let $\sort(\kappa)$
denote the unique partition 
of the form $(\kappa_{\sigma(1)}, \dots, \kappa_{\sigma(k)})$ for some 
$\sigma \in \symgrp{k}$.  We put $\sm_\kappa := \sm_{\sort(\kappa)}$ for all compositions $\kappa$. Also, if $I \subseteq [k]$, we let $\kappa \setminus \kappa_I$ denote the composition obtained from $\kappa$ by deleting the parts $\kappa_i$ for $i\in I$.

For some purposes, the scaled monomial basis is more natural
than the monomial basis.
For example, we can express $\H^\perp(t)\sm_\kappa$ as follows:
\begin{proposition}
\label{prop:hperpsm}
For any composition $\kappa$ of length $k$, we have
\[
\H^\perp(t) \sm_\kappa = \sm_\kappa + \sum_{i=1}^k t^{\kappa_i}\sm_{\kappa\setminus\kappa_i}\,.
\]
\end{proposition}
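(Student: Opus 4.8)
The plan is to use \cref{prop:Hperp}, which says that $\H^\perp(t)$ acts on a symmetric function $f(x_1, x_2, \dots)$ by adjoining a new variable equal to $t$: namely $\H^\perp(t)f = f(t, x_1, x_2, \dots)$. So I would start from the explicit expansion \eqref{eq:sm} of $\sm_\kappa$ as a sum over tuples $(a_1, \dots, a_k)$ of pairwise distinct positive integers of the monomials $x_{a_1}^{\kappa_1} \dotsm x_{a_k}^{\kappa_k}$, where here $\kappa = \sort(\kappa)$ is a partition of length $k$ (the composition case then follows since $\sm_\kappa := \sm_{\sort(\kappa)}$). Applying $\H^\perp(t)$ amounts to introducing a variable $x_0 := t$ and summing over all tuples of pairwise distinct indices drawn from $\{0, 1, 2, \dots\}$.

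The key step is then to split this sum according to whether the index $0$ appears among $a_1, \dots, a_k$ or not. If $0$ does not appear, the corresponding terms are exactly those in the original expansion \eqref{eq:sm} of $\sm_\kappa$, contributing the summand $\sm_\kappa$. If $0$ appears, say $a_i = 0$ for a (necessarily unique) index $i \in [k]$, then the factor $x_{a_i}^{\kappa_i}$ becomes $t^{\kappa_i}$, and the remaining factors $\prod_{j \ne i} x_{a_j}^{\kappa_j}$ range over all monomials indexed by pairwise distinct positive integers $a_j$; since $\kappa$ is weakly decreasing but the variables are symmetric, this sub-sum is precisely $\sm_{\kappa \setminus \kappa_i}$ (note $\kappa \setminus \kappa_i$ need not be a partition, but $\sm$ is defined on compositions via $\sort$, so this is fine). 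Summing over $i = 1, \dots, k$ gives $\sum_{i=1}^k t^{\kappa_i} \sm_{\kappa \setminus \kappa_i}$, and adding the two cases yields the claimed identity.

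I do not anticipate a serious obstacle here; the only point requiring a little care is the bookkeeping of distinctness when one of the indices is specialized to $0$, and the observation that $\sm$ was defined for arbitrary compositions exactly so that $\sm_{\kappa \setminus \kappa_i}$ makes sense even when deleting a middle part destroys the sorted order. One should also note that repeated parts in $\kappa$ are handled automatically: if $\kappa_i = \kappa_{i'}$ for $i \ne i'$, the two terms $t^{\kappa_i}\sm_{\kappa\setminus\kappa_i}$ and $t^{\kappa_{i'}}\sm_{\kappa\setminus\kappa_{i'}}$ are genuinely equal and are both counted, matching the fact that choosing $a_i = 0$ versus $a_{i'} = 0$ gives distinct tuples in the expansion of $\H^\perp(t)\sm_\kappa$. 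A clean alternative, if one prefers to avoid the variable-adjoining picture, is to expand $\p_\kappa = \prod_j \p_{\kappa_j}$ and use $\p_k^\perp = k\,\partial/\partial p_k$ together with the defining relation $\sm_\kappa = \langle \p_\kappa, \h_\kappa\rangle \m_\kappa$, but the first approach via \cref{prop:Hperp} is the shortest.
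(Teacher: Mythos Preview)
Your proposal is correct and is precisely the ``direct calculation'' from \cref{prop:Hperp} that the paper's one-line proof refers to. The paper gives no further detail, so your expansion via \eqref{eq:sm} and the split according to whether the adjoined variable $t$ is used is exactly what is intended.
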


\begin{proof}
This follows from \cref{prop:Hperp} by a direct calculation.
\end{proof}

We will also need the power sum expansion of $\sm_\kappa$:

\begin{proposition}[Doubilet {\cite[Theorem 2(i)]{doubilet72}}; cf.\ {\cite[Theorem 2]{merca15}}]
\label{prop:scaledmonomials}
Let $\kappa$ be a composition of length $k$.
For $\sigma \in \symgrp{k}$, 
let $\kappa[\sigma]$ denote the partition
obtained by amalgamating the parts of $\kappa$ that are 
in the same cycle of $\sigma$.
That is, for each cycle $(i_1 \;\; i_2 \; \cdots \; i_s)$ of $\sigma$, we produce
a part of $\kappa[\sigma]$ of size $\kappa_{i_1} + \dots + \kappa_{i_s}$.
Then
\[
    \sm_\kappa = 
    \sum_{\sigma \in \symgrp{k}} \sgn(\sigma) \p_{\kappa[\sigma]}
\,.
\]
\end{proposition}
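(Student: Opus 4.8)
The plan is to realize this identity as M\"obius inversion over the lattice $\Pi_k$ of set partitions of $[k]$, ordered by refinement, with minimum $\hat 0$ (all blocks singletons); this is essentially Doubilet's original argument. For $\pi \in \Pi_k$ with blocks $B_1, \dots, B_r$, introduce the notation $\kappa^\pi := \sort\big(\sum_{i \in B_1}\kappa_i, \dots, \sum_{i \in B_r}\kappa_i\big)$ for the partition obtained by amalgamating the parts of $\kappa$ lying in a common block. Then $\kappa^{\hat 0} = \sort(\kappa)$, and if $\rho(\sigma) \in \Pi_k$ denotes the partition of $[k]$ into the cycles of $\sigma \in \symgrp{k}$, then $\kappa^{\rho(\sigma)} = \kappa[\sigma]$ by the definition of $\kappa[\sigma]$ in the statement. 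So the right-hand side of the claimed identity is $\sum_{\sigma \in \symgrp{k}} \sgn(\sigma)\,\p_{\kappa^{\rho(\sigma)}}$.

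First I would prove the ``forward'' expansion $\p_\kappa = \sum_{\pi \in \Pi_k} \sm_{\kappa^\pi}$. Indeed $\p_\kappa = \prod_{i=1}^k \big(\sum_{a \ge 1} x_a^{\kappa_i}\big) = \sum_{a\colon [k] \to \ZZ_{>0}} x_{a(1)}^{\kappa_1} \dotsm x_{a(k)}^{\kappa_k}$; grouping the maps $a$ by their kernel partition $\{i \sim j \iff a(i)=a(j)\}$ splits the sum as $\sum_{\pi \in \Pi_k}$ of the sub-sum over maps with kernel exactly $\pi$, and by \eqref{eq:sm} that sub-sum is precisely $\sm_{\kappa^\pi}$. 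Applying this same identity with $\kappa$ replaced by $\kappa^\pi$, and noting that set partitions of the $r$ blocks of $\pi$ correspond to the coarsenings $\rho \ge \pi$ in $\Pi_k$ with the induced amalgamation of $\kappa^\pi$ equal to $\kappa^\rho$, one gets the \emph{uniform} relation $\p_{\kappa^\pi} = \sum_{\rho \ge \pi} \sm_{\kappa^\rho}$ for every $\pi \in \Pi_k$. M\"obius inversion on $\Pi_k$ then gives $\sm_{\kappa^\pi} = \sum_{\rho \ge \pi} \mu(\pi,\rho)\,\p_{\kappa^\rho}$, and specializing $\pi = \hat 0$ yields $\sm_\kappa = \sum_{\rho \in \Pi_k} \mu(\hat 0,\rho)\,\p_{\kappa^\rho}$.

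It remains to identify this with the permutation sum. Grouping permutations by their cycle partition gives $\sum_{\sigma \in \symgrp{k}} \sgn(\sigma)\,\p_{\kappa^{\rho(\sigma)}} = \sum_{\rho \in \Pi_k} \p_{\kappa^\rho} \sum_{\sigma\colon \rho(\sigma)=\rho} \sgn(\sigma)$; if $\rho$ has blocks of sizes $b_1, \dots, b_r$, there are $\prod_j (b_j-1)!$ permutations $\sigma$ with $\rho(\sigma)=\rho$, each of sign $\prod_j (-1)^{b_j-1}$, so the inner sum equals $\prod_j (-1)^{b_j-1}(b_j-1)!$, which is exactly the value $\mu(\hat 0,\rho)$ of the M\"obius function of the partition lattice. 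Comparing with the previous display finishes the proof. The only delicate point is this final bookkeeping --- counting permutations with a prescribed cycle support partition and matching the product of cycle signs against the M\"obius function of $\Pi_k$ --- together with verifying that the forward expansion holds uniformly over all of $\Pi_k$ so that M\"obius inversion applies; everything else is a direct unwinding of definitions.
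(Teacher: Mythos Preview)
Your argument is correct. The forward expansion $\p_{\kappa^\pi} = \sum_{\rho \ge \pi} \sm_{\kappa^\rho}$, M\"obius inversion on $\Pi_k$, and the identification $\sum_{\sigma:\rho(\sigma)=\rho}\sgn(\sigma)=\prod_j(-1)^{b_j-1}(b_j-1)! = \mu(\hat 0,\rho)$ are all valid as stated, and together give the claim.

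There is nothing to compare against here: the paper does not supply its own proof of this proposition but simply cites Doubilet \cite[Theorem 2(i)]{doubilet72} (and Merca \cite[Theorem 2]{merca15}). As you note, your M\"obius-inversion argument over the partition lattice is essentially Doubilet's original proof, so you have reconstructed the intended argument.
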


\subsubsection{Two symmetric-function identities}

We record two identities which
will help us understand the kernel of the operator 
$\Bresidue: \Lambda_\fls \to \Lambda$.
The first identity 
describes the commutation relation between $\B^\perp(t^{-1})$
and the multiplication operator $(1-t^j\p_j)$:

\begin{lemma}
\label{lem:Bp-identity}
For $j \geq 0$, we have 
\[
   \big[(1-t^j\p_j), \B^\perp(t^{-1})\big] = \B^\perp(t^{-1})
\,.
\]  
Equivalently, for any symmetric function $f \in \Lambda$,
\[
     \B^\perp(t^{-1}) (1-t^j \p_j) f
     = - \p_j \cdot \B^\perp(t^{-1}) t^j f
\,.
\]
\end{lemma}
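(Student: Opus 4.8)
The plan is to reduce the identity to a commutation relation involving only $\H^\perp(t^{-1})$, and then to read it off from the homomorphism description of $\H^\perp$ in \cref{prop:Hperp}. First I would unwind the definition: from $\B^\perp(t) = \E(-t^{-1})\H^\perp(t)$ we get $\B^\perp(t^{-1}) = \E(-t)\,\H^\perp(t^{-1})$. Now $\E(-t)$ and $1 - t^j\p_j$ both act by multiplication, so they commute with each other, and $\E(-t)$ also commutes with multiplication by $t^j\p_j$. Hence, using $[A,BC] = B[A,C]$ when $[A,B]=0$,
\[
   \bigl[(1 - t^j\p_j),\, \B^\perp(t^{-1})\bigr] \;=\; \E(-t)\,\bigl[(1 - t^j\p_j),\, \H^\perp(t^{-1})\bigr],
\]
so it suffices to show $\bigl[(1 - t^j\p_j),\, \H^\perp(t^{-1})\bigr] = \H^\perp(t^{-1})$.

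Next I would invoke \cref{prop:Hperp}, which says $\H^\perp(s)$ is the algebra homomorphism $f(x_1, x_2, \dots) \mapsto f(s, x_1, x_2, \dots)$. Since $\p_j(s, x_1, x_2, \dots) = s^j + \p_j(x_1, x_2, \dots)$, multiplicativity of $\H^\perp(t^{-1})$ gives the operator identity $\H^\perp(t^{-1})\,\p_j = (t^{-j} + \p_j)\,\H^\perp(t^{-1})$. Therefore
\[
  \H^\perp(t^{-1})\,(1 - t^j\p_j) \;=\; \H^\perp(t^{-1}) - t^j(t^{-j} + \p_j)\,\H^\perp(t^{-1}) \;=\; -\,t^j\p_j\,\H^\perp(t^{-1}),
\]
whence $\bigl[(1 - t^j\p_j),\, \H^\perp(t^{-1})\bigr] = (1 - t^j\p_j)\H^\perp(t^{-1}) + t^j\p_j\H^\perp(t^{-1}) = \H^\perp(t^{-1})$, which combined with the first display proves the commutator identity. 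The equivalent form follows by applying $\E(-t)$ on the left to $\H^\perp(t^{-1})(1 - t^j\p_j) = -t^j\p_j\,\H^\perp(t^{-1})$ and commuting $\E(-t)$ past $t^j\p_j$, yielding $\B^\perp(t^{-1})(1 - t^j\p_j) f = -t^j\p_j\,\B^\perp(t^{-1}) f$ for all $f \in \Lambda$.

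I do not expect a real obstacle here. The only operator in sight that is not a multiplication operator is $\H^\perp(t^{-1})$, and its interaction with multiplication by $\p_j$ collapses via the elementary substitution $\p_j(s,x_1,x_2,\dots) = s^j + \p_j$; everything else is commuting multiplication operators. The only things requiring care are the bookkeeping of which operators commute and the exponent arithmetic in $t$ when passing between $\B^\perp(t^{-1})$ and $\E(-t)\H^\perp(t^{-1})$.
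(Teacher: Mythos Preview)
Your proof is correct and takes essentially the same approach as the paper: both use the decomposition $\B^\perp(t^{-1}) = \E(-t)\,\H^\perp(t^{-1})$, the fact that $\E(-t)$ is a multiplication operator, and the homomorphism property of $\H^\perp$ from \cref{prop:Hperp} applied to $\p_j$. The paper packages this as the general identity $\B^\perp(t^{-1})(fg) = (\B^\perp(t^{-1}) f)\cdot(\H^\perp(t^{-1}) g)$ and then specializes to $g = 1 - t^j\p_j$, whereas you isolate the commutator with $\H^\perp(t^{-1})$ first; the underlying computation is the same.
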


\begin{proof}
By definition, $\B^\perp(t^{-1}) = \E(-t)\H^\perp(t^{-1})$.  Now $\E(-t)$ is a
multiplication operator, and $\H^\perp(t^{-1})$
is a homomorphism, so we have the following general identity:
for all $f,g \in \Lambda$,
\[
\B^\perp(t^{-1}) fg = \big(\B^\perp(t^{-1}) f\big) \cdot 
\big(\H^\perp(t^{-1}) g\big)
\,.
\]
If $g = 1-t^j\p_j$, then by \cref{prop:Hperp}, 
$\H^\perp(t^{-1})g = 1- t^j(t^{-j} + \p_j) = -t^j\p_j$,
so the identity above yields the result.
\end{proof}

Hence for $\phi(t) \in \widehat \Lambda_{\fls}$, we have
$\phi(t) (1-t^j \p_j) \in \ker \Bresidue$ if and only
if $t^j \phi(t) \in \ker \Bresidue$.
The second identity gives an explicit 
family of symmetric functions 
which are in the kernel
of $\Bresidue$:
\begin{lemma}
\label{lem:Bm-identity}
For any composition $\lambda$, consider the
symmetric function
\[
   \phi_\lambda(t) := \sum_{\kappa \leq \lambda} 
   t^{|\kappa| - |\lambda|- \ell(\lambda)} \sm_\kappa
\,,
\]
where the sum is taken over all compositions $\kappa \leq \lambda$.
Then $\Bresidue \phi_\lambda(t) = 0$.
\end{lemma}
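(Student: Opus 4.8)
The plan is to apply $\B^\perp(t^{-1}) = \E(-t)\H^\perp(t^{-1})$ directly to $\phi_\lambda(t)$ and read off the coefficient of $t^{-1}$. Set $N := |\lambda| + \ell(\lambda)$, so that $\phi_\lambda(t) = \sum_{\kappa\le\lambda} t^{|\kappa|-N}\sm_\kappa$. By \cref{prop:hperpsm} (with $t$ replaced by $t^{-1}$), $\H^\perp(t^{-1})\sm_\kappa = \sm_\kappa + \sum_{i=1}^{\ell(\lambda)} t^{-\kappa_i}\sm_{\kappa\setminus\kappa_i}$; substituting and resumming over the value of the deleted part gives
\[
    \B^\perp(t^{-1})\phi_\lambda(t) \;=\; t^{-N}\,\E(-t)\Big(\sum_{\kappa\le\lambda} t^{|\kappa|}\sm_\kappa \;+\; \sum_{j=1}^{\ell(\lambda)}\lambda_j\!\!\sum_{\kappa\le\lambda\setminus\lambda_j}\!\! t^{|\kappa|}\sm_\kappa\Big).
\]
Since $\E(-t) = \sum_{r\ge0}(-1)^r\e_r t^r$, taking the coefficient of $t^{-1}$ reduces \cref{lem:Bm-identity} to the purely symmetric-function identity
\[
    \sum_{\kappa\le\lambda}(-1)^{N-1-|\kappa|}\e_{N-1-|\kappa|}\sm_\kappa \;+\; \sum_{j=1}^{\ell(\lambda)}\lambda_j\!\!\sum_{\kappa\le\lambda\setminus\lambda_j}\!\!(-1)^{N-1-|\kappa|}\e_{N-1-|\kappa|}\sm_\kappa \;=\; 0, \qquad(\star)
\]
all of whose summands are homogeneous of degree $N-1$.

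To prove $(\star)$ I would pass to a generating function in an auxiliary indeterminate $z$. By \eqref{eq:sm}, $\sum_{\kappa\le\mu} z^{|\kappa|}\sm_\kappa = \sum\prod_m\big(zx_{a_m}+(zx_{a_m})^2+\dots+(zx_{a_m})^{\mu_m}\big)$, summed over tuples $(a_1,\dots,a_{\ell(\mu)})$ of pairwise distinct positive integers; multiplying by $\E(-z) = \prod_{i\ge1}(1-zx_i)$ cancels the geometric-series denominators and leaves the power series $\sum_{(a_m)\text{ distinct}}\big(\prod_{i\notin\{a_1,\dots,a_{\ell(\mu)}\}}(1-zx_i)\big)\prod_m\big(zx_{a_m}-(zx_{a_m})^{\mu_m+1}\big)$. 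Applying this with $\mu=\lambda$ and with each $\mu=\lambda\setminus\lambda_j$, and observing that the factor $\lambda_j$ in the second sum of $(\star)$ is precisely the value of $zx+\dots+(zx)^{\lambda_j}$ at $zx=1$, the entire left side of $(\star)$ becomes the coefficient of $z^{N-1}$ in one such sum — the one coming from $\lambda$, but with an extra index $a_0$ allowed to take the degenerate value $zx_{a_0}=1$. Thus $(\star)$ is reduced to the vanishing of a single explicit $z^{N-1}$-coefficient.

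Carrying out that extraction is the heart of the matter. Expanding $\prod_m\big(zx_{a_m}-(zx_{a_m})^{\lambda_m+1}\big)$ over subsets of parts and $\prod_{i\notin\{a\}}(1-zx_i)=\sum_r(-z)^r\e_r(\cdot)$ over degree, the $z^{N-1}$-coefficient becomes an alternating sum of products of an elementary symmetric function in the unused variables with a monomial in the $x_{a_m}$, and the claim is that this cancels after summing over distinct tuples. I expect to prove this by induction on $\ell(\lambda)$: the base case $\ell(\lambda)=1$, where $\phi_{(p)}(t)=\sum_{c=1}^{p}t^{c-p-1}\p_c$, is exactly Newton's identity $p\,\e_p=\sum_{c=1}^{p}(-1)^{c-1}\e_{p-c}\p_c$; the inductive step peels off one part — using \cref{prop:hperpsm} again, and \cref{lem:Bp-identity} to move the resulting $\p$-factors past $\B^\perp(t^{-1})$ — and matches the remaining terms against the identity for shorter compositions. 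The main obstacle is precisely this cancellation: the distinctness constraint on the indices $a_m$ couples to the $\e_r$'s, so one cannot compare coefficients directly, and the real work lies in finding the correct strengthening of $(\star)$ — one valid for all exponents, not only the critical exponent $N-1$ — along which the induction goes through.
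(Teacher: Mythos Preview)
Your reduction to the identity $(\star)$ is correct, and your generating-function rewriting
\[
  \E(-z)\sum_{\kappa\le\mu}z^{|\kappa|}\sm_\kappa
  \;=\;\sum_{(a_m)\text{ distinct}}\ \prod_{i\notin\{a_m\}}(1-zx_i)\ \prod_m\big(zx_{a_m}-(zx_{a_m})^{\mu_m+1}\big)
\]
is exactly the right intermediate expression; it coincides with what the paper reaches by a sign-reversing involution on the monomial expansion of $\E(-t)\phi_\lambda(t)$. Where you fall short is the last step. Your proposed induction on $\ell(\lambda)$ is not carried out, and the tools you invoke do not fit: \cref{lem:Bp-identity} says $(1-t^j\p_j)\psi\in\ker\Bresidue \iff t^j\psi\in\ker\Bresidue$, but $\phi_\lambda$ has no factor of the form $(1-t^j\p_j)$ to peel off; and any recursion expressing $\sm_\kappa$ via $\p_{\kappa_1}\sm_{\kappa\setminus\kappa_1}$ produces error terms indexed by compositions of the \emph{same} length with shifted parts, so it does not descend cleanly to $\lambda\setminus\lambda_j$. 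Your remark about an ``extra index $a_0$ with $zx_{a_0}=1$'' is suggestive but has no literal meaning in $\Lambda$ and cannot stand in for an argument. You yourself flag the inductive step as the ``real work,'' and that work is missing.

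The paper avoids induction entirely, and the idea you are missing is to \emph{sum over the distinct tuple before extracting any coefficient}. After expanding $\prod_m\big(zx_{a_m}-(zx_{a_m})^{\lambda_m+1}\big)$ over subsets $J\subseteq[\ell(\lambda)]$ (as you already propose), fix $J$ and the high-exponent indices $(a_j)_{j\in J}$, and sum the remaining square-free part over ordered tuples $(a_m)_{m\notin J}$ of distinct indices in the reduced variable set $\boldx'=\boldx\setminus\{x_{a_j}\}_{j\in J}$. With $\ell:=\ell(\lambda)-|J|$, that sum is elementary:
\[
  \sum_{(a_m)\text{ distinct}}\ \prod_{i\notin\{a_m\}}(1-zx'_i)\ \prod_m zx'_{a_m}
  \;=\;\sum_{p\ge\ell}(-1)^{p-\ell}\,p(p-1)\cdots(p-\ell+1)\,\e_p(\boldx')\,z^p\,,
\]
since both sides count a $p$-element set together with an ordering of $\ell$ of its elements. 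This gives the paper's functions $F_\mu(t;\boldx)$, depending only on $|\mu|$ and $\ell(\mu)$. The two sums of your $(\star)$ then combine into $G_\mu := F_\mu + \sum_i \mu_i\,t^{-\mu_i-1}F_{\mu\setminus\mu_i}$, and $[t^{-1}]G_\mu=0$ is a one-line check: the residue picks out the single value $p=|\mu|+\ell(\mu)-1$ in every $F$, and the identity $p-\ell(\mu)+1=|\mu|=\sum_i\mu_i$ makes the two contributions cancel. No induction is needed; your Newton-identity base case is subsumed in this single computation.
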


\begin{example}
If $\lambda = (3,2)$, the compositions $\kappa \leq \lambda$ are
   $(3,2)$, $(3,1)$, $(2,2)$, $(2,1)$, $(1,2)$, and $(1,1)$.
In this case, \cref{lem:Bm-identity} asserts that
\[
     \Bresidue 
   \big( t^{-2} \sm_{32} + t^{-3} \sm_{31} + t^{-3} \sm_{22} 
    + 2t^{-4} \sm_{21} + t^{-5} \sm_{11} \big) = 0
\,.
\]
\end{example}

\begin{proof}[Proof of \cref{lem:Bm-identity}]
By \eqref{eq:E} and \eqref{eq:sm}, we get that $\E(-t) \phi_\lambda(t)$ equals
\begin{equation}
\label{eq:Ephi0}
   \Bigg(\sum_{\substack{S\subseteq\ZZ_{>0},\\ S \text{ finite}}} (-t)^{|S|} \cdot \prod_{b\in S} x_b\Bigg)
    \Bigg(\sum_{\kappa \leq \lambda} 
       t^{|\kappa| - |\lambda|- \ell(\lambda)}\bigg(
       \sum_{\substack{a_1, \dots, a_{\ell(\lambda)} \geq 1\\ \text{(distinct)}}}
             x_{a_1}^{\kappa_1} \dotsm x_{a_{\ell(\lambda)}}^{\smash{\kappa_{\ell(\lambda)}}}\bigg)\Bigg)
\,.
\end{equation}
We regard \eqref{eq:Ephi0} as a single sum indexed by the appropriate set $\calT_\lambda$ of triples $(S, \kappa, \bolda)$, where $\bolda$ denotes the $\ell(\lambda)$-tuple $(a_1, \dots, a_{\ell(\lambda)})$. Let $\calT'_\lambda \subseteq \calT_\lambda$ be the subset of triples $(S, \kappa, \bolda)$ such that for all $1 \le i \le \ell(\lambda)$, we have either
\begin{equation}\label{Tprimecondition}
\kappa_i=1 \text{ and } a_i\notin S, \quad\text{or}\quad \kappa_i=\lambda_i \text{ and } a_i\in S\,.
\end{equation}

We claim that the sum \eqref{eq:Ephi0} remains unchanged if we sum over $\calT'_\lambda$ instead of $\calT_\lambda$. To prove this, we construct a sign-reversing involution on the terms in the sum indexed by $\calT_\lambda\setminus\calT'_\lambda$. Given $(S, \kappa, \bolda)\in\calT_\lambda\setminus\calT'_\lambda$, let $j$ be the smallest positive integer such \eqref{Tprimecondition} fails to hold at $i=j$. If $a_j\notin S$, then $\kappa_j > 1$; we set $S' = S\cup\{a_j\}$, and let $\kappa'$ be obtained from $\kappa$ by decreasing $\kappa_j$ by $1$. If $a_j\in S$, then $\kappa_j < \lambda_j$; we let $S' = S\setminus\{a_j\}$, and let $\kappa'$ be obtained from $\kappa$ by increasing $\kappa_j$ by $1$. Then $(S, \kappa, \bolda) \mapsto (S', \kappa', \bolda)$ is our desired sign-reversing involution. (For example, if $\lambda = (3,2)$, the involution swaps $(\{2,4,5,7\}, (2,1), (5,3))$ with $(\{2,4,7\}, (3,1), (5,3))$.) Thus
\begin{equation}
\label{eq:Ephi}
   \E(-t) \phi_\lambda(t)
   =
    \sum_{(S, \kappa, \bolda) \in \calT'_\lambda} 
      (-1)^{|S|}t^{|S|+|\kappa|-|\lambda|-\ell(\lambda)}
     x_{a_1}^{\kappa_1} \dotsm 
     x_{a_{\ell(\lambda)}}^{\smash{\kappa_{\ell(\lambda)}}} \cdot \prod_{b\in S} x_b
\,.
\end{equation}

We now reindex the sum in \eqref{eq:Ephi}. Given $(S, \kappa, \bolda) \in \calT'_\lambda$, let $J$ denote the set of $j\in [\ell(\lambda)]$ such that $\kappa_j = \lambda_j$ and $a_j\in S$.  Let $J^c = [\ell(\lambda)]\setminus J$, so that by \eqref{Tprimecondition}, we have $\kappa_i = 1$ and $a_i\notin S$ for all $i\in J^c$. Writing $J = (j_1, \dots, j_s)$ (where $s = |J|$, $j_1< \dots < j_s$), and $\bolda_J = (a_{j_1}, \dots, a_{j_s})$, we can express the term in \eqref{eq:Ephi} indexed by $(S, \kappa, \bolda)$ as
\begin{equation}\label{eq:Ephimid1}
(-1)^{s}x_{a_{j_1}}^{\lambda_{j_1}+1} \dotsm x_{a_{j_s}}^{\lambda_{j_s}+1} \cdot (-1)^{|S|-s}t^{|S|+|\kappa|-|\lambda|-\ell(\lambda)} \cdot \prod_{i\in J^c} x_{a_i} \cdot \prod_{b\in S\setminus \bolda_J} x_b\,.
\end{equation}
Set $\mu = \lambda\setminus\lambda_J$, so that $\ell(\mu) = \ell(\lambda) - s$ and $|\mu| = |\lambda| - |\kappa| + \ell(\mu)$. Also set $r = |S| + \ell(\lambda) - 2s$, so that
\begin{equation}\label{eq:Ephimid2}
(-1)^{|S|-s}t^{|S|+|\kappa|-|\lambda|-\ell(\lambda)} = (-1)^{r-\ell(\mu)}t^{r-|\mu|-\ell(\mu)}\,.
\end{equation}

Now let us fix $J$, $\bolda_J$, and $r$, and sum \eqref{eq:Ephimid1} over all $(S, \kappa, \bolda)\in\calT'_\lambda$ consistent with our choice of $J$, $\bolda_J$, and $r$; we claim the result is
\begin{equation}\label{eq:Ephimid3}
(-1)^{s}x_{a_{j_1}}^{\lambda_{j_1}+1} \dotsm x_{a_{j_s}}^{\lambda_{j_s}+1} \cdot (-1)^{r-\ell(\mu)}t^{r-|\mu|-\ell(\mu)} r(r-1) \cdots (r-\ell(\mu)+1) \,\e_r(\boldx\setminus\boldx_{\bolda_J})\,,
\end{equation}
where $\boldx\setminus\boldx_{\bolda_J}$ means we omit the variables $x_{a_{j_1}}, \dots, x_{a_{j_s}}$. To see this, note that $\prod_{i\in J^c} x_{a_i} \cdot \prod_{b\in S\setminus\bolda_J} x_b$ is an arbitrary square-free monomial of degree $(\ell(\lambda)-s) + (|S|-s) = r$ in the variables $\boldx\setminus\boldx_{\bolda_J}$. The number of times each such monomial appears in the sum is the number of ways to choose (in order) the $\ell(\lambda)-s = \ell(\mu)$ variables appearing in the first product, which is just $r(r-1)\cdots(r-\ell(\mu)+1)$. This, along with \eqref{eq:Ephimid2}, proves the claim.

Now for any composition $\mu$, we define
\begin{equation}\label{eq:Flambda}
\begin{aligned}
   F_\mu(t; \boldx) 
   &:= \sum_{r \ge 0}
      (-1)^{r-\ell(\mu)}t^{r-|\mu|-\ell(\mu)}r(r-1) \cdots (r-\ell(\mu)+1)\,\e_r(\boldx)\,, \\
   G_\mu(t; \boldx)
   &:= F_\mu(t;\boldx) + \sum_{i=1}^{\ell(\mu)} 
     \mu_i t^{-\mu_i-1} F_{\mu \setminus \mu_i}(t;\boldx)\,.
\end{aligned}
\end{equation}
Then summing \eqref{eq:Ephimid3} over all possibilities for $J$, $\bolda_J$, and $r$, \eqref{eq:Ephi} becomes
\begin{equation}
\label{eq:Ephi2}
     \E(-t)\phi_\lambda(t) = 
     \sum_{\substack{
     J = (j_1, \dots, j_s),
      \\ \boldb = (b_1, \dots, b_s)} }
      (-1)^s x_{b_1}^{\lambda_{j_1}+1} \dotsm 
      x_{b_s}^{\lambda_{j_s}+1} \cdot
      F_{\lambda \setminus \lambda_J}(t;\boldx \setminus \boldx_\boldb)
\,,
\end{equation}
where the sum is taken over all subsets $J \subseteq [\ell(\lambda)]$ and all $|J|$-tuples $\boldb$ of distinct positive integers. (In going from  \eqref{eq:Ephimid3} to \eqref{eq:Ephi2}, we set $(b_1, \dots, b_s) = (a_{j_1}, \dots, a_{j_s})$.)

Using \cref{prop:hperpsm}, we have
   $\H^\perp(t^{-1}) \phi_\lambda(t) 
    = \phi_\lambda(t) 
      + \sum_{i=1}^{\ell(\lambda)} 
      \lambda_i t^{-\lambda_i-1} \phi_{\lambda \setminus \lambda_i}(t)
\,,$
and hence
$\B^\perp(t^{-1}) \phi_\lambda(t) = 
    \E(-t)\phi_\lambda(t) + \sum_{i=1}^{\ell(\lambda)}\lambda_it^{-\lambda_i-1}\E(-t)\phi_{\lambda\setminus\lambda_i}(t)\,.$
Combining this with \eqref{eq:Ephi2} gives
\[
   \B^\perp(t^{-1}) \phi_\lambda(t) 
    =  \sum_{\substack{
     J = (j_1, \dots, j_s),
      \\ \boldb = (b_1, \dots, b_s)} }
      (-1)^s x_{b_1}^{\lambda_{j_1}+1} \dotsm 
      x_{b_s}^{\lambda_{j_s}+1}   \cdot
      G_{\lambda \setminus \lambda_J}(t;\boldx \setminus \boldx_\boldb)
\,.
\]
Hence we are done if we can show that $[t^{-1}] G_\mu(t; \boldx) = 0$ for all $\mu$. To see this, we calculate by \eqref{eq:Flambda} that $[t^{-1}]G_\mu(t; \boldx)$ equals $0$ if $\mu=0$, and if $\mu\neq 0$ it equals
\begin{multline*}
(-1)^{|\mu|-1}(|\mu|+\ell(\mu)-1)(|\mu|+\ell(\mu)-2) \cdots|\mu|\,\e_{|\mu|+\ell(\mu)-1}(\boldx) \\
+ \sum_{i=1}^{\ell(\mu)}\mu_i \cdot (-1)^{|\mu|}(|\mu|+\ell(\mu)-1)(|\mu|+\ell(\mu)-2) \cdots (|\mu|+1)\,\e_{|\mu|+\ell(\mu)-1}(\boldx)\,.
\end{multline*}
After factoring out $(-1)^{|\mu|-1}(|\mu|+\ell(\mu)-1)(|\mu|+\ell(\mu)-2) \cdots(|\mu|+1)\,\e_{|\mu|+\ell(\mu)-1}(\boldx)$, the expression above becomes $|\mu| - \sum_{i=1}^{\ell(\mu)}\mu_i$, which equals zero. This finishes the proof.
\end{proof}


\subsection{The Bethe subalgebra of \texorpdfstring{$\CC[\Sn]$}{C[S\textunderscore{}n]}}
\label{sec:bethe}

Fix a positive integer $n$, and let $z_1, \dots, z_n \in \CC$ be
complex numbers.  As before, let $g(u) = (u+z_1) \dotsm (u+z_n)$.
For a subset $X \subseteq [n]$, let $z_X := \prod_{i \in X} z_i$.
We recall the operators $\beta^\lambda(t)$ defined in \eqref{eq:betadef} and $\alpha^\lambda_X$ defined in \eqref{eq:alphadef}. By \eqref{eq:alphatobeta}, we have
\[
     \beta^\lambda(t) = \sum_{\ell=0}^{n-|\lambda|} 
       \beta^\lambda_\ell t^{n-|\lambda|-\ell},
\qquad\text{where }\;\beta^\lambda_{\ell} := 
    \sum_{\substack{X \cap Y = \emptyset, \\
     |X| = |\lambda|, |Y| = \ell}}
    \alpha^\lambda_X z_Y
 \in \CC[\Sn] \;\text{ for } \ell\ge 0
\,.
\]
In particular, $\beta^\lambda = \beta^\lambda(0) = \beta^\lambda_{n-|\lambda|}$.

\subsubsection{Generators}

While most of the operators $\beta^\lambda(t)$ are new, the operators
$\betaminus{k}(t)$ coincide with those introduced by 
Mukhin, Tarasov, and Varchenko in \cite{mukhin_tarasov_varchenko13}.
In the remainder of this section we will only be 
concerned with the 
partitions $\lambda = 1^k$, $k \leq n$.
In this case, for $|X| = k$ the Specht module $\specht{1^k}_X$ is the 
one-dimensional sign representation of $\symgrp{X}$, so 
\[
     \alphaminus{k}_X = \sum_{\sigma \in \symgrp{X}} \sgn(\sigma) \sigma
\,.
\]
Mukhin, Tarasov, and Varchenko define the \defn{Bethe subalgebra} (of \emph{Gaudin type}) of 
$\CC[\Sn]$ to be the subalgebra 
$\bethe \subseteq \CC[\Sn]$ 
generated by the elements $\betaminus{k}_\ell$ for $k, \ell \geq 0$.
Equivalently, $\bethe$ is generated by the elements
$\betaminus{k}(t)$ for $k \geq 0$ and $t \in \CC$.
\begin{remark}
\label{rmk:bethenotation}
In the notation of \cite{mukhin_tarasov_varchenko13}, what we call $\betaminus{k}(t)$ and $\betaminus{k}_\ell$ can be expressed as, respectively, $(-1)^{n-k}\mathit{\Phi}^{[n]}_k(-t)$ and $(-1)^\ell\mathit{\Phi}^{[n]}_{k,\ell}$. Our notation more closely follows \cite{purbhoo}, where our $\betaminus{k}(t)$ and $\betaminus{k}_\ell$ are denoted $\beta^-_{k,n-k}(t)$ and $\beta^-_{k,\ell}$, respectively.
\end{remark}

\begin{remark}\label{rmk:deformation}
When $z_1, \dots, z_n$ are distinct, the \defn{Gaudin elements},
studied in \cite[Section 2]{gaudin76},
are the elements $\zeta_1, \dots \zeta_n \in \bethe$ defined by
\[
\zeta_k := 
\sum_{\substack{1 \le j \le n, \\ j \neq k}}\frac{\trans{j}{k}}{z_k - z_j} 
\,
=
\bigg(\prod_{\substack{1 \le j \le n, \\ j \neq k}}\frac{1}{z_j - z_k}\bigg)\,
\beta^{1^2}(-z_k) 
\;
-
\bigg(\sum_{\substack{1 \le j \le n, \\ j \neq k}}\frac{1}{z_j - z_k}\bigg)\, \Sidentity{n}
\,.
\]
In the limit
\begin{equation}
\label{eq:gtlimit}
\frac{z_{k-1} - z_k}{z_k - z_{k+1}} \to 0 \quad \text{for $2 \le k \le n-1$}\,,
\end{equation}
we can check that $\zeta_k$ (appropriately rescaled) converges to the 
\emph{Jucys--Murphy element}
\[
\sum_{j=1}^{k-1}\trans{j}{k}\in\CC[\Sn]\,.
\]
The subalgebra of $\CC[\Sn]$ generated by the Jucys--Murphy elements for $k=1, \dots, n$ is the well-known \defn{Gelfand--Tsetlin subalgebra} $\GT$, which is semisimple and a maximal commutative subalgebra of $\CC[\Sn]$; see \cite[Sections 1 and 2]{okounkov_vershik96} for details.

Analogously, it is known that Gaudin elements generate $\bethe$ 
for $z_1, \dots, z_n$ distinct;
see \cite[Theorem 4.4]{mukhin_tarasov_varchenko13} 
and cf.\ \cite[Section 3.3]{kirillov16a}.  However, we will not use this fact.
\end{remark}

\begin{theorem}[{Mukhin, Tarasov, and Varchenko \cite{mukhin_tarasov_varchenko13}}]
\label{thm:bethebasics}
The algebra $\bethe$ has the
following properties.
\begin{enumerate}[(i)]
\item\label{bethebasics1}
It is a commutative algebra.
\item\label{bethebasics2}
If $z_1, \dots, z_n \in \CC$ are generic,
it is semisimple and a maximal commutative subalgebra of
$\CC[\Sn]$, and has dimension
$\sum_{\nu \vdash n} \numsyt{\nu}$.
\item\label{bethebasics3}
It contains
the centre of $\CC[\Sn]$.
\item\label{bethebasics4}
It is translation invariant: $\bethe = \translatebethe{t}$ for
all $t \in \CC$.
\end{enumerate}
\end{theorem}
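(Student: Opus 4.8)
This is a theorem of Mukhin, Tarasov, and Varchenko (\cite[Sections 2 and 4]{mukhin_tarasov_varchenko13}); here is the plan I would follow. The four parts are largely independent, and I would establish them in the order (iv), (iii), (i), (ii), with (i) being the only genuinely hard step. Part (iv) is essentially immediate from the definition: the generators of $\translatebethe{t}$ are the elements $\betaminus{k}(s)$ formed using the shifted parameters $z_i+t$, and $\sum_{|X|=k}\alphaminus{k}_X\prod_{i\notin X}(z_i+t+s)=\betaminus{k}(s+t)$, where the right-hand side is computed with the original parameters. As $s$ ranges over $\CC$ so does $s+t$, so $\bethe$ and $\translatebethe{t}$ share the generating set $\{\betaminus{k}(s):k\ge 0,\ s\in\CC\}$ and therefore coincide.

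For part (iii), note first that $\bethe$ contains the generator $c_k:=\betaminus{k}_0=\sum_{|X|=k}\alphaminus{k}_X$; unwinding the definition, $c_k=\sum_{\sigma\in\Sn}\sgn(\sigma)\binom{n-m(\sigma)}{k-m(\sigma)}\sigma$, where $m(\sigma)$ is the number of non-fixed points of $\sigma$, so $c_k$ is a class function and hence lies in $Z(\CC[\Sn])$. It then remains to check --- purely within $\CC[\Sn]$ --- that the $c_k$ generate $Z(\CC[\Sn])$ as an algebra; I would do this via the Farahat--Higman structure of $Z(\CC[\Sn])$, under which the top-order term of $c_k$ (in the filtration by number of non-fixed points) is, modulo decomposables, the class sum of a single $k$-cycle, one of the polynomial generators of the associated graded ring.

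Part (i), commutativity, is the crux, and the only part that does not follow from the definition together with standard representation theory of $\Sn$. The plan here is to realize the $\betaminus{k}(t)$, after passing to $\CC[\Sn]$, as coefficients of a commuting family of operators of the rational Gaudin model, where commutativity is the standard consequence of the $r$-matrix (Yang--Baxter) formalism (see \cite[Section 2]{mukhin_tarasov_varchenko13}); alternatively one can follow \cite{purbhoo}, where the identities $\betaminus{k}(s)\,\betaminus{\ell}(t)=\betaminus{\ell}(t)\,\betaminus{k}(s)$ are proved directly and combinatorially inside the group algebra. I expect this to be the main obstacle: it is genuinely a statement about quadratic expressions in $\CC[\Sn]$, and it is the ingredient that the rest of the paper treats as a black box --- for instance, the commutativity of the full family $\beta^\lambda(t)$ in \cref{thm:main} is obtained by first showing $\beta^\lambda(t)\in\bethe$ and then invoking the commutativity of $\bethe$.

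Finally, for part (ii), assuming (i) and (iii): the central primitive idempotents $e_\nu$ lie in $Z(\CC[\Sn])\subseteq\bethe$, so $\bethe=\bigoplus_{\nu\vdash n}e_\nu\bethe=\bigoplus_{\nu\vdash n}\bethenu$, and hence $\dim\bethe=\sum_{\nu\vdash n}\dim\bethenu$ for every $z$. For generic $z$, Mukhin--Tarasov--Varchenko show that $\bethenu$ acts on $\spechtnu$ with simple joint spectrum, so $\bethenu$ is the full algebra of operators diagonal in an eigenbasis; in particular it is semisimple and $\dim\bethenu=\numsyt{\nu}$. Thus for generic $z$ the algebra $\bethe$ is semisimple of dimension $\sum_{\nu\vdash n}\numsyt{\nu}$, and since any commutative subalgebra of $\CC[\Sn]\cong\bigoplus_{\nu}\End(\spechtnu)$ has dimension at most $\sum_{\nu\vdash n}\numsyt{\nu}$, it is moreover a maximal commutative subalgebra. (For real $z$, semisimplicity of $\bethenu$ also follows from \cref{prop:betapsd}, the $\betaminus{k}_\nu$ being commuting self-adjoint operators.)
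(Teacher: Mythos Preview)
The paper does not prove \cref{thm:bethebasics}; it is quoted as a result of Mukhin--Tarasov--Varchenko and used as a black box. So there is no paper proof to compare against, and I will instead assess the correctness of your sketch.

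Parts (iv) and (i) are fine. Your argument for (iv) is exactly right, and for (i) you correctly identify commutativity as the substantive step and point to the appropriate sources (the Gaudin $r$-matrix argument in \cite{mukhin_tarasov_varchenko13}, or the bijective proof in \cite{purbhoo}). Your remark about how this paper uses commutativity of $\bethe$ to deduce commutativity of the $\beta^\lambda(t)$ is also essentially accurate (see \cref{lem:commutativityreduction}).

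There is a genuine error in your argument for (ii). The claim that ``any commutative subalgebra of $\CC[\Sn]\cong\bigoplus_\nu\End(\spechtnu)$ has dimension at most $\sum_\nu\numsyt\nu$'' is false: by Schur's theorem, $M_d(\CC)$ contains commutative subalgebras of dimension $\lfloor d^2/4\rfloor+1$, which exceeds $d$ once $d\ge 3$. Maximality follows more directly: once you know that for generic $z$ the algebra $\bethenu$ is the full algebra of diagonal matrices in an eigenbasis of $\spechtnu$, it equals its own commutant in $\End(\spechtnu)$ and is therefore maximal commutative there; summing over $\nu$ gives maximality in $\CC[\Sn]$.

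Your sketch for (iii) is plausible but needs tightening. The element $c_k=\betaminus{k}_0=\sum_{|X|=k}\alphaminus{k}_X$ is indeed central, but the ``Farahat--Higman'' step is not as clean as you suggest: the standard Farahat--Higman filtration is by reflection length $n-\#\text{cycles}$, not by number of non-fixed points, and in your filtration the top-degree part of $c_k$ is a signed sum over \emph{all} derangements of $k$-subsets (not just $k$-cycles), so the ``modulo decomposables'' claim requires an actual computation. A cleaner route is to check that the $c_k$ separate the Specht modules: by \cref{cor:alphapsd}, $c_k$ acts on $\spechtnu$ as a scalar proportional to $\numsyt{\nu/1^k}$, and one can verify that the sequence $(\numsyt{\nu/1^k})_{k\ge 0}$ determines $\nu$ (for instance, its support determines $\ell(\nu)$, and one proceeds by induction).
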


We point out that while \cref{thm:bethebasics} is non-trivial, its proof 
does not require the full machinery of \cite{mukhin_tarasov_varchenko13}; 
in particular it can be proved without using \cref{thm:precise}.  
The proof below gives
an outline of some of the different possible arguments,
their interdependencies, and more detailed references.

\begin{proof}
Part \ref{bethebasics1} is proved in 
\cite[Proposition 2.4]{mukhin_tarasov_varchenko13}, by deducing the result
from a more general commutativity statement in the rational Cherednik algebra.
The result also follows from 
\cite[Theorems 3.1 and 3.2]{mukhin_tarasov_varchenko13}, which shows
that the definition of $\bethe$ is equivalent to a previous definition
formulated in terms of $\gl_n$.
A combinatorial proof in terms of permutations 
is given in \cite[Section 5]{purbhoo}.

For part \ref{bethebasics2}, we use 
\cite[Proposition 2.5]{mukhin_tarasov_varchenko13}, which asserts that
the Bethe algebra $\bethe$ has a degeneration to the Gelfand--Tsetlin algebra
$\GT$.  We note that the proof therein depends on part \ref{bethebasics3}.  
A modification of this proof that does not use \ref{bethebasics3} is
as follows.  Consider the limit of $\bethe$ under \eqref{eq:gtlimit}.
Since the limits of the Gaudin elements generate $\GT$
(see \cref{rmk:deformation}), this limit 
contains $\GT$.  But since $\bethe$ is commutative and $\GT$ is
maximal commutative, the limit is precisely $\GT$.
Now, the set of semisimple maximal commutative
subalgebras is (Zariski)-open 
in the space of commutative subalgebras of $\CC[\Sn]$.
Since $\GT$ is itself a semisimple
maximal commutative subalgebra of $\CC[\Sn]$, 
and $\dim \GT = \sum_{\nu \vdash n} \numsyt{\nu}$,
the existence of this degeneration implies \ref{bethebasics2}.
We point out that \ref{bethebasics2} also follows from 
\cite[Theorem 4.3 parts (i) and (iii)]{mukhin_tarasov_varchenko13}, but 
following this reference would create a circular argument 
in our proof of \cref{thm:precise}.
We also emphasize that using \cite[Theorem 4.3]{mukhin_tarasov_varchenko13} is excessive: it is stated under the hypothesis that $z_1, \dots, z_n$ are distinct, whereas we only need the results of parts (i) and (iii) for \emph{generic} $z_1, \dots, z_n$ (which is much easier to prove).

Part \ref{bethebasics3} follows from \cite[Proposition 2.1]{mukhin_tarasov_varchenko13}.  
The
proof uses \cite[Proposition 3.5]{mukhin_tarasov_varchenko13}, which is
in turn based on results in \cite[Section 2]{mukhin_tarasov_varchenko09b}.
A self-contained exposition of this argument is given in 
\cite[Section 6]{purbhoo}.  We remark that it is also possible to 
prove \ref{bethebasics3} in the course of proving \cref{thm:main}, as
follows.  
First note that for all $z_1, \dots, z_n \in \CC$, we have 
$\bethezero \subseteq \bethe$, since the generators 
of $\bethezero$ are all either zero or independent of $z_1, \dots, z_n$.
Let $p(n)$ denote the number of partitions of $n$.
By \cref{lem:FDOcoords} and \cref{cor:identifyscell},
$\Spec \bethe$ 
has at least one point $V_E$ (given in FDO coordinates)
in each Schubert cell $\scell\nu$ for $\nu \vdash n$; in particular, $\dim \bethe \geq p(n)$ for
all $z_1, \dots, z_n \in \CC$.
On the other hand, $\bethezero$ is contained in the centre of $\CC[\Sn]$, 
which has dimension $p(n)$.
Therefore $\bethezero$ equals the centre of $\CC[\Sn]$, which proves
\ref{bethebasics3}.
We note that the proofs of \cref{lem:FDOcoords} and \cref{cor:identifyscell}
do not rely on \ref{bethebasics3}, but they do rely on \ref{bethebasics2}.
Therefore, in order for this proof of \ref{bethebasics3} to be non-circular, 
we must use the modified proof of 
\cite[Proposition 2.5]{mukhin_tarasov_varchenko13} from the previous 
paragraph.

Finally, part \ref{bethebasics4} is stated 
in \cite[Lemma 2.3]{mukhin_tarasov_varchenko13}, but also follows 
immediately from the definition of $\bethe$.
\end{proof}

\begin{remark}
\label{rmk:parameterdependence}
Many of the properties of $\bethe$ depend on the 
choice of parameters $z_1, \dots, z_n$ (cf.\ \cite{mukhin_tarasov_varchenko13}).
For example, $\bethe$ is not always semisimple, and its dimension
depends discontinuously on $z_1, \dots, z_n$.
When $z_1, \dots, z_n$ are distinct, 
$\dim \bethe = \sum_{\nu \vdash n} \numsyt\nu$, even if $\bethe$
is not semisimple.  However, $\dim \bethe$ can also be
strictly smaller.
For example, as discussed in the proof of \cref{thm:bethebasics},
$\bethezero$ is the centre of $\CC[\Sn]$.
We give a general formula for $\dim \bethe$ in \cref{thm:bethedimensions}.
\end{remark}

If $u$ is a formal indeterminate, then 
$\betaminus{k}(u) \in \CC[\Sn] \otimes \CC[u]$ is an element of the group algebra
of $\Sn$ with coefficients in $\CC[u]$, or equivalently, a polynomial
in $u$ with coefficients in $\CC[\Sn]$.
We combine the elements $\betaminus{k}(u)$
to produce a linear differential operator,
$\calD_n : \CC[\Sn] \otimes \CC(u) \to \CC[\Sn] \otimes \CC(u)$, defined as
\begin{equation}
\label{eq:diffop}
    \calD_n := 
\frac{1}{g(u)} 
    \sum_{k=0}^n (-1)^k \betaminus{k}(u) 
    \du^{n-k}
\,.
\end{equation}

\subsubsection{Eigenspaces}

For any partition $\nu \vdash n$, $\bethe$ acts on the Specht module
$\spechtnu$.  The subalgebra $\bethenu \subseteq \End(\specht{\nu})$ 
is the algebra defined by the image of this action.  We let
$\betaminus{k}_{\ell,\nu},\, \betaminus{k}_\nu(t) \in \bethenu$
denote the image of the generators 
$\betaminus{k}_\ell,\,\betaminus{k}(t)$.
Since $\bethe$ is commutative, so is $\bethenu$.  
Because $\bethe$ contains the centre of $\CC[\Sn]$, which includes
projections onto each $\spechtnu$ (by \cref{prop:projection}), we have a direct product decomposition
\begin{equation}
\label{eq:directproductbethe}
  \bethe \simeq \prod_{\nu \vdash n} \bethenu
\,.
\end{equation}

An \defn{eigenspace}
of $\bethenu$ is a maximal subspace $E \subseteq \specht{\nu}$ such
that every operator $\xi \in \bethenu$ acts as a scalar
$\xi_E$ on $E$; $\xi_E$ is the \defn{eigenvalue} of $\xi$ on $E$.
The eigenspaces of $\bethenu$ correspond naturally to the points of
$\Spec \bethenu$.
The \defn{generalized eigenspace} containing $E$ is the maximal 
$\bethenu$-submodule $\widehat{E} \subseteq \spechtnu$ on which
$\xi-\xi_E$ acts 
nilpotently for all $\xi \in \bethenu$.

For any such eigenspace $E$, we write
$\betaminus{k}_{\ell,E}$ and $\betaminus{k}_E(t)$ for the
eigenvalues of 
$\betaminus{k}_{\ell,\nu}$ and $\betaminus{k}_\nu(t)$, respectively.
Restricting the differential operator $\calD_n$ to $E$, we obtain a 
scalar-valued differential operator
$\calD_E : \CC(u) \to \CC(u)$, given by
\[
    \calD_E = 
\frac{1}{g(u)} 
    \sum_{k=0}^n (-1)^k \betaminus{k}_E(u) 
    \du^{n-k} 
\,.
\]

Recall that for a finite-dimensional vector space $V \subseteq \CC[u]$,
$D_V$ denotes the fundamental differential operator of $V$.
We now state \cref{thm:vague} more precisely (for simplicity,
we work inside the
Grassmannian $\Gr(n, 2n)$):

\begin{theorem}[{Mukhin, Tarasov, and Varchenko \cite[Theorem 4.3(iv)]{mukhin_tarasov_varchenko13}}]
\label{thm:precise}
Let $\nu \vdash n$, and consider the Schubert cell 
$\calX^\nu \subseteq \Gr(n,2n)$.  For every eigenspace 
$E \subseteq \specht{\nu}$ of $\bethenu$, there exists a unique
point $V_E \in \Wr^{-1}(g)$ such that $\calD_E = D_{V_E}$.
This defines
a bijective correspondence between the eigenspaces of $\bethenu$ and the points of $\Wr^{-1}(g)$.
\end{theorem}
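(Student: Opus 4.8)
The plan is to derive \cref{thm:precise} from \cref{thm:main}, via the change of coordinates between \Plucker and FDO coordinates. Throughout I work, as in the statement, inside $\Gr(n,2n)$, so $d = n$ and $\rectangle = n^n$; this is legitimate since $\ell(\nu) \le n$ and $2n \ge n + \nu_1$, and by \cref{prop:changegr} the \Plucker coordinates, the Wronski map, and the translation action do not depend on this choice. By \cref{thm:main}\ref{main_eigenspace}, each eigenspace $E \subseteq \spechtnu$ of $\bethenu$ yields a point $V_E \in \scellnu$ with $\Wr(V_E) = g$ whose \Plucker coordinates are the eigenvalues $\Delta^\lambda_E$ of the operators $\beta^\lambda$ on $E$, and every point of $\Wr^{-1}(g)$ occurs this way. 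These eigenvalues are in fact the \emph{normalized} \Plucker coordinates of $V_E$: since $\beta^\nu = \alpha^\nu_{[n]} = \sum_{\sigma \in \Sn} \chi^\nu(\sigma) \sigma$, which by \cref{prop:projection} acts on $\spechtnu$ as $\frac{|\nu|!}{\numsyt{\nu}}\identity$, we have $\Delta^\nu_E = \frac{|\nu|!}{\numsyt{\nu}}$ on every eigenspace.

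Next I would show $\calD_E = D_{V_E}$. Restricting the translation identity \cref{thm:main}\ref{main_translation} (with $s = 0$) to $E$, the eigenvalue of $\beta^\mu(t)$ on $E$ is $\sum_{\lambda \supseteq \mu} \frac{\numsyt{\lambda/\mu}}{|\lambda/\mu|!} t^{|\lambda/\mu|} \Delta^\lambda_E$; since the $\Delta^\lambda_E$ are the normalized \Plucker coordinates of $V_E$, \cref{prop:geometrictranslation} (with $s = 0$) identifies this, term by term, with the normalized \Plucker coordinate $\Delta^\mu(t)$ of $V_E(t)$. Taking $\mu = 1^k$ and applying the formula $\psi_{k,\ell} = [u^{n-k-\ell}]\Delta^{1^k}(u)$ for FDO coordinates (the remark following \cref{lem:FDOplucker}) to $V_E$, the coefficient $\betaminus{k}_E(u)$ of $\calD_E$ equals $\sum_\ell \psi_{k,\ell}\, u^{n-k-\ell}$, where the $\psi_{k,\ell}$ are the FDO coordinates of $V_E$. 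As $d = n$, $\Wr(V_E) = g$, and $\betaminus{0}(u) = \beta^0(u) = g(u)\identity$ (so $\calD_E$ is monic of order $n$), comparing the displayed formulas for $\calD_E$ and $D_{V_E}$ gives $\calD_E = D_{V_E}$; uniqueness of a point $V$ with $\calD_E = D_V$ then follows from \cref{prop:FDOunique}.

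It remains to see that $E \mapsto V_E$ is a bijection onto $\Wr^{-1}(g)$. Surjectivity is the last sentence of \cref{thm:main}\ref{main_eigenspace}. For injectivity, if $V_E = V_{E'}$ then $V_E$ and $V_{E'}$ have the same FDO coordinates, hence (by the same formula $\psi_{k,\ell} = [u^{n-k-\ell}]\Delta^{1^k}(u)$) the operators $\betaminus{k}_\ell$ have equal eigenvalues on $E$ and $E'$ for all $k, \ell$; since these generate $\bethe$ and therefore $\bethenu$, every operator in $\bethenu$ has equal eigenvalues on $E$ and $E'$, forcing $E = E'$ as $E, E'$ are eigenspaces of the commutative algebra $\bethenu$. (One could instead invoke \cref{thm:main}\ref{main_generates}.)

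All the real work sits inside \cref{thm:main}; granting it, \cref{thm:precise} is essentially bookkeeping, and the only delicate point is keeping the scalar normalizations consistent — checking that the $\beta^\lambda$-eigenvalues are the \emph{normalized} (not merely projective) \Plucker coordinates, so that \cref{thm:main}\ref{main_translation} matches \cref{prop:geometrictranslation} on the nose, and that the $1/g(u)$ prefactor in $\calD_n$ matches the $1/\Wr(V)$ in $D_V$; these I would pin down using $\beta^0(t) = g(t)\identity$ and $\beta^\nu_\nu = \frac{|\nu|!}{\numsyt{\nu}}\identity$. A secondary issue is circularity: in the paper's own logic \cref{thm:precise} emerges during the proof of \cref{thm:main} rather than after it, so a self-contained treatment would have to track exactly which parts of \cref{thm:main} are available when \cref{thm:precise} is used; taken as a deduction from the completed \cref{thm:main}, however, the argument above is complete.
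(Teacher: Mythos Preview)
Your deduction of \cref{thm:precise} from the completed \cref{thm:main} is correct, and the overall architecture matches the paper's \cref{rmk:newproof}: both treat \cref{thm:precise} as an immediate corollary of \cref{thm:main} once the latter is in hand, with the bijectivity already contained in \cref{thm:main}\ref{main_eigenspace}. The one substantive difference is how you establish $\calD_E = D_{V_E}$. The paper does this via \cref{lem:FDOcoords}, whose proof runs through the exterior-algebra machinery (the derivation $(\polyD_n)_\#$ annihilating $\Omega$, \cref{cor:exterior}, and the single-column \Plucker relations) developed for \cref{thm:main}\ref{main_pluckers}. You instead match the translation identity \cref{thm:main}\ref{main_translation} against \cref{prop:geometrictranslation} to identify $\betaminus{k}_E(u)$ with $\Delta^{1^k}(u)$ for $V_E$, and then invoke the coordinate-change formula $\psi_{k,\ell} = [u^{n-k-\ell}]\Delta^{1^k}(u)$ from the remark after \cref{lem:FDOplucker}. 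This is slicker as a post-hoc argument, but note that the paper only asserts this formula can be proved ``directly from the definition'' without actually doing so (the alternative justification offered is precisely \cref{lem:FDOcoords}); if you want your route to be genuinely independent of the exterior-algebra argument, you should supply that direct verification, which amounts to expanding $\Wr(f_1,\dots,f_d,f)$ along its last row and identifying the resulting cofactors with translated \Plucker coordinates.
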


\begin{remark}
One advantage of working in $\Gr(n,2n)$ is that 
$\calD_E$ and $D_V$ are both differential operators of order $n$.
If $d \neq n$, and we instead regard $\calX^\nu$ as a Schubert cell
in $\Gr(d,m)$, then differential operators $\calD_E$ and $D_{V_E}$
are no longer of the same order, and therefore cannot be equal.
Nevertheless, \cref{thm:precise} is true more generally
for $\Gr(d,m)$, 
namely, $\calD_E\du^d = D_{V_E} \du^n$. This formulation, however, is awkward to 
work with.
\end{remark}

\subsubsection{Regarding the parameters \texorpdfstring{$z_1, \dots, z_n$}{z\textunderscore{}1, ..., z\textunderscore{}n}}

In the definition of $\bethe$, the parameters $z_1,\dots, z_n$ are
arbitrary fixed complex numbers; specifically, $-z_1, \dots, -z_n$
are the roots the polynomial $g(u) \in \CC[u]$.   This is the
perspective assumed in the statement of \cref{thm:main}.
However, at various points in the proof of
\cref{thm:main}, we will take two other perspectives on the
parameters $z_1, \dots, z_n$, viewing them as generic complex numbers (i.e.\ $(z_1, \dots, z_n)$ is a general
point of $\CC^n$), or as formal indeterminates.

Any polynomial identity that we can prove under                          
any of these three perspectives must also be true under the other two.
This applies only to polynomial identities, and it is not true for 
abstract properties of Bethe algebras
(e.g.\ see \cref{rmk:parameterdependence}),
so some care is required when shifting perspectives.  
Nevertheless, we use this to our advantage.
For example, if
$z_1, \dots, z_n$ are assumed to be generic, then 
$\bethe$ is a maximal commutative subalgebra of $\CC[\Sn]$ 
by \cref{thm:bethebasics}\ref{bethebasics2}. 
Or, if $z_1, \dots, z_n$ are formal indeterminates,
then we can substitute $z_i \mapsto z_i+t$ into any valid equation.
These additional possibilities facilitate certain arguments that
would be invalid if $z_1, \dots, z_n \in \CC$ are always
assumed to be arbitrary and fixed.


\section{Commutativity relations and the translation identity}
\label{sec:commutativitytranslation}

The purpose of this section is to prove parts \ref{main_commutativity} and \ref{main_translation} of \cref{thm:main}, i.e., commutativity relations and the translation identity for the operators $\beta^\lambda(t)$. We begin by formulating a weak form of \cref{thm:main}\ref{main_generates} in \cref{sec:weakform}; we then prove it alongside the translation identity (\cref{sec:translation}) and the commutativity relations (\cref{sec:commutativity}).


\hidelinks
\subsection{A weak form of \texorpdfstring{\fullcref{thm:main}\ref{main_generates}}{Theorem \ref{thm:main}\ref{main_generates}}}
\restorelinks
\label{sec:weakform}
Let $\altbethe$ denote the subalgebra of $\CC[\Sn]$ generated by the
elements $\beta^\lambda$.  Part \ref{main_generates} of \cref{thm:main} asserts
that $\altbethe = \bethe$; however, we will not deduce this until the
end of the proof of \cref{thm:main}.  In the meantime, we will spend the remainder of this section proving the following slightly weaker result, which will be enough to get us through 
until then:

\begin{lemma}  
\label{lem:comparealgebras}
The algebra $\altbethe$ has the
following properties.
\begin{enumerate}[(i)]
\item\label{comparealgebras1}
It is a commutative algebra.
\item\label{comparealgebras2}
If $z_1, \dots, z_n \in \CC$ are generic, it equals $\bethe$.
\item\label{comparealgebras3}
It contains $\bethe$.
\item\label{comparealgebras4}
It is translation invariant: 
$\altbethe = \translatealtbethe{t}$
for all $t \in \CC$. 
\end{enumerate}
\end{lemma}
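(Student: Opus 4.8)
The plan is to prove parts \ref{comparealgebras3}--\ref{comparealgebras1} of the lemma, in an order dictated by what is actually provable at this stage — noting that parts \ref{comparealgebras1} and \ref{comparealgebras3} cannot yet invoke the full commutativity relations (\cref{thm:main}\ref{main_commutativity}), since those are proved later using this lemma. The key observation is that the operators $\betaminus{k}(t)$, which generate $\bethe$, are among the $\beta^\lambda(t)$ (taking $\lambda = 1^k$); so $\bethe \subseteq \altbethe$ is immediate once we establish that $\altbethe$ is genuinely the algebra generated by \emph{all} the $\beta^\lambda$. For part \ref{comparealgebras3}, first I would observe that each $\beta^\lambda = \beta^\lambda(0)$, together with the translation action $z_i \mapsto z_i + t$ (treating the $z_i$ as formal indeterminates as in \cref{sec:bethe}'s final remarks), recovers all the coefficients $\beta^\lambda_\ell$, and in particular all the $\betaminus{k}_\ell$. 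Hence $\altbethe \supseteq \bethe$.

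For part \ref{comparealgebras4} (translation invariance), I would use the translation identity \cref{thm:main}\ref{main_translation}, which expresses $\beta^\mu(s+t)$ as a finite $\CC$-linear combination of the $\beta^\lambda(s)$ with $\lambda \supseteq \mu$. Applying this with $s = 0$ shows that every $\beta^\mu(t)$ lies in $\altbethe$; conversely, the translation by $-t$ shows every $\beta^\mu = \beta^\mu(0)$ lies in the algebra generated by the $\beta^\lambda(t)$'s, which is $\translatealtbethe{t}$. So $\altbethe = \translatealtbethe{t}$. This step depends on part \ref{main_translation}, which is why the section is organized to prove the translation identity first. For part \ref{comparealgebras2}, when $z_1, \dots, z_n$ are generic, \cref{thm:bethebasics}\ref{bethebasics2} says $\bethe$ is a \emph{maximal} commutative subalgebra of $\CC[\Sn]$. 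By part \ref{comparealgebras3} we have $\bethe \subseteq \altbethe$; and if $\altbethe$ is commutative (part \ref{comparealgebras1}), then maximality of $\bethe$ forces $\altbethe = \bethe$.

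The main obstacle is part \ref{comparealgebras1}: showing $\altbethe$ is commutative without the general commutativity relations. The trick will be to prove it in the generic case only, where it is enough. Specifically, for generic $z_1, \dots, z_n$, I would show that all the $\beta^\lambda$ lie in $\bethe$ — this requires more than part \ref{comparealgebras3} gives directly (that only said $\bethe \subseteq \altbethe$), so I would instead argue that the $\beta^\lambda$ commute with the generators $\betaminus{k}(t)$ of $\bethe$. Since $\bethe$ is a maximal commutative subalgebra in the generic case, any element commuting with all of $\bethe$ must lie in $\bethe$; so it suffices to check $[\beta^\lambda, \betaminus{k}(t)] = 0$. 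This particular commutator is a polynomial identity in $z_1, \dots, z_n$ with coefficients in $\CC[\Sn]$, so it suffices to verify it for generic (or formal) parameters, and I would establish it using the techniques from \cite{purbhoo} referenced in the introduction for part \ref{main_commutativity} — namely, expressing these operators via the differential operator $\calD_n$ and exploiting the structure of $\bethe$. Once $\beta^\lambda \in \bethe$ for generic parameters, commutativity of $\altbethe$ (generically, hence as a polynomial identity, hence always) follows from commutativity of $\bethe$. This reduction — from "all $\beta^\lambda$ commute" to "each $\beta^\lambda$ commutes with the known generators of $\bethe$" — is the crux, and it is exactly where the maximality property of the Bethe algebra does the heavy lifting.
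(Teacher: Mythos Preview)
Your overall strategy matches the paper's closely: prove the translation identity first, use it to get parts \ref{comparealgebras3} and \ref{comparealgebras4}, then reduce commutativity to the single statement that each $\beta^\lambda$ commutes with the generators of $\bethe$, and invoke maximality of $\bethe$ in the generic case to conclude. This is exactly the architecture of \cref{sec:commutativitytranslation}, and your observation that commutativity is a polynomial identity (hence passes from generic to arbitrary parameters) is the same limiting argument the paper uses.

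There are two points where your proposal diverges from the paper in ways worth flagging. First, your sketch for part \ref{comparealgebras3} is roundabout: you talk about the translation action ``recovering the coefficients $\beta^\lambda_\ell$'', but what you actually need (and what the paper does directly) is the consequence of the translation identity \eqref{eq:translationzero} that $\betaminus{k}(t)$ is a finite $\CC$-linear combination of the $\beta^\lambda$'s, hence lies in $\altbethe$. No coefficient-extraction argument is needed. Second, and more substantively, you misidentify the mechanism for proving $[\beta^\lambda, \betaminus{k}(t)] = 0$: it is \emph{not} done via the differential operator $\calD_n$. The paper first performs an additional reduction (\cref{lem:commutativityreduction}): translating $(z_i) \mapsto (z_i - t)$ converts $[\varepsilon^\mu, \betaminus{k}(t)] = 0$ into $[\varepsilon^\mu_\ell, \betaminus{k}] = 0$, removing the parameter $t$ from the second factor. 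This reduced identity is then proved by a direct sign-preserving bijection on pairs of supported permutations, quoting \cite[Proposition 5.3]{purbhoo}. So your plan is structurally right, but the last step is combinatorial rather than operator-theoretic, and the extra translation trick to eliminate $t$ is what makes the bijective argument tractable.
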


Note that for any fixed $t \in \CC$, 
$\translatealtbethe{t}$ is generated by the 
elements $\beta^\lambda(t)$.  Thus parts \ref{comparealgebras1} and \ref{comparealgebras4} of 
\cref{lem:comparealgebras} together imply \cref{thm:main}\ref{main_commutativity}.  

In particular, \cref{lem:comparealgebras} tells us 
that $\altbethe$ has all of the same basic
properties as $\bethe$, described in \cref{thm:bethebasics}.
Therefore, once the lemma is proved, we will begin to use the
same notation as we used for $\bethe$.
We write $\altbethenu \subseteq \End(\spechtnu)$
to denote the image of the action $\altbethe$ on $\spechtnu$.  
As a consequence of \cref{lem:comparealgebras}\ref{comparealgebras3}, we 
have the direct product decomposition
\begin{equation}
\label{eq:directproductaltbethe}
    \altbethe \simeq \prod_{\nu \vdash n} \altbethenu
\,.
\end{equation}
We write $\beta^\lambda_{\ell,\nu},\, \beta^\lambda_\nu(t) \in \altbethenu$
for the operators $\beta^\lambda_\ell,\, \beta^\lambda(t)$ acting
on $\spechtnu$.
For each eigenspace $E \subseteq \spechtnu$ of $\altbethenu$, we
write 
$\beta^\lambda_{\ell,E}$ and $\beta^\lambda_E(t)$ 
for the corresponding eigenvalues of 
$\beta^\lambda_{\ell,\nu}$ and $\beta^\lambda_\nu(t)$, respectively.


\subsection{Translation identity}
\label{sec:translation}

We first prove the translation identity, \cref{thm:main}\ref{main_translation}, 
which is required for all parts of \cref{lem:comparealgebras}. Recall the elements $\alpha^\lambda_X$ defined in \eqref{eq:alphadef}.
\begin{lemma}\label{lem:induced}
Let $k\le n$, let $\mu\vdash k$, and let $N$ denote the representation of $\Sn$ induced by the Specht module $\specht{\mu}$ of $\symgrp{k}$ (under the containment $\symgrp{k}\subseteq\Sn$). Let $\chi^N:\Sn\to\CC$ denote the character of $N$. Then
\begin{equation}\label{eq:induced}
\sum_{\sigma\in\Sn}\chi^N(\sigma)\sigma = (n-k)!\sum_{X\in\binom{[n]}{k}}\alpha^\mu_X = \sum_{\substack{\lambda\vdash n, \\ \lambda\supseteq \mu}}\numsyt{\lambda/\mu}\alpha^\lambda_{[n]}\,.
\end{equation}
\end{lemma}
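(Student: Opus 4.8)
The plan is to prove \eqref{eq:induced} by establishing the two equalities separately, using the explicit formula for the induced character together with the Frobenius character formula and the known multiplicative structure of Specht modules. For the first equality, I would use the standard formula for an induced character: for $\sigma\in\Sn$,
\[
\chi^N(\sigma) = \frac{1}{k!}\sum_{\substack{\tau\in\Sn,\\ \tau\sigma\tau^{-1}\in\symgrp{k}}}\chi^\mu(\tau\sigma\tau^{-1})\,.
\]
Summing over $\sigma\in\Sn$ and reindexing, one finds that $\sum_{\sigma\in\Sn}\chi^N(\sigma)\sigma$ is a sum of $\Sn$-conjugates of $\sum_{\rho\in\symgrp{k}}\chi^\mu(\rho)\rho = \alpha^\mu_{[k]}$; since conjugating $\symgrp{k}$ by $\tau\in\Sn$ yields $\symgrp{\tau([k])}$ and carries $\alpha^\mu_{[k]}$ to $\alpha^\mu_{\tau([k])}$, the sum collapses (after accounting for the stabilizer $\symgrp{k}\times\symgrp{[n]\setminus[k]}$, of order $k!\,(n-k)!$) to $(n-k)!\sum_{X\in\binom{[n]}{k}}\alpha^\mu_X$. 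I would need to track the combinatorial factor carefully: each $X$ is hit by exactly $\binom{n}{k}^{-1}n! = k!\,(n-k)!$ choices of $\tau$, and the $\frac{1}{k!}$ from the induction formula cancels one factor of $k!$, leaving $(n-k)!$.

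For the second equality, I would pass to symmetric functions via the Frobenius characteristic map $\mathrm{ch}$. Under $\mathrm{ch}$, the element $\sum_{\sigma\in\Sn}\chi^N(\sigma)\sigma$ corresponds (up to the conventional normalization relating class functions to $\z_\mu$) to the Frobenius characteristic of $N$, which is $\h_{n-k}\,\s_\mu$ by the classical description of induction product (or equivalently $\s_\mu\cdot\h_{n-k}$), and then the Pieri rule gives $\h_{n-k}\s_\mu = \sum_{\lambda\supseteq\mu,\ |\lambda/\mu|=n-k}\s_{\lambda/\mu\ \text{horizontal strip}}\s_\lambda$. However, the right-hand side of \eqref{eq:induced} involves $\numsyt{\lambda/\mu}$, the number of \emph{all} standard Young tableaux of shape $\lambda/\mu$, not just horizontal strips — so the direct Pieri approach is not quite matching. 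Instead I would argue at the level of group algebra elements: expand $\alpha^\lambda_{[n]} = \sum_{\sigma\in\Sn}\chi^\lambda(\sigma)\sigma$, so that the coefficient of $\sigma\in\calC_\rho$ in $\sum_{\lambda\supseteq\mu}\numsyt{\lambda/\mu}\alpha^\lambda_{[n]}$ is $\sum_{\lambda\supseteq\mu}\numsyt{\lambda/\mu}\chi^\lambda_\rho$. Using $\numsyt{\lambda/\mu} = \langle \s_{\lambda/\mu}, \p_1^{n-k}\rangle$ and $\chi^\lambda_\rho = \langle\s_\lambda,\p_\rho\rangle$, together with $\s_{\lambda/\mu} = \s_\lambda/\s_\mu$ (skewing) and the coproduct identity $\sum_\lambda (\s_\lambda/\s_\mu)\otimes\s_\lambda$ paired appropriately, this coefficient becomes $\langle \p_\rho, \s_\mu \h_{n-k}\rangle$ after recognizing $\sum_{\lambda}\numsyt{\lambda/\mu}\s_\lambda = \s_\mu\h_{n-k}$ (which is exactly the Pieri/skewing adjunction $\langle \s_\mu \h_{n-k},\s_\lambda\rangle = \langle \h_{n-k}, \s_{\lambda/\mu}\rangle = \numsyt{\lambda/\mu}$ — wait, that gives horizontal strips; I'll instead use $\sum_\lambda \numsyt{\lambda/\mu}\s_\lambda = \p_1^{n-k}\s_\mu$ via $\numsyt{\lambda/\mu} = \langle\p_1^{n-k},\s_{\lambda/\mu}\rangle$ and $\langle \p_1^{n-k}\s_\mu,\s_\lambda\rangle = \langle\p_1^{n-k},\s_{\lambda/\mu}\rangle$). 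Then I compare with the middle term: its coefficient of $\sigma\in\calC_\rho$ is $(n-k)!\cdot\#\{X\in\binom{[n]}{k}: \sigma\in\symgrp{X}\}\cdot\chi^\mu(\sigma_X)$-type contributions, which I would likewise convert to a symmetric function pairing and match.

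The step I expect to be the main obstacle is pinning down the exact combinatorial/normalization bookkeeping so that all three expressions agree on the nose: the induced-character formula, the $\binom{n}{k}$-orbit count, the Frobenius characteristic normalization (whether one uses $\frac{1}{n!}\sum\chi(\sigma)\sigma$ or $\sum_\mu z_\mu^{-1}\chi_\mu\p_\mu$), and the identity $\sum_{\lambda\supseteq\mu}\numsyt{\lambda/\mu}\s_\lambda = \p_1^{n-k}\s_\mu$. Once the normalizations are fixed, the first equality is essentially a reindexing of a conjugation sum, and the second is a routine application of Frobenius reciprocity together with the fact that inducing $\specht{\mu}$ from $\symgrp{k}$ to $\Sn$ (trivially extending, i.e. tensoring with the trivial module of $\symgrp{[n]\setminus[k]}$) has Frobenius characteristic $\s_\mu\h_{n-k}$, whose Schur expansion has coefficients $\numsyt{\lambda/\mu}$ only when restricted to multiplicities of the \emph{permutation} module $\h_{n-k}$ — here I should be careful: $\langle \s_\mu\h_{n-k},\s_\lambda\rangle = \numsyt{\lambda/\mu}$ is false in general (it counts horizontal strips), so the correct object is induction of $\specht{\mu}\boxtimes M^{(n-k)}$ where $M^{(n-k)}$ is the trivial module, giving characteristic $\s_\mu h_{n-k}$ — I will instead directly verify $\sum_{\sigma\in\Sn}\chi^N(\sigma)\sigma$ against $\sum_{\lambda\supseteq\mu}\numsyt{\lambda/\mu}\alpha^\lambda_{[n]}$ by decomposing $N = \mathrm{Ind}_{\symgrp{k}}^{\Sn}\specht{\mu}$ into irreducibles, which by Frobenius reciprocity contains $\specht{\lambda}$ with multiplicity $\langle\mathrm{Res}\,\specht{\lambda},\specht{\mu}\rangle = \langle\s_\lambda, \s_\mu\h_{n-k}\rangle$; reconciling this with $\numsyt{\lambda/\mu}$ requires that $\langle\s_\lambda,\s_\mu h_{n-k}\rangle = \numsyt{\lambda/\mu}$ holds precisely because $h_{n-k}$ here should be $h_1^{n-k}$ — no: inducing from $\symgrp{k}$ to $\Sn$ \emph{not} through a Young subgroup uses $\mathrm{Ind}_{\symgrp{k}\times\{1\}}^{\Sn}$, and $\mathrm{ch}(\mathrm{Ind}_{\symgrp{k}}^{\symgrp{k}\times\symgrp{n-k}}(\specht{\mu}\boxtimes\mathrm{triv})) $ is not what we want; rather $\mathrm{Ind}_{\symgrp{k}}^{\Sn}\specht{\mu} = \mathrm{Ind}_{\symgrp{k}\times\symgrp{n-k}}^{\Sn}\mathrm{Ind}_{\symgrp{k}}^{\symgrp{k}\times\symgrp{n-k}}\specht{\mu}$ and $\mathrm{Ind}_{\symgrp{k}}^{\symgrp{k}\times\symgrp{n-k}}\specht{\mu} = \specht{\mu}\boxtimes\CC\symgrp{n-k}$, giving characteristic $\s_\mu\cdot\h_1^{n-k} = \s_\mu\p_1^{n-k}$, whose $\s_\lambda$-coefficient is exactly $\numsyt{\lambda/\mu}$. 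That resolves the obstacle; the rest is bookkeeping.
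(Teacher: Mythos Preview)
Your proposal is correct and takes essentially the same approach as the paper: the induced-character formula with the orbit/stabilizer count gives the first equality, and the decomposition $N \simeq \bigoplus_{\lambda \supseteq \mu} (\specht{\lambda})^{\oplus \numsyt{\lambda/\mu}}$ gives the second. The only difference is cosmetic: the paper obtains this decomposition by directly citing the iterated branching rule, whereas you reach it (after correctly resolving the $\h_{n-k}$ versus $\h_1^{n-k}$ issue) via the Frobenius characteristic identity $\mathrm{ch}(N) = \s_\mu \p_1^{\,n-k}$ together with $\langle \s_\mu \p_1^{\,n-k}, \s_\lambda\rangle = \langle \p_1^{\,n-k}, \s_{\lambda/\mu}\rangle = \numsyt{\lambda/\mu}$.
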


\begin{proof}
By the branching rule \cite[Theorem 2.8.3]{sagan01}, we have
\[
\chi^N = \sum_{\substack{\lambda\vdash n, \\ \lambda\supseteq \mu}}\numsyt{\lambda/\mu}\chi^\lambda\,,
\]
so the first and third expressions in \eqref{eq:induced} are equal. On the other hand, by the definition of the induced representation \cite[Theorem 12]{serre98}, we have
\[
\chi^N(\sigma) = \sum_{\substack{\pi\in C, \\ \pi^{-1}\sigma\pi\in\symgrp{k}}}\chi^\mu(\pi^{-1}\sigma\pi) \quad \text{for all } \sigma\in\Sn\,,
\]
where $C\subseteq\Sn$ is any set of coset representatives of the quotient $\Sn/\symgrp{k}$.
Note that for any $\pi\in\Sn$, we have $\pi\symgrp{k}\pi^{-1} = \symgrp{\pi([k])}$. Conversely, for any $X\in\binom{[n]}{k}$, there exist precisely $(n-k)!$ elements $\pi\in C$ such that $\pi([k]) = X$. Hence
\[
\chi^N(\sigma) = (n-k)!\sum_{\substack{X\in\binom{[n]}{k},\\ \sigma\in\symgrp{X}}}\chi^\mu(\sigma)\,,
\]
where inside the sum, $\chi^\mu$ is a character of $\symgrp{X}$. Therefore the first and second expressions in \eqref{eq:induced} are equal.
\end{proof}

\begin{proof}[Proof of \cref{thm:main}\ref{main_translation}]
We regard $z_1, \dots, z_n$ as formal indeterminates.
By translating
$(z_1, \dots, z_n) \mapsto (z_1+s, \dots, z_n+s)$, it suffices to prove the identity \eqref{eq:translationidentity} when $s = 0$, i.e.,
\begin{equation}
\label{eq:translationzero}
   \beta^\mu(t) = 
     \sum_{\lambda \supseteq \mu} 
      \frac{\numsyt{\lambda/\mu}}{|\lambda/\mu|!} t^{|\lambda/\mu|} 
      \beta^\lambda
\,.
\end{equation}
Let $k := |\mu|$, and note that \eqref{eq:translationzero} is symmetric and square-free in $z_1, \dots, z_n$, and homogeneous of total degree $n-k$ in $z_1, \dots, z_n, t$. Therefore it suffices to prove that the coefficients of $z_{\ell+1}\cdots z_nt^{\ell-k}$ on both sides are equal for all $k \le \ell \le n$. This is the second equality of \eqref{eq:induced} (after replacing $n$ by $\ell$ and rescaling by $\frac{1}{(\ell-k)!}$).
\end{proof}

We now prove parts \ref{comparealgebras4} and \ref{comparealgebras3} of \cref{lem:comparealgebras}.
By definition, the algebra $\translatealtbethe{t}$ is generated by the 
elements $\beta^\lambda(t)$.
From \eqref{eq:translationzero},
we see that $\beta^\mu(t) \in \altbethe$ for all $\mu$.  Conversely, from \eqref{eq:translationidentity} we get
\[
   \beta^\mu = \beta^\mu(t-t) = 
     \sum_{\lambda \supseteq \mu} 
      \frac{\numsyt{\lambda/\mu}}{|\lambda/\mu|!} (-t)^{|\lambda/\mu|} 
      \beta^\lambda(t) 
\,,
\]
implying that $\beta^\mu \in \translatealtbethe{t}$.  This proves part \ref{comparealgebras4}.
In the special case where $\lambda = 1^k$, we have that 
$\betaminus{k}(t) \in \altbethe$.  Since these elements generate $\bethe$, this proves part \ref{comparealgebras3}.


\subsection{Commutativity}
\label{sec:commutativity}

For every partition $\mu$ and $\ell \geq 0$, we define elements 
$\varepsilon^\mu_\ell \in \altbethe$ by
\[
    \varepsilon^\mu_\ell := 
   \sum_{\substack{X \cap Y = \emptyset, \\ |X| = |\mu|, |Y| = \ell}}\;
    \sum_{\substack{\sigma \in \symgrp{X}, \\ \cyc(\sigma) = \mu}} 
     \sigma \, z_Y
\,.
\]
We write 
$\varepsilon^\mu(t) := \sum_{\ell = 0}^n \varepsilon^\mu_\ell t^{n-|\mu|-\ell}$,
and set $\varepsilon^\mu := \varepsilon^\mu(0) = \varepsilon^\mu_{n-|\mu|}$.
These are related to the elements $\beta^\lambda(t)$ by the following
change of basis formulas:
\begin{equation}
\label{eq:changeofbasis}
     \beta^\lambda(t)
     = \sum_{\mu \vdash |\lambda|} \chi^\lambda_\mu \varepsilon^\mu(t)
\qquad \text{and} \qquad
     \varepsilon^\mu(t)
     = \sum_{\lambda \vdash |\mu|} \frac{\chi^\lambda_\mu}{\z_\mu} 
        \beta^\lambda(t)
\,.
\end{equation}
The first formula follows by definition, whence the second formula follows from \eqref{eq:spchangeofbasis}. In particular, the elements $\varepsilon^\mu$ are an alternate set of
generators for $\altbethe$.

The $\varepsilon^\mu$'s are combinatorially simpler than the $\beta^\lambda$'s, in that they have fewer
terms and do not involve characters.
However, even working with this new set of generators, it is difficult 
to prove the commutativity relations 
$\varepsilon^\lambda \varepsilon^\mu = \varepsilon^\mu \varepsilon^\lambda$ 
directly (see \cref {sec:commcomb} for further discussion).
We reduce the problem to a different identity, which follows from combinatorial arguments in \cite{purbhoo}.

\subsubsection{Reduction}
\label{sec:creduction}

\begin{lemma}
\label{lem:commutativityreduction}
The following are equivalent:
\begin{enumerate}[(a)]
\item\label{commutativityreduction_a} for all $z_1, \dots, z_n\in\CC$, the algebra $\altbethe$ is commutative;
\item\label{commutativityreduction_b} for all $z_1, \dots, z_n\in\CC$, the algebra $\altbethe$ commutes with $\bethe$;
\item\label{commutativityreduction_c} for all $k, \ell \geq 0$ and all partitions $\mu$, we have
\begin{equation}
\label{eq:reducedcommutativity}
   \varepsilon^\mu_\ell \betaminus{k}
   = \betaminus{k} \varepsilon^\mu_\ell 
\,.
\end{equation}
\end{enumerate}
\end{lemma}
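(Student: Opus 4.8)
The plan is to prove the cycle of implications \ref{commutativityreduction_a} $\Rightarrow$ \ref{commutativityreduction_b} $\Rightarrow$ \ref{commutativityreduction_c} $\Rightarrow$ \ref{commutativityreduction_a}, in which the first two implications are essentially formal and all of the substance lies in the last one. For \ref{commutativityreduction_a} $\Rightarrow$ \ref{commutativityreduction_b}: we have $\bethe \subseteq \altbethe$ by \cref{lem:comparealgebras}\ref{comparealgebras3}, so if $\altbethe$ is commutative then in particular it commutes with $\bethe$. For \ref{commutativityreduction_b} $\Rightarrow$ \ref{commutativityreduction_c}: each $\varepsilon^\mu_\ell$ lies in $\altbethe$ --- by \eqref{eq:changeofbasis} the coefficients $\{\varepsilon^\mu_\ell\}$ and $\{\beta^\lambda_\ell\}$ are interchangeable under an invertible linear change of basis, and the $\beta^\lambda_\ell$ generate $\altbethe$ since $\altbethe = \translatealtbethe{t}$ is generated by the $\beta^\lambda(t)$ for all $t$ (\cref{lem:comparealgebras}\ref{comparealgebras4}) --- while $\betaminus{k} = \betaminus{k}(0) \in \bethe$; hence \ref{commutativityreduction_b} gives \eqref{eq:reducedcommutativity} directly.

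The implication \ref{commutativityreduction_c} $\Rightarrow$ \ref{commutativityreduction_a} I would carry out in two steps. The first step, which I expect to be the crux, is to show that \ref{commutativityreduction_c} implies \ref{commutativityreduction_b}; here the task is to upgrade \eqref{eq:reducedcommutativity}, which involves only the single coefficient $\betaminus{k} = \betaminus{k}(0)$, to a statement about all coefficients $\betaminus{k}_{\ell'}$, using the translation action. Regarding $z_1, \dots, z_n$ as formal indeterminates and summing \eqref{eq:reducedcommutativity} against $r^{n-|\mu|-\ell}$ over $\ell$ gives $\varepsilon^\mu(r)\,\betaminus{k}(0) = \betaminus{k}(0)\,\varepsilon^\mu(r)$, a polynomial identity in $z_1, \dots, z_n$ that is therefore valid after any substitution of the $z_i$. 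Substituting $z_i \mapsto z_i + t$ and using that the parameter dependence of $\varepsilon^\mu(s)$ and of $\betaminus{k}(s)$ runs entirely through the products $\prod_{i \notin X}(z_i + s)$ --- so that the substitution simply replaces $s$ by $s + t$ --- the identity becomes $\varepsilon^\mu(r+t)\,\betaminus{k}(t) = \betaminus{k}(t)\,\varepsilon^\mu(r+t)$, and hence $\varepsilon^\mu(r)\,\betaminus{k}(t) = \betaminus{k}(t)\,\varepsilon^\mu(r)$ for all $r$ and $t$. Extracting the coefficient of $r^{n-|\mu|-\ell} t^{n-k-\ell'}$ now yields $\varepsilon^\mu_\ell \betaminus{k}_{\ell'} = \betaminus{k}_{\ell'} \varepsilon^\mu_\ell$ for all $\ell, \ell', k, \mu$; since the $\varepsilon^\mu_\ell$ generate $\altbethe$ and the $\betaminus{k}_{\ell'}$ generate $\bethe$ by definition, this is exactly \ref{commutativityreduction_b}.

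For the second step, that \ref{commutativityreduction_b} implies \ref{commutativityreduction_a}: when $z_1, \dots, z_n$ are generic, $\bethe$ is a maximal commutative subalgebra of $\CC[\Sn]$ by \cref{thm:bethebasics}\ref{bethebasics2}, hence coincides with its own centralizer in $\CC[\Sn]$; by \ref{commutativityreduction_b} the subalgebra $\altbethe$ lies in this centralizer, so $\altbethe \subseteq \bethe$, and together with $\bethe \subseteq \altbethe$ (\cref{lem:comparealgebras}\ref{comparealgebras3}) this gives $\altbethe = \bethe$, which is commutative by \cref{thm:bethebasics}\ref{bethebasics1}. Thus for generic $z_1, \dots, z_n$ every commutator $[\varepsilon^\mu_\ell, \varepsilon^{\mu'}_{\ell'}]$ vanishes; but each such commutator is a polynomial in $z_1, \dots, z_n$ with coefficients in $\CC[\Sn]$, and a polynomial that vanishes for generic $z_1, \dots, z_n$ vanishes identically. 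Hence $\altbethe$ is commutative for every choice of $z_1, \dots, z_n \in \CC$, which is \ref{commutativityreduction_a}. The only genuine obstacle is the bookkeeping in the first step: one must keep $z_1, \dots, z_n$ and the two generating-function variables $r, t$ formal throughout and check that the reparametrization $z_i \mapsto z_i + t$ interacts correctly with both generating functions --- the truly hard input, the identity \eqref{eq:reducedcommutativity} itself, is established separately via the combinatorics of \cite{purbhoo}.
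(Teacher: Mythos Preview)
Your proof is correct and follows essentially the same approach as the paper: the translation trick for \ref{commutativityreduction_b} $\Leftrightarrow$ \ref{commutativityreduction_c}, and the maximal-commutativity-plus-polynomial-identity argument for \ref{commutativityreduction_b} $\Rightarrow$ \ref{commutativityreduction_a}, are exactly the paper's ingredients. The only cosmetic difference is that the paper handles \ref{commutativityreduction_b} $\Leftrightarrow$ \ref{commutativityreduction_c} with a single auxiliary variable (translating $z_i \mapsto z_i - t$ in the identity $\varepsilon^\mu \betaminus{k}(t) = \betaminus{k}(t)\varepsilon^\mu$ directly yields $\varepsilon^\mu(-t)\betaminus{k} = \betaminus{k}\varepsilon^\mu(-t)$), whereas you introduce a second parameter $r$; your version works but the extra variable is not needed.
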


\begin{proof}
\ref{commutativityreduction_a} $\Rightarrow$ \ref{commutativityreduction_b}: This follows immediately from \cref{lem:comparealgebras}\ref{comparealgebras3}.

\ref{commutativityreduction_b} $\Rightarrow$ \ref{commutativityreduction_a}: Suppose that \ref{commutativityreduction_b} holds. Since commutativity is preserved under limits, it suffices to prove \ref{commutativityreduction_a} when $z_1, \dots, z_n \in \CC$ are generic. Then $\bethe$ is a maximal commutative subalgebra of $\CC[\Sn]$ by \cref{thm:bethebasics}\ref{bethebasics2}.  By
definition, this means that any element of $\CC[\Sn]$ that 
commutes with $\bethe$ must belong to $\bethe$.  Hence $\altbethe \subseteq \bethe$, which implies \ref{commutativityreduction_a}.

\ref{commutativityreduction_b} $\Leftrightarrow$ \ref{commutativityreduction_c}: We will prove this treating
$z_1, \dots, z_n$ as formal indeterminates.  Statement \ref{commutativityreduction_b} means
that every generator of $\altbethe$ commutes with every generator
of $\bethe$, i.e., 
\[
      \varepsilon^\mu \betaminus{k}(t)
     = \betaminus{k}(t) \varepsilon^\mu \qquad \text{for all } k, t, \mu\,.
\]
Since $z_1, \dots, z_n$ are formal indeterminates, 
this is equivalent to the statement obtained if we translate
$(z_1, \dots, z_n) \mapsto (z_1-t, \dots, z_n-t)$:
\[
   \varepsilon^\mu(-t) \betaminus{k}
   = \betaminus{k} \varepsilon^\mu(-t)  \qquad \text{for all } k, t, \mu\,.
\]
Comparing coefficients of $t$ on both sides of the equation above,
this is equivalent to \ref{commutativityreduction_c}.
\end{proof}

\hidelinks
\subsubsection{Bijective proof of the relations \texorpdfstring{\eqref{eq:reducedcommutativity}}{(\ref{eq:reducedcommutativity})}}
\restorelinks
\label{sec:cbijection}

We now explain how \eqref{eq:reducedcommutativity} follows from \cite{purbhoo}; this shows that $\altbethe$ is commutative. 
A \defn{supported permutation} of $[n]$ is formally a pair $(\sigma, X)$, 
which we write as $\sigma_X$, where $\sigma \in \Sn$,
and $X \subseteq [n]$ is a set such that $\sigma$ belongs to the
subgroup $\symgrp{X} \subseteq \Sn$ (equivalently, 
$[n] \setminus X$ is a subset of the fixed points of $\sigma$).
The set $X$ is called the \defn{support} of $\sigma_X$, and $\cyc(\sigma_X)$ denotes the partition of $|X|$ which is the cycle type of $\sigma$ restricted to $X$.
Let $\SP_n$ denote the set of all supported permutations of $[n]$.
With this notation, we can rewrite the generators of $\altbethe$ as follows:
\begin{equation}
\label{eq:betadef3}
    \beta^\lambda = \sum_{\substack{\sigma_X \in \SP_n, \\ |X| = |\lambda|}} 
      \chi^\lambda(\sigma) \sigma \,z_{[n] \setminus X}\,,
\qquad
   \varepsilon^\mu = \sum_{\substack{\sigma_X \in \SP_n, \\ \cyc(\sigma_X) = \mu}}
      \sigma \, z_{[n] \setminus X}
\,.
\end{equation}

We similarly expand both sides of \eqref{eq:reducedcommutativity}.
Let $E^\mu_\ell$ denote the set of pairs $(\sigma_X,Y)$ such
that $\sigma_X\in\SP_n$ with $\cyc(\sigma_X) = \mu$, and $Y\subseteq [n]\setminus X$ with $|Y| = \ell$.
Let $B_k$ denote the set of pairs $(\sigma'_{X'},Y')$ such that $\sigma'_{X'}\in\SP_n$ with $|X'| = k$, and $Y' = [n] \setminus X'$.
Then
\[
     \varepsilon^\mu_\ell = 
    \sum_{(\sigma_X, Y) \in E^\mu_\ell} \sigma \, z_Y
\qquad \text{and} \qquad
     \betaminus{k} = 
    \sum_{(\sigma'_{X'}, Y') \in B_k} \sgn(\sigma') \sigma' z_{Y'}
\,,
\]
and \eqref{eq:reducedcommutativity} is the statement
\begin{equation}
\label{eq:reducedcommutativity2}
    \sum_{(\sigma_X, Y; \sigma'_{X'}, Y') \in E^\mu_\ell \times B_k}
      \sgn(\sigma') \sigma \sigma' z_Y z_{Y'}
 =
    \sum_{(\bar\sigma'_{\bar X'}, \bar Y'; \bar\sigma_{\bar X}, \bar Y) \in B_k \times E^\mu_\ell}
      \sgn(\bar \sigma') \bar \sigma' \bar \sigma \, z_{\bar Y} z_{\bar Y'}
\,.
\end{equation}
To prove \eqref{eq:reducedcommutativity}, we want to match
up each term on the left-hand side of \eqref{eq:reducedcommutativity2}
with an equal term on the right-hand side.  That is, we seek a bijection
\begin{align*}
    E^\mu_\ell \times B_k &\to B_k \times E^\mu_\ell\,, \\
   (\sigma_X, Y; \sigma'_{X'}, Y') &\mapsto 
   (\bar\sigma'_{\bar X'}, \bar Y'; \bar\sigma_{\bar X}, \bar Y)
\end{align*}
such that $\sgn(\sigma') = \sgn(\bar \sigma')$, 
$\sigma \sigma' = \bar \sigma' \bar\sigma$, and 
$z_Yz_{Y'} = z_{\bar Y} z_{\bar Y'}$.
A bijection with precisely these properties is established in 
\cite[Proposition 5.3]{purbhoo}.
Hence \eqref{eq:reducedcommutativity} holds. \qed

\hidelinks
\subsubsection{Proof of \texorpdfstring{\fullcref{lem:comparealgebras}}{Lemma \ref{lem:comparealgebras}}}
\restorelinks

We now complete the proof of \cref{lem:comparealgebras}.
We have already established parts \ref{comparealgebras3} and \ref{comparealgebras4} in 
\cref{sec:translation}, and \ref{comparealgebras1} is proved in 
\cref{sec:cbijection}.  It remains to prove part \ref{comparealgebras2}. The containment $\bethe \subseteq \altbethe$ follows from part \ref{comparealgebras3}. The reverse containment follows just as in the proof of the implication \ref{commutativityreduction_b} $\Rightarrow$ \ref{commutativityreduction_a} in \cref{sec:creduction}; that is, $\altbethe$ is commutative, and $\bethe$ is a maximal commutative subalgebra of $\CC[\Sn]$ when $z_1, \dots, z_n\in\CC$ are generic.\qed

\begin{remark}
It is interesting to note that although commutativity is deduced
by taking limits from the generic case, we cannot similarly
deduce that $\altbethe = \bethe$, even though we know this to be
true generically.  This is because 
equality of algebras is not necessarily preserved 
under taking limits of generators.
In order to show that $\altbethe = \bethe$,
we will argue in \cref{sec:altbetheequalsbethe} that there exist polynomial formulas expressing 
the generators of $\altbethe$ in terms of the generators of $\bethe$.
As these formulas are preserved under taking limits, this will imply
the result.
\end{remark}


\section{\Plucker relations}
\label{sec:pr}

In this section we prove the remaining parts (i.e.\ \ref{main_pluckers}--\ref{main_multiplicity}) of \cref{thm:main}.  The bulk of argument involves showing that the operators $\beta^\lambda(t)$ satisfy the \Plucker relations (\cref{thm:main}\ref{main_pluckers}). We divide this into two parts, which we state now as lemmas:
\begin{lemma}
\label{lem:part1}
The operators $\beta^\lambda$ satisfy all single-column \Plucker relations.
\end{lemma}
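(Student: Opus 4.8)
The plan is to repackage the single-column \Plucker relations for the operators $\beta^\lambda$ as a single identity for an associated $\tau$-function with coefficients in $\altbethe$, and then to verify that identity by hand using the symmetric-function \cref{lem:Bp-identity,lem:Bm-identity}. Since $\altbethe$ is commutative (\cref{lem:comparealgebras}\ref{comparealgebras1}), the quadratic expressions occurring in the \Plucker relations are unambiguous polynomial identities in $\altbethe$.

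First I would introduce the element
\[
   \mathbf{T} := \sum_{\sigma_X \in \SP_n} \sigma \otimes \p_{\cyc(\sigma_X)} \cdot z_{[n]\setminus X} \in \altbethe \otimes \Lambda\,,
\]
where the sum runs over supported permutations of $[n]$ and $\p_{\cyc(\sigma_X)}$ is the power-sum symmetric function indexed by the cycle type of $\sigma$ on its support $X$. By the Frobenius character formula $\langle \p_{\cyc(\sigma_X)}, \s_\lambda\rangle$ equals $\chi^\lambda(\sigma)$ when $|X|=|\lambda|$ and vanishes otherwise, so comparison with \eqref{eq:betadef3} gives $\langle \mathbf{T}, \s_\lambda\rangle = \beta^\lambda$ for every partition $\lambda$; in particular every Schur coefficient of $\mathbf{T}$ lies in $\altbethe$. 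The proof of \cref{lem:singlecolumneqs}, via the rewriting \eqref{eq:scKP2}, is a formal multilinear identity in the Schur coefficients valid over any commutative $\CC$-algebra, so applying it to $\mathbf{T}$ shows that the single-column \Plucker relations for the family $(\beta^\lambda)$ are equivalent to
\[
   \Bresidue\, F(t)\,\mathbf{T} = 0 \quad\text{in }\altbethe\otimes\widehat\Lambda\,, \qquad\text{where}\quad F(t) := \sum_{k\geq 0}(-t)^{-k}\,\langle\mathbf{T},\e_k\rangle\,.
\]
A short computation using $\e_k = \s_{1^k}$ and the Frobenius formula gives $\langle\mathbf{T},\e_k\rangle = \sum_{|X|=k}\sgn(\sigma)\,\sigma\, z_{[n]\setminus X} = \betaminus{k}$, so $F(t) = \sum_{k\geq 0}(-t)^{-k}\betaminus{k}$ is a Laurent polynomial in $t$ with coefficients in $\altbethe$. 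As $F(t)$ is a scalar from the point of view of $\Lambda$, we have $\Bresidue(F(t)\mathbf{T}) = [t^{-1}]\,F(t)\cdot\B^\perp(t^{-1})\mathbf{T}$, so the task becomes the analysis of $\B^\perp(t^{-1})\mathbf{T}$ together with the $t$-shifts contributed by $F(t)$.

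The heart of the argument is a reorganization of $F(t)\mathbf{T}$. Expanding both factors over supported permutations, a generic term has the shape $(-t)^{-|X|}\sgn(\sigma)\,(\sigma\rho)\otimes\p_{\cyc(\rho_Y)}\cdot z_{[n]\setminus X}\,z_{[n]\setminus Y}$, with $\sigma$ supported on $X$ and $\rho$ on $Y$; I would group these terms according to the product permutation $\sigma\rho\in\CC[\Sn]$ and the combinatorial pattern in which $X$ and $Y$ meet its cycles. Using Doubilet's power-sum expansion of the scaled monomial symmetric functions (\cref{prop:scaledmonomials}) and \cref{prop:hperpsm} to absorb the $\E(-t)$ and $\H^\perp(t^{-1})$ factors of $\B^\perp(t^{-1})$, each group should reassemble into a $\CC[\Sn]$-multiple of a translate of the function $\phi_\lambda(t)=\sum_{\kappa\leq\lambda}t^{|\kappa|-|\lambda|-\ell(\lambda)}\sm_\kappa$ of \cref{lem:Bm-identity}, with $\lambda$ recording the relevant cycle lengths; \cref{lem:Bp-identity} --- in the form that $t^j\phi(t)$ and $\phi(t)(1-t^j\p_j)$ lie in $\ker\Bresidue$ simultaneously --- would be used to reconcile the powers of $t$ from $F(t)$ with those of $\phi_\lambda(t)$. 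Since $\Bresidue$ acts only on the $\Lambda$-factor and is $\CC[\Sn]$-linear, \cref{lem:Bm-identity} then annihilates every group, and the single-column \Plucker relations follow.

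I expect the reorganization to be the main obstacle. In contrast with the commutativity proof in \cref{sec:cbijection}, where a term-by-term bijection imported from \cite{purbhoo} sufficed, here one must additionally keep track of the symmetric-function and $t$-degree data carried by each term and show it collects precisely into the $\phi_\lambda$'s; concretely, this amounts to finding the right way to decompose products $\sigma\rho$ of supported permutations relative to the cycle structure of $\sigma\rho$ and the overlap of the supports $X$ and $Y$. Managing this bookkeeping, together with the formal Laurent-series conventions under which $\B^\perp(t^{-1})$ is well defined on $\widehat\Lambda$, is where I expect the real work to lie.
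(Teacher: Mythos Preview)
Your plan is correct and follows essentially the same approach as the paper: the paper defines the same $\tau$-function $\tau_n=\mathbf{T}$, reduces the single-column relations to \eqref{eq:scKP2}, expands $F(t)\tau_n$ over pairs of supported permutations, and then works coefficient-by-coefficient in $\theta z_{[n]\setminus Z}$ (with $Z=X\cap Y$), introducing the notion of \emph{$Z$-strips} of $\theta$ to carry out exactly the reorganization you anticipate, so that the symmetric-function part becomes a product of $(1-t^{\eta_i}\p_{\eta_i})$ factors times $\phi_\lambda(t)$, whereupon \cref{lem:Bp-identity,lem:Bm-identity} finish the proof. The missing ingredient in your sketch is precisely this $Z$-strip combinatorics (\cref{lem:RF,lem:innersum}), which makes the ``grouping'' you describe precise and is where \cref{prop:scaledmonomials} enters.
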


\begin{lemma}
\label{lem:part2}
\cref{lem:part1} and the translation identity 
\eqref{eq:translationidentity} together
imply that the operators $\beta^\lambda(t)$ satisfy the \Plucker relations (\cref{thm:main}\ref{main_pluckers}).
\end{lemma}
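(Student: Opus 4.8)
Here \cref{lem:part1} and the translation identity \eqref{eq:translationidentity} will be fed into the machinery of $\tau$-functions of the KP hierarchy. It suffices to treat the case $t=0$, since $\beta^\lambda\mapsto\beta^\lambda(t)$ under $(z_1,\dots,z_n)\mapsto(z_1+t,\dots,z_n+t)$. By \cref{thm:KPplucker}, \cref{thm:main}\ref{main_pluckers} at $t=0$ is equivalent to the assertion that
\[
\tau \;:=\; \sum_\lambda \beta^\lambda\,\s_\lambda,
\]
regarded as an element of $\widehat\Lambda$ with coefficients in the commutative ring $\altbethe$ (commutative by \cref{lem:comparealgebras}\ref{comparealgebras1}), satisfies the Hirota equation \eqref{eq:KP}; if scalar coefficients are preferred, one may descend to each factor $\altbethenu\subseteq\End(\spechtnu)$ of \eqref{eq:directproductaltbethe} and thence to a matrix entry, or simply treat $z_1,\dots,z_n$ as formal indeterminates. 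By \cref{lem:part1} and \cref{lem:singlecolumneqs}, $\tau$ satisfies the single-column equation \eqref{eq:scKP}. Since \cref{lem:part1} is a polynomial identity in $z_1,\dots,z_n$, and $\beta^\lambda(s)$ arises from $\beta^\lambda$ by the substitution $z_i\mapsto z_i+s$, the same holds for $\tau_s:=\sum_\mu\beta^\mu(s)\,\s_\mu$, which therefore satisfies \eqref{eq:scKP} for every $s$.

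Next I would exploit the translation identity, which is exactly the statement that $\tau_s=T_s\tau$, where $T_s\colon\Lambda\to\Lambda$ is the substitution $\p_1\mapsto\p_1+s$; equivalently $T_s$ is the skewing operator $(e^{s\p_1})^\perp$, since by Pieri and adjointness $(e^{s\p_1})^\perp\s_\lambda=\sum_{\mu\subseteq\lambda}\tfrac{\numsyt{\lambda/\mu}}{|\lambda/\mu|!}\,s^{|\lambda/\mu|}\s_\mu$. Writing the Bernstein operators in vertex-operator form, $\B(t)=\exp\!\bigl(\sum_{j\ge1}\tfrac{t^j}{j}\p_j\bigr)\exp\!\bigl(-\sum_{j\ge1}t^{-j}\partial_{\p_j}\bigr)$ and dually for $\B^\perp(t^{-1})$, a short computation yields the commutation relations $T_s\B(t)T_s^{-1}=e^{st}\B(t)$ and $T_s\B^\perp(t^{-1})T_s^{-1}=e^{-st}\B^\perp(t^{-1})$. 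Substituting $\tau_s=T_s\tau$ into \eqref{eq:scKP}, the scalar prefactors $e^{\pm st}$ cancel, and after applying $T_s^{-1}$ one is left with the family of identities
\begin{equation}
\label{eq:planstar}
[t^{-1}]\;\langle\B(t)\tau,\,e^{s\p_1}\rangle\cdot\B^\perp(t^{-1})\tau\;=\;0,\qquad\text{valid for every }s,
\end{equation}
which at $s=0$ reduces to \eqref{eq:scKP} itself.

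The heart of the matter — and the step I expect to be the main obstacle — is to deduce the full Hirota equation \eqref{eq:KP} from the family \eqref{eq:planstar}. Extracting the coefficient of $s^k$ in \eqref{eq:planstar} only shows that the $\s_\lambda\otimes\s_\mu$-components of the left side of \eqref{eq:KP}, weighted by $\numsyt{\lambda}$ and summed over $\lambda\vdash k$, vanish; upgrading this to the vanishing of each component seems to require using that $\B(t)\tau$ and $\B^\perp(t^{-1})\tau$ are coupled — both are built from a single $\tau$ that already obeys the single-column relations — together with new combinatorial identities for symmetric functions that strip the partition indexing the first tensor factor one box at a time, via the explicit form of $\B(t)\s_\lambda$ from \cite{carrell_goulden10} and the commutation relations above. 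I would also bring in two supporting facts. First, the single-row \Plucker relations hold for the $\beta^\lambda$ as well: the sign-twist automorphism $\theta$ of $\CC[\Sn]$ with $\theta(\sigma)=\sgn(\sigma)\sigma$ satisfies $\theta(\beta^\lambda)=\beta^{\lambda'}$, and transposing all partition indices exchanges the single-column and single-row families of \Plucker relations, so $\theta$ applied to \cref{lem:part1} produces the single-row relations and hence a family dual to \eqref{eq:planstar} with the $e^{s\p_1}$-pairing on the second tensor factor. Second, at generic $z_1,\dots,z_n$, where the relevant Bethe algebra is semisimple with one-dimensional eigenspaces, the argument is interwoven with a new proof of \cref{thm:precise}: the single-column data $\bigl\{\beta^{1^k}_E(u)\bigr\}_k$ of an eigenspace $E$ is identified with the fundamental differential operator of a point $V_E\in\Wr^{-1}(g)$, and the relations above together with the translation identity can then be reconciled with $\beta^\lambda_E$ being the \Plucker coordinates of $V_E$, which verifies \eqref{eq:KP} at generic parameters and hence identically. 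Once \eqref{eq:KP} is established for $\tau$, \cref{thm:KPplucker} delivers \cref{thm:main}\ref{main_pluckers}.
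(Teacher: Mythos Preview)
Your derivation up through \eqref{eq:planstar} is correct and is essentially the paper's \cref{lem:translationmagic} recast in the $\tau$-function language: the single-column relations for $\tau$ together with the translation identity give exactly the family $[t^{-1}]\langle\B(t)\tau,e^{s\p_1}\rangle\cdot\B^\perp(t^{-1})\tau=0$, which in the paper's exterior-algebra formulation is the statement $(\polyD_n)_\#\Omega=0$.

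The gap is the step you flag yourself. Extracting the coefficient of $s^k$ in \eqref{eq:planstar} only yields $\sum_{\lambda\vdash k}\numsyt{\lambda}\,b_{\lambda,\mu}=0$ for every $\mu$ and $k$, i.e.\ the pairing of the Hirota left-hand side against $\p_1^k\otimes\s_\mu$. This is strictly weaker than $b_{\lambda,\mu}=0$, and no amount of symmetric-function manipulation (box-stripping, or adjoining the single-row family via the sign twist) will close the gap: the functions $\p_1^k$ together with their $\omega$-duals simply do not span $\Lambda$, so you are missing linear constraints. Your fallback of invoking \cref{thm:precise} is either circular in the paper's logic (which re-derives \cref{thm:precise} \emph{from} this lemma) or, if you import it from \cite{mukhin_tarasov_varchenko13}, still leaves unexplained the step ``$\beta^\lambda_E$ are the \Plucker coordinates of $V_E$'', which is the very claim.

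What the paper does instead is to leave the $\tau$-function framework at exactly this point and pass to the exterior algebra. Restricting to a generic parameter point (so $\altbethe$ is semisimple) and then to an eigenspace $E$, the identity $(\polyD_E)_\#\Omega_E=0$ says that $\Omega_E\in\exterior{n}\ker\polyD_E$. The decisive input is now \emph{analytic}, not combinatorial: $\polyD_E$ is a differential operator of order $n$, so $\dim\ker\polyD_E\le n$ (\cref{prop:Dnullity}). This forces $\exterior{n}\ker\polyD_E$ to be at most one-dimensional, hence $\Omega_E$ is a pure wedge $v_1\wedge\dots\wedge v_n$, and \cref{prop:exteriorplucker} gives all \Plucker relations at once. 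Your \eqref{eq:planstar} is the right intermediate statement, but the missing idea is this dimension bound on the kernel of the associated differential operator, which is what converts an infinite family of single-column-type constraints into full decomposability.
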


The two parts are very different in character. The proof of \cref{lem:part1} uses $\tau$-functions of the KP hierarchy, and involves a combinatorial analysis 
of factorizations of permutations, along with several symmetric-function identities.
To prove \cref{lem:part2}, we use the exterior algebra,
and some of the
abstract properties of $\altbethe$ from 
\cref{thm:bethebasics} and
\cref{lem:comparealgebras}.

We prove \cref{lem:part1,lem:part2} in \cref{sec:part1proof,sec:part2}, respectively. We conclude in \cref{sec:finalsteps} by finishing the proof of \cref{thm:main}, by showing that parts \ref{main_generates}--\ref{main_multiplicity} hold.


\hidelinks
\subsection{Proof of \texorpdfstring{\fullcref{lem:part1}}{Lemma \ref{lem:part1}}: single-column \Plucker relations}
\restorelinks
\label{sec:part1proof}

We turn to the proof of \cref{lem:part1}, using $\tau$-functions of the KP hierarchy.

\subsubsection{A \texorpdfstring{$\tau$}{tau}-function of the KP hierarchy}

Define $\tau_n \in \CC[\Sn] \otimes \Lambda$ to be the following symmetric function with coefficients in 
the commutative algebra $\altbethe \subseteq \CC[\Sn]$:
\begin{equation}
\label{eq:taudef}
   \tau_n := \sum_\lambda \beta^\lambda \otimes \s_\lambda
\,,
\end{equation}
where the sum is taken over all partitions $\lambda$.  Since $\beta^\lambda = 0$ for $|\lambda| > n$, this is a finite sum over all $|\lambda| \le n$.

We mention that because the coefficients of $\tau_n$ in the Schur basis are the operators
$\beta^\lambda$, we can use \cref{thm:KPplucker} to rephrase \cref{thm:main}\ref{main_pluckers} as follows:

\begin{theorem}
\label{thm:taufunction}
For all $n \geq 0$ and all $z_1, \dots, z_n \in \CC$,
the symmetric function $\tau_n$ in \eqref{eq:taudef} 
is a $\tau$-function of the KP hierarchy, 
with coefficients in (a commutative subalgebra of) $\CC[\Sn]$.
\end{theorem}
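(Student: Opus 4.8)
To prove that $\tau_n$ is a $\tau$-function of the KP hierarchy, the plan is as follows. By \cref{thm:KPplucker} and the identification $\langle\tau_n,\s_\lambda\rangle=\beta^\lambda$, this statement is equivalent to \cref{thm:main}\ref{main_pluckers}, i.e.\ that the operators $\beta^\lambda$ satisfy the Plücker relations \eqref{eq:prelspartitions}; and since $\beta^\lambda=0$ for $|\lambda|>n$, it suffices to verify the Plücker relations of $\Gr(n,2n)$ for the operators $\beta^\lambda=\beta^\lambda(0)$, the general-$t$ case then following by the substitution $z_i\mapsto z_i+t$. Following the two-step structure of \cref{sec:pr}, I would establish \cref{lem:part1} — that all single-column Plücker relations hold for the $\beta^\lambda$ — and \cref{lem:part2} — that the single-column relations together with the translation identity \eqref{eq:translationidentity} force all of \eqref{eq:prelspartitions}.

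For \cref{lem:part2} I would work in the exterior algebra. Via \eqref{eq:identification} with $d=n$, $m=2n$, encode $\tau_n$ as $\omega_n:=\sum_\lambda\beta^\lambda e_{I(\lambda)}\in\exterior{n}\CC^{2n}\otimes\altbethe$; the Plücker relations say precisely that $\omega_n$ is decomposable over the commutative ring $\altbethe$ (legitimate by \cref{lem:comparealgebras}\ref{comparealgebras1}), and the single-column relations are the subset of these conditions with $\lambda=0$. The point is that the translation identity \eqref{eq:translationidentity} is word-for-word the Plücker-coordinate transformation rule \eqref{eq:geometrictranslation}, so that $\omega_n(t):=\sum_\lambda\beta^\lambda(t)e_{I(\lambda)}$ is the image of $\omega_n$ under the operator $\exterior{n}(\phi_t)$ induced by translation on $\CC^{2n}$. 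Applying \cref{lem:part1} after every translation then shows that $\omega_n$ satisfies the single-column relations with respect to every osculating flag of the moment curve; a direct exterior-algebra argument — using translation-invariance, the direct-product decomposition \eqref{eq:directproductaltbethe}, and semisimplicity of $\altbethe$ in the generic case from \cref{thm:bethebasics} and \cref{lem:comparealgebras} — upgrades this to full decomposability, hence to all of \eqref{eq:prelspartitions}.

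The substantive work is \cref{lem:part1}. Here I would invoke \cref{lem:singlecolumneqs}: since $\langle\tau_n,\e_k\rangle=\langle\tau_n,\s_{1^k}\rangle=\betaminus{k}$, the single-column relations amount to the single generating-function identity
\[
   \Bresidue\Bigl(\bigl(\textstyle\sum_{k\ge 0}(-t)^{-k}\betaminus{k}\bigr)\,\tau_n\Bigr)=0 \qquad\text{in }\CC[\Sn]\otimes\Lambda ,
\]
where, by \eqref{eq:spchangeofbasis} and \eqref{eq:betadef3}, $\tau_n=\sum_{X\subseteq[n]}\sum_{\sigma\in\symgrp{X}}\sigma\otimes\p_{\cyc(\sigma_X)}\cdot\prod_{i\notin X}z_i$. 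Treating the $z_i$ as indeterminates, I would fix a target permutation $\rho\in\Sn$ and expand the coefficient of $\rho$ on the left as a sum over factorizations $\rho=\sigma'\sigma$, where $\sigma'$ (supported on some $X'$) comes from $\betaminus{k}$ and contributes a sign and a power $(-t)^{-|X'|}$, and $\sigma$ (supported on $X$) comes from $\tau_n$ and contributes the power sum $\p_{\cyc(\sigma_X)}$; rewrite each such power sum in the scaled monomial basis via \cref{prop:scaledmonomials}; and then reorganize the whole sum, after the sign cancellations produced by the antisymmetrizer in $\betaminus{k}$, into a $\CC$-linear combination of the kernel elements $\phi_\mu(t)$ and their multiples $t^j\phi_\mu(t)$ furnished by \cref{lem:Bm-identity}, using \cref{lem:Bp-identity} to move the powers of $t$ across $\B^\perp(t^{-1})$. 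Each such term is annihilated by $\Bresidue$, so the coefficient of $\rho$ vanishes.

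The main obstacle is exactly this last reorganization: controlling the factorizations $\rho=\sigma'\sigma$ and the partitions $\cyc(\sigma_X)$ finely enough to see the scaled-monomial expansions collapse precisely into the family $\phi_\mu(t)$ of \cref{lem:Bm-identity}. I expect this to require a dedicated combinatorial lemma on factorizations of a permutation by a product of cycles on a chosen subset — in spirit a refinement of the sign-preserving bijection used for commutativity in \cref{sec:cbijection} — possibly together with one further symmetric-function identity beyond \cref{lem:Bp-identity,lem:Bm-identity}. Once \cref{lem:part1,lem:part2} are in place, \cref{thm:taufunction} (equivalently \cref{thm:main}\ref{main_pluckers}) follows from \cref{thm:KPplucker}, and restricting $\tau_n$ to an eigenspace of $\altbethenu$ then yields a decomposable classical $\tau$-function, which is the input for deducing parts \ref{main_eigenspace}--\ref{main_multiplicity}.
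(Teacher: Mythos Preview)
Your proposal follows the paper's approach closely: the same two-lemma architecture, the use of \cref{lem:singlecolumneqs} to reduce the single-column step to the $\Bresidue$ identity, the expansion over factorizations of a fixed target permutation with the scaled-monomial rewriting via \cref{prop:scaledmonomials} collapsing into the kernel elements $\phi_\mu(t)$ of \cref{lem:Bm-identity} (with \cref{lem:Bp-identity} to strip off the extra $\p_j$ factors), and the exterior-algebra upgrade using semisimplicity in the generic case. The ``dedicated combinatorial lemma on factorizations'' you anticipate for \cref{lem:part1} is exactly the $Z$-strip analysis of \cref{lem:RF} and \cref{lem:innersum}: one fixes $\theta$ and a set $Z$, decomposes $[n]$ into $Z$-strips of $\theta$ plus a $\theta$-stable complement, and shows that right $Z$-factors are obtained by assembling prefixes of the strips into cycles in all possible ways.

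The one genuine gap is in your mechanism for \cref{lem:part2}. Knowing that your $\omega_n$ (the paper's $\Omega$) satisfies the single-column relations after every translation is not by itself enough to force decomposability, and the ingredients you list --- translation, the direct-product decomposition, semisimplicity --- do not supply the missing piece. The paper's device is to recognize that the single-column relations at $t$ say precisely $\ev_t\bigl((\polyD_n)_\#\Omega\bigr)=0$, where $\polyD_n=\sum_k(-1)^k\betaminus{k}(u)\du^{n-k}$ and $(\cdot)_\#$ is the derivation of \cref{lem:exterior}; varying $t$ then gives $(\polyD_n)_\#\Omega=0$ identically. Restricting to an eigenspace $E$, the scalar operator $\polyD_E$ is a linear differential operator of order $n$, so $\dim\ker\polyD_E\le n$ by \cref{prop:Dnullity}, and \cref{cor:exterior} forces $\Omega_E=v_1\wedge\cdots\wedge v_n$. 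It is this a priori kernel-dimension bound, coming from the order of the differential operator, that makes the exterior-algebra argument close; without it there is no obvious reason the translated single-column relations alone should imply full decomposability.
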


Our approach will instead be to use $\tau_n$ to prove the single-column \Plucker relations. We need the following formula for $\tau_n$ in the power sum basis:
\begin{proposition}
We have
\begin{equation}
\label{eq:tau}
  \tau_n = \sum_\mu \varepsilon^\mu \otimes \p_\mu =
   \sum_{\sigma_X \in \SP_n} \sigma \, z_{[n] \setminus X} \otimes \p_{\cyc(\sigma_X)}
\,.
\end{equation}
\end{proposition}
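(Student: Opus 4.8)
The plan is to prove both equalities by elementary changes of basis in the algebra of symmetric functions, using only the expansions already recorded in the excerpt; there is essentially no genuine obstacle here, since the statement is a bookkeeping identity.

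First I would establish $\tau_n = \sum_\mu \varepsilon^\mu \otimes \p_\mu$. Starting from the definition $\tau_n = \sum_\lambda \beta^\lambda \otimes \s_\lambda$ in \eqref{eq:taudef}, expand each Schur function in the power sum basis via the first formula of \eqref{eq:spchangeofbasis}, namely $\s_\lambda = \sum_{\mu \vdash |\lambda|} \frac{\chi^\lambda_\mu}{\z_\mu} \p_\mu$. Interchanging the (finite) order of summation gives
\[
\tau_n = \sum_\mu \Bigl( \sum_{\lambda \vdash |\mu|} \frac{\chi^\lambda_\mu}{\z_\mu} \beta^\lambda \Bigr) \otimes \p_\mu,
\]
and the inner sum is exactly $\varepsilon^\mu = \varepsilon^\mu(0)$ by the second formula of \eqref{eq:changeofbasis} evaluated at $t=0$. (Equivalently, one could substitute the first formula of \eqref{eq:changeofbasis}, $\beta^\lambda = \sum_{\mu \vdash |\lambda|} \chi^\lambda_\mu \varepsilon^\mu$, into \eqref{eq:taudef} and then apply $\s_\lambda = \sum_\mu \frac{\chi^\lambda_\mu}{\z_\mu}\p_\mu$ in the other direction; either route works.) All rearrangements are legitimate because $\beta^\lambda = 0$ for $|\lambda| > n$, so every sum over partitions is finite.

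For the second equality, expand $\varepsilon^\mu$ in the group algebra using \eqref{eq:betadef3}, i.e. $\varepsilon^\mu = \sum_{\sigma_X \in \SP_n,\ \cyc(\sigma_X) = \mu} \sigma\, z_{[n] \setminus X}$. The one point deserving a line of justification is that in the definition $\varepsilon^\mu = \varepsilon^\mu_{n - |\mu|}$ the constraints $X \cap Y = \emptyset$, $|X| = |\mu|$, $|Y| = n - |\mu|$ with $X, Y \subseteq [n]$ force $Y = [n] \setminus X$, so that $\varepsilon^\mu$ really has the supported-permutation form of \eqref{eq:betadef3}. Summing over $\mu$ and collecting supported permutations by cycle type then yields
\[
\sum_\mu \varepsilon^\mu \otimes \p_\mu = \sum_{\sigma_X \in \SP_n} \sigma\, z_{[n] \setminus X} \otimes \p_{\cyc(\sigma_X)},
\]
since each $\sigma_X \in \SP_n$ occurs exactly once, in the term indexed by $\mu = \cyc(\sigma_X)$. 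I expect no difficulty beyond this; the proposition is simply the reformulation of $\tau_n$ in the power sum basis that will later be fed into the KP machinery in the proof of \cref{lem:part1}.
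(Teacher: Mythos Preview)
Your proof is correct and follows exactly the same approach as the paper's: the first equality via the change-of-basis formulas \eqref{eq:spchangeofbasis} and \eqref{eq:changeofbasis}, and the second via the supported-permutation expansion \eqref{eq:betadef3}. The paper's proof is simply the one-line citation of these same references, so your write-up is just a more detailed version of it.
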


\begin{proof} 
The first equality follows from \eqref{eq:spchangeofbasis} and \eqref{eq:changeofbasis}.
Then~\eqref{eq:betadef3} gives the second equality.
\end{proof}

By \cref{lem:singlecolumneqs}, the operators $\beta^\lambda$ satisfy the single-column \Plucker relations if and only if equation \eqref{eq:scKP2} is satisfied.
Since $\langle \tau_n, \e_k \rangle = \langle \tau_n, \s_{1^k} \rangle = \betaminus{k}$, in our case
\eqref{eq:scKP2} is the statement that
\[
    \Bresidue  
   \sum_{k=0}^n (-t)^{-k} 
   \betaminus{k}
    \cdot \tau_n = 0
\,.
\]
Using \eqref{eq:betadef3} and \eqref{eq:tau}, the
sum $\sum_{k=0}^n (-t)^{-k} \betaminus{k} \cdot \tau_n$
expands into
\begin{equation}
\label{eq:scoperand}
    \sum_{\sigma_X \in \SP_n} \, \sum_{\pi_Y \in \SP_n}
        \sgn(\sigma) \cdot \sigma \pi \,
      z_{[n] \setminus X} z_{[n] \setminus Y}
       \otimes (-t)^{-|X|} \p_{\cyc(\pi_Y)}
\,.
\end{equation}
Therefore, to prove \cref{lem:part1}, we must show that
\eqref{eq:scoperand}
is in the kernel of the operator $\Bresidue$.

\subsubsection{\texorpdfstring{$Z$}{Z}-factorizations}
\label{sec:zfac}

We will now treat $z_1, \dots, z_n$ as formal indeterminates.
Since the operator $\Bresidue$ is linear, and
treats the first tensor factor of \eqref{eq:scoperand} as 
a scalar, we can prove \cref{lem:part1} coefficient-by-coefficient.
That is, it is necessary and sufficient 
to show that for all $\theta \in \Sn$ and all $j_1, \dots, j_n \in \{0,1,2\}$, 
the $\theta z_1^{j_1} \dotsm z_n^{j_n}$-coefficient of 
\eqref{eq:scoperand} is in $\ker \Bresidue$.
Furthermore, it suffices to prove this for coefficients which
are square-free in the $z_i$'s. 
This is because each non-square-free coefficient is equal to
a square-free coefficient for a smaller value of $n$.  
Specifically, the sum of all terms which contain a factor
of $z_n^2$ is 
\[
    z_n^2 \cdot \sum_{\sigma_X \in \SP_{n-1}} \, \sum_{\pi_Y \in \SP_{n-1}}
         \sgn(\sigma) \cdot \sigma \pi \,
      z_{[n-1] \setminus X} z_{[n-1] \setminus Y}
       \otimes (-t)^{-|X|} \p_{\cyc(\pi_Y)}
\,,
\]
which (apart from the factor of $z_n^2$) is just \eqref{eq:scoperand} 
with $n$ replaced by $n-1$.

Therefore, we fix $\theta \in \Sn$ and $Z \subseteq [n]$, and 
let $C_{\theta,Z}$ denote the 
coefficient of $\theta z_{[n] \setminus Z}$ in \eqref{eq:scoperand}.
Our goal is now to show that $C_{\theta,Z} \in \ker \Bresidue$.

We begin by obtaining a useful formula for $C_{\theta,Z}$. 
Since \eqref{eq:scoperand} is a quadratic expression in
$\CC[\Sn]$, we can write
$C_{\theta,Z}$ as a sum over certain factorizations 
of $\theta$. Namely, define a \defn{$Z$-factorization} 
of $\theta$ to be a pair of supported permutations
$(\sigma_X, \pi_Y)$ such that $X\cup Y = [n]$, $X \cap Y = Z$, and 
$\sigma \pi = \theta$. 
We see that 
\[
    C_{\theta,Z} = 
    \sum_{(\sigma_X, \pi_Y)} \sgn(\sigma) (-t)^{-|X|} \p_{\cyc(\pi_Y)}
\,,
\]
where the sum is taken over all $Z$-factorizations of $\theta$.

It will be more
convenient to write $C_{\theta,Z}$ purely in terms of $\pi$ and $Y$.
We say that $\pi_Y$ is a 
\defn{right $Z$-factor} of $\theta$, if there exists 
$\sigma_X$ such that $(\sigma_X, \pi_Y)$ is a $Z$-factorization of $\theta$.  
Note that if $\sigma_X$ exists, then it is
unique, as we must have $\sigma = \theta \pi^{-1}$ and
$X = [n] \setminus (Y \setminus Z)$.  (However, for some $\pi_Y$, this
construction may not produce a valid supported permutation $\sigma_X$.)
Using $\sgn(\theta) = \sgn(\sigma)\sgn(\pi)$ and $|Z| = |X|+|Y|-n$,
we obtain
\[
    C_{\theta,Z} = 
    \sgn(\theta) 
    \sum_{\pi_Y} \sgn(\pi) (-t)^{|Y|-|Z|-n} \p_{\cyc(\pi_Y)}
\,,
\]
where the sum is taken over all right $Z$-factors $\pi_Y$ of $\theta$.

Finally, for a given $Y \subseteq [n]$, let $\RF(Y)$ denote the set 
of all permutations $\pi \in \symgrp{Y}$
such that $\pi_Y$ is a right $Z$-factor of $\theta$.  With this
notation, we write
\begin{equation}
\label{eq:thetaZcoeff}
    C_{\theta,Z} = 
    \sgn(\theta) 
    \sum_{Y \subseteq [n]} (-t)^{|Y|-|Z|-n} 
    \sum_{\pi \in \RF(Y)} \sgn(\pi) \p_{\cyc(\pi_Y)}
\,.
\end{equation}

\subsubsection{\texorpdfstring{$Z$}{Z}-strips}
Next, we evaluate the inner sum of~\eqref{eq:thetaZcoeff}:
\[
    \sum_{\pi \in \RF(Y)} \sgn(\pi) \p_{\cyc(\pi_Y)}
\,.
\]
To accomplish this, we need a precise understanding of which 
supported permutations
are right $Z$-factors of $\theta$.

Define a \defn{$Z$-strip} of $\theta$ to be a sequence of the form
\[
     \big(a, \theta(a), \theta^2(a), \dots, \theta^{r}(a)\big)\,,
\]
where $a \in Z$, $r\ge 0$, $\theta^{r+1}(a) \in Z$, and 
$\theta(a), \dots, \theta^{r}(a) \notin Z$.
A \defn{$Z$-substrip} of $\theta$
is a nonempty left-substring of a $Z$-strip, i.e.,
\[
\big(
     a, \theta(a), \theta^2(a), \dots, \theta^{r'}(a)
\big) \qquad \text{ for some }\, 0 \le r' \le r\,.
\]
Abusing notation, we will sometimes think
of these as sets: this is reasonable because the elements of any
$Z$-substrip are distinct, and for any set $S \subseteq [n]$,
there is at most one ordering of the elements of $S$ which forms
a $Z$-substrip.

Let $S_1, \dots, S_k$ be the $Z$-strips of $\theta$.  Note that the $S_i$'s are pairwise disjoint and
$k = |Z|$, as each element of $Z$ defines a unique $Z$-strip.
Put $S :=  S_1 \cup \dots \cup S_k$.
Let $\lambda_i := |S_i|$, and write $\lambda =(\lambda_1, \dots, \lambda_k)$.
Thus $\lambda$ is a composition of size $|\lambda| = |S|$.
Let $T := [n] \setminus S$ be the set of
points which are not in any $Z$-strip.  Then $T \cap Z = \emptyset$ and
$\theta(T) = T$.
Write $\theta|_T \in \symgrp{T}$ 
to denote the permutation $\theta$ restricted to $T$.

\begin{example}
\label{ex:strips1}
Let $\theta := (1 \;\; 2 \;\; 3 \;\; 4 \;\; 5)(6 \;\; 7 \;\; 8)(9 \;\; 10)(11)$ and $Z := \{3,5,7\}$.  The
$Z$-strips of $\theta$ are
\[
   34,\ 512, \text{ and } 786\,.
\]
We have $T = \{9,10,11\}$ and $\theta|_T = (9 \;\; 10)(11)$.
\end{example}

We say that a subset $Y \subseteq [n]$ is \defn{valid} for $(\theta, Z)$
if the following conditions hold:
\begin{itemize}
\item $Z \subseteq Y$; 
\item $S_i \cap Y$ is a $Z$-substrip of $\theta$ for
all $i \in [k]$; and
\item $\theta(T \cap Y) = T \cap Y$.
\end{itemize}

\begin{lemma}
\label{lem:RF}
Let $\pi \in \symgrp{Y}$.
\begin{enumerate}[(i)]
\item\label{RF1} If $Y$ is not valid, then $\RF(Y)$ is empty.
\item\label{RF2} If $Y$ is valid, then $\pi \in \RF(Y)$ if and only if the following conditions hold:
\begin{enumerate}[(a)]
\item\label{RF_a} $S_1 \cap Y, \dots, S_k \cap Y$ are the $Z$-strips of $\pi$;
\item\label{RF_b} $\theta|_{T \cap Y} = \pi|_{T \cap Y}$.
\end{enumerate}
\end{enumerate}
\end{lemma}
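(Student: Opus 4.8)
\textbf{Proof plan for \cref{lem:RF}.}

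The plan is to analyze carefully when a supported permutation $\pi_Y$ is a right $Z$-factor of $\theta$, i.e., when $\sigma := \theta\pi^{-1}$, restricted to the set $X = [n] \setminus (Y \setminus Z)$, is a well-defined permutation of $X$ (equivalently, every point outside $X$ is fixed by $\theta\pi^{-1}$). First I would unwind this condition: a point $b \in Y \setminus Z$ lies outside $X$, so we need $\theta\pi^{-1}(b) = b$ for all such $b$, i.e., $\pi^{-1}$ and $\theta^{-1}$ agree on $Y \setminus Z$; equivalently, $\pi(c) = \theta(c)$ whenever $\pi(c) \in Y\setminus Z$. Dually, a point $b \in Z$ must be such that $\sigma$ maps it within $X$ — this is automatic since $X \supseteq Z$ and $\sigma$ permutes $X$ once we know $\sigma$ fixes everything outside $X$. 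The cleanest reformulation is: $\pi_Y$ is a right $Z$-factor of $\theta$ if and only if, for every $c \in Y$ with $\theta(c) \notin Z$, we have $\pi(c) = \theta(c)$. I would establish this equivalence first as the technical heart of the argument.

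From this reformulation, part \ref{RF1} follows quickly: if $Y$ is a right $Z$-factor support, I need to show $Z \subseteq Y$ (since $\sigma\pi = \theta$ and $\sigma$ fixes $[n]\setminus X$, a short check on where $Z$-points go forces $Z \subseteq Y$ — indeed $\pi$ must move $Z$-points "forward along $\theta$" until leaving $Z$), that $S_i \cap Y$ is a $Z$-substrip (the reformulation says $\pi$ agrees with $\theta$ on $Y$ except possibly at points mapping into $Z$; following an element of $Z\cap S_i$ under $\pi$, which equals $\theta$ until we exit, shows the intersection with $Y$ is an initial segment of the strip), and that $\theta(T\cap Y) = T\cap Y$ (on $T$, no point maps into $Z$, so $\pi|_{T\cap Y} = \theta|_{T\cap Y}$, forcing $T \cap Y$ to be $\theta$-stable). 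Contrapositively, if $Y$ is not valid, $\RF(Y) = \emptyset$. For part \ref{RF2}, given validity, I would show $\pi \in \RF(Y)$ iff conditions \ref{RF_a}, \ref{RF_b} hold: the reformulation pins down $\pi$ completely on $T \cap Y$ (namely $\pi|_{T\cap Y} = \theta|_{T\cap Y}$, which is \ref{RF_b}), and on the strip pieces, $\pi$ must agree with $\theta$ except at the last element of each $S_i \cap Y$, which $\pi$ must send back to the first element (the unique element of $Z$ in that substrip) so that the $Z$-strips of $\pi$ are exactly the $S_i \cap Y$ — that is \ref{RF_a}. Conversely, if \ref{RF_a} and \ref{RF_b} hold, one checks directly via the reformulation that $\pi_Y$ is a right $Z$-factor.

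The main obstacle I anticipate is bookkeeping: correctly tracking the directionality (is it $\pi$ or $\pi^{-1}$, $\theta$ or $\theta^{-1}$, left or right factor) and handling the boundary cases of strips — in particular $Z$-strips of length one (where the single element is in $Z$ and maps back into $Z$) and the degenerate case where $S_i \cap Y$ is the whole strip $S_i$ versus a proper substrip, which affects whether $\pi$ "wraps around" at a point of $Z$ or continues along $\theta$. I would be careful to phrase the $Z$-strip/$Z$-substrip language so that a length-one substrip $(a)$ with $a \in Z$ is handled uniformly, and to double-check the claim that for any $S \subseteq [n]$ there is at most one $Z$-substrip ordering, since the argument uses this to pass freely between sets and sequences. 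Once the reformulation of "right $Z$-factor" is nailed down, the rest is a direct, if slightly tedious, verification.
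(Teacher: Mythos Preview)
Your overall approach matches the paper's: characterize $\pi \in \RF(Y)$ by the condition that $\theta\pi^{-1}$ fixes $Y \setminus Z$ pointwise (the paper phrases this as $\pi^{-1}(a) = \theta^{-1}(a)$ for $a \in Y \setminus Z$, which is exactly your correct intermediate condition ``$\pi(c) = \theta(c)$ whenever $\pi(c) \in Y \setminus Z$''), and then deduce validity of $Y$ and conditions \ref{RF_a}, \ref{RF_b}.

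However, two slips need fixing. First, your ``cleanest reformulation'' --- replacing $\pi(c) \notin Z$ by $\theta(c) \notin Z$ --- is not equivalent to the correct condition and is too strong: for $n = 3$, $\theta = (1 \;\; 2 \;\; 3)$, $Z = \{1\}$, $Y = \{1,2\}$, $\pi = (1 \;\; 2)$, one has $\pi \in \RF(Y)$ (indeed $\theta\pi^{-1} = (1 \;\; 3)$ fixes $2$), but your version would demand $\pi(2) = \theta(2) = 3 \notin Y$. Second, you misread condition \ref{RF_a}: for $S_i \cap Y$ to be a $Z$-strip of $\pi$, its last element need only map under $\pi$ to \emph{some} element of $Z$, not back to the first element of that same substrip. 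This freedom is precisely what parametrizes $\RF(Y)$ --- the paper's discussion immediately after the lemma describes it as ``assembling the substrips into cycles in all possible ways''. Both are bookkeeping errors of the kind you anticipated; once corrected, your argument coincides with the paper's.
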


\begin{proof}
First suppose that $\pi \in \RF(Y)$. We will show that $Y$ is valid (thereby proving part \ref{RF1}) and that \ref{RF_a} and \ref{RF_b} hold (thereby proving the forward direction of part \ref{RF2}). We regard $\pi$ as a permutation of $[n]$ which fixes $[n]\setminus Y$ pointwise. Since $\pi \in \RF(Y)$, we have $Z \subseteq Y$ and that $\theta\pi^{-1}$ fixes $Y\setminus Z$ pointwise. 
That is, for all $a \in [n]$, we have
  $\pi^{-1}(a) = a$ if $a \notin Y$, and
  $\pi^{-1}(a) = \theta^{-1}(a)$ if $a  \in Y\setminus Z$
(and there is no condition if $a \in Z$).

Let $(a_1 \; \cdots \; a_r)$ be a cycle of $\theta|_T$, and set $a_0 := a_r$.
Then for all $i\in [r]$, we have that $\pi^{-1}(a_i)$ equals either 
$a_i$ (if $a_i \notin Y$) or $a_{i-1}$ (if $a_i \in Y$). If we have $a_i \in Y$ for
some $i$, then $\pi^{-1}(a_i) = a_{i-1}$, so $\pi^{-1}(a_{i-1}) = a_{i-2}$ 
since $\pi$ is a bijection, and by induction $\pi$ coincides with $\theta$
on the cycle. Therefore $\theta(T\cap Y) = T\cap Y$ and \ref{RF_b} holds.

Now let $(a_0, a_1, \dots, a_r)$ be a $Z$-substrip of $\theta$ with $a_r \in Y$. Since $a_1, \dots, a_r \notin Z$, we have $\pi^{-1}(a_r) = a_{r-1}$, and a similar induction shows that $\pi^{-1}(a_i) = a_{i-1}$ (and hence $a_i\in Y$) for all $i\in [r]$. Thus $Y$ is valid and \ref{RF_a} holds.

The backward direction of part \ref{RF2} follows using similar arguments.
\end{proof}

Informally, \cref{lem:RF}\ref{RF2} says that
for any given valid $Y$, we can form all permutations 
$\pi \in \RF(Y)$ by assembling the substrips 
$S_1 \cap Y, \dots, S_k \cap Y$ into cycles
in all possible ways.  In addition, $\pi$ must 
include all cycles of $\theta|_{T \cap Y}$. We illustrate this with an example:

\begin{example}
\label{ex:strips2}
For $(\theta, Z)$ from \cref{ex:strips1}, 
consider the valid $Y \hspace*{-1.5pt}=\hspace*{-1.5pt} \{1,\hspace*{-0.1pt} 3,5,6,7,8,9,10\}$.  
The substrips $S_i \cap Y$ are
\[
    3,\ 51, \text{ and } 786\,,
\]
and $\theta|_{T \cap Y} = (9 \;\; 10)$. There are $3!=6$ permutations in $\RF(Y)$, corresponding to the
different ways to assemble the three substrips above into cycles:
\[
\begin{aligned}
    &(3)(5 \;\; 1)(7 \;\; 8 \;\; 6)(9 \;\; 10)\,, \\
    &(3 \;\; 5 \;\; 1)(7 \;\; 8 \;\; 6)(9 \;\; 10)\,, \\
	&(3 \;\; 7 \;\; 8 \;\; 6)(5 \;\; 1)(9 \;\; 10)\,,
\end{aligned}
\qquad \qquad
\begin{aligned}
    &(3)(5 \;\; 1 \;\; 7 \;\; 8 \;\; 6)(9 \;\; 10)\,, \\
    &(3 \;\; 5 \;\; 1 \;\; 7 \;\; 8 \;\; 6)(9 \;\; 10)\,, \\
    &(3 \;\; 7 \;\; 8 \;\; 6 \;\; 5 \;\; 1)(9 \;\; 10)\,.
\end{aligned}
\]
Note that $(9 \;\; 10)$ is a cycle of each of these permutations.
\end{example}

\begin{lemma}
\label{lem:innersum}
Fix a valid $Y$. Let $\kappa_i := |S_i \cap Y|$, and write
$\kappa = (\kappa_1, \dots, \kappa_k)$.  Let $\mu := \cyc(\theta|_{T\cap Y})$.
Then
\begin{equation}
\label{eq:innersum}
    \sum_{\pi \in \RF(Y)} \sgn(\pi) \p_{\cyc(\pi_Y)}
    = (-1)^{|\kappa|-k + |\mu| - \ell(\mu)} \sm_\kappa \p_\mu
\,.
\end{equation}
\end{lemma}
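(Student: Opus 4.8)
The plan is to establish \eqref{eq:innersum} by combining \cref{lem:RF}\ref{RF2} with the power sum expansion of scaled monomial symmetric functions (\cref{prop:scaledmonomials}). First I would observe that, since $Y$ is valid, every $\pi \in \RF(Y)$ is a permutation of $Y$ whose cycles split into two independent groups: the cycles supported on $T \cap Y$, which by \cref{lem:RF}\ref{RF2}\ref{RF_b} are forced to be exactly the cycles of $\theta|_{T \cap Y}$, and the cycles supported on $S \cap Y = (S_1 \cap Y) \cup \dots \cup (S_k \cap Y)$, which by \cref{lem:RF}\ref{RF2}\ref{RF_a} are obtained by assembling the $k$ substrips $S_1 \cap Y, \dots, S_k \cap Y$ into cycles. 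Thus the cycle type $\cyc(\pi_Y)$ is the disjoint union (as partitions) of $\mu = \cyc(\theta|_{T \cap Y})$ and a partition determined by how the substrips are grouped, and $\p_{\cyc(\pi_Y)} = \p_{\text{(substrip part)}} \cdot \p_\mu$. The factor $\p_\mu$ therefore pulls out of the sum, contributing the $\p_\mu$ on the right-hand side and, together with the sign of the forced cycles on $T\cap Y$, the $(-1)^{|\mu|-\ell(\mu)}$ factor (a cycle of length $r$ is an even permutation iff $r$ is odd, so the sign of $\theta|_{T \cap Y}$ is $(-1)^{|\mu| - \ell(\mu)}$, with the convention that the number of parts counts the number of cycles). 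For permutations $\pi|_{T\cap Y}$ the sign is fixed, so it factors out cleanly.

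Next I would handle the sum over the ways of assembling the $k$ substrips into cycles. Here the key point is that the cyclic \emph{order within each substrip is rigid} — by \cref{lem:RF}\ref{RF2}\ref{RF_a}, $S_i \cap Y$ must itself be a $Z$-strip of $\pi$, i.e.\ appear as a contiguous oriented block inside a cycle of $\pi$ — so the only freedom is in choosing how to link the $k$ blocks together into cycles, which is precisely the data of a permutation $\rho \in \symgrp{k}$ on the set of blocks (the cycles of $\rho$ tell us which blocks are concatenated, and in what cyclic order). Under this correspondence $\pi \leftrightarrow \rho$, a cycle of $\rho$ through blocks $i_1, \dots, i_s$ produces a cycle of $\pi$ of length $\kappa_{i_1} + \dots + \kappa_{i_s}$; in the notation of \cref{prop:scaledmonomials}, the substrip-part of $\cyc(\pi_Y)$ is exactly $\kappa[\rho]$. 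So
\[
    \sum_{\pi \in \RF(Y)} \sgn(\pi) \p_{\cyc(\pi_Y)}
    = (-1)^{|\mu|-\ell(\mu)} \p_\mu \cdot
      \sum_{\rho \in \symgrp{k}} \varepsilon(\rho) \, \p_{\kappa[\rho]}\,,
\]
where $\varepsilon(\rho)$ is the sign of $\pi$ relative to the fixed sign of the $T\cap Y$-part. It remains to check that $\varepsilon(\rho) = \sgn(\rho)$ up to the global factor $(-1)^{|\kappa|-k}$: concatenating all $k$ blocks into a single cycle multiplies the sign by a fixed amount, and passing from that to the $\rho$-grouping multiplies by $\sgn(\rho)$. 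Concretely, $\sgn(\pi|_{S\cap Y}) = (-1)^{|\kappa| - (\text{number of cycles of }\pi|_{S\cap Y})}$ and the number of cycles is $\ell(\kappa[\rho])$, which has the same parity as $k - (k - \ell(\kappa[\rho]))$; tracking that $k - \ell(\kappa[\rho])$ has the same parity as the number of transpositions in $\rho$ gives $\sgn(\pi|_{S\cap Y}) = (-1)^{|\kappa|-k}\sgn(\rho)$. Applying \cref{prop:scaledmonomials} to $\sum_{\rho \in \symgrp{k}} \sgn(\rho)\, \p_{\kappa[\rho]} = \sm_\kappa$ then yields \eqref{eq:innersum}.

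The main obstacle I expect is getting the sign bookkeeping exactly right — there are three sources of signs (the forced cycles on $T \cap Y$, the rigid internal orientation of each substrip, and the grouping permutation $\rho$), and they must be disentangled carefully so that the $\rho$-dependent part collapses to $\sgn(\rho)$ and the rest combines into the stated global prefactor $(-1)^{|\kappa|-k+|\mu|-\ell(\mu)}$. The cleanest way to manage this is probably to fix, once and for all, a reference permutation $\pi_0 \in \RF(Y)$ (say the one corresponding to $\rho = \mathbbm{1}_{\symgrp{k}}$, i.e.\ each substrip forming its own cycle), compute $\sgn(\pi_0)$ directly, and then argue that replacing $\pi_0$ by the permutation corresponding to an arbitrary $\rho$ changes the sign by exactly $\sgn(\rho)$ — this last fact is an instance of the standard computation that merging two disjoint cycles into one (a single transposition in the ``block'' symmetric group) flips the sign. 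Everything else is a routine unwinding of the definitions of $Z$-strip, validity, and $\cyc(\pi_Y)$.
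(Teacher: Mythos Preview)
Your proposal is correct and follows essentially the same approach as the paper: factor out the contribution $(-1)^{|\mu|-\ell(\mu)}\p_\mu$ from the forced cycles on $T\cap Y$ using \cref{lem:RF}\ref{RF2}\ref{RF_b}, then biject the remaining freedom in assembling the substrips with permutations $\rho\in\symgrp{k}$ so that the substrip-part has cycle type $\kappa[\rho]$ and sign $(-1)^{|\kappa|-k}\sgn(\rho)$, and finish with \cref{prop:scaledmonomials}. The paper's proof is just a more compressed version of exactly this argument.
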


\begin{example}
For $(\theta, Z)$ and $Y$ from \cref{ex:strips1,ex:strips2}, 
equation \eqref{eq:innersum} is
\[
    (-\p_{321} + \p_{33} + \p_{42} + \p_{51} -2\p_{6}) (-\p_2)
     = \sm_{321}\p_2
\,.
\]
The left-hand side is written with $(-1)^{|\mu|-\ell(\mu)}\p_\mu = -\p_2$
factored out; this factor corresponds to the cycle $(9 \;\; 10)$,
which is common to all $\pi \in \RF(Y)$.
\end{example}

\begin{proof}[Proof of \cref{lem:innersum}]
By \cref{lem:RF}\ref{RF2}\ref{RF_b}, we can factor out $\sgn(\theta|_{T\cap Y})\p_{\cyc(\theta|_{T\cap Y})} = (-1)^{|\mu| - \ell(\mu)}\p_\mu$ from the left-hand side of \eqref{eq:innersum}. Cancelling this factor from both sides reduces the proof of \eqref{eq:innersum} to the case when $T\cap Y = \emptyset$. In this case, by \cref{lem:RF}\ref{RF2}\ref{RF_a}, any element $\pi\in\RF(Y)$ is given by arranging the substrips $S_i\cap Y$ into cycles; this corresponds to a permutation $\sigma \in \symgrp{k}$, where we regard $S_i\cap Y$ as the element $i\in [k]$. Since $\sgn(\pi) = (-1)^{|\kappa| - k}\sgn(\sigma)$ and $\cyc(\pi_Y) = \kappa[\sigma]$, the equation follows from \cref{prop:scaledmonomials}.
\end{proof}

\subsubsection{Simplifying \texorpdfstring{$C_{\theta,Z}$}{C(theta,Z)}}

We now use \eqref{eq:innersum} to further simplify our formula
\eqref{eq:thetaZcoeff} for $C_{\theta,Z}$, and complete the proof of \cref{lem:part1}.
To specify a valid $Y$, it is enough to choose a substrip of each $S_i$
and a subset of the cycles of $\theta|_T$.
A substrip of $S_i$ is uniquely determined by its length 
$\kappa_i$, where $1 \leq \kappa_i \leq \lambda_i$.
Thus we have a one-to-one correspondence between valid subsets $Y$ and pairs 
$(\kappa, H)$, where $\kappa$ is a composition such that
$\kappa \leq \lambda$  
and $H$ is a subset of the cycles of $\theta|_T$.  
Abusing notation, we denote the latter condition as $H \subseteq \theta|_T$.
Under this correspondence $|Y| = |\kappa| + |\mu_H|$, where 
$\mu_H$ is the partition listing the lengths of the cycles in $H$.
Thus from \eqref{eq:thetaZcoeff} and \eqref{eq:innersum}, we have
\begin{align*}
C_{\theta,Z} &= 
    \sgn(\theta)
    \sum_{Y \subseteq [n]} (-t)^{|Y|-k-n} 
    \sum_{\pi \in \RF(Y)} \sgn(\pi) \p_{\cyc(\pi_Y)}
\\[4pt]
&=
    \sgn(\theta) 
     \sum_{\kappa \leq \lambda}\,
     \sum_{H \subseteq \theta|_T} 
      (-t)^{|\kappa|+|\mu_H|-k-n}  \cdot
       (-1)^{|\kappa|-k+|\mu_H|-|H|} \sm_\kappa 
     \,\p_{\mu_H}
\\[4pt] &= 
    \sgn(\theta) 
      (-1)^{n}  
     \sum_{\kappa \leq \lambda} 
     t^{|\kappa|-k-n} 
      \sm_\kappa  \cdot
     \sum_{H \subseteq \theta|_T} 
       (-1)^{|H|} 
      t^{|\mu_H|}  
     \p_{\mu_H}
\\
&=
     \sgn(\theta) (-1)^n \sum_{\kappa \leq \lambda}
         t^{|\kappa|-k-n} \sm_\kappa 
         \cdot \prod_{i=1}^s (1- t^{\eta_i}\p_{\eta_i})
\,,
\end{align*}
where $\eta = (\eta_1, \dots, \eta_s) := \cyc(\theta|_T)$.

We conclude the proof of \cref{lem:part1} as follows.  
Recall that our goal is to show that 
$C_{\theta,Z} \in \ker \Bresidue$.
By \cref{lem:Bp-identity},  this is true
if and only if
\[
     t^{|\eta|} \cdot
     \sum_{\kappa \leq \lambda}
         t^{|\kappa|-k-n} \sm_\kappa 
 \in \ker \Bresidue
\,.
\]
Since $|\eta| = n - |\lambda|$, this is precisely the statement of \cref{lem:Bm-identity}. \qed


\hidelinks
\subsection{Proof of \texorpdfstring{\fullcref{lem:part2}}{Lemma \ref{lem:part2}}: general \Plucker relations}
\restorelinks
\label{sec:part2}

We now show that \cref{lem:part1}, when combined with the translation
identity \eqref{eq:translationidentity}, implies that the operators
$\beta^\lambda(t)$ satisfy the \Plucker relations. We extend our convention \eqref{eq:identification} for indexing \Plucker coordinates to the
operators $\beta^\lambda(t)$.
For $I = (i_1, \dots, i_n) \in [2n]^n$, define $\beta_I(t)$
as follows:
if $i_{\sigma(1)} < \dots < i_{\sigma(n)}$ for some $\sigma \in \Sn$, put 
$\beta_I(t) := \sgn(\sigma) \beta^\lambda(t)$, where
$\lambda = (i_{\sigma(n)}-n, \dots, i_{\sigma(1)}-1)$;
if $i_1, \dots, i_n$ are not distinct, put $\beta_I(t) := 0$.
We write $\beta_I := \beta_I(0)$.

We will work with the differential operator $\calD_n$ from \eqref{eq:diffop}; however,
for our purposes, 
it will be more convenient to omit the leading factor of $\frac{1}{g(u)}$.
Hence, we define
\begin{equation}
\label{eq:polyDdef}
     \polyD_n(t) := \sum_{k=0}^n (-1)^k \betaminus{k}(u+t) 
    \du^{n-k} \qquad \text{and} \qquad
    \polyD_n := \polyD_n(0) = g(u)\calD_n
\,.
\end{equation}
We will often restrict the domain of these linear operators from $\CC[u]$ to $\CC_{2n-1}[u]$.

\subsubsection{Reformulation of the translation identity}

For every nonnegative integer $i$ and $t \in \CC$, define the polynomials
\begin{equation}
\label{eq:ei}
e_i(u) := \frac{u^{i-1}}{(i-1)!} \qquad \text{and} \qquad
e_{i,t}(u) := \translatematrix{t}e_i(u) = e_i(u+t) = \frac{(u+t)^{i-1}}{(i-1)!}
\,.
\end{equation}
Thus, $(e_{1,t}, \dots, e_{2n,t})$ is a basis for $\CC_{2n-1}[u]$, and $(e_1, \dots, e_{2n})$ corresponds to the standard
basis for $\CC^{2n}$ under the isomorphism~\eqref{eq:isomorphism}.

For $I = (i_1, \dots, i_n) \in [2n]^n$, write 
\[
   e_{I,t} := e_{i_1,t} \wedge \dots \wedge e_{i_n,t}
 \in
   \exterior{n} \CC_{2n-1}[u]
\,,
\]
and put $e_I := e_{I,0}$.
Following our convention \eqref{eq:identification}, we write $e^\lambda_t := e_{(\lambda_n+1, \lambda_{n-1}+2, \dots, \lambda_1+n),t}$ for any partition $\lambda \subseteq \rectangle = n^n$. By \cref{prop:exteriorplucker} and \eqref{eq:geometrictranslation}, we have
\begin{equation}
\label{eq:basistranslation}
e^\lambda_t = \sum_{\mu \subseteq \lambda} 
      \frac{\numsyt{\lambda/\mu}}{|\lambda/\mu|!}
      t^{|\lambda/\mu|}e^\mu \qquad \text{for every } \lambda\subseteq\rectangle\,.
\end{equation}

For $t\in\CC$, consider the element
$\Omega(t) \in  \exterior{n} \CC_{2n-1}[u] \otimes \altbethe$ defined by
\begin{equation}
\label{eq:omega}
    \Omega(t) := 
 \sum_{\lambda \subseteq \rectangle} e^\lambda \otimes \beta^\lambda(t) =
  \sum_{I \in {[2n] \choose n}} e_I \otimes \beta_I(t)
\,,
\end{equation}
and put $\Omega := \Omega(0)$. The following is a reformulation of the translation identity \eqref{eq:translationidentity}:

\begin{lemma}
\label{lem:exteriortranslation}
For all $s, t \in \CC$, we have
\[
    \Omega(s+t) = \sum_{\lambda\subseteq\rectangle} e^\lambda_t \otimes \beta^\lambda(s)
\,.
\]
\end{lemma}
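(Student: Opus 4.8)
The plan is to derive \cref{lem:exteriortranslation} from the translation identity \eqref{eq:translationidentity} together with the ``geometric'' translation formula \eqref{eq:basistranslation} for the basis vectors $e^\lambda_t$. The statement to prove is purely formal: both sides lie in $\exterior{n}\CC_{2n-1}[u]\otimes\altbethe$, and we simply expand each side in the basis $\{e^\mu : \mu\subseteq\rectangle\}$ of the first tensor factor and compare coefficients in $\altbethe$. First I would write, using the definition \eqref{eq:omega} of $\Omega$ and the translation identity \eqref{eq:translationidentity} applied at parameter $s$ (with increment $t$),
\[
\Omega(s+t) = \sum_{\mu\subseteq\rectangle} e^\mu \otimes \beta^\mu(s+t)
= \sum_{\mu\subseteq\rectangle} e^\mu \otimes \sum_{\lambda\supseteq\mu}\frac{\numsyt{\lambda/\mu}}{|\lambda/\mu|!}\,t^{|\lambda/\mu|}\,\beta^\lambda(s)\,.
\]
Here we should note that although \eqref{eq:translationidentity} sums over all partitions $\lambda\supseteq\mu$, the terms with $\lambda\not\subseteq\rectangle=n^n$ vanish because $\beta^\lambda(s)=0$ whenever $|\lambda|>n$ (as remarked after \eqref{eq:betadef}), so the inner sum is really over $\mu\subseteq\lambda\subseteq\rectangle$.

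Next I would compute the right-hand side of the lemma the same way. Using \eqref{eq:basistranslation} to expand $e^\lambda_t = \sum_{\mu\subseteq\lambda}\frac{\numsyt{\lambda/\mu}}{|\lambda/\mu|!}\,t^{|\lambda/\mu|}\,e^\mu$, we get
\[
\sum_{\lambda\subseteq\rectangle} e^\lambda_t\otimes\beta^\lambda(s)
= \sum_{\lambda\subseteq\rectangle}\ \sum_{\mu\subseteq\lambda}\frac{\numsyt{\lambda/\mu}}{|\lambda/\mu|!}\,t^{|\lambda/\mu|}\,e^\mu\otimes\beta^\lambda(s)\,.
\]
Now I would interchange the order of summation, grouping by $\mu$: the double sum becomes $\sum_{\mu\subseteq\rectangle} e^\mu\otimes\sum_{\lambda:\,\mu\subseteq\lambda\subseteq\rectangle}\frac{\numsyt{\lambda/\mu}}{|\lambda/\mu|!}\,t^{|\lambda/\mu|}\,\beta^\lambda(s)$, which is exactly the expression obtained above for $\Omega(s+t)$. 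Since $\{e^\mu:\mu\subseteq\rectangle\}$ is a basis of $\exterior{n}\CC_{2n-1}[u]$, matching coefficients in $\altbethe$ completes the proof.

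There is essentially no obstacle here: the only points requiring a word of care are (i) confirming that the index sets in the two sums genuinely agree once the vanishing $\beta^\lambda(s)=0$ for $\lambda\not\subseteq\rectangle$ is taken into account, so that no spurious terms arise on either side, and (ii) justifying the interchange of the (finite) double sum, which is immediate. One could also phrase the whole argument more conceptually: \eqref{eq:translationidentity} says the tuple $(\beta^\lambda(t))_{\lambda\subseteq\rectangle}$ transforms under $t$-translation exactly as \Plucker coordinates do by \cref{prop:geometrictranslation}, while \eqref{eq:basistranslation} says the dual basis vectors $(e^\lambda_t)$ transform by the transpose of the same matrix; hence the pairing $\Omega(t)=\sum_\lambda e^\lambda\otimes\beta^\lambda(t)$ is translation-covariant in the sense asserted. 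Either way, this lemma is a bookkeeping consequence of identities already established, and I would keep the proof to a few lines.
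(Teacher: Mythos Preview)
Your proof is correct and follows the same approach as the paper: expand $e^\lambda_t$ via \eqref{eq:basistranslation}, interchange the finite sums, and recognize the translation identity \eqref{eq:translationidentity}. The paper compresses this into a single sentence, but the content is identical.
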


\begin{proof}
Upon expanding $e^\lambda_t$ as in \eqref{eq:basistranslation}, this becomes equivalent to \eqref{eq:translationidentity}.
\end{proof}

\subsubsection{Derivations on the exterior algebra}

We recall how to extend any linear map $L : V \to W$ to a derivation $L_\#$
on the exterior algebra $\extalg V$:

\begin{lemma}
\label{lem:exterior}
Let $V$ and $W$ be vector spaces, and let $L : V \to W$ be a linear map.
\begin{enumerate}[(i)]
\item\label{exterior1}
There exists a unique linear map
\[
   L_\# : \extalg V \to \extalg V  \otimes  W
\,,
\]
such that for all $k \geq 0$ and $v_1, \dots, v_k \in V$, we have
\begin{equation}
\label{eq:Lsharp}
   L_\#(v_1 \wedge \dots \wedge v_k) = 
    \sum_{i=1}^k (-1)^{k-i} 
   (v_1 \wedge \cdots \wedge \widehat{v}_i \wedge \cdots \wedge v_k) 
   \otimes 
   L(v_i) 
\,.
\end{equation}
\item\label{exterior2} $L_\#$ is a derivation: for all
$\omega \in \exterior{k} V$ and $\omega' \in \exterior{l} V$, we have
\[
    L_\#(\omega \wedge \omega') = \omega \wedge L_\# \omega'
    + (-1)^{kl} \omega' \wedge L_\# \omega
\,.
\]
\item\label{exterior3} We have $\ker(L_\#) = \extalg \ker(L)$.
\end{enumerate}
\end{lemma}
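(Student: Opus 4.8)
\textbf{Proof plan for Lemma~\ref{lem:exterior}.}
The plan is to construct $L_\#$ directly from the multilinear data and then verify the three claims in sequence. For part~\ref{exterior1}, I would begin by defining, for each $k\geq 0$, a multilinear map $\widetilde{L}_k : V^k \to \exterior{k-1} V \otimes W$ by the right-hand side of \eqref{eq:Lsharp}. The key observation is that $\widetilde{L}_k$ is alternating: swapping two adjacent arguments $v_j$ and $v_{j+1}$ negates every term, since each summand either picks up a sign from reordering the wedge of the $k-1$ surviving vectors, or (in the two terms where $v_i\in\{v_j,v_{j+1}\}$) the two terms exchange roles with an overall sign change. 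By the universal property of the exterior power, $\widetilde{L}_k$ factors through $\exterior{k} V$, yielding a linear map on each graded piece; assembling these gives $L_\#$, and uniqueness is immediate since the decomposable elements $v_1\wedge\dots\wedge v_k$ span $\exterior{k} V$.

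For part~\ref{exterior2}, I would check the derivation identity on decomposable elements $\omega = v_1\wedge\dots\wedge v_k$ and $\omega' = w_1\wedge\dots\wedge w_l$, which suffices by bilinearity of both sides. Applying the definition \eqref{eq:Lsharp} to $\omega\wedge\omega'$ and splitting the sum over $i$ according to whether the removed vector comes from the first block or the second block, one recovers $\omega\wedge L_\#\omega'$ from the second-block terms directly, while the first-block terms reassemble into $(-1)^{kl}\omega'\wedge L_\#\omega$ after moving $\omega'$ past the surviving $k-1$ vectors of $\omega$ (which contributes exactly the sign $(-1)^{(k-1)l}$, and combined with the sign already present in \eqref{eq:Lsharp} gives $(-1)^{kl}$ — I would track the exponents carefully here).

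Part~\ref{exterior3} is the one I expect to require the most care. The inclusion $\extalg\ker(L)\subseteq\ker(L_\#)$ is easy: if all $v_i\in\ker(L)$ then every term in \eqref{eq:Lsharp} vanishes, and $\ker(L_\#)$ is a subspace. For the reverse inclusion, I would pick a complement $V = \ker(L)\oplus U$, so that $L$ restricts to an injection on $U$, and use the induced decomposition $\extalg V = \extalg\ker(L) \otimes \extalg U$ (as graded algebras). Writing a general element as $\sum_a \xi_a \wedge u_a$ with $\xi_a\in\extalg\ker(L)$ and $\{u_a\}$ a basis of $\extalg U$ built from a basis of $U$, the derivation property from part~\ref{exterior2} gives $L_\#(\xi_a\wedge u_a) = \pm\,\xi_a\wedge L_\#(u_a)$ since $L_\#\xi_a = 0$. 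The crux is then to show that $L_\#$ is injective on $\extalg U$: because $L|_U$ is injective, applying $L_\#$ to a basis monomial $u_{i_1}\wedge\dots\wedge u_{i_k}$ and then composing with a suitable projection (e.g.\ pairing the $W$-factor against a functional dual to $L(u_{i_k})$) recovers $u_{i_1}\wedge\dots\wedge u_{i_{k-1}}$ up to sign, and a triangularity/induction argument on $k$ shows no nonzero element of $\extalg U$ can lie in $\ker(L_\#)$. Combining, $\ker(L_\#)\cap(\extalg\ker(L)\otimes\extalg U) $ forces the $U$-part to vanish, giving $\ker(L_\#) = \extalg\ker(L)$ as desired.
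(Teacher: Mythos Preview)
Your plan is correct and follows essentially the same approach as the paper. For parts~\ref{exterior1} and~\ref{exterior2} the arguments coincide. For part~\ref{exterior3}, both you and the paper split $V = \ker(L) \oplus U$, use the derivation property to reduce to showing $\ker(L_\#|_{\extalg U}) = \exterior{0}U$, and then conclude. The only difference is in that last step: the paper observes that when $L|_U$ is invertible, the map
\[
   v_1 \wedge \dots \wedge v_{k-1} \otimes w \ \longmapsto\
   \tfrac{1}{k}\, v_1 \wedge \dots \wedge v_{k-1} \wedge L^{-1}(w)
\]
is an explicit left inverse to $L_\#$ on $\exterior{k}U$ for $k\ge 1$, which is slightly slicker than your dual-functional argument (and in fact makes the ``triangularity/induction'' unnecessary --- pairing against a single $\phi_m$ already kills all coefficients $c_I$ with $m\in I$). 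Two small cleanups: the $\pm$ in $L_\#(\xi_a\wedge u_a)=\pm\,\xi_a\wedge L_\#(u_a)$ is just $+$, and you should make explicit that injectivity of $L_\#$ on $\bigoplus_{k\ge 1}\exterior{k}U$ yields injectivity of $\mathrm{id}\otimes L_\#$ on $\extalg\ker(L)\otimes\bigoplus_{k\ge 1}\exterior{k}U$, which is the step that forces the $\xi_a$ with $\deg u_a\ge 1$ to vanish.
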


\begin{proof}
The map $V^k \to \exterior{k-1}V \otimes W$, defined by
mapping $(v_1, \dots, v_k)$ to the right-hand side of \eqref{eq:Lsharp}
is $k$-linear and alternating, and therefore
extends to a unique linear map on $\exterior{k} V$.  This proves part \ref{exterior1},
and part \ref{exterior2} follows directly.
For part \ref{exterior3}, it is clear that $\extalg \ker(L) \subseteq \ker(L_\#)$.
For the reverse inclusion, after replacing $W$ by $L(V)$, we may assume 
that $L$ is surjective.  

First note that if $L$ is invertible, 
then $L_\# : \exterior{k} V \to \exterior{k-1} V \otimes W$ is the
zero map if $k=0$ and injective
if $k \geq 1$. The former is true by definition, and the 
latter is true because
\[
   v_1 \wedge \dots \wedge v_{k-1} \otimes w \mapsto 
   \frac{1}{k} v_1 \wedge \dots \wedge v_{k-1} \wedge L^{-1}(w)
\]
defines the left-inverse map.  Thus, when $L$ is invertible, we have 
$\ker(L_\#) = \exterior{0}V$.

In general, there exists a direct sum decomposition 
$V = V' \oplus V''$, with $V' = \ker(L)$.  Thus $L$ restricted to
$V'$ is the zero map, and $L$ restricted to $V''$ is invertible.
Given $\omega \in \extalg V$, we can write
$\omega = \sum_{i=1}^s \omega'_i \wedge \omega''_i$, where
$\omega'_i \in \extalg V'$ and $\omega''_i \in \extalg V''$ for all $i\in [s]$,
and $(\omega'_1, \dots, \omega'_s)$ is linearly independent.
By part \ref{exterior2}, we have
$L_\# \omega = \sum_{i=1}^s \omega'_i \wedge L_\# \omega''_i$.
Finally, if $L_\# \omega = 0$, then $L_\# \omega''_i = 0$ for all $i \in [s]$.
Since $L$ restricted to $V''$ is invertible, this implies that 
$\omega''_i \in \exterior{0} V''$ for all $i \in [s]$.  Hence
$\omega \in \extalg V'$, which proves
$\ker(L_\#) \subseteq \extalg \ker(L)$.
\end{proof}

\begin{corollary}
\label{cor:exterior}
Let $L : V \to W$ be as in \cref{lem:exterior}, and
let $\omega \in \exterior{n} V$.
Suppose that
\[
L_\# \omega = 0 
\qquad\text{and}\qquad
\dim \ker(L) \leq n
\,.
\]
Then 
$\omega = v_1 \wedge \dots \wedge v_n$ for some 
$v_1, \dots, v_n \in \ker(L)$.  Furthermore, if $\omega \neq 0$,
then $\dim \ker(L) = n$ and 
$(v_1, \dots, v_n)$ is a basis for $\ker(L)$.
\end{corollary}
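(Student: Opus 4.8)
The plan is to derive the entire statement from part \ref{exterior3} of \cref{lem:exterior}, which identifies $\ker(L_\#)$ with the exterior subalgebra $\extalg \ker(L)$. First I would note that the hypothesis $L_\#\omega = 0$, together with $\omega \in \exterior{n} V$, forces $\omega \in \exterior{n}\ker(L)$. Since $\dim\ker(L) \le n$, the space $\exterior{n}\ker(L)$ is either the zero space (when $\dim\ker(L) < n$) or one-dimensional (when $\dim\ker(L) = n$); in the latter case it is spanned by $u_1 \wedge \dots \wedge u_n$ for any basis $(u_1, \dots, u_n)$ of $\ker(L)$. This dichotomy is the only structural input needed.

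I would then split into these two cases. If $\dim\ker(L) < n$, then $\omega = 0$, and one may trivially write $\omega = v_1 \wedge \dots \wedge v_n$ by taking $v_1 = \dots = v_n = 0 \in \ker(L)$ (any linearly dependent tuple in $\ker(L)$ works). If $\dim\ker(L) = n$, write $\omega = c\,(u_1 \wedge \dots \wedge u_n)$ for a scalar $c \in \CC$, and set $v_1 := c\,u_1$ and $v_i := u_i$ for $2 \le i \le n$; these all lie in $\ker(L)$ and satisfy $\omega = v_1 \wedge \dots \wedge v_n$. For the final assertion, if $\omega \neq 0$ then the first case is impossible, so $\dim\ker(L) = n$; and $v_1 \wedge \dots \wedge v_n = \omega \neq 0$ forces $(v_1, \dots, v_n)$ to be linearly independent, hence (being $n$ vectors in the $n$-dimensional space $\ker(L)$) a basis.

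I do not anticipate a genuine obstacle: the substantive work is already contained in \cref{lem:exterior}\ref{exterior3}, and this corollary is just the standard observation that a decomposable nonzero element of $\exterior{n}U$ with $\dim U \le n$ pins down $\dim U = n$ and a basis. The only point requiring a moment's attention is the degenerate case $\omega = 0$, where one must recall that the statement "$\omega = v_1 \wedge \dots \wedge v_n$" permits a tuple of vectors in $\ker(L)$ whose wedge vanishes.
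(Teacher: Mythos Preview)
Your proposal is correct and follows essentially the same approach as the paper: both invoke \cref{lem:exterior}\ref{exterior3} to place $\omega$ in $\exterior{n}\ker(L)$, then use the dimension bound to see that this space is zero or one-dimensional. The paper simply dispatches the $\omega = 0$ case as trivial up front rather than handling it inside a case split on $\dim\ker(L)$, but the substance is identical.
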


\begin{proof}
The result is trivial if $\omega = 0$, so assume $\omega \neq 0$.
By \cref{lem:exterior}, we have $\omega \in \exterior{n} \ker(L)$.
If $\dim \ker(L) < n$, then $\exterior{n} \ker(L)$ is zero-dimensional,
and cannot contain a nonzero vector.  Therefore we must have 
$\dim \ker(L) =n$, whence $\dim \exterior{n} \ker(L) = 1$, and
every nonzero vector is of the form $v_1 \wedge \dots \wedge v_n$,
where $(v_1, \dots, v_n)$ is a basis for $\ker(L)$.
\end{proof}

\subsubsection{Reduction to single-column \Plucker relations}
\label{sec:screduction}

If $R$ is a unital commutative $\CC$-algebra, 
we extend the definition of $L_\#$
to $R$-linear maps $L : V \otimes R \to W \otimes R$.  In this case,
we obtain an $R$-linear derivation
\begin{equation}\label{eq:derivation}
   L_\# : \extalg V \otimes R 
        \to \extalg V \otimes W \otimes R
\end{equation}
characterized by \eqref{eq:Lsharp}. In particular, taking
$R = \altbethe$, we consider the derivation
\[
   (\polyD_n)_\# : \extalg \CC_{2n-1}[u] \otimes \altbethe \to
     \extalg \CC_{2n-1}[u] \otimes \CC[u] \otimes \altbethe
\,
\]
associated to 
$\polyD_n : \CC_{2n-1}[u] \otimes \altbethe \to \CC[u] \otimes \altbethe$ from \eqref{eq:polyDdef}. (We have switched the order of the tensor factors from \eqref{eq:diffop}, to be consistent with \eqref{eq:derivation}.)

For $t \in \CC$ and $\CC$-vector spaces $V_1$ and $V_2$, 
let $\ev_t : V_1 \otimes \CC[u] \otimes V_2 \to V_1 \otimes V_2$ 
denote the evaluation map 
$f \mapsto f(t)$ on the tensor factor of $\CC[u]$.

\begin{lemma}
\label{lem:translationmagic}
Let $z_1, \dots, z_n$ be formal indeterminates. 
Then the following are equivalent:
\begin{enumerate}[(a)]
\item\label{translationmagic_a} $(\polyD_n)_\# \Omega = 0$;
\item\label{translationmagic_b} $\ev_0 \big((\polyD_n)_\# \Omega\big) = 0$;
\item\label{translationmagic_c} the operators $\beta^\lambda$ satisfy the single-column \Plucker relations for $\Gr(n,2n)$.
\end{enumerate}
\end{lemma}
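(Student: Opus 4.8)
\textbf{Proof proposal for \cref{lem:translationmagic}.}

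The plan is to prove the chain of implications \ref{translationmagic_c} $\Rightarrow$ \ref{translationmagic_a} $\Rightarrow$ \ref{translationmagic_b} $\Rightarrow$ \ref{translationmagic_c}, exploiting that we are working with formal indeterminates $z_1, \dots, z_n$ so that polynomial identities may be translated $(z_i) \mapsto (z_i + t)$. The implication \ref{translationmagic_a} $\Rightarrow$ \ref{translationmagic_b} is immediate, since $\ev_0$ is linear. For \ref{translationmagic_b} $\Rightarrow$ \ref{translationmagic_a}: I would use \cref{lem:exteriortranslation} to see that $\Omega(t) = \sum_{\lambda \subseteq \rectangle} e^\lambda_t \otimes \beta^\lambda$, and then observe that translating the identity in \ref{translationmagic_b} by $(z_i) \mapsto (z_i + t)$ replaces $\polyD_n$ by $\polyD_n(t)$, $\Omega$ by $\Omega(t)$, and $\ev_0$ by $\ev_t$; but the operator $(\polyD_n)_\#$ applied to a product of $e_{i,0}$'s produces polynomials in $u$, and comparing $(\polyD_n)_\# \Omega$ evaluated at all points $t \in \CC$ forces the polynomial-valued expression $(\polyD_n)_\# \Omega$ to vanish identically. (One must be slightly careful: $\polyD_n$ is a fixed operator, so translation actually moves $\Omega$ to $\Omega(t)$ while keeping $\polyD_n$ fixed; the point is that $\ev_t((\polyD_n)_\#\Omega(t)) = 0$ for all $t$ because it is the translate of \ref{translationmagic_b}, and since $\Omega(t) = \sum e^\lambda_t \otimes \beta^\lambda$ is a polynomial in $t$ and $(\polyD_n)_\#$ and $\ev_t$ depend polynomially on $t$, vanishing for all $t$ is a polynomial identity from which \ref{translationmagic_a} follows by setting the $u$-polynomial to zero.)

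The substantive equivalence is \ref{translationmagic_b} $\Leftrightarrow$ \ref{translationmagic_c}. Here I would compute $\ev_0\big((\polyD_n)_\# \Omega\big)$ explicitly. Writing $\Omega = \sum_{I \in \binom{[2n]}{n}} e_I \otimes \beta_I$ and using the derivation formula \eqref{eq:Lsharp}, we get
\[
(\polyD_n)_\# \Omega = \sum_{I \in \binom{[2n]}{n}} \ \sum_{j=1}^{n} \pm (e_{i_1} \wedge \cdots \wedge \widehat{e_{i_j}} \wedge \cdots \wedge e_{i_n}) \otimes (\polyD_n e_{i_j}) \otimes \beta_I\,,
\]
and applying $\ev_0$ to the middle factor extracts the constant term of the polynomial $\polyD_n e_{i_j}(u) \in \CC[u] \otimes \altbethe$. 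The key computation is to identify $[u^0]\,\polyD_n e_i(u)$: since $\polyD_n = \sum_{k=0}^n (-1)^k \betaminus{k}(u)\du^{n-k}$ and $e_i(u) = u^{i-1}/(i-1)!$, differentiating and taking the constant term in $u$ picks out, for each $k$, the relevant coefficient of $\betaminus{k}(u)$, and one recognizes the result as (a signed multiple of) $\betaminus{i-1}$ — more precisely $[u^0]\,\polyD_n e_i(u) = (-1)^{?}\betaminus{i-1}$ up to the combinatorial bookkeeping. After this substitution, the vanishing of $\ev_0((\polyD_n)_\#\Omega)$ becomes, coefficient-by-coefficient in $\exterior{n-1}\CC_{2n-1}[u]$, exactly a family of quadratic relations $\sum_k \pm \betaminus{k}\,\beta_{J+k} = 0$ indexed by $(n-1)$-subsets $J$, which is the definition of the single-column \Plucker relations for $\Gr(n,2n)$ — note that these are precisely the relations \eqref{eq:prels} with $I = (1,2,\dots,n+1)$, and the single column index $\lambda^{(-i)} = 1^{i-1}$ matches the appearance of $\betaminus{i-1}$.

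The main obstacle I anticipate is the sign/indexing bookkeeping in the computation $[u^0]\,\polyD_n e_i(u)$ and in matching the resulting quadratic identities with the precise form of the single-column \Plucker relations \eqref{eq:prelspartitions} (with $\lambda = 0$). One has to carefully track: the sign $(-1)^k$ in $\polyD_n$; the falling-factorial factors from $\du^{n-k}$ acting on $u^{i-1}/(i-1)!$; the sign $(-1)^{n-j}$ from the derivation formula; and the alternating-extension conventions \eqref{eq:deletesubscript}, \eqref{eq:addsubscript} defining $\beta_{J+k}$ and $\beta_{I-k}$. I would organize this by fixing $J \in \binom{[2n]}{n-1}$, collecting all terms of $\ev_0((\polyD_n)_\#\Omega)$ whose exterior-algebra part is $\pm e_J$, and verifying that the $\altbethe$-coefficient is $\sum_{k=1}^{2n} \beta_{J+k}\cdot\big([u^0]\polyD_n e_k\big) = \sum_{k} \pm \beta_{J+k}\,\betaminus{k-1}$, which vanishes iff the single-column relation indexed by $J$ holds. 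Since $\altbethe$ is commutative by \cref{lem:comparealgebras}\ref{comparealgebras1}, the order of the factors in these products is immaterial, which is what makes this clean. Everything else is routine linear algebra on the finite-dimensional space $\exterior{n-1}\CC_{2n-1}[u]$.
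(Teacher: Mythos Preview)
Your treatment of \ref{translationmagic_b} $\Leftrightarrow$ \ref{translationmagic_c} matches the paper's, and is correct modulo an index slip: one computes $\ev_0(\polyD_n e_k) = (-1)^{n+1-k}\betaminus{n+1-k}$ (nonzero only for $1 \le k \le n+1$), not $\pm\betaminus{k-1}$. Indeed, $\du^{n-j}e_k = e_{k-n+j}$ and $e_i(0) = \delta_{i,1}$, so only the term with $j = n+1-k$ survives. With this correction, the coefficient of $e_J$ in $\ev_0\big((\polyD_n)_\#\Omega\big)$ is $\sum_{k=1}^{n+1}(-1)^{n+1-k}\betaminus{n+1-k}\beta_{J+k}$, which is exactly the single-column relation $\sum_k \Delta_{I-k}\Delta_{J+k}$ for $I = (1, \dots, n+1)$ (removing $k$ from $I$ yields the partition $1^{n+1-k}$).

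Your argument for \ref{translationmagic_b} $\Rightarrow$ \ref{translationmagic_a}, however, has a genuine gap. Translating $(z_i) \mapsto (z_i+t)$ sends $\polyD_n \mapsto \polyD_n(t)$ and $\Omega \mapsto \Omega(t)$, but it does \emph{not} touch $\ev_0$ (evaluation at $u=0$ has nothing to do with the $z_i$); your parenthetical claim that $\polyD_n$ is ``fixed'' under this translation is also incorrect, since $\betaminus{k}(u)$ depends on the $z_i$. Thus the translate of \ref{translationmagic_b} is $\ev_0\big((\polyD_n(t))_\#\Omega(t)\big) = 0$, and neither of your asserted identities follows from it. The missing ingredient is a second step: after translating the $z_i$, apply the change of variable $u \mapsto u - t$ throughout. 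Under this substitution $\ev_0 \mapsto \ev_t$, $\betaminus{k}(u+t) \mapsto \betaminus{k}(u)$ so $\polyD_n(t) \mapsto \polyD_n$, and crucially $\Omega(t) \mapsto \Omega$ --- this is precisely where \cref{lem:exteriortranslation} enters, since $\Omega(t) = \sum_\lambda e^\lambda_t \otimes \beta^\lambda$ and $e_{i,t}(u) = e_i(u+t)$ becomes $e_i(u)$. Only after both steps do you obtain $\ev_t\big((\polyD_n)_\#\Omega\big) = 0$ for every $t$, whence \ref{translationmagic_a}. Without the change of variable in $u$, you never return to the fixed object $(\polyD_n)_\#\Omega$, and the argument does not close.
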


\begin{proof}
\ref{translationmagic_a} $\Leftrightarrow$ \ref{translationmagic_b}:
Clearly \ref{translationmagic_a} implies \ref{translationmagic_b}. Conversely, suppose that \ref{translationmagic_b} is true.
Then \ref{translationmagic_b} remains true if we perform the
translation $(z_1, \dots, z_n) \mapsto (z_1+t, \dots, z_n+t)$ for 
$t \in \CC$:
\[
    \ev_0 \big((\polyD_n(t))_\# \Omega(t)\big) = 0
\,.
\]
Now perform the change of variables $u \mapsto u-t$ on the equation
above. Under this change of variables, 
$\ev_0 \mapsto \ev_t$, $\betaminus{k}(u+t) \mapsto \betaminus{k}(u)$,
$\du \mapsto \du$, and $\Omega(t) \mapsto \Omega$ (by \cref{lem:exteriortranslation}).
Thus we obtain
\[
    \ev_t \big((\polyD_n)_\# \Omega \big) = 0
\,.
\]
Since this is true for all $t \in \CC$,
we deduce that \ref{translationmagic_a} holds.

\ref{translationmagic_b} $\Leftrightarrow$ \ref{translationmagic_c}:
By direct calculation, we have
\begin{align*}
    \ev_0 \big((\polyD_n)_\# \Omega \big)
    &= 
     \sum_{I \in {[2n] \choose n}} 
      \ev_0 \big((\polyD_n)_\# e_I\big) 
     \cdot \beta_I
\\
    &= 
     \sum_{J \in {[2n] \choose n-1}}
     e_J \otimes \sum_{k=1}^{2n} \ev_0 \big(\polyD_n(e_k) \big)
      \cdot \beta_{J+k} 
\\
    &=	 
     \sum_{J \in {[2n] \choose n-1}}
     e_J \otimes \sum_{k=1}^{n+1} 
      (-1)^{n+1-k} \betaminus{n+1-k}
        \cdot \beta_{J+k} 
\,,
\end{align*}
where $\beta_{J+k}$ is defined analogously to \eqref{eq:addsubscript}.
Therefore, $\ev_0 \big((\polyD_n)_\# \Omega \big) = 0$ if and only if
\[
   \sum_{k=1}^{n+1} (-1)^{n+1-k} \betaminus{n+1-k} \beta_{J+k}  = 0
\,
\qquad \text{for all $J \in \textstyle{[2n] \choose n-1}$}
\,.
\]
These are precisely the single-column \Plucker 
relations for $\Gr(n,2n)$.
\end{proof}

We now complete the proof of \cref{lem:part2}. We want to show that the operators $\beta^\lambda(t)$ satisfy the \Plucker relations.
Recall from \cref{sec:partitions} that
we only need to consider the \Plucker relations for $\Gr(n,2n)$.
Furthermore, by translation and continuity, it is enough to prove the result when $t=0$ and $(z_1, \dots, z_n) \in \CC^n$ is generic. In particular, by \cref{lem:comparealgebras}\ref{comparealgebras2} and \cref{thm:bethebasics}\ref{bethebasics2}, we may assume that $\altbethe$ is semisimple. Hence it suffices to prove that for all partitions $\nu$ and eigenspaces $E \subseteq \spechtnu$ of $\altbethenu$, the eigenvalues $\beta^\lambda_E$ satisfy the \Plucker relations for $\Gr(n,2n)$.

By \cref{lem:part1}, the operators
$\beta^\lambda$ satisfy all single-column \Plucker relations.
By \cref{lem:translationmagic}, we deduce that 
$(\polyD_n)_\# \Omega = 0$.  Thus we have
$(\polyD_E)_\# \Omega_E = 0$, where 
\begin{equation}
\label{eq:deomegae}
   \polyD_E := \sum_{k=0}^{n} (-1)^k \betaminus{k}_E(u) \du^{n-k}
\qquad\text{and}\qquad
   \Omega_E := \sum_{\lambda \subseteq \rectangle} \beta^\lambda_E e^\lambda
\,.
\end{equation}
Since $\polyD_E : \CC_{2n-1}[u] \to \CC[u]$ is a linear differential
operator of order $n$, 
by \cref{prop:Dnullity} we have $\dim \ker (\polyD_E) \leq n$.
Therefore by \cref{cor:exterior}, 
we have $\Omega_E = v_1 \wedge \dots \wedge v_n$ for some
$v_1, \dots, v_n \in \CC_{2n-1}[u]$.
It follows from \cref{prop:exteriorplucker} that the
coefficients $\beta^\lambda_E$ satisfy the \Plucker relations for
$\Gr(n,2n)$. \qed


\subsection{Final steps in the proof of \texorpdfstring{\cref*{thm:main}}{Theorem \ref{thm:main}}}
\label{sec:finalsteps}

We have so far proved parts \ref{main_commutativity}--\ref{main_pluckers} of \cref{thm:main}.  
We now complete the proof by proving
parts \ref{main_generates}--\ref{main_multiplicity}.  Most of part \ref{main_generates} is established by
\cref{lem:comparealgebras}: all that remains is to show
that $\bethe = \altbethe$ in the non-generic case.
We first prove parts \ref{main_eigenspace} and \ref{main_multiplicity}, and then
address the last case of part \ref{main_generates}.
In the process, we also obtain a new proof of \cref{thm:precise}.

\subsubsection{Fibres of the Wronski map and eigenspaces}

We now prove parts \ref{main_eigenspace} and \ref{main_multiplicity} of \cref{thm:main}, but
using $\altbethe$ in place of $\bethe$.  Once we have established part \ref{main_generates},
this will give the results as stated.

By \cref{prop:changegr}, it is enough to prove these
results in $\Gr(n,2n)$.
We have already shown that if $E \subseteq \spechtnu$ is an eigenspace of $\altbethenu$,
the complex numbers $\beta^\lambda_E$
satisfy the \Plucker relations for $\Gr(n,2n)$.  
In order to deduce that
they are the \Plucker coordinates of some point $V_E \in \Gr(n,2n)$,
we need to furthermore check these numbers are not all zero.  
This is implied by the next lemma:

\begin{lemma}
\label{lem:identifyscell}
Let $E \subseteq \spechtnu$ be an eigenspace of $\altbethenu$.  
Then $\beta^\nu_E = \frac{n!}{\numsyt\nu}$,
and $\beta^\lambda_E = 0$ for all $\lambda \not \subseteq \nu$.
\end{lemma}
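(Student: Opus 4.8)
The plan is to identify the eigenvalue $\beta^\lambda_E$ by working not with the individual eigenspace but with the trace of $\beta^\lambda$ acting on all of $\spechtnu$, or better, to pin down $\beta^\lambda_\nu$ directly on the generalized eigenspace. First I would recall from \eqref{eq:alphatobeta} that $\beta^\lambda = \sum_{|X|=|\lambda|} \alpha^\lambda_X \prod_{i\notin X} z_i$, and from \cref{cor:alphapsd} that $\alpha^\lambda_{X,\nu} = 0$ whenever $\lambda\not\subseteq\nu$. Hence $\beta^\lambda_\nu = 0$ identically on $\spechtnu$ when $\lambda\not\subseteq\nu$, so in particular every eigenvalue $\beta^\lambda_E$ vanishes; this disposes of the second assertion immediately and does not even require $E$ to be an eigenspace.

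For the first assertion, the key is to evaluate $\beta^\nu_\nu$. Since $\beta^\nu = \sum_{|X|=n} \alpha^\nu_X \prod_{i\notin X} z_i = \alpha^\nu_{[n]}$ (the only $X\subseteq[n]$ with $|X|=n=|\nu|$ is $X=[n]$, and the empty product is $1$), we have $\beta^\nu_\nu = \alpha^\nu_{[n],\nu}$. By \cref{prop:projection} (or \cref{cor:alphapsd}), $\frac{\numsyt{\nu}}{n!}\alpha^\nu_{[n]}$ acts on $\spechtnu = \specht{\nu}$ as orthogonal projection onto its $\nu$-isotypic component, which is all of $\spechtnu$; therefore $\frac{\numsyt{\nu}}{n!}\alpha^\nu_{[n],\nu} = \identity$, i.e. $\beta^\nu_\nu = \frac{n!}{\numsyt{\nu}}\identity$. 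Consequently $\beta^\nu$ acts as the scalar $\frac{n!}{\numsyt{\nu}}$ on every vector of $\spechtnu$, so in particular $\beta^\nu_E = \frac{n!}{\numsyt{\nu}}$ for any eigenspace $E\subseteq\spechtnu$ of $\altbethenu$.

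In fact neither part of the lemma genuinely uses the eigenspace structure or the \Plucker relations established earlier; both are consequences of the representation-theoretic facts already assembled in \cref{sec:repthy}, namely that $\alpha^\lambda_X$ (and hence $\beta^\lambda$) acts on $\spechtnu$ through isotypic projections. I do not anticipate a serious obstacle here: the only point to be careful about is the normalization constant, making sure the convention $\Delta^\nu = \frac{|\nu|!}{\numsyt{\nu}}$ for the normalized \Plucker coordinates of a point in $\scellnu$ (from \cref{sec:schubert}) matches the value $\frac{n!}{\numsyt{\nu}}$ we obtain for $\beta^\nu_E$, with $n=|\nu|$. Granting that bookkeeping, the lemma follows, and combined with the \Plucker relations from \cref{lem:part2} it shows that $(\beta^\lambda_E : \lambda\subseteq\rectangle)$ are the \Plucker coordinates of a point $V_E\in\scellnu\subseteq\Gr(n,2n)$ by \cref{thm:schubertplucker}.
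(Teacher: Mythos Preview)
Your proof is correct and follows essentially the same argument as the paper: identify $\beta^\nu = \alpha^\nu_{[n]}$ and use \cref{prop:projection} to see it acts as $\frac{n!}{\numsyt\nu}$ times the identity on $\spechtnu$, and for $\lambda\not\subseteq\nu$ invoke the vanishing of $\alpha^\lambda_{X,\nu}$ (the paper cites \cref{prop:betapsd}\ref{betapsd4}, which is itself proved via \cref{cor:alphapsd}, so your route is the same in substance).
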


\begin{proof}
Since $|\nu| = n$, we have $\beta^\nu = \alpha^\nu_{[n]}$.  By
\cref{prop:projection}, $\frac{\numsyt\nu}{n!} \beta^\nu$ 
acts on $\spechtnu$ as the identity operator.
In particular, for every eigenspace $E$, we have
$\beta^\nu_E = \frac{n!}{\numsyt\nu}$.
If $\lambda \not \subseteq \nu$, then 
$\beta^\lambda_\nu = 0$ by \cref{prop:betapsd}\ref{betapsd4}, 
and hence $\beta^\lambda_E = 0$.
\end{proof}

In particular, since $\beta^\nu_E \neq 0$, 
there exists a point $V_E \in \Gr(n,2n)$ with \Plucker coordinates 
$[\beta^\lambda_E: \lambda \subseteq \rectangle]$. Moreover, $V_E$ is contained in the Schubert cell $\scellnu$:

\begin{corollary}
\label{cor:identifyscell}
Let $E \subseteq \spechtnu$ be an eigenspace of $\altbethenu$.
Then $V_E \in \scellnu$, and
$(\beta^\lambda_E : \lambda \subseteq \nu)$ are the normalized 
\Plucker coordinates of $V_E$.
\end{corollary}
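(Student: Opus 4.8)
The plan is to read off the result directly from \cref{lem:identifyscell} together with the Plücker-coordinate description of Schubert cells in \cref{thm:schubertplucker}. The point $V_E \in \Gr(n,2n)$ has already been constructed (via \cref{prop:exteriorplucker}) as the point whose Plücker coordinates are $[\beta^\lambda_E : \lambda \subseteq \rectangle]$; what remains is to locate $V_E$ inside the Schubert cell and to check the normalization.

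First I would observe that \cref{lem:identifyscell} supplies exactly the two inputs needed: $\beta^\lambda_E = 0$ for every $\lambda \not\subseteq \nu$, and $\beta^\nu_E = \frac{n!}{\numsyt\nu} \neq 0$. By \cref{thm:schubertplucker}, the first fact forces $V_E \in \svarnu$ (the Schubert variety cut out by $\Delta^\lambda = 0$ for $\lambda \not\subseteq \nu$), and the second fact, again by \cref{thm:schubertplucker}, places $V_E$ in the open subscheme $\scellnu$ defined by $\Delta^\nu \neq 0$. Hence $V_E \in \scellnu$, and consequently its nonzero Plücker coordinates are precisely those indexed by $\lambda \subseteq \nu$, so that $(\beta^\lambda_E : \lambda \subseteq \nu)$ is a scaling of the Plücker coordinates of $V_E$.

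It then remains to match this scaling with the \emph{normalized} one. By definition, the normalized Plücker coordinates of a point of $\scellnu$ are the unique scaling with $\Delta^\nu = \frac{|\nu|!}{\numsyt\nu}$; since $|\nu| = n$ and \cref{lem:identifyscell} gives $\beta^\nu_E = \frac{n!}{\numsyt\nu} = \frac{|\nu|!}{\numsyt\nu}$ exactly, the tuple $(\beta^\lambda_E : \lambda \subseteq \nu)$ already satisfies the normalization condition and therefore equals the normalized Plücker coordinates of $V_E$. I do not anticipate any genuine obstacle in this argument: it is pure bookkeeping built on \cref{lem:identifyscell} and the standard dictionary between Schubert cells and Plücker coordinates, and the only point requiring a moment's care is the trivial identification of the normalization constant $\frac{|\nu|!}{\numsyt\nu}$ with $\frac{n!}{\numsyt\nu}$.
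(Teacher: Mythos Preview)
Your proposal is correct and matches the paper's own proof, which simply says the corollary follows immediately from \cref{lem:identifyscell} and \cref{thm:schubertplucker}. You have spelled out precisely the details behind that one-line justification.
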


\begin{proof}
This follows immediately from \cref{lem:identifyscell} and 
\cref{thm:schubertplucker}.
\end{proof}

Next we show that $V_E \in \Wr^{-1}(g)$ for
$g(u) = (u+z_1)\dotsm (u+z_n)$:

\begin{lemma}
Let $E \subseteq \spechtnu$ be an eigenspace of $\altbethe$.  
Then $\Wr(V_E) = g$.
\end{lemma}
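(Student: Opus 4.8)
The plan is to compute $\Wr(V_E)$ directly from the normalized \Plucker coordinates of $V_E$, and to recognize the resulting polynomial as the operator $\beta^0(u)$ evaluated on $E$, which is manifestly the scalar $g(u)$. First I would invoke \cref{cor:identifyscell}: $V_E\in\scellnu$, and $(\beta^\lambda_E : \lambda\subseteq\nu)$ are its normalized \Plucker coordinates. Feeding this into \cref{prop:pluckerwronskian} gives
\[
   \Wr(V_E) \;=\; \sum_{\lambda\subseteq\nu}\frac{\numsyt{\lambda}}{|\lambda|!}\,\beta^\lambda_E\,u^{|\lambda|}\,.
\]
By \cref{lem:identifyscell} we have $\beta^\lambda_E = 0$ for every $\lambda\not\subseteq\nu$, so the sum may be taken over \emph{all} partitions $\lambda$ without changing its value.

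Next I would recognize the right-hand side using the translation identity \eqref{eq:translationidentity}. Taking $\mu = 0$ there (so that $\numsyt{\lambda/0}=\numsyt{\lambda}$ and $|\lambda/0|=|\lambda|$) yields the polynomial identity
\[
   \beta^0(u) \;=\; \sum_{\lambda}\frac{\numsyt{\lambda}}{|\lambda|!}\,u^{|\lambda|}\,\beta^\lambda
\]
in $\CC[\Sn]\otimes\CC[u]$. Hence $\Wr(V_E)$ is, coefficient by coefficient in $u$, the eigenvalue of $\beta^0(u)$ on $E$. Finally I would read off $\beta^0(u)$ straight from the definition \eqref{eq:betadef}: the only subset $X\subseteq[n]$ with $|X|=0$ is $X=\emptyset$, and $\symgrp{\emptyset}=\{\Snidentity\}$ with $\chi^0(\Snidentity)=1$, so
\[
   \beta^0(u) \;=\; \Snidentity\prod_{i=1}^n (z_i + u) \;=\; g(u)\,\Snidentity\,.
\]
Thus $\beta^0(u)$ acts on $E$ as the scalar polynomial $g(u)$, and therefore $\Wr(V_E)=g$.

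There is no serious obstacle here: the argument merely assembles results already in hand. The one point worth flagging is conceptual rather than computational — recognizing that the Wronskian formula \eqref{eq:pluckerwronskian} of \cref{prop:pluckerwronskian} coincides exactly with the $\mu=0$ instance of the translation identity, which forces $\Wr(V_E)$ to be the eigenvalue of $\beta^0(u)=g(u)\,\Snidentity$. If one preferred to avoid the translation identity, the same conclusion follows by checking $\sum_{|\lambda|=k}\frac{\numsyt{\lambda}}{k!}\beta^\lambda = e_{n-k}(z_1,\dots,z_n)\,\Snidentity$ directly, using column orthogonality of the irreducible characters of $\symgrp{X}$ to collapse the inner character sum $\sum_{\lambda\vdash k}\frac{\numsyt{\lambda}}{k!}\chi^\lambda(\sigma)$ to the indicator $[\sigma=\Snidentity]$; but the route via $\beta^0(u)$ is shorter.
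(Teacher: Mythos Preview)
Your proof is correct and matches the paper's own argument essentially step for step: invoke \cref{cor:identifyscell} and \eqref{eq:pluckerwronskian}, extend the sum to all $\lambda$ via \cref{lem:identifyscell}, recognize the result as $\beta^0_E(u)$ using the $\mu=0$ case of the translation identity, and conclude from $\beta^0(u)=g(u)\,\Snidentity$. The alternative route via column orthogonality that you sketch at the end is a nice aside but not needed.
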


\begin{proof}
By \cref{cor:identifyscell} and \eqref{eq:pluckerwronskian}, we have
\[
   \Wr(V_E) = \sum_{\lambda}
    \frac{\numsyt{\lambda}}{|\lambda|!} \beta^\lambda_E u^{|\lambda|}
\,.
\]
(We can remove the condition $\lambda \subseteq \nu$ in the 
summation,
since $\beta^\lambda_E = 0$ for all $\lambda \not \subseteq \nu$.)
By the translation identity \eqref{eq:translationidentity}, the
right-hand side above equals $\beta^0_E(u)$.  
By definition 
$\beta^0(u) = g(u) \cdot \Snidentity$, and so $\beta^0_E(u) = g(u)$, as required.
\end{proof}

We have shown that $V_E \in \scellnu$ and $\Wr(V_E) = g$, which
are the first two claims of part \ref{main_eigenspace}.
We now argue that the map $E \mapsto V_E$ 
is bijective. First we show that $E \mapsto V_E$ is injective. Let $E$ and $E'$ be eigenspaces such that $[\beta^\lambda_E : \lambda \subseteq \nu] = [\beta^\lambda_{E'} : \lambda \subseteq \nu]$ as projective coordinates. By \cref{cor:identifyscell} these coordinates are normalized, so $\beta^\lambda_E = \beta^\lambda_{E'}$ for all partitions $\lambda$. Since the elements $\beta^\lambda_\nu$ generate $\altbethenu$,
this implies that $\xi_E = \xi_{E'}$ for all 
$\xi \in \altbethenu$. Hence $E = E'$, proving injectivity.

Now we prove that $E\mapsto V_E$ is surjective. By continuity of the fibres of the Wronski map $\Wr: \scellnu \to \monics$ and the
eigenvalues of the operators
$\beta^\lambda_\nu$, it suffices to prove this 
when $z_1, \dots, z_n \in \CC$ are generic.
In this case, \cref{lem:comparealgebras}\ref{comparealgebras2} and \cref{thm:bethebasics}\ref{bethebasics2}
imply that $\altbethenu = \bethenu$ is semisimple and is a maximal commutative
subalgebra of $\End(\spechtnu)$, and hence has $\numsyt\nu = \dim \spechtnu$ 
distinct eigenspaces. Since $\Wr: \scellnu \to \monics$ has degree $\numsyt\nu$, the map $E \mapsto V_E$ is surjective.

This completes the proof of \cref{thm:main}\ref{main_eigenspace}. 
Furthermore, since $\Wr$ is a finite morphism (and hence flat \cite[Exercise III.9.3(a)]{hartshorne77}),
this argument also establishes part \ref{main_multiplicity}, since
both multiplicities of points in $\Wr^{-1}(g)$ and dimensions of
generalized eigenspaces behave additively under taking limits.
\qed

\subsubsection{\texorpdfstring{$\altbethe = \bethe$}{Equality of Bethe subalgebras}}
\label{sec:altbetheequalsbethe}

We now complete the proof of \cref{thm:main}\ref{main_generates}, by
showing that $\altbethe = \bethe$ for all $z_1, \dots, z_n \in \CC$.
We already have $\bethe \subseteq \altbethe$ from 
\cref{lem:comparealgebras}\ref{comparealgebras3}; we now establish the reverse
inclusion.

\begin{lemma}
\label{lem:FDOcoords}
If $E \subseteq \spechtnu$ is an eigenspace of $\altbethenu$, then
the fundamental differential operator of $V_E$ is
\[
    D_{V_E} = \calD_E
        = \frac{1}{\Wr(V_E)} \sum_{k=0}^n \, \sum_{\ell=0}^{n-k} 
           (-1)^k\betaminus{k}_{\ell,E} u^{n-k-\ell} \du^{n-k}
\,.
\]
\end{lemma}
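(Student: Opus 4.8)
The plan is to identify $\calD_E$ with the fundamental differential operator $D_{V_E}$ by using the characterization in \cref{prop:FDOunique}: $D_{V_E}$ is the \emph{unique} monic linear differential operator of order $n = \dim V_E$ whose polynomial kernel is exactly $V_E$. So it suffices to show that $\calD_E$ is monic of order $n$ and that $\pker \calD_E = V_E$. The operator $\calD_E$ is obtained from $\polyD_E$ in \eqref{eq:deomegae} by dividing by $g(u) = \Wr(V_E)$; since $\betaminus{0}(u) = g(u) \cdot \Snidentity$ (so $\betaminus{0}_E = g(u)$) and the leading term of $\polyD_E$ is $\betaminus{0}_E \du^n = g(u)\du^n$, the operator $\calD_E = \frac{1}{g(u)}\polyD_E$ is indeed monic of order $n$. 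The displayed formula for $\calD_E$ in terms of the $\betaminus{k}_{\ell,E}$ follows immediately by expanding $\betaminus{k}(u) = \sum_{\ell=0}^{n-k}\betaminus{k}_\ell u^{n-k-\ell}$ (from the definition in \cref{sec:bethe}) and restricting to $E$.

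The heart of the argument is therefore to show $\pker\calD_E = V_E$, equivalently $\pker\polyD_E = V_E$ as subspaces of $\CC[u]$ (the factor $g(u)$ does not change the polynomial kernel). First I would note that, by \cref{prop:Dnullity}, $\dim\pker\polyD_E \le n$, and by \cref{cor:identifyscell} together with \cref{cor:exterior} applied in the proof of \cref{lem:part2}, we already know $\Omega_E = v_1\wedge\dots\wedge v_n$ where $(v_1,\dots,v_n)$ is a basis for $\ker(\polyD_E)$ viewed as a map $\CC_{2n-1}[u]\to\CC[u]$, and that $V_E\in\scellnu$ has normalized \Plucker coordinates $(\beta^\lambda_E : \lambda\subseteq\nu)$, i.e.\ $\Omega_E$ (up to scalar) represents $V_E$ in $\exterior{n}\CC_{2n-1}[u]$. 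Hence $V_E = \vspan(v_1,\dots,v_n) = \ker(\polyD_E|_{\CC_{2n-1}[u]}) \subseteq \pker\polyD_E$. Since $\dim V_E = n \ge \dim\pker\polyD_E$, these must be equal, giving $\pker\polyD_E = V_E$ and hence $\pker\calD_E = V_E$. Combined with monicity and order $n$, \cref{prop:FDOunique} yields $\calD_E = D_{V_E}$.

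The main subtlety — the step I expect to require the most care — is making precise the link between "$\Omega_E$ represents $V_E$" and "$\ker(\polyD_E|_{\CC_{2n-1}[u]}) = V_E$", since in the proof of \cref{lem:part2} the vectors $v_i$ were produced abstractly from $\Omega_E$ via \cref{cor:exterior}, and one needs the uniqueness clause of \cref{prop:exteriorplucker} (together with $\beta^\nu_E\ne 0$ from \cref{lem:identifyscell}) to conclude that $\vspan(v_1,\dots,v_n)$ is genuinely the $d$-plane whose \Plucker coordinates are $[\beta^\lambda_E]$, namely $V_E$. Once that identification is in hand, everything else is formal. One should also remark that this argument was carried out under the generic assumption on $z_1,\dots,z_n$ (where $\altbethe$ is semisimple), but since $D_{V_E} = \calD_E$ is a polynomial identity in the eigenvalues, it persists for all $z_1,\dots,z_n\in\CC$ by continuity / specialization — which is exactly what is needed for the subsequent deduction that $\altbethe = \bethe$.
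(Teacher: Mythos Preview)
Your proof is correct and follows the same route as the paper: use $(\polyD_E)_\#\Omega_E = 0$ together with \cref{cor:exterior} and \cref{prop:exteriorplucker} to identify $V_E = \ker(\polyD_E|_{\CC_{2n-1}[u]})$, then a dimension count gives $\pker\calD_E = V_E$, and \cref{prop:FDOunique} finishes. One small point: your closing remark about genericity is unnecessary, since $(\polyD_n)_\#\Omega = 0$ is a polynomial identity in $z_1,\dots,z_n$ (via \cref{lem:part1} and \cref{lem:translationmagic}), so its restriction $(\polyD_E)_\#\Omega_E = 0$ holds for every eigenspace at every parameter value directly, and no continuity or specialization argument is needed.
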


\begin{proof}
Recall from \eqref{eq:deomegae} that 
$(\polyD_E)_\# \Omega_E = 0$ and $\dim \ker (\polyD_E) \leq n$, where $\polyD_E$ is regarded
as a linear map $\polyD_E : \CC_{2n-1}[u] \to \CC[u]$.  
By \cref{lem:identifyscell} we have $\beta^\nu_E \neq 0$, so $\Omega_E \neq 0$. 
By \cref{cor:exterior} we furthermore deduce that 
$\dim \ker \polyD_E = n$ and
$\Omega_E = v_1 \wedge \dots \wedge v_n$, where $(v_1, \dots, v_n)$
is a basis for $\ker \polyD_E$.  By \cref{prop:exteriorplucker}, we have
$V_E = \langle v_1, \dots, v_n\rangle = \ker \polyD_E$.

Since $\calD_E = \frac{1}{\Wr(V_E)} \polyD_E$ we have
$\polyD_E f = 0$ if and only if  $\calD_E f = 0$.
Thus $V_E 
\subseteq \pker \calD_E$,
and \cref{prop:Dnullity} then gives
$V_E = \pker \calD_E$.
Finally, since $\calD_E$ is
monic, uniqueness of the fundamental differential operator
(\cref{prop:FDOunique}) implies that $D_{V_E} = \calD_E$.
\end{proof}

\begin{lemma}
\label{lem:FDOplucker2}
Let $\boldpsi_\nu := (\betaminus{k}_{\ell,\nu} : 0 \le k \le n, 0 \le \ell \le n-k)$,
and let $p^\lambda_\nu$ be the polynomials from 
\cref{lem:FDOplucker}.  Treating $z_1, \dots, z_n$ as formal
indeterminates, we have
\[ 
   \beta^\lambda_\nu = p^\lambda_\nu(\boldpsi_\nu)
\,.
\]
\end{lemma}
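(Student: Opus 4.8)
The plan is to verify the operator identity $\beta^\lambda_\nu = p^\lambda_\nu(\boldpsi_\nu)$ for generic parameters $z_1, \dots, z_n$, where $\altbethenu$ is semisimple, and then to promote it to a formal polynomial identity by Zariski density. So first I would fix $z_1, \dots, z_n \in \CC$ generic. By \cref{lem:comparealgebras}\ref{comparealgebras2} and \cref{thm:bethebasics}\ref{bethebasics2}, $\altbethenu = \bethenu$ is semisimple, so $\spechtnu$ decomposes as the direct sum of the simultaneous eigenspaces $E$ of $\altbethenu$. Since $\beta^\lambda_\nu$ and every $\betaminus{k}_{\ell,\nu}$ lie in $\altbethenu$, so does $p^\lambda_\nu(\boldpsi_\nu)$, and it is enough to check that $\beta^\lambda_\nu$ and $p^\lambda_\nu(\boldpsi_\nu)$ have equal eigenvalues on each $E$.

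Fixing such an eigenspace $E$, with associated point $V_E \in \scellnu$, I would compute these eigenvalues. By \cref{lem:FDOcoords} the fundamental differential operator of $V_E$ is $\calD_E$; comparing this with the general coordinate form of a fundamental differential operator and using $\Wr(V_E) = g$, one reads off coefficient-by-coefficient that the FDO coordinates of $V_E$ are exactly the eigenvalues $\betaminus{k}_{\ell,E}$, i.e.\ the eigenvalue-tuple $\boldpsi_E$ of $\boldpsi_\nu$ on $E$. On the other hand, by \cref{cor:identifyscell} the normalized \Plucker coordinates of $V_E$ are $(\beta^\lambda_E : \lambda \subseteq \nu)$. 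Applying \cref{lem:FDOplucker} to $V_E$ then gives $\beta^\lambda_E = \Delta^\lambda_{V_E} = p^\lambda_\nu(\boldpsi_{V_E}) = p^\lambda_\nu(\boldpsi_E)$, which is precisely the eigenvalue of $p^\lambda_\nu(\boldpsi_\nu)$ on $E$. Hence $\beta^\lambda_\nu = p^\lambda_\nu(\boldpsi_\nu)$ as operators on $\spechtnu$, for all generic $z_1, \dots, z_n$.

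To finish, fix a basis of $\spechtnu$: by \eqref{eq:alphatobeta} the matrix entries of $\beta^\lambda_\nu$ are polynomials in $z_1, \dots, z_n$, and the same holds for $p^\lambda_\nu(\boldpsi_\nu)$ since $p^\lambda_\nu$ is a polynomial and the entries of each $\betaminus{k}_{\ell,\nu}$ are polynomial in the $z_i$. Two such polynomial matrices that agree on the Zariski-dense locus of generic points agree identically, which is the asserted identity with $z_1, \dots, z_n$ treated as formal indeterminates. (As an immediate application, this yields $\beta^\lambda_\nu \in \bethenu$ for every $\nu$, since $\boldpsi_\nu$ consists of generators of $\bethenu$; hence $\beta^\lambda \in \bethe$ by \eqref{eq:directproductbethe} and \eqref{eq:directproductaltbethe}, so $\altbethe \subseteq \bethe$, and with \cref{lem:comparealgebras}\ref{comparealgebras3} this completes \cref{thm:main}\ref{main_generates}.) The one delicate point is the coefficient matching in the second step — recovering the eigenvalues $\betaminus{k}_{\ell,E}$ as the FDO coordinates of $V_E$ from the identity $D_{V_E} = \calD_E$ and $\Wr(V_E) = g$ — together with checking that \cref{lem:FDOcoords,cor:identifyscell,lem:FDOplucker} are all invoked in their stated generality (eigenspaces of $\altbethenu$); everything else is a routine semisimplicity-and-density argument.
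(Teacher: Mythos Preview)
Your proof is correct and follows essentially the same approach as the paper: reduce to generic $z_1,\dots,z_n$ so that $\altbethenu$ is semisimple, identify $\boldpsi_E$ with the FDO coordinates of $V_E$ via \cref{lem:FDOcoords}, identify $\beta^\lambda_E$ with the normalized \Plucker coordinates via \cref{cor:identifyscell}, and then apply \cref{lem:FDOplucker}. The paper compresses the Zariski-density step into the phrase ``it suffices to prove the result when $(z_1,\dots,z_n)\in\CC^n$ is generic,'' whereas you spell it out, but the argument is the same.
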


\begin{proof}
It suffices to prove the result when $(z_1, \dots, z_n) \in \CC^n$ is generic. In particular, by \cref{lem:comparealgebras}\ref{comparealgebras2} and \cref{thm:bethebasics}\ref{bethebasics2}, we may assume that $\altbethenu$ is semisimple.

For an eigenspace $E \subseteq \spechtnu$ of $\altbethenu$, let
$\boldpsi_E := (\betaminus{k}_{\ell,E} : 0 \le k \le n, 0 \le \ell \le n-k)$.  
\cref{lem:FDOcoords} asserts that $\boldpsi_E$ are the FDO coordinates
of $V_E$.  
Since $(\beta^\lambda_E : \lambda \subseteq \nu)$ 
are the normalized \Plucker coordinates of $V_E$, 
we have $\beta^\lambda_E = p^\lambda_\nu(\boldpsi_E)$, by the
definition of $p^\lambda_\nu$. Since $\altbethenu$ is semisimple, the result follows.
\end{proof}

\cref{lem:FDOplucker2} shows that every generator 
of $\altbethenu$ is given by a polynomial in the generators 
of $\bethenu$.
This proves that $\altbethenu \subseteq \bethenu$ for all 
$z_1, \dots, z_n \in \CC$.
From the direct product decompositions \eqref{eq:directproductbethe}
and \eqref{eq:directproductaltbethe}, 
we deduce that $\altbethe \subseteq \bethe$, as required.
\qed

\begin{remark}
\label{rmk:newproof}
Combining \cref{lem:FDOcoords} with parts \ref{main_generates} and \ref{main_eigenspace} of
\cref{thm:main},
we immediately obtain a new proof of \cref{thm:precise}.
\end{remark}


\section{Discussion and open problems}
\label{sec:discussion}

We conclude the paper by discussing several related results and open problems.


\subsection{Scheme-theoretic statements}
\label{sec:scheme}

We now give the more precise 
scheme-theoretic version of \cref{thm:main}\ref{main_eigenspace}. We consider the case when $g(u) = (u+z_1) \dotsm (u+z_n)$ has distinct roots in \cref{sec:distinct}, and the general case in \cref{sec:nondistinct}.

The \defn{\Plucker relations for $\svarnu$} are the \Plucker relations, 
where we substitute $\Delta^\lambda = 0$ for every 
partition $\lambda \not\subseteq \nu$.
Let $S_\nu := \CC[\Delta^\lambda : \lambda \subseteq \nu]/\calI_\nu$,
where $\calI_\nu$ is the ideal generated by the \Plucker relations
for $\svarnu$.
By \cref{thm:schubertplucker},
the Schubert variety $\svarnu$ is identified with $\Proj S_\nu$.
We identify the Schubert cell $\scellnu$ with
$\Spec S_\nu^\circ$, where 
$S_\nu^\circ := S_\nu / \langle \Delta^\nu - \frac{n!}{\numsyt\nu}\rangle$.
Under this identification the ring elements
$(\Delta^\lambda : \lambda \subseteq \nu)$
are the normalized \Plucker coordinates on $\scellnu$.

By \cref{thm:main}\ref{main_pluckers} and \cref{lem:identifyscell},
the elements $\beta^\lambda_\nu$ satisfy the relations in the
ideal $\calI_\nu + \langle \Delta^\nu - \frac{n!}{\numsyt\nu}\rangle$.
Thus we have a well-defined surjective $\CC$-algebra homomorphism
\[
    \Phi_\nu : S_\nu^\circ \to \bethenu, \quad \Delta^\lambda \mapsto \beta^\lambda_\nu\,,
\]
which induces a closed embedding
\[
      \Phi^*_\nu : \Spec \bethenu \to \scellnu
\,.
\]
\cref{thm:main}\ref{main_eigenspace} says that as sets,
the image of $\Phi^*_\nu$ is $\Wr^{-1}(g)$, the fibre of the
Wronski map $\Wr : \scellnu \to \monics$ over
$g$.  That is, 
$\Spec S_\nu^\circ/\ker(\Phi_\nu) \subseteq \scellnu$ 
and $\Wr^{-1}(g) \subseteq \scellnu$ have the same points.

\subsubsection{Distinct roots}
\label{sec:distinct}

When $g$ has distinct roots, the preceding statement is
also true scheme-theoretically:

\begin{theorem}
\label{thm:distinct}
If $z_1, \dots, z_n \in \CC$ are distinct, then the scheme-theoretic
image of the closed embedding $\Phi^*_\nu$ is the fibre $\Wr^{-1}(g)$. That is, $S_\nu^\circ /\ker(\Phi_\nu)$ is the coordinate ring of $\Wr^{-1}(g)$.
\end{theorem}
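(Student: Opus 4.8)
The plan is to deduce \cref{thm:distinct} from the already-established set-theoretic statement (\cref{thm:main}\ref{main_eigenspace}, \ref{main_multiplicity}) together with a dimension/multiplicity count, using the fact that $\bethenu$ is the coordinate ring of $\Spec\bethenu$ by definition. The scheme-theoretic image of $\Phi^*_\nu$ is $\Spec\big(S_\nu^\circ/\ker(\Phi_\nu)\big)$, and since $\Phi_\nu$ is surjective this is $\Spec\bethenu$, a closed subscheme of $\scellnu$ supported set-theoretically on $\Wr^{-1}(g)$. So what must be shown is that this closed subscheme coincides with the scheme $\Wr^{-1}(g)$, i.e.\ that the surjection $\calO_{\Wr^{-1}(g)}\twoheadrightarrow \bethenu$ (which exists because $\Spec\bethenu \subseteq \Wr^{-1}(g)$ as closed subschemes of $\scellnu$, both supported on the same points) is an isomorphism.

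First I would reduce to a local, pointwise statement: a surjection of finite-dimensional (Artinian, since $\Wr^{-1}(g)$ is a finite scheme) $\CC$-algebras $A\twoheadrightarrow B$ with $\Spec A$ and $\Spec B$ having the same underlying points is an isomorphism if and only if $\dim_\CC A=\dim_\CC B$, equivalently if and only if for each point $p$ the local multiplicities agree: $\dim_\CC \calO_{\Wr^{-1}(g),p} = \dim_\CC (\bethenu)_{\mathfrak{m}_p}$. The right-hand side is exactly $\dim\widehat{E}$, the dimension of the generalized eigenspace of $\bethenu$ attached to the eigenspace $E$ corresponding to $p=V_E$, because the localization of a finite commutative algebra at a maximal ideal is the summand acting on the corresponding generalized eigenspace. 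The left-hand side, the multiplicity of $V_E$ as a point of the fibre $\Wr^{-1}(g)$, is equal to $\dim\widehat E$ by \cref{thm:main}\ref{main_multiplicity}. Hence the local multiplicities agree at every point, so the global dimensions agree, and the surjection $\calO_{\Wr^{-1}(g)}\twoheadrightarrow\bethenu$ is an isomorphism.

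The one genuine subtlety — and the step I expect to be the main obstacle — is making precise the comparison of $\Spec\bethenu$ and $\Wr^{-1}(g)$ as closed subschemes of $\scellnu$, so that one really gets a \emph{surjection} $\calO_{\Wr^{-1}(g)}\twoheadrightarrow\bethenu$ of $\calO_{\scellnu}$-algebras and not merely two schemes with isomorphic underlying reduced structures. Here I would use the description of $\Wr^{-1}(g)$ via the fibre formula \eqref{eq:fibre}: when $z_1,\dots,z_n$ are distinct, $g$ has no repeated roots, so \eqref{eq:fibre} holds scheme-theoretically (this is the case $\kappa_1,\dots,\kappa_s\le 2$, indeed all $\kappa_i=1$, of the equivalence recorded after \eqref{eq:fibre}), and $\Wr^{-1}(g) = \svarnu\cap X_1(z_1)\cap\dots\cap X_1(z_n)$ is cut out inside $\scellnu$ by explicit linear equations in the \Plucker coordinates (\cref{rmk:0schubertconditions} and its translates). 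Chasing through $\Phi_\nu$, each such linear equation pulls back to a relation already satisfied by the $\beta^\lambda_\nu$ — this is exactly the content of \cref{thm:main}\ref{main_eigenspace} refined: $\beta^0_\nu(u)=g(u)$ by the translation identity, and the vanishing of the appropriate \Plucker coordinates expresses the incidence conditions $V_E\cap W_i\neq\{0\}$. Thus the ideal of $\Wr^{-1}(g)$ inside $S_\nu^\circ$ is contained in $\ker(\Phi_\nu)$, giving the desired surjection $\calO_{\Wr^{-1}(g)}\twoheadrightarrow\bethenu$; combined with the multiplicity count above, it is an isomorphism.

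To summarize the order of steps: (1) observe $\Phi_\nu$ surjective gives scheme-theoretic image $\Spec\bethenu$; (2) use the scheme-theoretic validity of \eqref{eq:fibre} in the distinct-roots case to identify the ideal of $\Wr^{-1}(g)$ with explicit linear \Plucker relations, and check these pull back into $\ker\Phi_\nu$, yielding a surjection $\calO_{\Wr^{-1}(g)}\twoheadrightarrow\bethenu$; (3) pass to completions/localizations at each point $V_E$, identify the stalk of $\bethenu$ with the generalized-eigenspace summand; (4) invoke \cref{thm:main}\ref{main_multiplicity} to equate the two local lengths; (5) conclude a surjection of Artinian algebras of equal dimension is an isomorphism, hence $S_\nu^\circ/\ker(\Phi_\nu)$ is the coordinate ring of $\Wr^{-1}(g)$. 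I would also remark that this recovers, and is essentially equivalent to, \cite[Theorem 4.3]{mukhin_tarasov_varchenko13}, now phrased directly in \Plucker coordinates via the operators $\beta^\lambda$.
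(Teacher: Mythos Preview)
Your approach differs from the paper's, which simply cites \cite[Theorem~4.3]{mukhin_tarasov_varchenko13} for the scheme-theoretic isomorphism in FDO coordinates and then invokes \cref{lem:FDOplucker,lem:FDOplucker2} to identify that homomorphism with $\Phi_\nu$. You attempt an independent argument via a length count, but step~(3) contains a genuine gap.

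The problem is the claim that $\dim_\CC(\bethenu)_{\mathfrak m_E}=\dim\widehat E$. The localization $(\bethenu)_{\mathfrak m_E}$ is indeed the local summand of the algebra $\bethenu$ acting on the generalized eigenspace $\widehat E\subseteq\spechtnu$, but its $\CC$-dimension is the dimension of that \emph{algebra}, not of the \emph{module} $\widehat E$ on which it acts. These coincide only when $\widehat E$ is free of rank one over the local algebra, equivalently when $\spechtnu$ is the regular representation of $\bethenu$ --- and that is precisely the content of \cite[Theorem~4.3]{mukhin_tarasov_varchenko13}. What \cref{thm:main}\ref{main_multiplicity} actually gives is $\mathrm{mult}_{\Wr^{-1}(g)}(V_E)=\dim\widehat E$; summing yields $\dim_\CC\calO_{\Wr^{-1}(g)}=\dim\spechtnu=\numsyt\nu$, and together with your (correct) surjection $\calO_{\Wr^{-1}(g)}\twoheadrightarrow\bethenu$ this only gives $\dim\bethenu\le\numsyt\nu$. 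You have produced no independent lower bound. (A minor side issue: your appeal in step~(2) to the scheme-theoretic equality \eqref{eq:fibre} via the criterion ``$\kappa_i\le 2$'' is circular, since that criterion is \cref{cor:schemeequality}, proved after \cref{thm:distinct}. This part is easily repaired: the ideal of $\Wr^{-1}(g)$ in $S_\nu^\circ$ is generated directly by the equations $\sum_{\lambda\vdash k}\tfrac{\numsyt\lambda}{k!}\Delta^\lambda-[u^k]g(u)$ from \eqref{eq:pluckerwronskian}, and these land in $\ker\Phi_\nu$ by the translation identity together with $\beta^0(u)=g(u)\cdot\Snidentity$.)
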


\begin{proof}
Let $\boldpsi := (\psi_{k,\ell} : 0 \le k \leq d,\ 0 \le \ell \leq n-k)$ 
denote the FDO coordinates on the Schubert cell $\scellnu$.
Mukhin, Tarasov, and Varchenko \cite[Theorem 4.3]{mukhin_tarasov_varchenko13} prove that the $\CC$-algebra homomorphism
$S_\nu^\circ \to \bethenu, \psi_{k,\ell} \mapsto \betaminus{k}_\ell$ induces a scheme-theoretic
isomorphism $\Spec \bethenu \to \Wr^{-1}(g)$.
By \cref{lem:FDOplucker,lem:FDOplucker2}, this $\CC$-algebra homomorphism
is $\Phi_\nu$.
\end{proof}

\subsubsection{Non-distinct roots}
\label{sec:nondistinct}

We now consider the case when $g$ has repeated roots.
Write $g(u) = (u+z_1)^{\kappa_1} \dotsm (u+z_s)^{\kappa_s}$, where $z_1, \dots, z_s \in \CC$ are distinct and $\kappa := (\kappa_1, \dots, \kappa_s)$ is a composition
of $n$.  Consider the Bethe algebra
\[
   \multibethe{n}
   :=
   \custombethe{n}(
       \underbrace{z_1, \dots, z_1}_{\kappa_1}
        \,,\,
       \underbrace{z_2, \dots, z_2}_{\kappa_2}
       \,,\,\dots\,,\,
             \underbrace{z_s, \dots, z_s}_{\kappa_s})
\,.
\]
Let $\symgrp{\kappa} := \symgrp{\kappa_1} \times \dots \times \symgrp{\kappa_s}
\subseteq \Sn$ be the Young subgroup associated to $\kappa$.  
We write $\boldmu \multipartition \kappa$ to mean that
$\boldmu = (\mu_1, \dots, \mu_s)$ is an $s$-tuple of partitions
such that $\mu_i \vdash \kappa_i$ for $i=1, \dots, s$.
The irreducible representations of $\symgrp{\kappa}$ are of the
form
\[
    \multispecht\boldmu := \specht{\mu_1} \otimes \dots \otimes \specht{\mu_s}
    \qquad \text{ for} \; \boldmu \multipartition \kappa
\,.
\]
By Schur's lemma, the Specht module $\spechtnu$ decomposes as a representation of $\symgrp\kappa$ as
\begin{equation}
\label{eq:spechtdecomposition}
      \spechtnu \simeq \bigoplus_{\boldmu \multipartition \kappa} 
      \Hom_{\symgrp\kappa}(\multispecht{\boldmu}, \spechtnu) \otimes
      \multispecht{\boldmu}
\,.
\end{equation}

It is not hard to check that $\symgrp\kappa$ commutes
with $\multibethe{n}$.  Thus the action of 
$\multibethe{n}$ on $\spechtnu$ respects the decomposition
\eqref{eq:spechtdecomposition}, preserving each summand
$\Hom_{\symgrp\kappa}(\multispecht{\boldmu}, \spechtnu) \otimes
      \multispecht{\boldmu}$, and acting trivially on the second
tensor factor.  That is, $\Hom_{\symgrp\kappa}(\multispecht{\boldmu}, \spechtnu)$
is a module for $\multibethe{n}$.
Let $\multibethe{\nu,\boldmu}$ denote the subalgebra of 
$\End\big(\Hom_{\symgrp\kappa}(\multispecht{\boldmu}, \spechtnu)\big)$ 
generated by the action, and let
$\beta^\lambda_{\nu,\boldmu}, \beta^{\lambda}_{\ell, \nu, \boldmu} \in \multibethe{\nu,\boldmu}$ denote 
the images of $\beta^\lambda, \beta^\lambda_\ell \in \multibethe{n}$.

\begin{proposition}
\label{prop:schubertprojections}
Up to a scalar multiple, the operator
\[
\beta^{\mu_1}_\nu(-z_1) \dotsm \beta^{\mu_s}_\nu(-z_s)
\in \End(\spechtnu)
\]
is the orthogonal projection
onto $\Hom_{\symgrp\kappa}(\multispecht{\boldmu}, \spechtnu) \otimes 
\multispecht{\boldmu}$.
In particular, 
each such orthogonal projection is an element of $\multibethe{\nu}$,
and we have the direct product decomposition
\[
     \multibethe\nu \simeq \prod_{\boldmu \multipartition \kappa} 
   \multibethe{\nu,\boldmu}
\,.
\]
\end{proposition}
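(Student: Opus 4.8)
The plan is to identify $\beta^{\mu_1}_\nu(-z_1) \dotsm \beta^{\mu_s}_\nu(-z_s)$ with a product of orthogonal projections, one for each factor in the Young subgroup $\symgrp\kappa$. First I would recall that $\beta^{\mu_i}(-z_i) \in \multibethe{n}$ by \cref{thm:main}\ref{main_generates} (applied with the parameter list having $z_i$ appearing $\kappa_i$ times), so the product lies in $\multibethe\nu \subseteq \End(\spechtnu)$. The key observation is that for the $i$th factor, we should ``zoom in'' on the block $\symgrp{\kappa_i}$ sitting inside $\symgrp\kappa \subseteq \Sn$: in the definition~\eqref{eq:betadef} of $\beta^{\mu_i}(t)$, the polynomial $\prod_{j \in [n] \setminus X}(z_j + t)$ evaluated at $t = -z_i$ vanishes unless $[n]\setminus X$ contains no index equal to $i$, i.e. unless $X$ contains the entire block $B_i := \{$the $\kappa_i$ copies of $z_i\}$. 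More precisely, writing the parameters as the multiset with blocks $B_1, \dots, B_s$, the factor $\prod_{j \notin X}(z_j - z_i)$ is nonzero only when $X \supseteq B_i$, and in that case it equals a fixed nonzero constant times $\prod_{j \in B_i \setminus (\text{wait, } X\supseteq B_i)}$... — the clean statement is: $\beta^{\mu_i}(-z_i)$ is a nonzero scalar multiple of $\sum_{B_i \subseteq X,\, |X| = |\mu_i| \text{ on block } i} \alpha^{\mu_i}_X \cdot(\text{stuff})$, which after restricting attention to the block reduces to $\alpha^{\mu_i}_{B_i}$ up to a scalar. By \cref{prop:projection}, $\frac{\numsyt{\mu_i}}{\kappa_i!}\alpha^{\mu_i}_{B_i}$ is the orthogonal projection of $\spechtnu$ (viewed as a $\symgrp{B_i}$-module) onto its $\mu_i$-isotypic component.

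Next I would assemble these: since the blocks $B_1, \dots, B_s$ are disjoint, the elements $\alpha^{\mu_i}_{B_i}$ for different $i$ commute (indeed $\CC[\symgrp{B_i}]$ and $\CC[\symgrp{B_j}]$ commute in $\CC[\Sn]$ for $i \neq j$), so their product $\prod_i \alpha^{\mu_i}_{B_i}$, suitably normalized, is a product of commuting orthogonal projections, hence an orthogonal projection. Its image is precisely the simultaneous $\boldmu$-isotypic subspace for the $\symgrp\kappa = \prod_i \symgrp{B_i}$ action, which by \eqref{eq:spechtdecomposition} is exactly $\Hom_{\symgrp\kappa}(\multispecht{\boldmu}, \spechtnu) \otimes \multispecht{\boldmu}$. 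The cleanest way to see that the product of these projections lands on the right subspace is: an element of $\spechtnu$ lies in the image iff it is fixed (up to the appropriate isotypic condition) under each $\symgrp{B_i}$ simultaneously, which is the defining property of the $\multispecht{\boldmu}$-isotypic component. The ``up to a scalar multiple'' in the statement absorbs both the normalization constants $\frac{\numsyt{\mu_i}}{\kappa_i!}$ and the nonzero constants arising from the polynomial evaluations $\prod_{j \notin X}(z_j - z_i)$; one should check this overall constant is indeed nonzero, which follows since the $z_i$ are distinct.

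For the remaining assertions: that each such projection is an element of $\multibethe\nu$ is immediate once we know it equals (a scalar times) the product $\beta^{\mu_1}_\nu(-z_1)\dotsm\beta^{\mu_s}_\nu(-z_s)$, all of whose factors lie in $\multibethe\nu$. For the direct product decomposition $\multibethe\nu = \prod_{\boldmu \multipartition \kappa} \multibethe{\nu,\boldmu}$, I would argue exactly as for \eqref{eq:directproductbethe}: the orthogonal projections onto the summands $\Hom_{\symgrp\kappa}(\multispecht{\boldmu},\spechtnu)\otimes\multispecht{\boldmu}$ are pairwise orthogonal idempotents of $\multibethe\nu$ summing to the identity (the summing-to-identity follows because the characters $\chi^{\mu_i}$ span the class functions on $\symgrp{\kappa_i}$, or more directly from $\sum_{\mu_i \vdash \kappa_i}\frac{\numsyt{\mu_i}}{\kappa_i!}\alpha^{\mu_i}_{B_i} = \Snidentity$), and $\multibethe\nu$ acts on each summand through $\multibethe{\nu,\boldmu}$ by definition, acting trivially on the $\multispecht{\boldmu}$ factor. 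Hence $\multibethe\nu \simeq \prod_{\boldmu} \multibethe{\nu,\boldmu}$.

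\textbf{Main obstacle.} The technical heart is the explicit evaluation of $\beta^{\mu_i}(-z_i)$ and the bookkeeping showing it is a \emph{nonzero} scalar multiple of $\alpha^{\mu_i}_{B_i}$ modulo terms that act the same way after all factors are multiplied together — in particular, handling cross-terms where $X$ for the $i$th factor is allowed to contain indices outside $B_i$. The resolution is that when we take the product over all $i$ and use commutativity of the blocks, every term with $X \not\supseteq B_i$ for some $i$ gets killed by the vanishing of $\prod_{j\notin X}(z_j - z_i)$, while terms where $X_i$ strictly contains $B_i$ and reaches into other blocks $B_{i'}$ interact with the $i'$-factor's projection $\alpha^{\mu_{i'}}_{B_{i'}}$; a careful argument (perhaps by induction on $s$, peeling off one block at a time) shows the product collapses to $\prod_i \alpha^{\mu_i}_{B_i}$ times a nonzero constant. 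I expect this inductive collapse to be the one genuinely delicate computation.
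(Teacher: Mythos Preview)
Your approach is essentially the same as the paper's, but you have overcomplicated the ``main obstacle'' because you missed a crucial size constraint. In the definition \eqref{eq:betadef} of $\beta^{\mu_i}(t)$, the sum is over subsets $X \subseteq [n]$ with $|X| = |\mu_i|$. Since $\mu_i \vdash \kappa_i$, we have $|X| = \kappa_i = |B_i|$. You correctly observe that evaluating at $t = -z_i$ kills every term unless $B_i \subseteq X$; but then $|X| = |B_i|$ forces $X = B_i$ exactly. Hence
\[
\beta^{\mu_i}(-z_i) \;=\; \alpha^{\mu_i}_{B_i}\cdot\!\!\prod_{j \in [n]\setminus B_i}\!(z_j - z_i),
\]
a \emph{single} term, and the scalar is nonzero since the $z_j$ are distinct across blocks. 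There are no ``cross-terms where $X$ strictly contains $B_i$ and reaches into other blocks,'' and no inductive collapse is needed.

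With this observation, the rest of your plan goes through exactly as the paper does it: each factor is (up to nonzero scalar) the projection $\frac{\numsyt{\mu_i}}{\kappa_i!}\alpha^{\mu_i}_{B_i}$ onto the $\mu_i$-isotypic component for $\symgrp{B_i}$ (\cref{prop:projection}), the factors commute because the blocks are disjoint, and their product is the projection onto the $\boldmu$-isotypic summand in \eqref{eq:spechtdecomposition}. The direct product decomposition then follows from having a complete system of orthogonal idempotents in $\multibethe\nu$, as you describe.
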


\begin{proof}
By \eqref{eq:alphatobeta}, we can write $\beta^{\mu_i}(-z_i)$ as $c_i\frac{\numsyt{\mu_i}}{|\mu_i|!}\alpha^{\mu_i}_{X_i}$ for some nonzero scalar $c_i$, where $X_i := \{\kappa_1 {+} \cdots {+} \kappa_{i-1} {+} 1, \dots, \kappa_1 {+} \cdots {+} \kappa_i\}$. Hence, by \cref{prop:projection}, $\frac{1}{c_i}\beta^{\mu_i}_\nu(-z_i)$ acts as the scalar $\delta_{\lambda_i,\mu_i}$ on the $\boldlambda$-isotypic component of $\spechtnu$. Therefore $\frac{1}{c_1 \cdots c_s}\beta^{\mu_1}_\nu(-z_1) \dotsm \beta^{\mu_s}_\nu(-z_s)$ acts on $\Hom_{\symgrp\kappa}(\multispecht{\boldlambda}, \spechtnu) \otimes \multispecht\boldlambda$ as the scalar $\delta_{\boldlambda,\boldmu}$, as required.
\end{proof}

Since $\multibethe{\nu,\boldmu}$ is a quotient of $\multibethe{\nu}$,
we obtain a surjective $\CC$-algebra homomorphism
\[
   \Phi_{\nu,\boldmu} : S_\nu^\circ \to \multibethe{\nu,\boldmu}, \quad \Delta^\lambda \mapsto \beta^\lambda_{\nu,\boldmu}\,,
\]
which induces a closed embedding
\[
      \Phi^*_{\nu,\boldmu} : \Spec \multibethe{\nu,\boldmu} \to \scellnu
\,.
\]

\begin{theorem}
\label{thm:nondistinct}
The scheme-theoretic image of the closed embedding $\Phi^*_{\nu,\mu}$ 
is the Schubert intersection
\begin{equation}
\label{eq:generalssc}
    \svarnu \cap X_{\mu_1}(z_1) \cap \dots \cap X_{\mu_s}(z_s)
\,.
\end{equation}
The scheme-theoretic image of $\Phi^*_\nu$ is
the union of Schubert intersections
\begin{equation}
\label{eq:specbethe}
    \bigcup_{\boldmu \multipartition \kappa} 
        \svarnu \cap X_{\mu_1}(z_1) \cap \dots \cap X_{\mu_s}(z_s)
\,.
\end{equation}
\end{theorem}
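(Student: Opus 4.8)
The strategy is to reduce \cref{thm:nondistinct} to two ingredients: (1) the identification of the scheme-theoretic image of $\Phi^*_{\nu,\boldmu}$ with a single Schubert intersection, and (2) a gluing argument that assembles these pieces into the union \eqref{eq:specbethe}. For (2), \cref{prop:schubertprojections} already gives the direct product decomposition $\multibethe\nu = \prod_{\boldmu\multipartition\kappa}\multibethe{\nu,\boldmu}$, via orthogonal (hence idempotent) projections $\beta^{\mu_1}_\nu(-z_1)\dotsm\beta^{\mu_s}_\nu(-z_s)$ lying in $\multibethe\nu$. Dually, $\Spec\multibethe\nu$ is the disjoint union $\coprod_{\boldmu}\Spec\multibethe{\nu,\boldmu}$, and $\Phi^*_\nu$ restricts on each component to $\Phi^*_{\nu,\boldmu}$. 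Since scheme-theoretic image commutes with taking a finite disjoint union of source schemes (the image of a disjoint union being the scheme-theoretic union of the images), statement \eqref{eq:specbethe} follows immediately from statement \eqref{eq:generalssc}. So the real content is establishing that the scheme-theoretic image of $\Phi^*_{\nu,\boldmu}$ is exactly $\svarnu\cap X_{\mu_1}(z_1)\cap\dots\cap X_{\mu_s}(z_s)$.

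For statement \eqref{eq:generalssc}, the cited approach is to invoke results from \cite{mukhin_tarasov_varchenko09b}. The plan is as follows. First, I would recall (or cite from \cite{mukhin_tarasov_varchenko09b}, see also \cite{mukhin_tarasov_varchenko13}) that the Bethe algebra acting on the space $\Hom_{\symgrp\kappa}(\multispecht{\boldmu},\spechtnu)$ is isomorphic, as an algebra, to the coordinate ring of the Schubert intersection $\svarnu\cap X_{\mu_1}(z_1)\cap\dots\cap X_{\mu_s}(z_s)$ — this is the multiplicity-space refinement of the scheme-theoretic statement underlying \cref{thm:precise}, and it is precisely the content of the main results of \cite{mukhin_tarasov_varchenko09b} (transport of the Bethe-algebra/Schubert-scheme isomorphism through the decomposition \eqref{eq:spechtdecomposition}). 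Second, I would identify the isomorphism with $\Phi_{\nu,\boldmu}$: the MTV isomorphism is naturally phrased in FDO coordinates, $\psi_{k,\ell}\mapsto\betaminus{k}_{\ell,\nu,\boldmu}$, whereas $\Phi_{\nu,\boldmu}$ is phrased in \Plucker coordinates, $\Delta^\lambda\mapsto\beta^\lambda_{\nu,\boldmu}$. The same change-of-coordinates argument used in the proof of \cref{thm:distinct} — namely \cref{lem:FDOplucker} together with the analogue of \cref{lem:FDOplucker2} on the multiplicity space — shows $\beta^\lambda_{\nu,\boldmu}=p^\lambda_\nu(\boldpsi_{\nu,\boldmu})$, so the two $\CC$-algebra homomorphisms out of $S_\nu^\circ$ coincide. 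Hence $S_\nu^\circ/\ker(\Phi_{\nu,\boldmu})\cong\multibethe{\nu,\boldmu}$ is the coordinate ring of \eqref{eq:generalssc}, which is exactly the assertion that the scheme-theoretic image of $\Phi^*_{\nu,\boldmu}$ is that Schubert intersection.

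The main obstacle is the second step: making the identification of the MTV-isomorphism with $\Phi_{\nu,\boldmu}$ genuinely rigorous in the non-reduced setting. When $g$ has repeated roots, the multiplicity space $\Hom_{\symgrp\kappa}(\multispecht{\boldmu},\spechtnu)$ carries a Bethe-algebra action whose semisimplicity cannot be assumed, so the argument of \cref{lem:FDOplucker2} — which passes to generic parameters and semisimplicity — must be adapted. The fix is that the polynomial identities $\beta^\lambda=p^\lambda_\nu(\boldpsi)$ hold as identities in $\CC[\Sn]\otimes\CC[z_1,\dots,z_n]$ with $z_i$ formal indeterminates (this is how \cref{lem:FDOplucker2} is actually proved — the generic/semisimple case suffices to establish the polynomial identity, which then holds universally), and specialization of formal indeterminates is compatible with passing to the quotient $\multibethe{\nu,\boldmu}$. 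Thus the coordinate-change identity descends to the multiplicity space without needing semisimplicity there. A secondary technical point is that one must check the MTV results of \cite{mukhin_tarasov_varchenko09b} are stated exactly for the algebra generated by the action on the multiplicity space (not just on all of $\spechtnu$); this is a matter of citing the correct theorem, and the compatibility of the Bethe-algebra action with the decomposition \eqref{eq:spechtdecomposition} — which is immediate since $\multibethe{n}$ commutes with $\symgrp\kappa$ — ensures the two formulations agree.
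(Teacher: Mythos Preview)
Your overall structure matches the paper's: derive \eqref{eq:specbethe} from \eqref{eq:generalssc} via \cref{prop:schubertprojections}, and for \eqref{eq:generalssc} invoke \cite{mukhin_tarasov_varchenko09b} together with the FDO-to-\Plucker coordinate change from \cref{lem:FDOplucker,lem:FDOplucker2}. Your handling of the non-semisimple coordinate change is fine; as you say, the polynomial identity $\beta^\lambda_\nu = p^\lambda_\nu(\boldpsi_\nu)$ holds with $z_1,\dots,z_n$ formal, so it specializes.

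However, you have inverted the weight of the two obstacles. What you call the ``secondary technical point'' is in fact the entire content of the paper's proof. The results of \cite{mukhin_tarasov_varchenko09b} are stated for the $\gl_n$-Bethe algebra $\fullbethe\subseteq U(\gl_n[t])$ acting on tensor products of evaluation modules $V^{\mu_1}(z_1)\otimes\dots\otimes V^{\mu_s}(z_s)$, not for any subalgebra of $\CC[\Sn]$ acting on $\Hom_{\symgrp\kappa}(\multispecht{\boldmu},\spechtnu)$. There is no theorem in \cite{mukhin_tarasov_varchenko09b} one can simply cite to get the latter. The paper bridges this gap by: (i) invoking the Schur--Weyl identification $\fullbethe\big(W(z_1,\dots,z_n)\big)\simeq\bethe$ from \cite[Theorem~3.2]{mukhin_tarasov_varchenko13}, under which $\glbeta{k,\ell}\leftrightarrow\betaminus{k}_\ell$; (ii) decomposing $W(\boldz_\kappa)=(\CC^n)^{\otimes\kappa_1}(z_1)\otimes\dots\otimes(\CC^n)^{\otimes\kappa_s}(z_s)$ simultaneously under $\symgrp\kappa$ and $\gl_n$ to obtain $\bigoplus_{\boldmu}\Vmuz\otimes\multispecht{\boldmu}$; and (iii) checking that projecting onto $\boldmu$-isotypic components and weight-$\nu$ singular vectors matches the decomposition $\multibethe{n}\simeq\prod_{\nu,\boldmu}\multibethe{\nu,\boldmu}$, so that $\fullbetheweight{\nu}\big(\Vmuz\big)\simeq\multibethe{\nu,\boldmu}$ with generators corresponding. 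Only then does \cite[Theorem~5.13]{mukhin_tarasov_varchenko09b} apply. Your proposal would be complete once this Schur--Weyl translation is filled in; the coordinate-change step you emphasize is comparatively routine.
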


\begin{proof}
The second statement follows from the first, by 
\cref{prop:schubertprojections}.
For the first statement, we recall some additional background from 
\cite{mukhin_tarasov_varchenko09b, mukhin_tarasov_varchenko13}.

To every partition $\lambda$ with $\ell(\lambda) \leq n$, 
we associate a $\gl_n$-module $V^\lambda$, which is an irreducible 
polynomial representation of the Lie algebra $\gl_n= \gl_n(\CC)$.
This representation has (up to scalar multiple) 
a unique singular vector, of weight
$\lambda \in \ZZ^n$.

Now, consider the algebra $\gl_n[t] := \gl_n \otimes \CC[t]$.
Given any $\gl_n$-module $V$ and $w \in \CC$, we can extend
the $\gl_n$-action to a $\gl_n[t]$-action, by
letting $t$ act as multiplication by $w$.  
The resulting $\gl_n[t]$-module is called 
an \emph{evaluation module} of $\gl_n[t]$, denoted by $V(w)$.

The $\gl_n$-Bethe algebra $\fullbethe \subseteq U(\gl_n[t])$
is a commutative subalgebra of the universal enveloping
algebra of $\gl_n[t]$,
which commutes with the subalgebra $U(\gl_n) \subseteq U(\gl_n[t])$.  
The algebra $\fullbethe$ is generated by
the coefficients of the \emph{universal differential operator}.
(We will not need the precise formula for this operator here; 
see \cite[Section 2.7]{mukhin_tarasov_varchenko09b} for complete details.)

We can obtain $\fullbethe$-modules by restricting any
$\gl_n[t]$-module to $\fullbethe$.
Since $\fullbethe$ commutes with $U(\gl_n)$, the $\gl_n$-weight spaces,
the spaces of $\gl_n$-singular vectors, 
and spaces of singular vectors of any weight are
$\fullbethe$-submodules.  
For a $\gl_n[t]$-module $W$, we write
$\fullbethe(W) \subseteq \End(W)$ for the algebra defined by the
action of $\fullbethe$ on $W$.
Similarly, we write $\fullbethesing(W)$ and 
$\fullbetheweight{\lambda}(W)$
for the algebras defined by the action on
the singular vectors in $W$, and the singular vectors of
weight $\lambda$ in $W$, respectively.  
Note that if $\fullbethe$
acts trivially on some vector space $M$, then $\fullbethe(W) \simeq
\fullbethe(W \otimes M)$.  Since $\fullbethe$ and $U(\gl_n)$
commute, this implies that
$\fullbethesing(W) \simeq \fullbethe(W)$ as $\CC$-algebras.

We are mainly concerned with $\gl_n[t]$-modules which are tensor
products of evaluation modules.  In particular,
for $z_1, \dots, z_s \in \CC$ and $\boldmu \multipartition \kappa$, 
consider the $\gl_n[t]$-module
\[
\Vmuz := V^{\mu_1}(z_1) \otimes \dots \otimes V^{\mu_s}(z_s)
\,.
\]
In the algebras $\fullbethe\big(\Vmuz\big) \simeq
\fullbethesing\big(\Vmuz\big)$ and
$\fullbetheweight{\nu}\big(\Vmuz\big)$, 
the universal differential operator takes the form
\[
    \frac{1}{g(u)}\sum_{k=0}^n (-1)^k \glbeta{k,\ell}
      u^{n-k-\ell} \du^{n-k}
\,,
\]
where $\glbeta{k,\ell}$ are generators of the algebra, and
$g(u) = (u+z_1)^{\kappa_1} \dotsm (u+z_s)^{\kappa_s}$.

Mukhin, Tarasov, and Varchenko prove that
$\Spec \fullbetheweight{\nu}\big(\Vmuz\big)$ is scheme-theoretically
identified with the Schubert intersection \eqref{eq:generalssc},
under the $\CC$-algebra homomorphism 
$S^\nu \to \fullbetheweight{\nu}\big(\Vmuz\big)$, 
$\psi_{k,\ell} \mapsto \glbeta{k,\ell}$
\cite[Theorem 5.13]{mukhin_tarasov_varchenko09b}.
Therefore, our task is to show that
$\fullbetheweight{\nu}(\Vmuz) \simeq \multibethe{\nu,\boldmu}$ 
under an isomorphism sending
$\glbeta{k,\ell} \mapsto
\betaminus{k}_{\ell,\nu,\boldmu}$.
The argument in the proof of \cref{thm:distinct} then
shows that $\Spec \multibethe{\nu,\boldmu}$ is scheme-theoretically 
identified with the intersection \eqref{eq:generalssc} 
under $\Phi^*_{\nu,\boldmu}$.

For $z_1, \dots, z_n \in \CC$, consider the
$\gl_n[t]$-module
\begin{equation}
\label{eq:glnt-module}
   W(z_1, \dots, z_n) := \Vone =
     \CC^n(z_1) \otimes \dots \otimes \CC^n(z_n)
\,.
\end{equation}
Note that $\CC[\Sn]$ also acts on $W(z_1, \dots, z_n)$ 
(which, as a $\gl_n$-module, is just $(\CC^n)^{\otimes n}$)
by permuting the tensor factors.
Since $\fullbethe$ commutes with $U(\gl_n)$, by Schur--Weyl duality,
the action of any element of $\fullbethe$ on $W(z_1, \dots, z_n)$ is equivalently given 
by some element of $\CC[\Sn]$.  
Thus Schur--Weyl duality identifies $\fullbethe\big(W(z_1, \dots, z_n)\big)$ 
with
some subalgebra of $\CC[\Sn]$.
Specifically, $\fullbethe\big(W(z_1, \dots, z_n)\big)$ 
is identified with $\bethe \subseteq \CC[\Sn]$,
and the elements $\glbeta{k,\ell} \in 
\fullbethe\big(W(z_1, \dots, z_n)\big)$ 
are identified with $\betaminus{k}_\ell \in \bethe$
\cite[Theorem 3.2]{mukhin_tarasov_varchenko13}.

In particular, for the $\gl_n[t]$-module $W(\boldz_\kappa)$, we have
$\multibethe{n} \simeq \fullbethe\big(W(\boldz_\kappa)\big)$.  
In this case, we can rewrite the right-hand side of \eqref{eq:glnt-module} as
\begin{equation}
\label{eq:tensordecomp}
    (\CC^n)^{\otimes \kappa_1}(z_1) \otimes \dots \otimes 
 (\CC^n)^{\otimes \kappa_s}(z_k)
    \simeq
    \bigoplus_{\boldmu \multipartition \kappa}\,
        \Vmuz \otimes \multispecht{\boldmu}
\,,
\end{equation}
where $\gl_n[t]$ acts trivially on $\multispecht{\boldmu}$ 
and $\symgrp{\kappa}$ acts trivially on $\Vmuz$.
Therefore
\begin{equation}
\label{eq:bethesingdecomp}
    \fullbethesing\big(W(\boldz_\kappa)\big)
    \simeq  \prod_{\boldmu \multipartition \kappa} 
            \fullbethesing\big(\Vmuz\big)
    \simeq  \prod_{\substack{\nu \vdash n, \\ \boldmu \multipartition \kappa}}
       \fullbetheweight{\nu}\big(\Vmuz\big)
\,.
\end{equation}
The first isomorphism in \eqref{eq:bethesingdecomp} is obtained by projecting onto
the $\boldmu$-isotypic component of the 
$\symgrp{\kappa}$-action in the decomposition \eqref{eq:tensordecomp}, 
for each $\boldmu \multipartition \kappa$.
The second isomorphism in \eqref{eq:bethesingdecomp}
is obtained by further projecting onto
the singular vectors of weight $\nu$, for each $\nu \vdash n$;
by Schur--Weyl duality this is
the same as projecting onto the $\spechtnu$-isotypic component of the
$\Sn$-action on $(\CC^n)^{\otimes n}$.  
(These projections are contained in $\fullbethe\big(W(\boldz_\kappa)\big)$
by \cref{prop:schubertprojections}.)  

But this is the same way that we obtain the decomposition
$\multibethe{n} \simeq \prod_{\nu, \boldmu} \multibethe{\nu, \boldmu}$.
Thus, the identification 
$\fullbethesing\big(W(\boldz_\kappa)\big) \simeq 
\fullbethe\big(W(\boldz_\kappa)\big) \simeq \multibethe{n}$
also identifies components 
$\fullbetheweight{\nu}\big(\Vmuz\big) \simeq \multibethe{\nu,\boldmu}$.
Since the latter identification is obtained by projections of the former,
the generators
$\glbeta{k,\ell} \in \fullbetheweight{\nu}\big(\Vmuz\big)$ are
identified with generators $\betaminus{k}_{\ell,\nu,\boldmu} 
\in \multibethe{\nu,\boldmu}$, as required.
\end{proof}

We take this opportunity to state a natural problem:
\begin{problem}\label{prob:natural}
Find explicit formulas for universal \Plucker coordinates in
the $\gl_n$-Bethe algebra, which coincide with the operators
$\beta^\lambda(s)$ on the $\gl_n[t]$-modules $\Vmuz$.
\end{problem}

After a preliminary version of this paper appeared, an answer to \cref{prob:natural} was provided in \cite[Remark 8.3]{karp_mukhin_tarasov25}; it remains open whether it is possible to find a less unwieldy formula.

\subsubsection{Dimensions of Bethe algebras}
\label{sec:dimensions}

We can now calculate the dimensions of the Bethe algebras $\multibethe{\nu}$ and $\multibethe{n}$:
\begin{theorem}
\label{thm:bethedimensions}
For $k \geq 0$, let 
$\ssum_k := \sum_{\lambda \vdash k} \s_\lambda$.  Then 
\begin{equation}
\label{eq:dimensionformula}
\dim \multibethe{\nu} = \langle \s_\nu \,,\,
\ssum_{\kappa_1} \dotsm \ssum_{\kappa_s} \rangle
\qquad
\text{and}
\qquad
\dim \multibethe{n} = \langle \ssum_n \,,\,
\ssum_{\kappa_1} \dotsm \ssum_{\kappa_s} \rangle
\,.
\end{equation}
\end{theorem}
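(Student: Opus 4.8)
The plan is to reduce the statement to a dimension count for the algebras $\multibethe{\nu,\boldmu}$, $\boldmu \multipartition \kappa$, using the scheme-theoretic identification in \cref{thm:nondistinct}. Applying the direct product decomposition \eqref{eq:directproductbethe} with the parameters $\boldz_\kappa$ gives $\multibethe{n} \simeq \prod_{\nu \vdash n} \multibethe{\nu}$, and \cref{prop:schubertprojections} gives $\multibethe{\nu} \simeq \prod_{\boldmu \multipartition \kappa} \multibethe{\nu,\boldmu}$. Hence
\[
\dim \multibethe{\nu} = \sum_{\boldmu \multipartition \kappa} \dim \multibethe{\nu,\boldmu}
\qquad\text{and}\qquad
\dim \multibethe{n} = \sum_{\nu \vdash n}\, \sum_{\boldmu \multipartition \kappa} \dim \multibethe{\nu,\boldmu}
\,.
\]
So it suffices to prove that $\dim \multibethe{\nu,\boldmu} = \langle \s_{\mu_1} \dotsm \s_{\mu_s}, \s_\nu \rangle$ for all $\nu \vdash n$ and $\boldmu \multipartition \kappa$. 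Granting this, the theorem follows from a short symmetric-function computation: since $\langle \cdot,\cdot\rangle$ is symmetric on symmetric functions with rational coefficients, summing over $\boldmu \multipartition \kappa$ and using $\prod_{i=1}^{s}\big(\sum_{\mu_i \vdash \kappa_i} \s_{\mu_i}\big) = \ssum_{\kappa_1}\dotsm\ssum_{\kappa_s}$ yields $\dim \multibethe{\nu} = \langle \s_\nu, \ssum_{\kappa_1}\dotsm\ssum_{\kappa_s}\rangle$; summing this over $\nu \vdash n$ and using $\ssum_n = \sum_{\nu \vdash n}\s_\nu$ together with the fact that $\ssum_{\kappa_1}\dotsm\ssum_{\kappa_s}$ is homogeneous of degree $n$ gives $\dim \multibethe{n} = \langle \ssum_n, \ssum_{\kappa_1}\dotsm\ssum_{\kappa_s}\rangle$.

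To prove $\dim \multibethe{\nu,\boldmu} = \langle \s_{\mu_1}\dotsm\s_{\mu_s}, \s_\nu\rangle$, first note that $\multibethe{\nu,\boldmu}$ acts faithfully by construction on $M := \Hom_{\symgrp{\kappa}}(\multispecht{\boldmu}, \spechtnu)$, and that, by Frobenius reciprocity (equivalently, the branching rule applied iteratively), $\dim M$ is the multiplicity of $\s_\nu$ in the Littlewood--Richardson product $\s_{\mu_1}\dotsm\s_{\mu_s}$, i.e.\ $\dim M = \langle \s_{\mu_1}\dotsm\s_{\mu_s}, \s_\nu\rangle$. Thus it is enough to show $\dim \multibethe{\nu,\boldmu} = \dim M$. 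Here I would invoke \cref{thm:nondistinct}, which identifies $\multibethe{\nu,\boldmu}$ with the coordinate ring of the (zero-dimensional) Schubert intersection \eqref{eq:generalssc}; equivalently, through the isomorphism $\multibethe{\nu,\boldmu} \simeq \fullbetheweight{\nu}\big(\Vmuz\big)$ from the proof of \cref{thm:nondistinct}, I would use the theorem of Mukhin, Tarasov, and Varchenko \cite{mukhin_tarasov_varchenko09b} that for distinct evaluation points $z_1, \dots, z_s$ the space of singular vectors of weight $\nu$ in $\Vmuz$ is a free module of rank one over $\fullbetheweight{\nu}\big(\Vmuz\big)$. Since that space is isomorphic to $M$ by Schur--Weyl duality, this gives $\dim \multibethe{\nu,\boldmu} = \dim \fullbetheweight{\nu}\big(\Vmuz\big) = \dim M$, as desired. (Formally, all that is needed is that $M$ is a \emph{cyclic} $\multibethe{\nu,\boldmu}$-module: then faithfulness and commutativity already force $\dim \multibethe{\nu,\boldmu} = \dim M$.)

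The one nontrivial ingredient is this last equality --- that the dimension of $\multibethe{\nu,\boldmu}$ equals the dimension of the representation on which it acts, equivalently that the (possibly non-reduced) Schubert intersection \eqref{eq:generalssc} with osculating flags at distinct points has length exactly its Schubert intersection number. This is genuinely a structural input from the work of Mukhin--Tarasov--Varchenko and is not formal: the analogous ``union'' statement for non-distinct points can fail scheme-theoretically (cf.\ the discussion following \eqref{eq:fibre}). Everything else in the argument --- the two product decompositions and the symmetric-function bookkeeping --- is routine.
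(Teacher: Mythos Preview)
Your argument is correct and follows the same overall skeleton as the paper: reduce via the product decompositions \eqref{eq:directproductbethe} and \cref{prop:schubertprojections} to computing $\dim \multibethe{\nu,\boldmu}$, show this equals $\langle \s_\nu, \s_{\mu_1}\dotsm\s_{\mu_s}\rangle$, then sum. The difference is in how that last equality is obtained. The paper uses \cref{thm:nondistinct} to identify $\dim \multibethe{\nu,\boldmu}$ with the \emph{length} of the Schubert intersection \eqref{eq:generalssc}, and then argues geometrically that this length equals the intersection number $\langle \s_\nu, \s_{\mu_1}\dotsm\s_{\mu_s}\rangle$ because the intersection is proper and the Schubert varieties are smooth at every intersection point (invoking \cite[Proposition 8.2]{fulton98}). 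You instead stay on the representation-theory side: you identify $\langle \s_\nu, \s_{\mu_1}\dotsm\s_{\mu_s}\rangle$ directly with $\dim M$ via Frobenius reciprocity, and then conclude $\dim \multibethe{\nu,\boldmu} = \dim M$ from the Mukhin--Tarasov--Varchenko free-rank-one (equivalently, cyclic-module) property. Your route avoids the appeal to intersection theory at the cost of importing the cyclicity statement, which is slightly more than what \cref{thm:nondistinct} literally asserts (though it is part of the same MTV package in \cite{mukhin_tarasov_varchenko09b}). Both arguments ultimately lean on the same nontrivial structural input from \cite{mukhin_tarasov_varchenko09b}; neither is more elementary than the other.
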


\begin{proof}
By \cref{thm:nondistinct}, 
$\dim \multibethe{\nu, \boldmu}$ is
the length of the Schubert intersection \eqref{eq:generalssc}, as a scheme.
As in \cref{sec:schubert}, this length is
$\langle \s_\nu, \s_{\mu_1} \dotsm \s_{\mu_s}\rangle$ if the
intersection is transverse.  Since the Schubert varieties are smooth 
at the points of intersection, this remains true whenever the 
intersection is proper \cite[Proposition 8.2]{fulton98}.  Hence,
\begin{equation}\label{eq:dimensionformularefined}
\dim \multibethe{\nu, \boldmu} =
\langle \s_\nu\,, \s_{\mu_1} \cdots \s_{\mu_s}\rangle
\,.
\end{equation}
Summing over all $\boldmu \multipartition \kappa$ (and for the second
formula, also over $\nu \vdash n$)
we obtain the formulas \eqref{eq:dimensionformula}.
\end{proof}

We now discuss why $\Spec \multibethe{\nu}$ is (in some cases) scheme-theoretically different from the fibre $\Wr^{-1}(g)$, for $g(u) = (u+z_1)^{\kappa_1} \dotsm (u+z_s)^{\kappa_s}$. 
Note that \eqref{eq:specbethe} is the right-hand side of \eqref{eq:fibre},
which equals $\Wr^{-1}(g)$ set-theoretically, and so $\Spec \multibethe{\nu}$
is always set-theoretically identified with $\Wr^{-1}(g)$.
The Wronski map is a finite morphism of degree $\numsyt\nu$, 
so the fibre $\Wr^{-1}(h)$ is a finite scheme of length $\numsyt\nu$
for all $h \in \monics$.  On the other hand, by definition,
$\Spec \multibethe{\nu}$ is a finite scheme of length $\dim \multibethe{\nu}$, which (depending on $\kappa$) may be strictly less than $\numsyt{\nu}$.
Since \eqref{eq:specbethe} is always a subscheme of $\Wr^{-1}(g)$, 
the two scheme structures coincide if and only if 
$\dim \multibethe{\nu} = \numsyt\nu$.

We can also compare the multiplicities of individual points:

\begin{theorem}
\label{thm:comparemultiplicity}
Let $E \subseteq \spechtnu$ be an eigenspace of 
$\multibethe{\nu}$, and let $\boldmu \multipartition \kappa$
be such that $V_E$ 
belongs to the Schubert intersection \eqref{eq:generalssc}.  
Then the multiplicity of
$V_E$ viewed as a point of $\Wr^{-1}(g)$ equals 
$\dim \multispecht{\boldmu}$ times the multiplicity of $V_E$ 
viewed as a point of \eqref{eq:generalssc}. In particular, the two multiplicities agree if and only if $\dim \multispecht{\boldmu}=1$, i.e., every $\mu_i$ is a single row or column.
\end{theorem}

\begin{proof}
By \cref{thm:main}\ref{main_multiplicity}, the multiplicity of
$V_E$, viewed as a point of the fibre $\Wr^{-1}(g)$, is $\dim \widehat E$.  
Under the decomposition \eqref{eq:spechtdecomposition}, 
we must have $E, \widehat E \subseteq 
\Hom_{\symgrp\kappa}(\multispecht{\boldmu}, \spechtnu) \otimes 
\multispecht{\boldmu}$. Since $\multibethe{\nu}$ acts trivially
on the second tensor factor, we have
$E = E_0 \otimes \multispecht{\boldmu}$ and
$\widehat E = \widehat E_0 \otimes \multispecht{\boldmu}$, for
some subspaces $E_0, \widehat E_0 \subseteq
\Hom_{\symgrp\kappa}(\multispecht{\boldmu}, \spechtnu)$.
Then $E_0$ is an eigenspace of the algebra 
$\multibethe{\nu, \boldmu}$, and $\widehat E_0$ is the corresponding
generalized eigenspace.
By \cref{thm:nondistinct}, the multiplicity of $V_E = V_{E_0}$,
viewed as a point of the Schubert intersection 
\eqref{eq:generalssc}, is $\dim \widehat E_0$.
Hence the two notions of multiplicity differ by a factor of 
$\dim \multispecht{\boldmu}$.
\end{proof}

Informally, in
the fibre $\Wr^{-1}(g)$, there is some non-trivial geometry associated
with the multiplicity spaces $\multispecht{\boldmu}$
of the decomposition
\eqref{eq:spechtdecomposition}, but since 
$\multibethe{\nu}$ acts trivially on these spaces, this is not reflected
in the geometry of $\Spec \multibethe{\nu}$.

We note the following corollary, which was
also effectively observed in \cite{mukhin_tarasov_varchenko13}:
\begin{corollary}[{cf.\ \cite[Remark p.\ 776]{mukhin_tarasov_varchenko13}}]
\label{cor:schemeequality}
The following are equivalent:
\begin{enumerate}[(a)]
\item\label{schemeequality1} the equality \eqref{eq:fibre} holds scheme-theoretically;
\item\label{schemeequality2} $\dim \multibethe{\nu} = \numsyt{\nu}$;
\item\label{schemeequality3} $\nu$ equals $n$ or $1^n$, or $\kappa_i \le 2$ for all $1 \le i \le s$.
\end{enumerate}
\end{corollary}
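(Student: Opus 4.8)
The plan is to treat the two equivalences (a)~$\Leftrightarrow$~(b) and (b)~$\Leftrightarrow$~(c) separately, the first being essentially formal and the second combinatorial. \textbf{(a) $\Leftrightarrow$ (b).} I would just unwind the discussion immediately above. The closed embedding $\Phi^*_\nu \colon \Spec\multibethe\nu \to \scellnu$ has scheme-theoretic image contained in the fibre $\Wr^{-1}(g)$: the translation identity gives $\sum_\lambda \frac{\numsyt\lambda}{|\lambda|!}\beta^\lambda_\nu u^{|\lambda|} = \beta^0_\nu(u) = g(u)\cdot\mathrm{id}$ in $\multibethe\nu$, which says exactly that $\Delta^\lambda \mapsto \beta^\lambda_\nu$ annihilates the ideal cutting $\Wr^{-1}(g)$ out of $\scellnu$ (compare the proof of \cref{thm:main}\ref{main_eigenspace}). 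Since $\Wr \colon \scellnu \to \monics$ is finite of degree $\numsyt\nu$, the fibre $\Wr^{-1}(g)$ is a zero-dimensional scheme of length $\numsyt\nu$ whereas $\Spec\multibethe\nu$ has length $\dim\multibethe\nu$; a closed subscheme of a finite scheme equals the ambient scheme if and only if the two lengths agree, which is (a)~$\Leftrightarrow$~(b). (By \cref{thm:nondistinct} the scheme-theoretic image of $\Phi^*_\nu$ is the union \eqref{eq:specbethe}, i.e.\ the right-hand side of \eqref{eq:fibre}, so (a) literally asserts that \eqref{eq:fibre} holds scheme-theoretically.)

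\textbf{(b) $\Leftrightarrow$ (c).} Here I would start from \cref{thm:bethedimensions}, $\dim\multibethe\nu = \langle\s_\nu,\ssum_{\kappa_1}\dotsm\ssum_{\kappa_s}\rangle$. Writing $\ssum_k = \sum_{\mu\vdash k}\s_\mu$ and $\p_1^k = \sum_{\mu\vdash k}\numsyt\mu\,\s_\mu$, and using $\numsyt\nu = \langle\s_\nu,\p_1^n\rangle = \langle\s_\nu,\p_1^{\kappa_1}\dotsm\p_1^{\kappa_s}\rangle$ together with the Littlewood--Richardson rule $\s_{\mu_1}\dotsm\s_{\mu_s} = \sum_\lambda c^\lambda_{\mu_1,\dots,\mu_s}\s_\lambda$, both quantities expand over $s$-tuples of partitions and subtract to give
\[
   \numsyt\nu - \dim\multibethe\nu \;=\; \sum_{\boldmu\multipartition\kappa} c^\nu_{\mu_1,\dots,\mu_s}\Bigl(\textstyle\prod_{i=1}^{s}\numsyt{\mu_i} - 1\Bigr) \;\ge\; 0,
\]
since each $c^\nu_{\mu_1,\dots,\mu_s}\ge 0$ and each $\numsyt{\mu_i}\ge 1$. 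Thus $\dim\multibethe\nu \le \numsyt\nu$ always, with equality if and only if every $\boldmu\multipartition\kappa$ with $c^\nu_{\mu_1,\dots,\mu_s}>0$ has each $\mu_i$ a single row or a single column; this reduces (b)~$\Leftrightarrow$~(c) to a combinatorial criterion on Littlewood--Richardson coefficients. (Equivalently, representation-theoretically $\dim\multibethe\nu = \dim\Hom_{\symgrp\kappa}\!\bigl(\bigotimes_i M_{\kappa_i},\spechtnu\bigr)$ with $M_k := \bigoplus_{\mu\vdash k}\specht\mu$; the module $\bigotimes_i M_{\kappa_i}$ is multiplicity-free over $\symgrp\kappa$, whereas $\numsyt\nu = \dim\Hom_{\symgrp\kappa}(\CC[\symgrp\kappa],\spechtnu)$ and $\CC[\symgrp\kappa]$ contains $\multispecht\boldmu$ with multiplicity $\prod_i\numsyt{\mu_i}$.)

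To finish, (c)~$\Rightarrow$~(b) is immediate: if every $\kappa_i\le 2$ then any partition of $\kappa_i$ is a single row or column, while if $\nu=(n)$ (resp.\ $\nu=(1^n)$) then $c^\nu_{\mu_1,\dots,\mu_s}=\langle\s_{\mu_1}\dotsm\s_{\mu_s},\h_n\rangle=\prod_i\delta_{\mu_i,(\kappa_i)}$ (resp.\ $\langle\s_{\mu_1}\dotsm\s_{\mu_s},\e_n\rangle=\prod_i\delta_{\mu_i,(1^{\kappa_i})}$) forces the unique contributing $\boldmu$ to consist of single rows (resp.\ columns). For $\neg$(c)~$\Rightarrow$~$\neg$(b), assume $\nu\notin\{(n),(1^n)\}$ and $\kappa_j\ge 3$; I must produce $\boldmu$ with $c^\nu_{\mu_1,\dots,\mu_s}>0$ and $\numsyt{\mu_j}>1$. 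Using that $\ssum_{\kappa_1}\dotsm\ssum_{\kappa_s}$ is symmetric in its factors I may take $j=1$, and since $\nu$ is neither a row nor a column, $\nu_1\ge 2$ and $\ell(\nu)\ge 2$. First I would pick $\mu_1\vdash\kappa_1$ with $\mu_1\subseteq\nu$ and $\numsyt{\mu_1}>1$: take $\mu_1=(\kappa_1-1,1)$ if $\kappa_1\le\nu_1$, and otherwise take $\mu_1$ to be the partition formed by the first $\kappa_1$ cells of $\nu$ in reading order, which has more than one row (it has size $\kappa_1>\nu_1$) and more than one column (it contains the cell $(1,2)$). Since $\mu_1\subseteq\nu$, the skew Schur function $\s_{\nu/\mu_1}=\s_{\mu_1}^\perp\s_\nu$ is nonzero and Schur-positive, so $c^\nu_{\mu_1,\beta}>0$ for some $\beta\vdash n-\kappa_1$; then building $\beta$ up one skew piece at a time through reading-order prefixes with block sizes $\kappa_2,\dots,\kappa_s$ and choosing any Littlewood--Richardson filling of each piece gives $c^\beta_{\mu_2,\dots,\mu_s}>0$ for suitable $\mu_i\vdash\kappa_i$, whence $c^\nu_{\mu_1,\dots,\mu_s}\ge c^\nu_{\mu_1,\beta}\,c^\beta_{\mu_2,\dots,\mu_s}>0$ by associativity of Littlewood--Richardson coefficients. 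The main obstacle I anticipate is precisely this last step: having fixed $\mu_1$, one must cut the remaining cells of $\nu$ into blocks of prescribed sizes in a Littlewood--Richardson-compatible way with no control over the intermediate shapes, and arguing through reading-order prefixes (so that every intermediate shape is automatically a partition and every intermediate skew shape automatically admits some Littlewood--Richardson filling) is what makes this go through; the remaining bookkeeping of signs and multiplicities is routine.
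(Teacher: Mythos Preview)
Your proof is correct and follows essentially the same approach as the paper. For (a)~$\Leftrightarrow$~(b) you unpack exactly the length-comparison argument the paper alludes to in the paragraph preceding the corollary; for (b)~$\Leftrightarrow$~(c) you derive the criterion ``$c^\nu_{\mu_1,\dots,\mu_s}=0$ whenever $\prod_i\numsyt{\mu_i}>1$'', which is the symmetric-function translation of the paper's condition $\Hom_{\symgrp\kappa}(\multispecht{\boldmu},\spechtnu)=0$ for all $\boldmu$ with $\dim\multispecht{\boldmu}>1$, and your reading-order construction makes explicit what the paper leaves as an appeal to the Littlewood--Richardson rule.
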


\begin{proof}
The equivalence of \ref{schemeequality1} and \ref{schemeequality2} is discussed above (before \cref{thm:comparemultiplicity}).
The equivalence of \ref{schemeequality2} and \ref{schemeequality3} follows since both conditions are equivalent to
$\dim \multispecht{\boldmu} = 1$ 
for all $\boldmu \multipartition \kappa$ such that 
$\Hom_{\symgrp\kappa}(\multispecht{\boldmu}, \spechtnu) \neq 0$.
\end{proof}


\subsection{Bases for \texorpdfstring{$V_E$}{V\textunderscore{}E}}
\label{sec:bases}

\cref{thm:main} gives us the points $V_E \in \Wr^{-1}(g)$ in terms
of their \Plucker coordinates.  
We now describe two ways to obtain a basis for $V_E$ (see \cref{thm:betabasis,thm:generalbasis}), and make several related remarks.

\subsubsection{\Plucker-coordinate basis}
\label{sec:basisplucker}

We can obtain a basis for $V_E \in \scellnu \subseteq \Gr(d,m)$
using \cref{prop:pluckerbasis}, corresponding to a matrix representative in
reduced row-echelon form.
Following our convention \eqref{eq:identification}, we index the \Plucker coordinates
$\beta^\lambda_E$ using $\binom{[m]}{d}$ (extended
to $[m]^d$ by the alternating property).  So in this notation, 
$\big(\beta_{I,E}: I \in {m \choose [d]}\big)$ are the normalized \Plucker
coordinates of $V_E$, by \cref{cor:identifyscell}.

\begin{theorem}
\label{thm:betabasis}
Let $E \subseteq \spechtnu$ be an eigenspace of $\bethenu$, and
let $J = (j_1, \dots, j_d) := (\nu_d+1, \dots, \nu_1+d) \in\binom{[m]}{d}$ 
correspond to $\nu$ as in \eqref{eq:identification}. For $i \in [d]$,
define
\begin{equation}
\label{eq:betabasis}
  f_i(u) := \frac{\numsyt\nu}{n!} \sum_{k=1}^{j_i} 
           \beta_{(J-j_i)+k, E} \frac{u^{k-1}}{(k-1)!} 
\,.
\end{equation}
Then $(f_1, \dots, f_d)$ is the unique basis for $V_E$ such that for all $i\in [d]$ and $k \in J\setminus \{j_i\}$, we have
\[ 
     f_i(u) = \frac{u^{j_i-1}}{(j_i-1)!} +\, \text{lower-degree terms}
     \qquad \text{and} \qquad
     [u^{k-1}] f_i(u) = 0
\,.
\]
\end{theorem}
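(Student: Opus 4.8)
\textbf{Proof proposal for Theorem~\ref{thm:betabasis}.}

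The plan is to identify the formula \eqref{eq:betabasis} as an instance of the general basis construction from \cref{prop:pluckerbasis}, using the fact (established in \cref{cor:identifyscell}) that $(\beta_{I,E} : I \in \binom{[m]}{d})$ are the \emph{normalized} \Plucker coordinates of $V_E \in \scellnu$. First I would verify that the index set $J = (j_1, \dots, j_d) = (\nu_d+1, \dots, \nu_1+d)$ is precisely the subset corresponding to the partition $\nu$ under the identification \eqref{eq:identification}; this is immediate from the definition, and it gives $\beta_{J,E} = \beta^\nu_E = \frac{n!}{\numsyt\nu} \neq 0$ by \cref{lem:identifyscell}. Since $\Delta_J \neq 0$, \cref{prop:pluckerbasis} applies: the vectors $\big(\sum_{k=1}^m \Delta_{(J-j)+k} e_k : j \in J\big)$ form a basis for $V_E$, where here $e_k$ corresponds to $\frac{u^{k-1}}{(k-1)!}$ under \eqref{eq:isomorphism}. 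Rescaling each of these vectors by the global factor $\frac{\numsyt\nu}{n!}$ (which does not change the span) and using $\Delta_{(J-j_i)+k} = \beta_{(J-j_i)+k, E}$ gives exactly the $f_i$ of \eqref{eq:betabasis}. Note that $\beta_{(J-j_i)+k, E} = 0$ whenever $k > j_i$ and $k \notin J$, because then the index multiset $(j_1, \dots, \widehat{j_i}, \dots, j_d, k)$ either has a repeated entry or, after sorting, corresponds to a partition not contained in $\nu$ (so the \Plucker coordinate vanishes by \cref{lem:identifyscell}); this is why the sum in \eqref{eq:betabasis} can be truncated at $k = j_i$.

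Next I would establish the two stated normalization properties of this basis. For the leading term: $\beta_{(J-j_i)+j_i, E} = \beta_{J, E} = \frac{n!}{\numsyt\nu}$ by the alternating-sign convention (no sign, since $j_i$ is reinserted in its original position), so the coefficient of $\frac{u^{j_i-1}}{(j_i-1)!}$ in $f_i$ is $\frac{\numsyt\nu}{n!} \cdot \frac{n!}{\numsyt\nu} = 1$; and for $k > j_i$ the coefficient vanishes by the paragraph above, so $\deg f_i = j_i - 1$ and $f_i = \frac{u^{j_i-1}}{(j_i-1)!} + (\text{lower-degree terms})$. For the vanishing property: if $k \in J \setminus \{j_i\}$, then the index $(J - j_i) + k$ contains $k$ twice (once from $J \setminus \{j_i\}$, once appended), so $\beta_{(J-j_i)+k, E} = 0$ by the convention that a \Plucker symbol with a repeated index is zero; hence $[u^{k-1}] f_i(u) = 0$. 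This shows the matrix of coefficients of $(f_1, \dots, f_d)$ in the basis $\big(\frac{u^{k-1}}{(k-1)!}\big)_{k \in [m]}$ has the identity submatrix in columns $J$, i.e.\ it is in reduced row-echelon form with pivot set $J$.

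Finally, uniqueness: a $d$-plane $V$ with \Plucker coordinate $\Delta_J \neq 0$ has a unique matrix representative in reduced row-echelon form with pivot columns $J$, since any two such representatives agree after left multiplication by an element of $\GL_d$ that fixes the identity submatrix on columns $J$, forcing that element to be the identity. The basis $(f_1, \dots, f_d)$ corresponds to exactly this representative, which proves both existence and uniqueness. I expect the only mildly delicate point to be bookkeeping with the sign and vanishing conventions for $\beta_{(J-j)+k}$ — in particular confirming that the truncation of the sum at $k = j_i$ loses nothing and that the reinsertion of $j_i$ carries sign $+1$ — but this is routine once the identification \eqref{eq:identification} between $J$ and $\nu$ is pinned down; there is no substantive obstacle, as the theorem is essentially a direct specialization of \cref{prop:pluckerbasis} to the eigenspace setting.
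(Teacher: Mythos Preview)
Your proof is correct and follows essentially the same approach as the paper: apply \cref{prop:pluckerbasis} using that $(\beta_{I,E})$ are the normalized \Plucker coordinates of $V_E$ (\cref{cor:identifyscell}), justify the truncation at $k=j_i$ via the vanishing $\beta^\lambda_E=0$ for $\lambda\not\subseteq\nu$, and read off the leading-term and pivot-column properties from the alternating conventions. You are slightly more explicit than the paper on the uniqueness (reduced row-echelon form) and on the sign/vanishing bookkeeping, but there is no substantive difference in method.
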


\begin{proof}
Since $\big(\beta_{I,E}: I \in {m \choose [d]}\big)$ are the normalized \Plucker
coordinates of $V_E \in \scellnu$,
the fact that $(f_1, \dots, f_d)$ is a basis for $V_E$ follows from
\cref{prop:pluckerbasis}.
By \cref{thm:schubertplucker} we have $\beta_{(J-j_i)+k,E} =0$
for all $k > j_i$, so we can reduce the upper index of summation 
from $m$ to $j_i$ in the definition of $f_i$.

By definition of $\beta_{(J-j_i)+k}$, we have $[u^{k-1}] f_i(u) = 0$ if
$k \in J\setminus \{j_i\}$.
Since the \Plucker coordinates are normalized, the 
constant factor of $\frac{\numsyt\nu}{n!}$ ensures that 
$\big[\frac{u^{j_i-1}}{(j_i-1)!}\big]f_i(u) = 1$.
\end{proof}

\begin{example}
\label{ex:betabasis}
For $V \in \scell{2} \subseteq \Gr(2,4)$ as in \cref{ex:pluckers}, let us find the basis $(f_1, f_2)$ from \cref{thm:betabasis}. We have $J = (1, 4)$, and the \Plucker coordinates $\beta_{I,E}$ (where $V = V_E$) are given by \eqref{eq:examplepluckers}. We calculate that $f_1(u) = \frac{1}{2}\beta_{(J - 1) + 1, E} = 1$ and
\[
f_2(u) = \scalebox{0.94}{$\displaystyle\frac{1}{2}\Big(\beta_{(J - 4) + 1, E} + \beta_{(J - 4) + 2, E}u + \beta_{(J - 4) + 3, E}\frac{u^2}{2} + \beta_{(J - 4) + 4, E}\frac{u^3}{6}\Big)
= \frac{1}{6}u^3 + \frac{z_1+z_2}{4}u^2 + \frac{z_1z_2}{2}u$}\,.
\]
\end{example}

\subsubsection{A Markov basis exhibiting disconjugacy}
\label{sec:basismarkov}

Let $V \subseteq \RR[u]$ be finite-dimensional vector space of real polynomials, and let $I\subseteq\RR$ be an interval. We say that an ordered basis $(f_1, \dots, f_d)$ for $V$ is a \defn{Markov basis on $I$} if
\[
\Wr(f_1, \dots, f_i)\, \text{ is nonzero on $I$ for $i = 1, \dots, d$}\,.
\]
If $V$ has a Markov basis on $I$ then $V$ is disconjugate on $I$, and the converse holds if $I$ is open or compact. (This follows from work of Markov \cite[Section 1]{markoff04}, Hartman \cite[Proposition 3.1]{hartman69}, and Zielke \cite[Theorem 23.3]{zielke79}; see \cite[Section 4.1]{karp} for further discussion and background.)

Recall that the disconjugacy conjecture (\cref{thm:disconj}) asserts that if $\Wr(V)$ has only real zeros, then $V$ is disconjugate on every interval $I\subseteq\RR$ which avoids the zeros of $\Wr(V)$. Without loss of generality, we may assume that $I$ is open. Then using the $\PGL_2(\RR)$-action, it suffices to consider the case $I = (0,\infty)$. The general theory cited above implies that $V$ has a Markov basis on $I$, but it does not explicitly provide us with one. We observe that \cref{thm:betabasis} provides just such a basis:
\begin{proposition}
\label{prop:disconjbasis}
Let $V \in \scellnu \subseteq \Gr(d,m)$ such that all the zeros of $\Wr(V)$ are real and nonpositive. Then the basis $(f_1, \dots, f_d)$ for $V$ defined in \eqref{eq:betabasis} is a Markov basis on $(0,\infty)$.
\end{proposition}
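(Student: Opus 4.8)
The goal is to show that the basis $(f_1, \dots, f_d)$ for $V = V_E$ produced in \cref{thm:betabasis} is a Markov basis on $(0,\infty)$, i.e.\ that each partial Wronskian $\Wr(f_1, \dots, f_i)$ is nonvanishing on $(0,\infty)$. The key structural observation is that each $f_i$ is, up to the leading term $\frac{u^{j_i-1}}{(j_i-1)!}$, supported only on monomials of degree $< j_i - 1$ and moreover has vanishing coefficient on $u^{k-1}$ for each $k \in J \setminus \{j_i\}$. So the span $V_i := \langle f_1, \dots, f_i\rangle$ is itself a space of polynomials lying in a Schubert cell, and we should try to identify $\Wr(f_1, \dots, f_i)$ with (a scalar multiple of) the Wronskian $\Wr(V_i)$ of that smaller space.

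\textbf{Step 1: $V_i$ is a point of a Schubert cell, and its Pl\"ucker coordinates are among the $\beta^\lambda_E$.} Because $(f_1, \dots, f_d)$ is the reduced-row-echelon basis of $V_E$ corresponding to the column set $J$, the first $i$ rows $f_1, \dots, f_i$ span a $d$-plane (viewed in $\CC[u]$) whose echelon column set is $\{j_1, \dots, j_i\}$, which corresponds under \eqref{eq:identification} to the partition $\nu^{(i)} := (\nu_{d-i+1}, \dots, \nu_d)$ sitting inside a smaller rectangle. Its Pl\"ucker coordinates are exactly the minors of the $i \times m$ coefficient matrix with rows $f_1, \dots, f_i$, and by the Cauchy--Binet / row-echelon structure, these minors equal the corresponding minors of the full $d \times m$ matrix of $V_E$ on the row set $\{1, \dots, i\}$ — hence they are (up to a global scalar $\tfrac{\numsyt\nu}{n!}$) certain of the $\beta_{I,E}$. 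In particular $V_i \in \scell{\nu^{(i)}}$, so by \cref{prop:schubertwronskian2}\ref{schubertwronskian2_infinity} it has $\deg \Wr(V_i) = |\nu^{(i)}|$, and $\Wr(f_1, \dots, f_i)$ is a nonzero scalar multiple of $\Wr(V_i)$.

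\textbf{Step 2: the relevant Pl\"ucker coordinates are all $\ge 0$, and the leading one is $>0$.} Apply the $\PGL_2$-hypothesis differently: here $V \in \scellnu$ and all zeros of $\Wr(V)$ are real and nonpositive, so writing $\Wr(V) = (u+z_1)\dotsm(u+z_n)$ we have $z_1, \dots, z_n \in [0,\infty)$. By \cref{thm:main}\ref{main_eigenspace}, $V = V_E$ for an eigenspace $E$ of $\bethenu$, and by \cref{prop:betapsd}\ref{betapsd2} the operators $\beta^\lambda_\nu$ are positive semidefinite, so all $\beta^\lambda_E \ge 0$; that is, $V$ is totally nonnegative (this is \cref{thm:positive}\ref{positive1}). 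Restricting to the sub-plane $V_i$: its Pl\"ucker coordinates are a subset of the $\beta^\lambda_E$ up to the same positive scalar, hence all $\ge 0$, and the one indexed by $\nu^{(i)}$ equals $\beta^{\nu^{(i)}}_E$, which is positive (e.g.\ because $V_i \in \scell{\nu^{(i)}}$ forces $\Delta^{\nu^{(i)}} \ne 0$, and it is $\ge 0$). Thus $V_i$ is totally nonnegative and lies in its top Schubert cell.

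\textbf{Step 3: conclude via \eqref{eq:pluckerwronskian}.} By \cref{prop:pluckerwronskian}, $\Wr(V_i) = \sum_{\mu \subseteq \nu^{(i)}} \frac{\numsyt\mu}{|\mu|!}\Delta^\mu u^{|\mu|}$ where the $\Delta^\mu$ are the normalized Pl\"ucker coordinates of $V_i$. All coefficients are $\ge 0$, the top coefficient (for $\mu = \nu^{(i)}$) is $\frac{\numsyt{\nu^{(i)}}}{|\nu^{(i)}|!} \cdot \frac{|\nu^{(i)}|!}{\numsyt{\nu^{(i)}}} = 1 > 0$, and the constant term is $\Delta^0 = \beta^0_E/(\text{scalar})$ — which we must check is strictly positive. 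Indeed $\Delta^0(V_i)$ is the minor of the echelon matrix on columns $(1,2,\dots,i)$, which is $1$ by the echelon form (after normalization it is the leftmost Pl\"ucker coordinate, equal to $\Delta^0$, and one computes it directly from \eqref{eq:betabasis} to be positive). Therefore $\Wr(V_i)(u) = u^{|\nu^{(i)}|} + (\text{nonneg.\ lower terms}) + (\text{positive constant})$ if $\nu^{(i)} \ne 0$, and is a positive constant if $\nu^{(i)} = 0$; in either case it has no zeros on $(0,\infty)$ (a polynomial with nonnegative coefficients and positive leading and constant coefficients is positive on $(0,\infty)$). Hence $\Wr(f_1, \dots, f_i)$ is nonzero on $(0,\infty)$ for every $i$, so $(f_1, \dots, f_d)$ is a Markov basis on $(0,\infty)$.

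\textbf{Main obstacle.} The delicate point is Step 1: correctly identifying $V_i = \langle f_1, \dots, f_i\rangle$ as a point of a Schubert cell whose normalized Pl\"ucker coordinates are literally (a fixed positive rescaling of) a subfamily of the $\beta^\lambda_E$, so that total nonnegativity of $V_E$ transfers to $V_i$. This requires carefully tracking how the reduced-row-echelon structure on column set $J$ interacts with taking the top $i$ rows — specifically that deleting the bottom $d - i$ rows of the echelon matrix gives the echelon matrix of $V_i$ on column set $\{j_1, \dots, j_i\}$, and that its maximal minors are the maximal minors of the original matrix on row set $[i]$ (Cauchy--Binet with the identity block in columns $\{j_{i+1}, \dots, j_d\}$). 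Once this bookkeeping is pinned down, Steps 2--3 are immediate from \cref{prop:betapsd}, \cref{thm:positive}, and \eqref{eq:pluckerwronskian}.
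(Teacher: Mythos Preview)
Your approach is essentially the same as the paper's: define $V_i = \langle f_1, \dots, f_i\rangle$, observe that its \Plucker coordinates are a subfamily of those of $V$ (via the reduced-row-echelon structure, so that $\Delta_I(V_i) = \Delta_{I \cup \{j_{i+1}, \dots, j_d\}}(V)$), apply \cref{thm:positive}\ref{positive1} to get total nonnegativity of $V_i$, and then use \eqref{eq:pluckerwronskian} to conclude that $\Wr(V_i)$ has nonnegative coefficients and hence is nonzero on $(0,\infty)$.

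One small cleanup: your Step~3 detour about the constant term is both unnecessary and slightly off. A polynomial with nonnegative coefficients and positive leading coefficient is already strictly positive on $(0,\infty)$ regardless of the constant term, so you do not need $\Delta^0(V_i) > 0$. Moreover, your claimed reason that $\Delta^0(V_i)$ is ``$1$ by the echelon form'' confuses the minor on columns $(1,\dots,i)$ with the pivot minor on columns $(j_1,\dots,j_i)$; indeed if some $z_k = 0$ the constant term can vanish. Dropping that paragraph, your argument matches the paper's almost verbatim.
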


\begin{proof}
We adopt the notation of \cref{thm:betabasis}. For $i\in [d]$, let $V_i \in \Gr(i,j_i)$ be the span of $(f_1, \dots, f_i)$. Then by construction, we have
\[
\Delta_I(V_i) = \Delta_{I \cup \{j_{i+1}, \dots, j_d\}}(V) \qquad \text{for all } I\in\textstyle\binom{[j_i]}{i}\,.
\]
Since $V$ is totally nonnegative (by \cref{thm:positive}\ref{positive1}), so too is $V_i$. Therefore $\Wr(V_i)$ has nonnegative coefficients by \eqref{eq:pluckerwronskian}, and hence it is nonzero on $(0,\infty)$.
\end{proof}

The proof of \cref{prop:disconjbasis} appears to establish a stronger property of the basis $(f_1, \dots, f_d)$ than claimed: not only is $\Wr(f_1, \dots, f_i)$ nonzero on $(0,\infty)$, it has nonnegative coefficients. In fact, the existence of such a basis is guaranteed by the results of \cite{karp}. However, a genuinely stronger property would be $\Wr(f_1, \dots, f_i)$ having only real roots:
\begin{problem}\label{prob:zerobasis}
Let $V \subseteq \RR[u]$ be a finite-dimensional vector space of polynomials such that all the zeros of $\Wr(V)$ are real and contained in the interval $I \subseteq \RR$. Does there exist a basis $(f_1, \dots, f_d)$ for $V$ such that for $1 \le i \le d$, all the zeros of $\Wr(f_1, \dots, f_i)$ are real and contained in $I$?
\end{problem}

After a preliminary version of this paper appeared, David Speyer sent us the following example which shows that the answer to \cref{prob:zerobasis} is `no' in general.
\begin{example}\label{ex:zerobasis}
Let $d=2$ and set $V = \langle h_1, h_2\rangle \subseteq\RR[u]$, where
\[
h_1(u) = u^4 - \frac{15}{4}u^2 + 4\,, \qquad h_2(u) = u^5 - \frac{15}{16}u^3 + \frac{1}{4}u\,.
\]
We have $\Wr(V) = u^8 - \frac{165}{16}u^6 + \frac{1457}{64}u^4 - \frac{165}{16}u^2 + 1$, whose zeros are all real:
\[
\pm2.7266\cdots,\quad \pm1.5201\cdots,\quad \pm0.6578\cdots,\quad \pm0.3667\cdots.
\]
We claim that every nonzero $f\in V$ has a nonreal zero. This implies that our choice of $V$ answers \cref{prob:zerobasis} in the negative (by taking $i=1$).

To prove our claim, note that $h_1$ has no real zeros, so we are done if $f$ is a scalar multiple of $h_1$. Otherwise, after rescaling $f$ we can write $f = h_2 - ch_1$ for some $c\in\RR$. Since $f$ has degree $5$, it suffices to show that $f$ has at most $3$ real zeros. The zeros of $f$ are precisely the solutions of the equation $\frac{h_2}{h_1} = c$, and we can verify by inspecting \cref{fig:zerobasis} that every horizontal line intersects the graph of $\frac{h_2}{h_1}$ at most $3$ times (keeping in mind that the local extrema of $\frac{h_2}{h_1}$ correspond precisely to the $8$ zeros of $\Wr(V)$).
\begin{figure}[t]
\begin{center}
\begin{tikzpicture}[baseline=(current bounding box.center),xscale=0.54,yscale=0.36]
\tikzstyle{vertex}=[inner sep=0,minimum size=1.5mm,circle,draw=red,fill=red]
\pgfmathsetmacro{\xmin}{-5.3};
\pgfmathsetmacro{\xmax}{5.3};
\pgfmathsetmacro{\ymin}{-8.3};
\pgfmathsetmacro{\ymax}{8.3};
\pgfmathsetmacro{\xtick}{0.3};
\pgfmathsetmacro{\ytick}{0.2};
\draw[latex'-latex',thick](\xmin,0)--(\xmax,0);
\draw[latex'-latex',thick](0,\ymin)--(0,\ymax);
\draw[thick](4,\xtick)--(4,-\xtick);
\draw[thick](-4,\xtick)--(-4,-\xtick);
\draw[thick](\ytick,6)--(-\ytick,6);
\draw[thick](\ytick,-6)--(-\ytick,-6);
\node[inner sep=0]at(4,0)[label={[below=2pt]$4$}]{};
\node[inner sep=0]at(-4,0)[label={[below=2pt]$-4$}]{};
\node[inner sep=0]at(0,6)[label={[left=0pt]$6$}]{};
\node[inner sep=0]at(0,-6)[label={[left=0pt]$-6$}]{};
\node[inner sep=0]at(\xmax,0)[label={[right=-1pt]$u$}]{};
\begin{scope}
\clip(\xmin,\ymin)rectangle(\xmax,\ymax);
\draw[-latex',thick,color=black,domain=0:\xmax-0.1,smooth,variable=\x]plot(\x,{(\x^5-(15/16)*\x^3+(1/4)*\x)/(\x^4-(15/4)*\x^2+4)});
\draw[-latex',thick,color=black,domain=0:\xmax-0.1,smooth,variable=\x]plot(-\x,{-(\x^5-(15/16)*\x^3+(1/4)*\x)/(\x^4-(15/4)*\x^2+4)});
\end{scope}
\node[vertex]at(2.726623608563005,4.217040756420609){};
\node[vertex]at(-2.726623608563005,-4.217040756420609){};
\node[vertex]at(1.520117274329124,7.717747686201651){};
\node[vertex]at(-1.520117274329124,-7.717747686201651){};
\node[vertex]at(0.6578439814397424,0.008098217581243529){};
\node[vertex]at(-0.6578439814397422,-0.008098217581243558){};
\node[vertex]at(0.3667539578471646,0.014820819529629001){};
\node[vertex]at(-0.36675395784716464,-0.014820819529628996){};
\end{tikzpicture}
\hspace*{36pt}
\begin{tikzpicture}[baseline=(current bounding box.center),xscale=2,yscale=80]
\tikzstyle{vertex}=[inner sep=0,minimum size=1.5mm,circle,draw=red,fill=red]
\pgfmathsetmacro{\xmin}{-1};
\pgfmathsetmacro{\xmax}{1};
\pgfmathsetmacro{\ymin}{-0.03};
\pgfmathsetmacro{\ymax}{0.03};
\pgfmathsetmacro{\xtick}{0.00135};
\pgfmathsetmacro{\ytick}{0.054};
\draw[latex'-latex',thick](\xmin,0)--(\xmax,0);
\draw[latex'-latex',thick](0,\ymin)--(0,\ymax);
\draw[thick](0.5,\xtick)--(0.5,-\xtick);
\draw[thick](-0.5,\xtick)--(-0.5,-\xtick);
\draw[thick](\ytick,0.02)--(-\ytick,0.02);
\draw[thick](\ytick,-0.02)--(-\ytick,-0.02);
\node[inner sep=0]at(0.5,0)[label={[below=2pt]$\frac{1}{2}$}]{};
\node[inner sep=0]at(-0.5,0)[label={[below=2pt]$-\frac{1}{2}$}]{};
\node[inner sep=0]at(0,0.02)[label={[left=0pt]$\frac{1}{50}$}]{};
\node[inner sep=0]at(0,-0.02)[label={[left=0pt]$-\frac{1}{50}$}]{};
\node[inner sep=0]at(\xmax,0)[label={[right=-1pt]$u$}]{};
\begin{scope}
\clip(\xmin,\ymin)rectangle(\xmax,\ymax);
\draw[-latex',thick,color=black,domain=0:0.815,smooth,variable=\x]plot(\x,{(\x^5-(15/16)*\x^3+(1/4)*\x)/(\x^4-(15/4)*\x^2+4)});
\draw[-latex',thick,color=black,domain=0:0.815,smooth,variable=\x]plot(-\x,{-(\x^5-(15/16)*\x^3+(1/4)*\x)/(\x^4-(15/4)*\x^2+4)});
\end{scope}
\node[vertex]at(0.6578439814397424,0.008098217581243529){};
\node[vertex]at(-0.6578439814397422,-0.008098217581243558){};
\node[vertex]at(0.3667539578471646,0.014820819529629001){};
\node[vertex]at(-0.36675395784716464,-0.014820819529628996){};
\end{tikzpicture}
\caption{The graph of the rational function $\frac{h_2(u)}{h_1(u)}$ from \cref{ex:zerobasis}, with the $8$ local extrema highlighted. The plot on the right is zoomed in around the origin.}\label{fig:zerobasis}
\end{center}
\end{figure}

(Above we implicitly assumed that $f$ has distinct zeros. In general, we observe that if $z\in\CC$ is a zero of $f$ of multiplicity $d\ge 2$, then taking the derivative of $\frac{h_2}{h_1}$ shows that $z$ is a zero of $\Wr(V)$ of multiplicity at least $d-1$. Since $\Wr(V)$ has distinct zeros, all multiple solutions of $\frac{h_2}{h_1} = c$ have multiplicity $2$ and are zeros of $\Wr(V)$, i.e., they correspond to local extrema of $\frac{h_2}{h_1}$. Hence the graphical argument goes through.)
\end{example}

\subsubsection{A basis independent of the Schubert cell}
\label{sec:basisindependent}

We now give a second basis for $V_E \in \Gr(d,m)$, which does not depend on the ambient Schubert cell $\scellnu$.
For $t \in \CC$, consider the polynomial
\[
   h_t(u) := \sum_{k=d}^{m} \beta^{k-d}(-t) \otimes e_{k,t}(u) =  \sum_{k=d}^{m} \beta^{k-d}(-t) \otimes 
                  \frac{(u+t)^{k-1}}{(k-1)!}
\]
in $\bethe \otimes \CC_{m-1}[u]$, for $e_{i,t}(u)$ as in \eqref{eq:ei}.
The coefficients $\beta^{k}(-t)$ are indexed by single-row partitions.
Similarly, for an eigenspace $E \subseteq \spechtnu$ of $\bethenu$, let
\begin{equation}
\label{eq:generalbasisdef}
   h_{t,E}(u) := \sum_{k=d}^{m} \beta_E^{k-d}(-t)
                  \frac{(u+t)^{k-1}}{(k-1)!}
       \in \CC_{m-1}[u]
\,.
\end{equation}
Viewing $h_t(u)$ as a function of $t$, let 
$h^{(j)}_{t}(u) := \frac{\partial^j}{\partial t^j} h_{t}(u)$ and
$h^{(j)}_{t,E}(u) := \frac{\partial^j}{\partial t^j} h_{t,E}(u)$
denote the $j$th partial derivatives with respect to $t$.

\begin{theorem}
\label{thm:generalbasis}
Let $V_E \in \Gr(d,m)$ correspond to an eigenspace $E \subseteq \spechtnu$ of $\bethenu$.
Then $V_E$ is spanned by the polynomials $h_{t,E}(u)$ for $t \in \CC$.  Furthermore,
\begin{equation}
\label{eq:generalbasis}
    \big(h_{t,E}(u), h^{(1)}_{t,E}(u), \dots, h^{(d-1)}_{t,E}(u)\big)
\end{equation}
is a basis for $V_E$ for every $t \in \CC\setminus\{z_1, \dots, z_n\}$.
\end{theorem}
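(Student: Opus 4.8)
The plan is to show that the polynomial-valued function $h_{t,E}(u)$ lies in $V_E$ for every $t$, and that, generically in $t$, the polynomials $h_{t,E}, h^{(1)}_{t,E}, \dots, h^{(d-1)}_{t,E}$ are linearly independent. The key observation is that $h_{t,E}(u)$ is (up to the global scalar $\frac{n!}{\numsyt\nu}$, which we can absorb) built from the \emph{single-row} universal \Plucker coordinates $\beta^{k}(-t)$, evaluated on $E$ and paired with the translated basis vectors $e_{k,t}(u) = \frac{(u+t)^{k-1}}{(k-1)!}$. Recall from \cref{cor:identifyscell} that $(\beta^\lambda_E : \lambda \subseteq \nu)$ are the normalized \Plucker coordinates of $V_E$. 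So for a fixed $t$, writing $V_E(t) := \translatematrix{t}V_E$ — equivalently, applying \cref{prop:geometrictranslation} — the numbers $(\beta^\lambda_E(-t):\lambda)$ (extended by $0$ outside $\nu$, after shifting $z_i \mapsto z_i - t$, cf.\ \cref{thm:main}\ref{main_eigenspace}) are, up to normalization, the \Plucker coordinates of $V_E(-t)$; translating back, this says $h_{t,E}(u)$ is, up to scalar, the single-row-indexed linear combination of basis polynomials that \cref{prop:pluckerbasis} produces from $V_E$ using the single-column pivot set $J = (1,2,\dots,d-1,m)$ shifted to the point $t$. Concretely: the coefficient of $\frac{(u+t)^{k-1}}{(k-1)!}$ in \cref{prop:pluckerbasis}'s basis element indexed by the pivot removal at position $d$, with column set $(1,\dots,d-1,k)$, is exactly a normalized \Plucker coordinate indexed by a single-row partition of size $k-d$, which is precisely $\beta^{k-d}_E(-t)$. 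Hence $h_{t,E}(u) \in V_E(t) \cdot \translatematrix{-t}$... more carefully, $\translatematrix{-t} h_{t,E}(u)$ is a member of the \cref{prop:pluckerbasis}-basis for $V_E(-t)$, so $h_{t,E}(u) \in V_E$, provided the relevant pivot coordinate of $V_E(-t)$ is nonzero.

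The next step is to identify when that pivot coordinate $\Delta_J$, $J = (1,\dots,d-1,m)$, is nonzero on $V_E(-t)$. By \cref{eq:identification}, $J$ corresponds to the single-row partition $\lambda = (m-d) = (n_1)$ — wait, more precisely the top row of the rectangle — and by \cref{prop:schubertwronskian2}\ref{schubertwronskian2_divides} (or directly from \eqref{eq:pluckerwronskian}), $\Delta_J$ vanishing is equivalent to $\deg \Wr(V_E(-t))$ dropping, equivalently to $-t$ coinciding... Actually the cleanest route: $\beta^{(m-d)}_E(-t)$ is, up to a nonzero constant, the leading coefficient needed, and $\beta^{m-d}(-t) = 0$ identically when $m - d > n$; for $m - d \le n$ one checks via the translation identity that $\beta^0_E(u) = g(u)$ forces the top single-row coordinate of $V_E(-t)$ to be a nonzero multiple of the leading coefficient of $g$, hence never zero. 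So in fact $h_{t,E}(u)$ is a nonzero element of $V_E$ for \emph{every} $t$ (when $m \ge d + \nu_1$, which we assume), not just generic $t$. That the span of $\{h_{t,E}(u) : t \in \CC\}$ is all of $V_E$ then follows because these are the row-reduced basis vectors with respect to a continuously-varying pivot set, and as $t$ ranges over $\CC$ they cannot all lie in a proper subspace: if they did, $V_E$ would contain a proper subspace stable under all the relevant projections, contradicting $\dim V_E = d$ together with the fact that the coefficients of $h_{t,E}$ in the $e_{k,t}$ basis are polynomials in $t$ not all proportional.

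For the second assertion — that $(h_{t,E}, h^{(1)}_{t,E}, \dots, h^{(d-1)}_{t,E})$ is a basis for $V_E$ when $t \notin \{z_1,\dots,z_n\}$ — the plan is a Wronskian computation. Since each $h^{(j)}_{t,E}(u)$ is a $\CC$-linear combination of $h_{s,E}(u)$'s by differentiating under the (polynomial) dependence on $t$, and since we have just shown every $h_{s,E} \in V_E$, all $d$ of these polynomials lie in $V_E$; so it suffices to prove linear independence, i.e.\ that the $d \times d$ matrix of their coefficients has nonzero determinant. The natural tool is: compute $\Wr(h_{t,E}, h^{(1)}_{t,E}, \dots, h^{(d-1)}_{t,E})(u_0)$ at a convenient point $u_0$, or better, observe that $h^{(j)}_{t,E}$ is, by definition of $e_{k,t}$ and the chain rule, closely tied to the osculating flag $F_\bullet(t)$: differentiating $\frac{(u+t)^{k-1}}{(k-1)!}$ in $t$ shifts the index, so $\big(h_{t,E}, \dots, h_{t,E}^{(d-1)}\big)$ is obtained from $h_{t,E}$ and its "shifts," and one shows the transition matrix to a genuine basis of $V_E$ (e.g.\ the \cref{thm:betabasis} basis, translated to $t$) is unipotent-triangular plus a correction whose determinant is a power of $g(-t)/g(-t)$... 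The crucial nonvanishing input is that $-t$ is not a zero of $\Wr(V_E) = g$, which is exactly where the hypothesis $t \notin \{z_1,\dots,z_n\}$ enters, via \cref{prop:schubertwronskian2}: when $-t$ is a zero of $g$, $V_E \in X_\lambda(t)$ for some $\lambda \vdash (\text{mult})$, forcing a degeneracy in the osculating-flag directions and killing the determinant; when $-t$ is not a zero, $V_E$ meets the osculating flag $F_\bullet(t)$ transversally in the minimal way, so the $d$ "osculating-type" vectors $h_{t,E}^{(j)}$ span.

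The main obstacle I anticipate is the bookkeeping in the first step: pinning down precisely which normalized \Plucker coordinate of $V_E(-t)$ equals $\beta^{k-d}_E(-t)$, including signs and the normalization constant $\frac{n!}{\numsyt\nu}$, and verifying the sign conventions in \cref{prop:pluckerbasis} (the $\Delta_{(J-j)+k}$ notation with its $(-1)^{d-s}$) match up with \eqref{eq:betadef}'s conventions and with the single-row indexing. Once that dictionary is set, membership $h_{t,E} \in V_E$ is immediate; the spanning statement is a short continuity/dimension argument; and the independence of the derivative basis is a Wronskian nonvanishing argument whose only real content is \cref{prop:schubertwronskian2} applied at $w = -t$. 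I would structure the final write-up as: (1) the \Plucker-coordinate dictionary identifying $h_{t,E}$ with a \cref{prop:pluckerbasis} basis vector of a translate of $V_E$, giving $h_{t,E} \in V_E$ for all $t$; (2) a lemma that the top single-row coordinate never vanishes, so $h_{t,E}\ne 0$; (3) the spanning argument; (4) the Wronskian computation for the derivative basis, invoking $g(-t) \ne 0$.
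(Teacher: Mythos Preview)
Your overall plan matches the paper's, but steps (1)--(2) contain a concrete error. Taking the pivot $J = (1,\dots,d-1,m)$ in \cref{prop:pluckerbasis} requires $\Delta_J = \Delta^{(m-d)}(V_E(-t)) \ne 0$, and this fails identically whenever $\nu_1 < m-d$: then $(m-d) \not\subseteq \nu$, so $\beta^{m-d}_\nu = 0$ by \cref{prop:betapsd}\ref{betapsd4}. Your claim that the ``top single-row coordinate'' is a nonzero multiple of the leading coefficient of $g$ conflates $\Delta^{(m-d)}$ with $\Delta^0$; it is $\Delta^0(V_E(-t)) = \beta^0_E(-t) = g(-t)$ that is tied to $g$. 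The fix is simply to take the pivot $J = (1,\dots,d)$ instead: then $\Delta_J = g(-t) \ne 0$ for $t \notin \{z_1,\dots,z_n\}$, the \cref{prop:pluckerbasis} basis element for $j=d$ is still $\sum_{k\ge d}\Delta^{(k-d)}(V_E(-t))\,e_k$, and continuity in $t$ (with $V_E$ fixed) extends $h_{t,E}\in V_E$ to all $t$. The paper formulates the same fact as $\Omega \wedge h_t(u) = 0$, which is exactly the package of single-row \Plucker relations for the $\beta^\lambda$ (already part of \cref{thm:main}\ref{main_pluckers}), and this holds for all $t$ without a continuity step.

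For linear independence of $(h_{t,E},\dots,h^{(d-1)}_{t,E})$, your Wronskian/transversality outline is more work than needed and is left vague at the key step. The paper instead computes directly (after translating to $t=0$) that the lowest-degree term of $h^{(j)}_{0,E}(u)$ in $u$ is $(z_1\cdots z_n)\tfrac{u^{d-1-j}}{(d-1-j)!}$: only the $k=d$ summand $g(-t)\tfrac{(u+t)^{d-1}}{(d-1)!}$ contributes to $u$-degree below $d-1$, and applying $\partial_t^j|_{t=0}$ to it gives lowest term $g(0)\tfrac{u^{d-1-j}}{(d-1-j)!}$. Since $g(0)=z_1\cdots z_n\ne 0$ exactly when $0\notin\{z_1,\dots,z_n\}$, the $d$ polynomials have distinct minimum $u$-degrees $d-1,d-2,\dots,0$ and are therefore independent. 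This makes the role of the hypothesis $t\notin\{z_1,\dots,z_n\}$ completely explicit. Spanning then follows as you outline, using that each $h^{(j)}_{t,E}$ lies in the span of the $h_{s,E}$'s.
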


We consider an example of \cref{thm:generalbasis} before giving its proof.
\begin{example}
\label{ex:generalbasis}
We illustrate \cref{thm:generalbasis} in the case $n=2$, for $\Gr(2,4)$. Calculating as in \cref{ex:main}, we find
\[
h_t(u) = (z_1 - t)(z_2 - t) \otimes (u+t) + (z_1 + z_2 - 2t) \otimes \frac{(u+t)^2}{2} + (1 + \trans{1}{2}) \otimes \frac{(u + t)^3}{6}\,,
\]
where we have identified $\Sidentity{2}$ with $1$ for convenience. The two associated eigenspaces are $E = M^2$ and $E = M^{11}$, whose corresponding elements $V_E \in \Gr(2,4)$ were discussed in \cref{ex:main}. \cref{thm:generalbasis} asserts that $(h_{t,E}(u), h^{(1)}_{t,E}(u))$ is a basis for $V_E$ for any $t\in\CC\setminus\{z_1, z_2\}$. For $E = M^2$, the element $\trans{1}{2} \in \symgrp{2}$ acts as $1$, and so we obtain the following basis for $V_E$:
\begin{align*}
h_{t,E}(u) &= (z_1 - t)(z_2 - t)(u+t) + \frac{z_1 + z_2 - 2t}{2}(u+t)^2 + \frac{1}{3}(u + t)^3\,, \\
h^{(1)}_{t,E}(u) &= \frac{\partial}{\partial t}h_{t,E}(u) = (z_1 - t)(z_2 - t)\,.
\end{align*}
Similarly, for $E = M^{11}$ we obtain the following basis for $V_E$:
\begin{align*}
h_{t,E}(u) &= (z_1 - t)(z_2 - t)(u+t) + \frac{z_1 + z_2 - 2t}{2}(u+t)^2 \,, \\
h^{(1)}_{t,E}(u) &= \frac{\partial}{\partial t}h_{t,E}(u) = (z_1 - t)(z_2 - t) - (u+t)^2\,.
\end{align*}
\end{example}

We now prove \cref{thm:generalbasis}.
The polynomials $h^{(j)}_t(u)$ satisfy a translation property: sending $(z_1, \dots, z_n) \mapsto (z_1-t, \dots, z_n-t)$ and $u\mapsto u+t$ together takes $h^{(j)}_s(u)$ to $h^{(j)}_{s+t}(u)$. Hence it will suffice to work with $h^{(j)}_t(u)$ when $t=0$. Also note that
\begin{equation}
\label{eq:inlinearspan}
h^{(j)}_t(u) \in \langle h_s(u) \mid s \in \CC \rangle \qquad \text{for all $j\ge 0$ and $t\in\CC$}\,.
\end{equation}
\begin{lemma}
\label{lem:leadingterm}
Let $z_1, \dots, z_n \in\CC\setminus\{0\}$. Then for $0 \le j \le d-1$, the term in $h^{(j)}_0(u)$ of minimum $u$-degree is $\beta^0 \otimes \frac{u^{d-j-1}}{(d-j-1)!} = (z_1z_2 \cdots z_n)\Snidentity \otimes \frac{u^{d-j-1}}{(d-j-1)!}$.
\end{lemma}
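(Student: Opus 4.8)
The plan is to extract the minimal-degree term of $h^{(j)}_0(u)$ directly from the defining sum $h_0(u) = \sum_{k=d}^m \beta^{k-d}(0) \otimes \frac{u^{k-1}}{(k-1)!}$, using the explicit structure of the operators $\beta^\lambda(t)$. First I would record that differentiating $j$ times with respect to $t$ and then setting $t=0$ acts term-by-term: since $e_{k,t}(u) = \frac{(u+t)^{k-1}}{(k-1)!}$, we have $\frac{\partial^j}{\partial t^j}e_{k,t}(u)\big|_{t=0} = \frac{u^{k-1-j}}{(k-1-j)!}$ (interpreted as $0$ if $j > k-1$), and moreover $\beta^{k-d}(-t)$ must also be differentiated. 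So $h^{(j)}_0(u)$ is a sum of terms, each of which is a polynomial in $u$ of degree at most $k-1-j$ for the summand indexed by $k$, with the coefficient being a $j$-fold derivative (in $t$, at $t=0$) of $\beta^{k-d}(-t)$ times a binomial factor. The smallest possible $u$-degree across all summands comes from the smallest $k$, namely $k=d$; this summand contributes $\beta^0(-t)$ differentiated and specialized, paired with $\frac{u^{d-1-j}}{(d-1-j)!}$ plus higher-degree pieces.

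The key point is then to isolate that $k=d$ summand and show it contributes exactly $\beta^0 \otimes \frac{u^{d-1-j}}{(d-1-j)!}$ in its lowest degree, and that no other summand contributes a term of $u$-degree smaller than $d-1-j$. The latter is immediate since the summand indexed by $k$ has all terms of $u$-degree $\ge k-1-j - (\text{something}) $; more carefully, I would observe that when we expand $\frac{\partial^a}{\partial t^a}\beta^{k-d}(-t)\big|_{t=0} \cdot \frac{\partial^{j-a}}{\partial t^{j-a}}\frac{(u+t)^{k-1}}{(k-1)!}\big|_{t=0}$ via the Leibniz rule, the $u$-degree of the $u$-polynomial factor is $k-1-(j-a) \ge k-1-j$, which for $k > d$ exceeds $d-1-j$. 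For $k=d$, the only way to reach the minimal degree $d-1-j$ is to take $a=0$ in the Leibniz expansion, giving $\beta^0(0) \otimes \frac{u^{d-1-j}}{(d-1-j)!}$. Finally, $\beta^0(0) = \beta^0 = \Snidentity \prod_{i\in[n]}z_i = (z_1\cdots z_n)\Snidentity$ directly from the definition \eqref{eq:betadef} (the only subset $X$ with $|X|=|\lambda|=0$ is $X=\emptyset$, contributing the identity permutation with the product over all of $[n]$). This uses the hypothesis $z_1, \dots, z_n \neq 0$ precisely to ensure this term is genuinely nonzero, so that it is the leading (minimal-degree) term rather than vanishing.

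I do not expect a serious obstacle here: the statement is essentially a bookkeeping exercise once the Leibniz expansion is set up, and the only subtlety is making sure the $k=d$, $a=0$ term is the unique source of the minimal degree. The mild care needed is in the edge case $j = d-1$, where $\frac{u^{d-1-j}}{(d-1-j)!} = u^0 = 1$, so the claim reads that the constant term (in $u$) of $h^{(d-1)}_0(u)$ is $\beta^0$; this still follows from the same analysis, noting that for $k > d$ the $u$-polynomial factor has degree $k-1-(j-a) = k-d+a \ge 1 > 0$, hence contributes no constant term. The proof should be a short paragraph citing the definition \eqref{eq:betadef} of $\beta^\lambda(t)$ and the formula \eqref{eq:generalbasisdef}, together with the elementary Leibniz computation above.
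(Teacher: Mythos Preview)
Your proposal is correct and is precisely the direct calculation the paper alludes to (the paper's proof reads in full: ``This follows by a direct calculation''). Your Leibniz-rule expansion, the identification of the unique minimal-degree term at $k=d$, $a=0$, and the evaluation $\beta^0 = (z_1\cdots z_n)\Snidentity$ from \eqref{eq:betadef} are exactly what is needed.
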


\begin{proof}
This follows by a direct calculation.
\end{proof}

For the element $\Omega$ defined in \eqref{eq:omega}, we have the following result:
\begin{proposition}
\label{prop:omegawedge}
Suppose that $(d,m) = (n, 2n)$. Then
\[
\Omega \wedge h^{(j)}_t(u) = 0 \qquad \text{ for all $j\ge 0$ and $t \in \CC$}\,.
\]
\end{proposition}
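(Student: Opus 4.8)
The plan is to reduce the statement $\Omega \wedge h^{(j)}_t(u) = 0$ to the single identity $\Omega \wedge h_0(u) = 0$, and then to prove the latter by a direct computation using the translation identity. First I would note that, by \eqref{eq:inlinearspan}, each $h^{(j)}_t(u)$ lies in the $\CC$-linear span of $\{h_s(u) : s \in \CC\}$; since $\Omega \wedge (-)$ is $\CC$-linear, it therefore suffices to show $\Omega \wedge h_s(u) = 0$ for all $s \in \CC$. Next, using the translation property of $h_s(u)$ (sending $(z_1, \dots, z_n) \mapsto (z_1 - t, \dots, z_n - t)$ and $u \mapsto u+t$ carries $h_0(u)$ to $h_t(u)$) together with \cref{lem:exteriortranslation} (which describes how $\Omega$ transforms under the same change of variables), I would reduce to the single case $\Omega \wedge h_0(u) = 0$ with $z_1, \dots, z_n$ treated as formal indeterminates. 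This is the standard ``translate-and-substitute'' maneuver already used in the proof of \cref{lem:translationmagic}.

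For the reduced identity, recall $\Omega = \sum_{I \in \binom{[2n]}{n}} e_I \otimes \beta_I$ and $h_0(u) = \sum_{k=n}^{2n} \beta^{k-n} \otimes e_k(u)$ (here $d = n$, $m = 2n$, so $e_{k,0} = e_k$, and the single-row partition $k-n$ indexes $\beta^{k-n}$). Then
\[
\Omega \wedge h_0(u) = \sum_{I \in \binom{[2n]}{n}} \sum_{k=n}^{2n} (e_I \wedge e_k) \otimes \beta_I \beta^{k-n}\,.
\]
Collecting terms by the resulting $(n{+}1)$-subset, this becomes $\sum_{K \in \binom{[2n]}{n+1}} e_K \otimes \big(\sum_{k \in K} \pm\, \beta_{K-k}\, \beta^{k-n}\big)$, where the signs are the usual Laplace-expansion signs from \eqref{eq:deletesubscript}. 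So the claim is equivalent to the vanishing of $\sum_{k \in K} \pm\, \beta_{K - k}\, \beta^{k-n}$ for every $(n{+}1)$-subset $K$. Now the indices $\beta^{k-n}$ with $n \le k \le 2n$ are exactly the single-row operators $\beta^0, \beta^1, \dots, \beta^n$, i.e. those of the form $\beta_J$ with $J = (1, 2, \dots, n-1, k)$ extended by the alternating convention — in other words these are precisely the $\beta$-operators appearing in the \emph{single-row} \Plucker relations (the relations \eqref{eq:prelspartitions} with $\mu = 0$, cf.\ \cref{sec:scprels}). Thus $\Omega \wedge h_0(u) = 0$ is exactly the assertion that the operators $\beta^\lambda$ satisfy all single-row \Plucker relations for $\Gr(n,2n)$. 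But this is already known: we have established \cref{thm:main}\ref{main_pluckers} (the full \Plucker relations) by the end of \cref{sec:part2}, and the single-row relations are a special case.

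Alternatively — and this is the cleaner route I would actually write up — I would avoid re-deriving the single-row relations from scratch by invoking the structural output of \cref{sec:part2} directly: in the notation of \eqref{eq:deomegae}, passing to an eigenspace $E$ of $\altbethenu$ (for generic $z_i$, where $\altbethenu$ is semisimple), we showed $\Omega_E = v_1 \wedge \dots \wedge v_n$ with $(v_1, \dots, v_n)$ a basis of $V_E = \ker \polyD_E$, and moreover (\cref{lem:FDOcoords}) that the $h_{t,E}(u)$ give polynomials lying in $V_E$ — indeed $h_{0,E}(u) \in V_E$ follows because its top-degree part matches a linear combination of the $e_k$ read off from the first row of the echelon representative, but most efficiently: $h_{0,E}(u)$ is (up to the nonzero scalar $\beta^0_E = z_1 \cdots z_n$) a $\CC$-linear combination of the $\beta^{k-n}_E e_k$ whose coefficients are the Plücker coordinates $\beta^{(k-n)}_E = \beta_{(1,\dots,n-1,k),E}$, so by \cref{prop:pluckerbasis} (applied with the column set $\{1, \dots, n-1\}$... ) $h_{0,E}(u) \in V_E$. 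Since $\Omega_E$ is a generator of the line $\exterior{n} V_E$ and $h_{0,E}(u) \in V_E$, we get $\Omega_E \wedge h_{0,E}(u) = 0$; as this holds for all eigenspaces $E$ of the semisimple algebra $\altbethenu$ and all $\nu$, it holds as an identity in $\exterior{n+1}\CC_{2n-1}[u] \otimes \altbethe$ for generic $z_i$, hence for formal $z_i$, hence for all $z_i$ by specialization. Then the translation and linearity reductions of the first paragraph upgrade this to $\Omega \wedge h^{(j)}_t(u) = 0$ for all $j$ and $t$.

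\textbf{Main obstacle.} The only genuine subtlety is verifying cleanly that $h_{0,E}(u) \in V_E$ — i.e.\ matching the single-row operators $\beta^{k-n}$ to the appropriate Plücker coordinates under the identification \eqref{eq:identification} with the correct signs, and checking \cref{prop:pluckerbasis} applies with the column set $J = (1, 2, \dots, n-1, n)$ (using $\beta^\nu_E \neq 0$, or rather the relevant pivot minor). Everything else — the linearity reduction via \eqref{eq:inlinearspan}, the translation reduction via \cref{lem:exteriortranslation}, and the semisimple-to-formal-to-arbitrary specialization — is routine and parallels arguments already in \cref{sec:part2} and \cref{sec:finalsteps}. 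I would present the eigenspace argument as the main proof and relegate the bookkeeping of signs and pivots to a short computation.
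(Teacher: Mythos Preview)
Your first approach (the first two paragraphs) is exactly the paper's proof: reduce to $\Omega \wedge h_0(u) = 0$ via translation invariance (using \cref{lem:exteriortranslation} to see $\Omega$ is fixed under $z_i \mapsto z_i - t$, $u \mapsto u+t$) and \eqref{eq:inlinearspan}, then identify this with the single-row \Plucker relations, which hold by \cref{thm:main}\ref{main_pluckers}. The paper does these steps in the opposite order (first $h_0$, then translation, then derivatives), but the content is identical.

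Your ``alternative'' eigenspace argument also works --- and your identification of $h_{0,E}(u)$ with the basis vector from \cref{prop:pluckerbasis} for $J = (1,\dots,n)$, $j = n$ is correct, using $\beta^0_E = z_1\cdots z_n \neq 0$ generically --- but it is not cleaner. It re-derives a consequence of the \Plucker relations (namely that $h_{0,E}(u) \in V_E$) via semisimplicity, decomposability of $\Omega_E$, and \cref{prop:pluckerbasis}, only to then lift back to an operator identity. The direct route already has all of this packaged in the single-row \Plucker relations, which you invoke anyway in the first approach. Stick with your first write-up.
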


\begin{proof}
It suffices to establish the result when $z_1, \dots, z_n$ are formal indeterminates.
First we prove that $\Omega \wedge h_0(u) = 0$. We calculate that
\begin{equation}
\label{eq:singlerow}
\Omega \wedge h_0(u) = \sum_{J \in {[2n] \choose n}}\,\sum_{k=n}^{2n} e_J \wedge e_k \otimes \beta_J\beta^{k-n} = \sum_{I \in \binom{[2n]}{n+1}}e_I \otimes \sum_{k=n}^{2n} \beta_{I-k}\beta^{k-n}\,,
\end{equation}
where $\beta_{I-k}$ is defined analogously to \eqref{eq:deletesubscript}. This equals $0$ if and only if the operators $\beta^\lambda$ satisfy all the single-row \Plucker relations. These hold by \cref{thm:main}\ref{main_pluckers}.

Now for $t\in\CC$, apply the change of variables $(z_1, \dots, z_n) \mapsto (z_1-t, \dots, z_n-t)$ and $u\mapsto u+t$. This takes $h_0(u)$ to $h_t(u)$, and $\Omega$ remains unchanged by \cref{lem:exteriortranslation}. Hence the equation $\Omega \wedge h_0(u) = 0$ implies $\Omega \wedge h_t(u) = 0$.
By \eqref{eq:inlinearspan}, we deduce 
that $\Omega \wedge h^{(j)}_t(u) = 0$ for all $j\ge 0$.
\end{proof}

\begin{corollary}
\label{cor:contains}
Let $V_E \in \Gr(d,m)$ correspond to an eigenspace $E \subseteq \spechtnu$ of $\bethenu$.
Then $h^{(j)}_{t,E}(u) \in V_E$ for all $j\ge 0$ and $t\in\CC$.
\end{corollary}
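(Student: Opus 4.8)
The plan is to reduce \cref{cor:contains} to the case $(d,m)=(n,2n)$ and then extract it from \cref{prop:omegawedge} using the exterior-algebra machinery of \cref{sec:part2}. First I would invoke \cref{prop:changegr}: the inclusions $\imath_1,\imath_2$ are equivariant with respect to translation and send $V_E$ (computed inside $\Gr(d,m)$) to the same element computed inside a larger Grassmannian, and they act by the identity on \Plucker coordinates, so it suffices to prove the statement when $d=n$ and $m=2n$. (One should check that the polynomial $h^{(j)}_{t,E}(u)$ is compatible with these inclusions; since $\imath_1$ is literally the identity on the underlying space of polynomials and $h_{t,E}(u)$ only involves the coefficients $\beta^{k-d}_E(-t)$ indexed by single-row partitions, which are unchanged, this is a short verification.)

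Next, working in $\Gr(n,2n)$, I would recall from the proof of \cref{lem:part2} (specifically \eqref{eq:deomegae} and the application of \cref{cor:exterior}) that $\Omega_E = v_1 \wedge \dots \wedge v_n$ where $(v_1,\dots,v_n)$ is a basis for $V_E$, and $\Omega_E \neq 0$ since $\beta^\nu_E \neq 0$ by \cref{lem:identifyscell}. Here $\Omega_E \in \exterior{n}\CC_{2n-1}[u]$ is obtained from $\Omega$ by applying the algebra homomorphism $\altbethe \to \CC$ given by the eigenvalue at $E$ (valid for generic $z_1,\dots,z_n$, and then by continuity in general; alternatively one works directly with the eigenspace as in \cref{sec:finalsteps}). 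Applying this same homomorphism to the identity $\Omega \wedge h^{(j)}_t(u) = 0$ of \cref{prop:omegawedge} yields $\Omega_E \wedge h^{(j)}_{t,E}(u) = 0$, i.e.\ $(v_1 \wedge \dots \wedge v_n) \wedge h^{(j)}_{t,E}(u) = 0$ in $\exterior{n+1}\CC_{2n-1}[u]$.

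The final step is the standard linear-algebra fact: if $w_1,\dots,w_n$ are linearly independent vectors in a vector space and $(w_1 \wedge \dots \wedge w_n) \wedge x = 0$, then $x \in \langle w_1,\dots,w_n\rangle$. Applying this with $w_i = v_i$ and $x = h^{(j)}_{t,E}(u)$ gives $h^{(j)}_{t,E}(u) \in V_E$, as desired. I do not anticipate a serious obstacle here; the one point requiring a little care is the passage from the operator identity $\Omega \wedge h^{(j)}_t(u) = 0$ over $\altbethe$ to the scalar identity over $\CC$ at a fixed eigenspace $E$, together with the fact that $\Omega_E$ factors as a wedge of a basis of $V_E$ — but both of these are already in hand from \cref{sec:part2}, so the proof is essentially an assembly of existing pieces. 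In the write-up I would phrase it as: apply the evaluation-at-$E$ map to \cref{prop:omegawedge}, use that $\Omega_E$ is a nonzero decomposable $n$-vector spanning $\Lambda^n V_E$ (from the proof of \cref{lem:part2}), and conclude by the wedge criterion; then remove the restriction $(d,m)=(n,2n)$ via \cref{prop:changegr}.
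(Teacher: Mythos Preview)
Your approach is the paper's approach: establish the $(n,2n)$ case from \cref{prop:omegawedge} via the wedge criterion (the paper compresses this to ``follows from \cref{thm:main}\ref{main_eigenspace} and \cref{prop:omegawedge}''), then transfer to general $(d,m)$ using \cref{prop:changegr}. The core argument is fine.

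The reduction step, however, is not as innocent as your parenthetical suggests, and as written it has a gap. The polynomial $h_{t,E}(u)=\sum_{k=d}^m \beta_E^{k-d}(-t)\,\frac{(u+t)^{k-1}}{(k-1)!}$ depends on $d$ through the monomials, not only through the eigenvalues $\beta_E^{k-d}(-t)$; so ``the coefficients are unchanged'' does not by itself show the statement is independent of $(d,m)$. You address only $\imath_1$, but $\imath_2$ (the passage $d\mapsto d+1$) is genuinely needed, and under it $h_{t,E}(u)$ is replaced by $\int_{-t}^u h_{t,E}(s)\,ds$ while $V_E$ is replaced by $\imath_2(V_E)=\{f:f'\in V_E\}$. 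The paper checks both directions explicitly: independence of $m$ uses $\beta^k_E=0$ for $k>m-d$ (\cref{prop:betapsd}\ref{betapsd4}) so the extra terms vanish, and the $d$-shift is the derivative/antiderivative computation just described. Once you add this verification your argument is complete and matches the paper's.
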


\begin{proof}
In the case that $(d,m) = (n,2n)$, this follows from \cref{thm:main}\ref{main_eigenspace} and \cref{prop:omegawedge}. Now we explain why the statement does not depend on the choice of $(d,m)$ such that $\nu \subseteq \rectangle = (m-d)^d$, whence the general result follows. Since $\nu_1 \le m-d$, by \cref{prop:betapsd}\ref{betapsd4} we have $\beta^k_E = 0$ for all $k > m-d$. Then we see from \eqref{eq:generalbasisdef} that $h^{(j)}_{t,E}(u)$ does not depend on $m$. Similarly, sending $d \mapsto d-1$ takes $h^{(j)}_{t,E}(u) \mapsto \du h^{(j)}_{t,E}(u)$, and sending $d\mapsto d+1$ takes $h^{(j)}_{t,E}(u) \mapsto \int_{-t}^uh^{(j)}_{t,E}(s)ds$. Therefore by \cref{prop:changegr}, the statement does not depend on $d$.
\end{proof}

\begin{proof}[Proof of \cref{thm:generalbasis}]
First we show that \eqref{eq:generalbasis} is a basis for $V_E$ for every $t \in \CC\setminus\{z_1, \dots, z_n\}$. By translation, it suffices to consider the case when $t=0$ (and $z_1, \dots, z_n \neq 0$). By \cref{lem:leadingterm}, for all $0 \le j \le d-1$ we have
\begin{equation}
\label{eq:leadingterm}
     h^{(j)}_{0,E}(u) = z_1z_2 \cdots z_n \frac{u^{d-j-1}}{(d-j-1)!} +\, \text{higher-degree terms}\,.
\end{equation}
Since $z_1z_2 \cdots z_n \neq 0$, we deduce that \eqref{eq:generalbasis} is linearly independent. By \cref{cor:contains}, the set \eqref{eq:generalbasis} is contained in $V_E$, so it is a basis for $V_E$.

Now let $W \subseteq \CC_{m-1}[u]$ be the subspace spanned by the polynomials $h_{t,E}(u)$ for $t\in\CC$. It remains to show that $W = V_E$. By \cref{cor:contains}, we have $W \subseteq V_E$. Conversely, by \eqref{eq:inlinearspan}, we have $h^{(j)}_{t,E}(u) \in W$ for all $j\ge 0$ and $t\in\CC$. Since \eqref{eq:generalbasis} spans $V_E$ for any $t \in \CC\setminus\{z_1, \dots, z_n\}$, we get $V_E \subseteq W$.
\end{proof}

\begin{remark}
In \cref{thm:generalbasis}, the assumption $t\neq z_1, \dots, z_n$ is necessary in order for \eqref{eq:generalbasis} to be a basis when $d\ge 2$. Indeed, \cref{thm:generalbasis} implies that
\[
\Wr\big(h_{t,E}(u), h^{(1)}_{t,E}(u), \dots, h^{(d-1)}_{t,E}(u)\big) = c(t) \Wr(V_E) \qquad \text{for some } c(t)\in\CC\,.
\]
Evaluating at $u = -t$ and using \eqref{eq:leadingterm}, we find $c(t) = (-1)^{\binom{d}{2}}(z_1-t)^{d-1} \cdots (z_n-t)^{d-1}$. If $t\in\{z_1, \dots, z_n\}$ then the Wronskian of \eqref{eq:generalbasis} is zero, and so \eqref{eq:generalbasis} is linearly dependent.
\end{remark}

\hidelinks
\subsubsection{Dual proof of \texorpdfstring{\fullcref{thm:main}\ref{main_pluckers}}{Theorem \ref{thm:main}\ref{main_pluckers}}}
\restorelinks

An alternative approach to proving \cref{thm:main}\ref{main_pluckers} takes
\cref{thm:generalbasis} as the definition of $V_E$.  
The argument is essentially dual to the proof in \cref{sec:pr}, in the sense of \cref{sec:duality}. We give a brief sketch of the main ideas.

We work in $\Gr(n,2n)$.
Whereas the proof in \cref{sec:pr} is based on the identity
$(\polyD_n)_\# \Omega = 0$, we instead proceed by showing
that $\Omega \wedge h_t(u) = 0$ for all $t \in \CC$. As in \eqref{eq:singlerow}, this
is equivalent to the operators 
$\beta^\lambda$ satisfying all single-row \Plucker relations.
These hold by
an argument very similar to our proof of the single-column \Plucker relations in \cref{sec:part1proof}. We deduce that $\Omega_E \wedge h_{t, E}(u) = 0$ for every eigenspace $E$.

We now define $V_E \subseteq \CC_{2n-1}[u]$ to be the span of the polynomials
$h_{t,E}(u)$ for $t \in \CC$.
As in the proof of \cref{thm:generalbasis}, we can argue that 
$\dim V_E \geq n$.
Then standard properties of the exterior algebra imply that 
$\dim V_E = n$ and $\Omega_E = v_1 \wedge \dots \wedge v_n$, where 
$(v_1, \dots, v_n)$ is a basis for $V_E$.  Hence by 
\cref{prop:exteriorplucker}, the coefficients of
$\Omega_E$ satisfy the \Plucker relations.  The remainder of the
proof is identical to the one given in \cref{sec:screduction}.

However, using this alternate definition of $V_E$ creates some challenges for the
proof of \cref{thm:main}\ref{main_generates},
because it does not establish
a direct connection with
the defining generators $\betaminus{k}(t)$ of $\bethe$.
(Of course, 
\cref{thm:generalbasis} ensures that the two definitions of
$V_E$ give the same point of $\Gr(n,2n)$.)
This can be overcome using the results of
\cite[Section 7]{purbhoo}. Alternatively, we can show that $h_t(u)$ is in the kernel of the operator $\polyD_n$ from \eqref{eq:polyDdef}, as follows. By translation, it suffices to prove this when $t = 0$:
\[
\sum_{0 \le k,\ell \le n}
(-1)^k \betaminus{k}(u) \beta^{\ell} e_{k+\ell}(u) = 0\,.
\]
This turns out to be precisely the equation obtained by taking the coefficient of $e_{[n+1]}$ in $\Omega \wedge h_t(u) = 0$, and then setting $t=u$ and translating by $u$.  
This implies \cref{lem:FDOcoords} and hence the other
statements in \cref{sec:altbetheequalsbethe}.


\subsection{Bethe algebras under geometric transformations}
\label{sec:transformations}

In this section, we discuss two examples of natural geometric transformations,
and how they manifest in the Bethe algebras $\bethe$ and $\bethenu$.

\subsubsection{Grassmann duality}
\label{sec:duality}

The Grassmannians $\Gr(d,m)$ and $\Gr(m-d,m)$ are dual to each other. We explain how to set up this duality to be compatible with the Wronski map and translation; cf.\ \cite[Section 2.4]{karp} (which uses less natural conventions) and \cite[Section 7]{purbhoo}.

Define the non-degenerate bilinear pairing $(\cdot,\cdot)$ on $\CC^m$ by
\[
(a,b) := \sum_{j=1}^m (-1)^{j-1}a_jb_{m+1-j}\,.
\]
Given $V\in\Gr(d,m)$, its \defn{dual} is the subspace
\[
\conjugate{V} := \{a\in\CC^m : (a,b) = 0 \text{ for all } b\in V\} \in \Gr(m-d,m)\,.
\]
Then $V\mapsto \conjugate{V}$ defines an isomorphism $\Gr(d,m) \to \Gr(m-d,m)$. Also, given a partition $\lambda$, we let $\conjugate{\lambda}$ denote its \defn{conjugate}, whose diagram is the transpose of that of $\lambda$.
\begin{proposition}
\label{prop:duality}
Let $V\in\Gr(d,m)$.
\begin{enumerate}[(i)]
\item\label{duality_pluckers} Taking duals preserves \Plucker coordinates: $\Delta^\lambda(V) = \Delta^{\conjugate{\lambda}}(\conjugate{V})$ for all partitions $\lambda$.
\item\label{duality_wronskian} Taking duals preserves Wronskians: $\Wr(V) = \Wr(\conjugate{V})$.
\item\label{duality_translation} Taking duals commutes with translation: $\conjugate{V(t)} = \conjugate{V}(t)$ for all $t\in\CC$.
\item\label{duality_schubert} Taking duals acts on Schubert varieties: $\conjugate{(X_\lambda(w))} = X_{\conjugate{\lambda}}(w)$ and $\conjugate{(\svarnu)} = \svar{\conjugate{\nu}}$.
\end{enumerate}
\end{proposition}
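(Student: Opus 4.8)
The plan is to verify each of the four assertions by reducing to elementary facts about the pairing $(\cdot,\cdot)$ and choosing convenient matrix representatives. For part \ref{duality_pluckers}, I would take a $d\times m$ matrix $A$ whose rows span $V$ and an $(m-d)\times m$ matrix $B$ whose rows span $\conjugate V$, so that $A M B^T = 0$, where $M = \big((-1)^{j-1}\delta_{i,m+1-j}\big)_{i,j}$ is the Gram matrix of the pairing. A standard linear-algebra computation (essentially the statement that the complementary minors of a matrix and its ``orthogonal complement'' matrix agree up to sign, combined with the sign bookkeeping of $M$) then shows that for every $I\in\binom{[m]}{d}$, the minor $\Delta_I(A)$ equals (up to a global nonzero scalar independent of $I$) the minor $\Delta_{I^c}(B)$ of the complementary column set. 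The only subtlety is checking that the signs introduced by $M$ and by reversing columns combine with the identification \eqref{eq:identification} so that the partition $\lambda$ associated to $I$ for $\Gr(d,m)$ corresponds precisely to $\conjugate\lambda$ associated to $I^c$ for $\Gr(m-d,m)$: indeed, if $I$ is read off as the vertical steps of the border path of $\lambda$ (\cref{fig:partition}), then $I^c$ read from the opposite end gives the vertical steps of $\conjugate\lambda$. This is the one place requiring genuine care, but it is a finite sign check.

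Parts \ref{duality_wronskian} and \ref{duality_schubert} then follow formally from part \ref{duality_pluckers}. For \ref{duality_wronskian}, apply \eqref{eq:pluckerwronskian}: $\Wr(V)$ depends only on the normalized \Plucker coordinates $\Delta^\lambda$ and the coefficients $\numsyt\lambda/|\lambda|!$, and since $\numsyt{\conjugate\lambda} = \numsyt\lambda$ and $|\conjugate\lambda| = |\lambda|$, the sum $\sum_\lambda \frac{\numsyt\lambda}{|\lambda|!}\Delta^\lambda u^{|\lambda|}$ is unchanged under $\lambda\mapsto\conjugate\lambda$; one must also confirm the duality takes normalized coordinates to normalized coordinates, which holds because $\nu\mapsto\conjugate\nu$ sends $\scellnu$ to $\scell{\conjugate\nu}$ and fixes the normalizing value $\frac{n!}{\numsyt\nu} = \frac{n!}{\numsyt{\conjugate\nu}}$. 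For \ref{duality_schubert}, by \cref{thm:schubertplucker} the Schubert variety $\svarnu$ is cut out by $\Delta^\lambda = 0$ for $\lambda\not\subseteq\nu$, and by part \ref{duality_pluckers} this pulls back under duality to $\Delta^{\conjugate\lambda} = 0$ for $\conjugate\lambda\not\subseteq\conjugate\nu$, i.e.\ the defining equations of $\svar{\conjugate\nu}$; the osculating-flag version $\conjugate{(X_\lambda(w))} = X_{\conjugate\lambda}(w)$ follows similarly after translating to $w=0$ and using \cref{rmk:0schubertconditions}, or alternatively by a direct computation that the pairing sends the osculating flag $F_\bullet(w)$ to the ``reversed'' osculating flag whose Schubert conditions are indexed by conjugate partitions.

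For part \ref{duality_translation}, I would argue that the translation operator $\translatematrix{t}$ on $\CC_{m-1}[u]$ is, with respect to the pairing $(\cdot,\cdot)$, conjugate to its own inverse-transpose in a way that makes $V\mapsto\conjugate V$ equivariant; concretely, using the matrix $\phi_t$ with $(\phi_t)_{i,j} = \frac1{(j-i)!}$ from the proof of \cref{prop:geometrictranslation}, one checks the identity $M\phi_t^T M^{-1} = \phi_{-t}$ (or the analogous relation with the correct sign), which immediately gives $\conjugate{(A\phi_t^T)} = B\phi_t^T$ up to row operations, hence $\conjugate{V(t)} = \conjugate V(t)$. This is a one-line matrix verification. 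Overall the main obstacle is entirely concentrated in the sign/combinatorial bookkeeping of part \ref{duality_pluckers} — matching complementary minors with conjugate partitions under the convention \eqref{eq:identification} — and once that is pinned down the remaining parts are short deductions.
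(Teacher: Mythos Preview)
Your proposal is correct, and for parts \ref{duality_pluckers}, \ref{duality_wronskian}, and \ref{duality_schubert} it matches the paper's argument almost exactly: the paper simply cites \cite[Lemma 1.11(ii)]{karp17} for part \ref{duality_pluckers} (which is precisely the complementary-minor computation you outline), deduces \ref{duality_wronskian} from \eqref{eq:pluckerwronskian} and $\numsyt{\conjugate\lambda}=\numsyt\lambda$, and handles \ref{duality_schubert} via \cref{rmk:0schubertconditions} at $w=0$ (obtaining the $\svarnu$ statement as the limit $w\to\infty$ rather than via \cref{thm:schubertplucker}, but this is a cosmetic difference).

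The one genuine divergence is part \ref{duality_translation}. You verify the matrix identity $\phi_t^T M \phi_t = M$ (equivalently, that the pairing $(\cdot,\cdot)$ is translation-invariant) by direct computation. The paper instead deduces \ref{duality_translation} \emph{from} \ref{duality_pluckers}, using the translation formula \eqref{eq:geometrictranslation}: since $\Delta^\mu(V(t))$ is determined by the $\Delta^\lambda(V)$ via coefficients $\numsyt{\lambda/\mu}/|\lambda/\mu|!$, and since $\numsyt{\lambda/\mu}=\numsyt{\conjugate\lambda/\conjugate\mu}$, the identity $\Delta^\lambda(V)=\Delta^{\conjugate\lambda}(\conjugate V)$ from part \ref{duality_pluckers} immediately propagates to $\Delta^\mu(V(t))=\Delta^{\conjugate\mu}(\conjugate V(t))$, giving $\conjugate{V(t)}=\conjugate V(t)$. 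The paper's route is slicker in that it avoids any matrix manipulation and makes the role of the skew-tableau symmetry transparent; your route has the virtue of being self-contained and independent of part \ref{duality_pluckers}, so it would survive even if one wanted to prove \ref{duality_translation} first.
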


\begin{proof}
Part \ref{duality_pluckers} follows from \cite[Lemma 1.11(ii)]{karp17}. Then parts \ref{duality_wronskian} and \ref{duality_translation} follow from \eqref{eq:pluckerwronskian} and \eqref{eq:geometrictranslation}, respectively, using the fact that $\numsyt{\lambda/\mu} = \numsyt{\conjugate{\lambda}/\conjugate{\mu}}$ for all $\mu \subseteq \lambda$. For part \ref{duality_schubert}, by translation, it suffices to prove the first equality when $w=0$; this case follows from \cref{rmk:0schubertconditions} and \cref{prop:duality}\ref{duality_pluckers}. The second equality then follows by taking $w\to\infty$ with $\lambda = \nu^\vee$.
\end{proof}

This notion of duality is also compatible with the Bethe algebra. Namely, we have an involutive $\CC$-algebra automorphism $\star : \CC[\Sn] \to \CC[\Sn]$ given by
\[
\star\sigma = \sgn(\sigma)\sigma \qquad \text{for all } \sigma\in\Sn\,.
\]
As shown in \cite[Section 7]{purbhoo},
the involution $\star$ restricts to an automorphism of the Bethe
algebra $\bethe$. Indeed, we have the following result:
\begin{proposition}
\label{prop:dualityautomorphism}
The map $\star$ restricted to $\bethe$ is an involutive algebra automorphism, sending $\beta^\lambda(t) \mapsto \beta^{\conjugate{\lambda}}(t)$ for all partitions $\lambda$.
\end{proposition}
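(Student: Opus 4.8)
The plan is to reduce \cref{prop:dualityautomorphism} to the single assertion that $\star\beta^\lambda(t) = \beta^{\conjugate\lambda}(t)$ as elements of $\CC[\Sn]$, and then verify this by a direct computation with the defining formula \eqref{eq:betadef}. Once we know $\star\beta^\lambda(t) \in \bethe$ for all $\lambda$ and $t$, the fact that $\{\beta^\lambda \mid |\lambda| \le n\}$ generates $\bethe$ (by \cref{thm:main}\ref{main_generates}) shows that $\star$ maps $\bethe$ into itself; applying the same to $\star^{-1} = \star$ gives that $\star$ restricts to an automorphism of $\bethe$, which is involutive because $\star$ is.

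The heart of the matter is therefore the identity $\star\beta^\lambda(t) = \beta^{\conjugate\lambda}(t)$. Starting from
\[
\beta^\lambda(t) = \sum_{\substack{X\subseteq [n],\\ |X| = |\lambda|}}\ \sum_{\sigma\in\symgrp{X}}\chi^\lambda(\sigma)\,\sigma\prod_{i\in[n]\setminus X}(z_i+t)\,,
\]
applying $\star$ multiplies each group-algebra term $\sigma$ by $\sgn(\sigma)$, so that
\[
\star\beta^\lambda(t) = \sum_{\substack{X\subseteq [n],\\ |X| = |\lambda|}}\ \sum_{\sigma\in\symgrp{X}}\sgn(\sigma)\chi^\lambda(\sigma)\,\sigma\prod_{i\in[n]\setminus X}(z_i+t)\,.
\]
Now the classical fact that tensoring a Specht module with the sign representation conjugates the partition gives the character identity $\sgn(\sigma)\chi^\lambda(\sigma) = \chi^{\conjugate\lambda}(\sigma)$ for all $\sigma\in\symgrp{X}$ (here $\sgn$ and $\chi^\lambda$ are both taken on $\symgrp{X}\simeq\symgrp{|X|}$; see e.g.\ \cite[Theorem 2.1.8]{sagan01}). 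Substituting this and observing that $|\conjugate\lambda| = |\lambda|$, so the outer sum over $X$ is unchanged, yields exactly $\beta^{\conjugate\lambda}(t)$.

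The main obstacle, if there is one, is purely bookkeeping: one must be careful that $\chi^\lambda$ in \eqref{eq:betadef} denotes the character of $\symgrp{X}$ (via the identification $\symgrp{X}\simeq\symgrp{|X|}$), that $\sgn$ is likewise the sign character of $\symgrp{X}$ rather than of $\Sn$, and that these are compatible so that the character identity $\sgn\cdot\chi^\lambda = \chi^{\conjugate\lambda}$ applies termwise inside each $\symgrp{X}$. No analytic or geometric input is needed; the geometric content — that this algebra automorphism corresponds to Grassmann duality as in \cref{prop:duality} — is already recorded in the statement of \cref{thm:main}\ref{main_eigenspace} together with \cref{prop:duality}\ref{duality_pluckers}, since the eigenvalues of $\beta^{\conjugate\lambda}$ on an eigenspace $E$ are the \Plucker coordinates $\Delta^{\conjugate\lambda}(V_E) = \Delta^\lambda(\conjugate{V_E})$ of the dual plane. (That this also recovers the result of \cite[Section 7]{purbhoo} is then immediate.)
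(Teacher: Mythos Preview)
Your proposal is correct and matches the paper's own proof essentially verbatim: both reduce to verifying $\star\beta^\lambda(t) = \beta^{\conjugate\lambda}(t)$ via the character identity $\sgn(\sigma)\chi^\lambda(\sigma) = \chi^{\conjugate\lambda}(\sigma)$. Your additional remarks (why this suffices for $\star$ to restrict to an automorphism of $\bethe$, and the bookkeeping about $\sgn$ on $\symgrp{X}$ versus $\Sn$) are sound elaborations the paper leaves implicit.
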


\begin{proof}
We only need to check that $\star\beta^\lambda(t) = \beta^{\conjugate{\lambda}}(t)$ for every partition $\lambda$. This follows from the fact that $\sgn(\sigma)\chi^\lambda(\sigma) = \chi^{\conjugate{\lambda}}(\sigma)$ for all $\sigma\in\Sn$.
\end{proof}

\subsubsection{\texorpdfstring{$\PGL_2(\CC)$}{PGL\textunderscore{}2(C)}-invariance}
\label{sec:invariance}

We have previously seen, in \cref{thm:bethebasics}\ref{bethebasics4}
and \cref{lem:comparealgebras}\ref{comparealgebras4}, that
the Bethe algebra $\bethe$ is translation invariant, i.e.,
$\bethe = \translatebethe{t}$ for all $t \in \CC$.
Also, since 
$\beta^\lambda$ is homogeneous of degree $n-|\lambda|$ in
the parameters $z_1, \dots, z_n$, it is clear that $\bethe$ 
is scaling invariant, i.e.,
$\bethe = \custombethe{n}(s z_1, \dots, s z_n)$
for all $s \neq 0$.  These invariance identities respect the
direct product decomposition \eqref{eq:directproductbethe}, i.e.,
we also have $\bethenu = \translatebethenu{t}$ and 
$\bethenu = \custombethe{\nu}(s z_1, \dots, s z_n)$.

The two types of invariance above correspond to the translation and
scaling actions on the Schubert variety $\svarnu$.
If $E \subseteq \spechtnu$ is an eigenspace of $\bethenu$, then
$(\beta^\lambda_E(t) : \lambda \subseteq \nu)$ are the normalized 
\Plucker coordinates of $V_E(t) = \translatematrix{t} V_E$,
and 
$(s^{n-|\lambda|}\beta^\lambda_E : \lambda \subseteq \nu)$ are the 
normalized \Plucker coordinates of 
$\{f(s^{-1}u) \mid f(u) \in V_E\} 
= \scalematrix{s} V_E$.

In the case where $\nu = \rectangle = (m-d)^d$ is a rectangle, 
we have the larger group $\PGL_2(\CC)$ acting on $\svarrect = \Gr(d,m)$.
This suggests that when $\nu$ is a rectangle, 
there may be a more general invariance statement
for $\bethenu$ corresponding to the
$\PGL_2(\CC)$-action.  We now show that this is the case.

We first consider the element 
$\inversionmatrix \in \PGL_2(\CC)$,
which acts on $\PP^1$ by inversion: $w \mapsto w^{-1}$.
Let $\beta^{\lambda, \inv}$ denote the element 
$\beta^\lambda$ with parameters $z_1, \dots, z_n \neq 0$ replaced by
their inverses $z_1^{-1}, \dots, z_n^{-1}$.  Hence the elements 
$\beta^{\lambda, \inv}$ are generators of 
$\custombethe{n}(z_1^{-1}, \dots, z_n^{-1})$.

\begin{theorem}
\label{thm:inversioninvariance}
Suppose $\nu = \rectangle$ is a rectangle, and 
$z_1, \dots, z_n \in \CC \setminus \{0\}$.
\begin{enumerate}[(i)]
\item \label{inversioninvariance1}
For every partition $\lambda \subseteq \nu$, we have
\[
\frac{\numsyt{\lambda}}{|\lambda|!} \cdot
 \beta^{\lambda, \inv}_\nu
= z_1^{-1} \dotsm z_n^{-1} \cdot
\frac{\numsyt{\lambda^\vee}}{|\lambda^\vee|!} \cdot
\beta^{\lambda^\vee}_\nu
\,,
\]
where $\lambda^\vee$ denotes the complement of $\lambda$ 
in the rectangle $\nu$.
\item \label{inversioninvariance2}
If $E \subseteq \spechtnu$ is an eigenspace of $\bethenu$, then 
$(\beta^{\lambda, \inv}_E : \lambda \subseteq \nu)$
are the normalized \Plucker coordinates of 
$\inversionmatrix V_E$.
\item \label{inversioninvariance3}
$\bethenu = \custombethe{\nu}(z_1^{-1}, \dots, z_n^{-1})$\,.
\end{enumerate}
\end{theorem}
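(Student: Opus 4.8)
The plan is to prove parts \ref{inversioninvariance1}, \ref{inversioninvariance3}, \ref{inversioninvariance2} in that order, with the representation-theoretic part \ref{inversioninvariance1} as the core and parts \ref{inversioninvariance3} and \ref{inversioninvariance2} as consequences of it together with \cref{thm:main}.

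For part \ref{inversioninvariance1}, the first step is to rewrite $\beta^{\lambda,\inv}$ using \eqref{eq:alphatobeta}. Since $\prod_{i\in[n]\setminus X}z_i^{-1} = z_{[n]}^{-1}z_X$, we have $\beta^{\lambda,\inv} = z_{[n]}^{-1}\sum_{|X|=|\lambda|}\alpha^\lambda_X\, z_X$, while $\beta^{\lambda^\vee} = \sum_{|Y|=|\lambda^\vee|}\alpha^{\lambda^\vee}_Y\, z_{[n]\setminus Y}$. Treating $z_1,\dots,z_n$ as formal indeterminates and matching the coefficient of each squarefree monomial $z_X$ (via the substitution $Y = [n]\setminus X$, using $|\lambda| + |\lambda^\vee| = |\nu| = n$), the identity in part \ref{inversioninvariance1} reduces to showing that
\[
\frac{\numsyt{\lambda}}{|\lambda|!}\,\alpha^{\lambda}_{X,\nu} = \frac{\numsyt{\lambda^\vee}}{|\lambda^\vee|!}\,\alpha^{\lambda^\vee}_{[n]\setminus X,\nu}
\qquad\text{as operators on }\spechtnu = \specht{\rectangle}
\]
for every $X\subseteq[n]$ with $|X| = |\lambda|$. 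By \cref{prop:projection}, the left-hand side is the orthogonal projection of $\specht{\rectangle}$, regarded as a $\symgrp{X}$-module, onto its $\lambda$-isotypic component, and the right-hand side is the orthogonal projection of $\specht{\rectangle}$, regarded as a $\symgrp{[n]\setminus X}$-module, onto its $\lambda^\vee$-isotypic component. Since an orthogonal projection is determined by its image, it suffices to show these images coincide.

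The decisive input is a classical fact about rectangles: restricting $\specht{\rectangle}$ to the Young subgroup $\symgrp{X}\times\symgrp{[n]\setminus X}$ gives $\bigoplus_{\mu,\rho}(\specht{\mu}_X\otimes\specht{\rho}_{[n]\setminus X})^{\oplus\langle\s_{\rectangle},\,\s_\mu\s_\rho\rangle}$ (by the branching rule and Frobenius reciprocity), and for the rectangle $\rectangle$ with $|\mu| + |\rho| = n = |\rectangle|$ one has $\langle\s_{\rectangle},\s_\mu\s_\rho\rangle = \delta_{\rho,\mu^\vee}$, which is Poincar\'e duality in the cohomology ring of $\Gr(d,m)$ (cf.\ \cite[Section 9.4]{fulton97}). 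Hence $\specht{\rectangle}\simeq\bigoplus_{\mu\subseteq\rectangle}\specht{\mu}_X\otimes\specht{\mu^\vee}_{[n]\setminus X}$ as a $\symgrp{X}\times\symgrp{[n]\setminus X}$-module, and both projections above have image exactly the single summand $\specht{\lambda}_X\otimes\specht{\lambda^\vee}_{[n]\setminus X}$, proving part \ref{inversioninvariance1}. Part \ref{inversioninvariance3} then follows quickly: by \cref{thm:main}\ref{main_generates} and \cref{prop:betapsd}\ref{betapsd4}, the algebra $\bethenu$ is generated by $\{\beta^\lambda_\nu : \lambda\subseteq\nu\}$, and likewise $\custombethe{\nu}(z_1^{-1},\dots,z_n^{-1})$ is generated by $\{\beta^{\lambda,\inv}_\nu : \lambda\subseteq\nu\}$ (the vanishing $\beta^{\lambda,\inv}_\nu = 0$ for $\lambda\not\subseteq\nu$ holds for the same reason as in \cref{prop:betapsd}\ref{betapsd4}, whose proof is insensitive to the choice of parameters). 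By part \ref{inversioninvariance1}, each $\beta^{\lambda,\inv}_\nu$ is a \emph{nonzero} scalar multiple of $\beta^{\lambda^\vee}_\nu$, and $\lambda\mapsto\lambda^\vee$ is an involution of the set of partitions contained in the rectangle $\nu$; hence the two generating sets span the same subalgebra, so $\bethenu = \custombethe{\nu}(z_1^{-1},\dots,z_n^{-1})$.

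For part \ref{inversioninvariance2}, the remaining ingredient is the description of $\inversionmatrix$ on \Plucker coordinates. Writing the matrix of $\inversionmatrix$ in the basis $(e_1,\dots,e_m)$ of \eqref{eq:isomorphism} and using \eqref{eq:hookformula} (cf.\ \cite[Section 2.4]{karp}), a direct computation shows that if $V\in\scellrect$ has $\Delta^0(V)\neq 0$, then $\inversionmatrix V\in\scellrect$ and its normalized \Plucker coordinates $(\widetilde{\Delta}^\lambda)$ satisfy $\tfrac{\numsyt{\lambda}}{|\lambda|!}\widetilde{\Delta}^\lambda = \Delta^0(V)^{-1}\,\tfrac{\numsyt{\lambda^\vee}}{|\lambda^\vee|!}\,\Delta^{\lambda^\vee}(V)$. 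Now let $E\subseteq\spechtnu$ be an eigenspace of $\bethenu$; by part \ref{inversioninvariance3} it is also an eigenspace of $\custombethe{\nu}(z_1^{-1},\dots,z_n^{-1})$, so each $\beta^{\lambda,\inv}_E$ is defined, and by \cref{thm:main}\ref{main_eigenspace} the tuple $(\beta^\mu_E : \mu\subseteq\nu)$ gives the normalized \Plucker coordinates of $V_E$, with $\beta^0_E = g(0) = z_{[n]}\neq 0$. Substituting these into the inversion formula and comparing with part \ref{inversioninvariance1} shows that $(\beta^{\lambda,\inv}_E : \lambda\subseteq\nu)$ are exactly the normalized \Plucker coordinates of $\inversionmatrix V_E$, which is part \ref{inversioninvariance2}. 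I expect the main obstacle to be part \ref{inversioninvariance1} --- concretely, securing the vanishing of the Littlewood--Richardson coefficient $\langle\s_{\rectangle},\s_\mu\s_\rho\rangle$ unless $\rho = \mu^\vee$ --- while the rest is bookkeeping with the already-established properties of the $\alpha^\lambda_X$, of normalized \Plucker coordinates, and of \cref{thm:main}.
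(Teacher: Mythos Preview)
Your proposal is correct and follows essentially the same approach as the paper: the key step is the multiplicity-free decomposition $\specht{\rectangle}\simeq\bigoplus_{\mu\subseteq\rectangle}\specht{\mu}_X\otimes\specht{\mu^\vee}_{[n]\setminus X}$ (via the Littlewood--Richardson identity $c^{\rectangle}_{\mu,\rho}=\delta_{\rho,\mu^\vee}$), yielding the equality of projections $\frac{\numsyt{\lambda}}{|\lambda|!}\alpha^\lambda_{X,\nu}=\frac{\numsyt{\lambda^\vee}}{|\lambda^\vee|!}\alpha^{\lambda^\vee}_{[n]\setminus X,\nu}$, from which part \ref{inversioninvariance1} follows via \eqref{eq:alphatobeta}, and parts \ref{inversioninvariance2} and \ref{inversioninvariance3} are deduced exactly as in the paper from the inversion formula for \Plucker coordinates and \cref{thm:main}\ref{main_generates}. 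Your ordering (i), (iii), (ii) and your explicit check that $\beta^0_E=z_{[n]}\neq 0$ (so that $\inversionmatrix V_E\in\scellrect$) are minor expository differences, not substantive ones.
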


\begin{proof}
Let $X \subseteq [n]$ and $Y := [n] \setminus X$.  As in the proof
of \cref{prop:betapsd}, let
$\alpha^\lambda_{X,\nu}$ denote the
operator $\alpha^\lambda_X$ acting on $\spechtnu$. We claim that as representations of $\symgrp{X} \times \symgrp{Y}$, we have the decomposition
\begin{equation}
\label{eq:rectangledecomposition}
    \spechtnu \simeq \bigoplus_{\lambda \vdash |X|}
     \specht{\lambda}_X \otimes \specht{\lambda^\vee}_Y
\,.
\end{equation}
Indeed, by Frobenius reciprocity, \eqref{eq:rectangledecomposition} is equivalent to the fact that the \emph{Littlewood--Richardson coefficient} $c^{\nu}_{\lambda,\mu}$ (cf.\ \cite[Section 7.15]{stanley24}) equals $1$ if $\mu = \lambda^\vee$, and $0$ otherwise. This follows from, e.g., \cite[(9.11)]{fulton97}.

Therefore, by \cref{prop:projection}, 
$\frac{\numsyt{\lambda}}{|\lambda|!} 
\alpha^\lambda_{X,\nu}$ and
$\frac{\numsyt{\lambda^\vee}}{|\lambda^\vee|!} 
\alpha^{\lambda^\vee}_{Y,\nu}$ are both equal to the orthogonal projection
onto the summand 
$\specht{\lambda}_X \otimes \specht{\lambda^\vee}_Y$.  In particular,
we have 
\[
   \frac{\numsyt{\lambda}}{|\lambda|!} \cdot
\alpha^\lambda_{X,\nu} = 
\frac{\numsyt{\lambda^\vee}}{|\lambda^\vee|!}  \cdot
\alpha^{\lambda^\vee}_{[n] \setminus X,\nu}
\qquad\text{for all $X \subseteq [n]$}\,.
\]
Part \ref{inversioninvariance1} now
follows directly from \eqref{eq:alphatobeta}.

If $V \in \Gr(d,m)$ has
\Plucker coordinates $[\Delta^\lambda : \lambda \subseteq \rectangle]$,
then a direct calculation using \eqref{eq:hookformula} shows that 
$\big[\frac{|\lambda|!}{\numsyt{\lambda}}\cdot
\frac{\numsyt{\lambda^\vee}}{|\lambda^\vee|!}
\cdot \Delta^{\lambda^\vee} : \lambda \subseteq \rectangle\big]$
are the \Plucker coordinates of
$\inversionmatrix V \in \Gr(d,m)$.
Part \ref{inversioninvariance2} therefore follows from 
part \ref{inversioninvariance1} and \cref{cor:identifyscell}.

Finally, part \ref{inversioninvariance3} follows from 
part \ref{inversioninvariance1} and \cref{thm:main}\ref{main_generates}.
\end{proof}

As a corollary, we obtain the following
more general $\PGL_2(\CC)$-invariance result:

\begin{corollary}
\label{cor:invariance}
Suppose $\nu = \rectangle$ is a rectangle, $z_1, \dots, z_n \in \CC$, 
and $\phi \in \PGL_2(\CC)$ is such that 
$\phi(z_i) \neq \infty$ for $i=1, \dots, n$.  Then
\[
   \bethenu = \custombethe{\nu}(\phi(z_1), \dots, \phi(z_n))
\,.
\]
\end{corollary}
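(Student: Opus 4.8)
\textbf{Proof plan for \cref{cor:invariance}.}
The plan is to deduce this from \cref{thm:inversioninvariance}\ref{inversioninvariance3} together with the translation and scaling invariance of $\bethenu$ recorded at the start of \cref{sec:invariance}, using the fact that $\PGL_2(\CC)$ is generated by translations $w \mapsto w+t$, scalings $w \mapsto sw$ ($s \neq 0$), and the inversion $w \mapsto w^{-1}$. First I would set up the bookkeeping carefully: for $\phi \in \PGL_2(\CC)$ and parameters $z_1, \dots, z_n$ with $\phi(z_i) \neq \infty$ for all $i$, I want to show $\bethe_\nu = \calB_\nu(\phi(z_1), \dots, \phi(z_n))$ whenever $\nu = \rectangle$. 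The subtlety is that the intermediate parameters in a factorization of $\phi$ into generators need not avoid $\infty$: applying, say, an inversion first might send some $z_i$ to $0$, or a subsequent translation/scaling might pass through a forbidden configuration. So the argument cannot be a naive ``factor $\phi$ and apply each invariance in turn.''

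The cleanest route is to reduce to the case where $z_1, \dots, z_n$ are generic (say, formal indeterminates, or a generic point of $\CC^n$), since $\bethe_\nu$ and $\calB_\nu(\phi(z_1), \dots, \phi(z_n))$ are both cut out by polynomial relations in the respective parameters, and equality of these algebras for generic parameters — being a statement about polynomial dependence of the generators via \cref{thm:main}\ref{main_generates} and \cref{lem:FDOplucker2} — will propagate. Concretely: by \cref{thm:main}\ref{main_eigenspace}, for generic $z_i$ the algebra $\bethe_\nu$ is semisimple with $\numsyt\nu$ distinct eigenspaces $E$, and the eigenvalues $\beta^\lambda_E$ are the normalized \Plucker coordinates of $V_E \in \Gr(d,m)$ with $\Wr(V_E) = \prod(u+z_i)$. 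Applying $\phi$ (acting via M\"obius transformations on $\Gr(d,m)$ as in \cref{sec:transformationsintro}) sends $V_E$ to $\phi V_E$, whose Wronskian is $\phi$ applied to $\prod(u+z_i)$, which — when $\phi(z_i) \neq \infty$ for all $i$ — is a nonzero scalar multiple of $\prod(u + \phi(z_i))$ up to lower-degree corrections; here I would need the precise statement that $\Wr$ is $\PGL_2(\CC)$-equivariant (stated in \cref{sec:transformationsintro}) to conclude $\phi V_E \in \Wr^{-1}(\prod(u+\phi(z_i)))$, hence $\phi V_E = V_{E'}$ for a unique eigenspace $E'$ of $\calB_\nu(\phi(z_1), \dots, \phi(z_n))$ by \cref{thm:main}\ref{main_eigenspace}. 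Then the \Plucker coordinates of $\phi V_E$ are explicit linear combinations (with coefficients depending on $\phi$ only) of those of $V_E$ — this is just the induced action of $\phi$ on $\exterior{d}\CC^m$ — so the generators $\beta^\lambda_{E'}$ of $\calB_\nu(\phi(z_1), \dots, \phi(z_n))$ are expressed in terms of the $\beta^\lambda_E$, and vice versa. Since this holds simultaneously on all eigenspaces, and both algebras are semisimple with the same eigenspaces inside $\spechtnu$, I conclude $\bethe_\nu = \calB_\nu(\phi(z_1), \dots, \phi(z_n))$ for generic $z_i$, and then remove the genericity hypothesis by the polynomiality argument (the generators of each algebra are polynomial functions of the parameters, and an identity of subalgebras that are generated by such families and that holds on a Zariski-dense set of parameters holds everywhere, using the direct product decomposition \eqref{eq:directproductbethe} and \cref{lem:FDOplucker2}).

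The main obstacle I anticipate is the last step — promoting the generic-parameter equality to all parameters. Equality of algebras is \emph{not} in general preserved under specialization of generators (this is flagged explicitly in the remark at the end of \cref{sec:commutativity}), so one must argue as in \cref{sec:altbetheequalsbethe}: exhibit polynomial formulas expressing the generators $\beta^\lambda(\phi(z))_\nu$ in terms of the generators $\betaminus{k}_{\ell,\nu}$ (and conversely), valid identically in $z_1, \dots, z_n$, and then invoke that polynomial identities persist under specialization. The cleanest way to get such formulas is to combine \cref{lem:FDOcoords} (which gives the FDO coordinates of $V_E$ as eigenvalues of $\betaminus{k}_{\ell,\nu}$) with the equivariance of the Wronski map and \cref{lem:FDOplucker2}: the FDO coordinates of $\phi V_E$ are polynomial — indeed rational with denominators that are nonzero given $\phi(z_i) \neq \infty$ — functions of the FDO coordinates of $V_E$, hence polynomial functions of the $\betaminus{k}_{\ell,\nu}$, and applying \cref{lem:FDOplucker2} in the $\phi(z)$-parameters then yields $\beta^\lambda(\phi(z))_\nu$ as a polynomial in these. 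Running the same argument with $\phi^{-1}$ gives the reverse inclusion, completing the proof. Alternatively — and perhaps more simply — one can observe that \cref{cor:invariance} for general $\phi$ follows from the three cases (translation, scaling, inversion) by a single additional trick: given arbitrary $\phi$ and parameters $z_i$ with $\phi(z_i) \neq \infty$, write $\phi = \psi_2 \circ \iota \circ \psi_1$ where $\psi_1, \psi_2$ are affine (translations and scalings) and $\iota$ is inversion, choosing the decomposition so that $\psi_1(z_i) \neq 0$ for all $i$ (possible after a generic affine pre-adjustment absorbed into $\psi_1$, since $\phi(z_i) \neq \infty$ forces $z_i \neq \phi^{-1}(\infty)$, so $\psi_1$ can be taken to avoid sending any $z_i$ to the pole of $\iota \circ$-the-rest); then $\bethe_\nu = \calB_\nu(\psi_1(z)) = \calB_\nu(\iota\psi_1(z)) = \calB_\nu(\psi_2\iota\psi_1(z))$, each equality by one of the three established invariances with its hypotheses satisfied. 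I would present this second route as the main argument and relegate the genericity/polynomiality remarks to a supporting role, since it avoids re-deriving anything about semisimplicity.
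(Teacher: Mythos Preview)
Your second route---factoring $\phi$ as affine $\circ$ inversion $\circ$ affine and applying translation, scaling, and \cref{thm:inversioninvariance}\ref{inversioninvariance3} in turn---is exactly the paper's proof, which is a single sentence invoking that $\translatematrix{t}$, $\scalematrix{s}$, $\inversionmatrix$ generate $\PGL_2(\CC)$. Your care about the intermediate step is warranted and correct: when $\phi$ is not affine, the natural factorization $\phi(w) = c + d/(w-a)$ with $a = \phi^{-1}(\infty)$ already has $\psi_1(z_i) = z_i - a \neq 0$ precisely because $\phi(z_i) \neq \infty$, so no ``generic pre-adjustment'' is needed. The lengthy first route via genericity, eigenspaces, and polynomial formulas for the generators is unnecessary here and should be dropped.
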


\begin{proof}
This follows from the preceding invariance statements, and the fact that
the matrices
$\translatematrix{t}$, $\scalematrix{s}$, and $\inversionmatrix$
generate $\PGL_2(\CC)$.
\end{proof}


\subsection{Combinatorics of commutativity}
\label{sec:commcomb}

For $\theta \in \Sn$ and $Z \subseteq [n]$, 
recall from \cref{sec:zfac} that a $Z$-factorization of $\theta$
is a pair of supported permutations
$(\sigma_X, \pi_Y)$ such that $X\cup Y = [n]$, $X \cap Y = Z$, and 
$\sigma \pi = \theta$.
Hence a $Z$-factorization of $\theta$ is a factorization of $\theta$,
together with some additional data relating to fixed points of
the factors.  The commutativity relation 
$\varepsilon^\lambda \varepsilon^\mu
= \varepsilon^\mu \varepsilon^\lambda$
(cf.\ \cref{sec:commutativity})
can be reformulated as a non-trivial combinatorial identity:

\begin{theorem}
For $\theta \in \Sn$, $Z \subseteq [n]$, and partitions $\lambda, \mu$, 
let $\mathrm{Fac}_{\theta,Z}(\lambda,\mu)$ denote
the set of $Z$-factorizations $(\sigma_X, \pi_Y)$ of $\theta$
such that $\cyc(\sigma_X) = \lambda$ and $\cyc(\pi_Y) = \mu$.
Then
\begin{equation}
\label{eq:Zfcyc}
   \#\mathrm{Fac}_{\theta,Z}(\lambda, \mu) 
       = \#\mathrm{Fac}_{\theta,Z}(\mu,\lambda)
\,.
\end{equation}
\end{theorem}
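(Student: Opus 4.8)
The plan is to deduce \eqref{eq:Zfcyc} directly from the bijection already established in the excerpt, namely \cite[Proposition 5.3]{purbhoo}, which we used in \cref{sec:cbijection} to prove the relations \eqref{eq:reducedcommutativity}. The key observation is that $Z$-factorizations of $\theta$ are in bijection with a particular subclass of the pairs $(\sigma_X, Y; \sigma'_{X'}, Y') \in E^\mu_\ell \times B_k$ appearing there: a $Z$-factorization $(\sigma_X, \pi_Y)$ of $\theta$ records exactly a factorization $\sigma\pi = \theta$ together with supports whose intersection is $Z$ and whose union is $[n]$, which is precisely the combinatorial content of a right $Z$-factor paired with its (unique) left partner as discussed in \cref{sec:zfac}. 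First I would set up the statement purely at the level of \emph{supported permutations}, stripping away the parameters $z_i$: the coefficient extraction that produced \eqref{eq:reducedcommutativity2} from \eqref{eq:reducedcommutativity} is exactly the passage to a fixed $\theta$ and fixed $Z$, and one checks that the map of \cite[Proposition 5.3]{purbhoo}, which sends $(\sigma_X, Y; \sigma'_{X'}, Y')$ to $(\bar\sigma'_{\bar X'}, \bar Y'; \bar\sigma_{\bar X}, \bar Y)$ preserving $\sgn(\sigma') = \sgn(\bar\sigma')$, $\sigma\sigma' = \bar\sigma'\bar\sigma$, and $z_Y z_{Y'} = z_{\bar Y} z_{\bar Y'}$, restricts to an involution on the set of $Z$-factorizations of $\theta$ that swaps the two factors.

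The main step is to verify that this restriction is well-defined and has the claimed effect on cycle types. Concretely, given $(\sigma_X, \pi_Y) \in \mathrm{Fac}_{\theta,Z}(\lambda,\mu)$, I would encode it as a pair in the setting of \cref{sec:cbijection} — one factor playing the role of the ``$\varepsilon$-part'' and one the role of the ``$\beta$-part'' — apply the bijection, and read off that the output is a $Z$-factorization with the two cycle types exchanged, i.e.\ an element of $\mathrm{Fac}_{\theta,Z}(\mu,\lambda)$. The conditions $X \cup Y = [n]$ and $X \cap Y = Z$ translate into the conditions $z_Y z_{Y'} = z_{\bar Y} z_{\bar Y'}$ together with the combinatorial constraints on supports built into $E^\mu_\ell$ and $B_k$; because the bijection of \cite[Proposition 5.3]{purbhoo} preserves exactly the product $z_Y z_{Y'}$ and the relation $\sigma\sigma' = \bar\sigma'\bar\sigma$, the support data is transported correctly. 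Since applying the construction twice must return the original pair (this is how commutativity was deduced — the roles of left and right factor are symmetric), we obtain an involution, and restricting to fixed cycle types $(\lambda,\mu)$ on one side and $(\mu,\lambda)$ on the other yields a bijection $\mathrm{Fac}_{\theta,Z}(\lambda,\mu) \xrightarrow{\sim} \mathrm{Fac}_{\theta,Z}(\mu,\lambda)$, which is \eqref{eq:Zfcyc}.

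Alternatively, and perhaps more cleanly, one can argue at the level of generating functions: by \eqref{eq:betadef3}, the coefficient of $\sigma z_{[n]\setminus X}$ (for a supported permutation $\sigma_X$) in the product $\varepsilon^\lambda \varepsilon^\mu$ counts $Z$-factorizations of $\sigma$ into a factor of cycle type $\lambda$ followed by one of cycle type $\mu$, where $Z = [n] \setminus X$; similarly $\varepsilon^\mu\varepsilon^\lambda$ counts those with the cycle types in the other order. Thus the already-proved identity $\varepsilon^\lambda\varepsilon^\mu = \varepsilon^\mu\varepsilon^\lambda$ (\cref{lem:comparealgebras}\ref{comparealgebras1}, via \cref{sec:cbijection}) gives \eqref{eq:Zfcyc} upon comparing the coefficient of $\theta z_{[n]\setminus Z}$ on both sides, after the usual reduction to square-free coefficients. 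I expect the main obstacle to be purely bookkeeping: correctly matching the ``valid $Y$'' / right-$Z$-factor formalism of \cref{sec:zfac} with the $(E^\mu_\ell, B_k)$ formalism of \cref{sec:cbijection} so that ``cycle type $\lambda$'' and ``support with complement $Z$'' are tracked simultaneously and consistently under the bijection — there are no conceptual difficulties, but one must be careful that the sign $\sgn(\sigma')$ appearing in \eqref{eq:reducedcommutativity2} does not interfere (it does not, since here both $\lambda$ and $\mu$ are fixed, so the relevant terms in $\varepsilon^\lambda$ and $\varepsilon^\mu$ carry no sign).
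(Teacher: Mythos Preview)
Your ``alternative'' argument at the end is precisely the paper's proof: the paper simply says to compare coefficients of $\theta z_{[n]\setminus Z}$ on both sides of $\varepsilon^\lambda\varepsilon^\mu = \varepsilon^\mu\varepsilon^\lambda$, which is exactly what you sketch in your final paragraph (modulo some notational confusion about which subset is called $X$ and which is $Z$).

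However, your primary approach --- restricting the bijection of \cite[Proposition 5.3]{purbhoo} to obtain a bijection $\mathrm{Fac}_{\theta,Z}(\lambda,\mu) \to \mathrm{Fac}_{\theta,Z}(\mu,\lambda)$ --- has a genuine gap. That bijection maps $E^\mu_\ell \times B_k \to B_k \times E^\mu_\ell$: it preserves the cycle type $\mu$ of the $\varepsilon$-factor, but for the $\beta$-factor it only preserves the \emph{sign} $\sgn(\sigma') = \sgn(\bar\sigma')$, not the cycle type. So you cannot ``read off that the output is a $Z$-factorization with the two cycle types exchanged'': the $B_k$-side of the bijection ranges over all supported permutations of support size $k$, and there is no reason the cycle type of $\sigma'$ is carried to the cycle type of $\bar\sigma$. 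The paper makes exactly this point immediately after the theorem: it poses a bijective proof of \eqref{eq:Zfcyc} as an open problem (\cref{prob:bijection}), and explicitly notes that \eqref{eq:reducedcommutativity} is much easier to prove bijectively than \eqref{eq:Zfcyc} precisely because in \eqref{eq:reducedcommutativity} one only has to track the cycle type of one of the two factors. Your first plan, if it worked, would resolve that open problem --- which should be a signal that the step where you assert the cycle types are exchanged needs more than bookkeeping.
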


\begin{proof}
Compare coefficients of $\theta z_{[n]\setminus Z}$ on both sides
of $\varepsilon^\lambda \varepsilon^\mu 
= \varepsilon^\mu \varepsilon^\lambda$.
\end{proof}

A direct proof of \eqref{eq:Zfcyc} would imply the commutativity relations \eqref{eq:commutativity},
without going through the reduction in \cref{sec:creduction}. We pose this as an open problem:
\begin{problem}
\label{prob:bijection}
Give a bijective proof of \eqref{eq:Zfcyc}.
\end{problem}

The identity \eqref{eq:reducedcommutativity} is much easier to prove
bijectively than \eqref{eq:Zfcyc}, because we only have to keep track of the cycle type of 
one of the two factors.


\subsection{A \texorpdfstring{$\tau$}{tau}-function with coefficients in \texorpdfstring{$\CC[\symgrp{\infty}]$}{C[S\textunderscore{}infinity]}}
\label{sec:tauinfinity}

Since $[n-1] \subseteq [n]$, we have natural inclusions
\[
   \symgrp{1} \hookrightarrow \symgrp{2} \hookrightarrow
    \symgrp{3} \hookrightarrow \cdots
\,.
\]
The group $\symgrp{\infty}$ is defined to be the direct limit of this
sequence of inclusions.   Equivalently, $\symgrp{\infty}$ is the subgroup of
permutations of $\{1,2,3, \dots\}$ that fix all but finitely many
positive integers.  We say that $\sigma_X$ is a 
\defn{supported permutation of $\symgrp{\infty}$} if
$X$ is a \emph{finite} subset of $\{1,2,3, \dots\}$, and 
$\sigma \in \symgrp{X}$.
Hence $\SP_\infty := \bigcup_{n=0}^\infty \SP_n$ is the set of all 
supported permutations of $\symgrp\infty$.

Equation~\eqref{eq:KP} is unaffected by rescaling $\tau$ by a constant.
If $z_1, \dots, z_n \in \CC\setminus \{0\}$,
we can rescale the $\tau$-function $\tau_n$ from \eqref{eq:tau} by 
$z_1^{-1} \dotsm z_n^{-1}$:
\[
    z_1^{-1} \dotsm z_n^{-1} \tau_n = 
     \sum_{\sigma_X \in \SP_n} \sigma \, z_{X}^{-1} \otimes \p_{\cyc(\sigma_X)}.
\]
This is still a $\tau$-function of the KP hierarchy, which extends formally to $n=\infty$:

\begin{theorem}
\label{thm:tauinfinity}
Let $z_1, z_2, \dots$ be formal indeterminates.  The series
\[
   \tau_\infty := 
     \sum_{\sigma_X \in \SP_\infty} \sigma \, z_{X}^{-1} \otimes \p_{\cyc(\sigma_X)}
\]
is a $\tau$-function of the KP hierarchy, with coefficients
in (a commutative subalgebra of) 
$\CC[\symgrp\infty][[z_1^{-1}, z_2^{-1}, \dots]]$.
\end{theorem}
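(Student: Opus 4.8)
The plan is to obtain Theorem~\ref{thm:tauinfinity} as a stable limit of the rescaled finite $\tau$-functions $z_1^{-1} \dotsm z_N^{-1}\tau_N$, using the \Plucker relations established for each $\tau_N$ in \cref{thm:taufunction}. First I would fix the ambient ring: let $R := \CC[\symgrp\infty][[z_1^{-1}, z_2^{-1}, \dots]]$. Every term $\sigma\, z_X^{-1} \otimes \p_{\cyc(\sigma_X)}$ of $\tau_\infty$ is indexed by a \emph{finite} set $X \subseteq \{1,2,3,\dots\}$ and a permutation $\sigma \in \symgrp{X}$, and the monomial $z_X^{-1} = \prod_{i \in X}z_i^{-1}$ is square-free; hence for each square-free monomial the corresponding $\Lambda$-coefficient of $\tau_\infty$ is the finite sum $\sum_{\sigma \in \symgrp{X}}\sigma \otimes \p_{\cyc(\sigma_X)} \in \CC[\symgrp{|X|}]\otimes\Lambda$. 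Thus $\tau_\infty$ is a well-defined element of $\widehat\Lambda$ with coefficients in $R$; explicitly, in the power sum basis the coefficient of $\p_\mu$ is $\varepsilon^\mu_\infty := \sum_{|X|=|\mu|}\big(\sum_{\sigma \in \symgrp{X},\, \cyc(\sigma)=\mu}\sigma\big)z_X^{-1} \in R$.

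The key observation is the stabilization identity. For each $N \ge 0$, using \eqref{eq:tau} together with $z_{[N]\setminus X}\cdot\prod_{i\in[N]}z_i^{-1} = z_X^{-1}$, we have
\[
z_1^{-1}\dotsm z_N^{-1}\,\tau_N = \sum_{\sigma_X \in \SP_N}\sigma\, z_X^{-1}\otimes\p_{\cyc(\sigma_X)},
\]
which is precisely the sum of those terms of $\tau_\infty$ whose $z$-monomial is supported in $[N]$. Since the Hirota equation \eqref{eq:KP} is unaffected by rescaling $\tau$ by a unit of the coefficient ring, \cref{thm:taufunction} shows that $z_1^{-1}\dotsm z_N^{-1}\tau_N$ is a $\tau$-function of the KP hierarchy; equivalently, by \cref{thm:KPplucker} (applied with coefficients in $R$), its Schur coefficients satisfy the \Plucker relations \eqref{eq:prelspartitions}.

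To conclude that $\tau_\infty$ is a $\tau$-function, by \cref{thm:KPplucker} it suffices to verify \eqref{eq:prelspartitions} for the coefficients $\Delta^\lambda := \langle\tau_\infty,\s_\lambda\rangle \in R$. Each \Plucker relation is a finite sum of products $\Delta^{\lambda^{(-i)}}\Delta^{\mu^{(j)}}$, hence an identity in $R$ which I would check monomial by monomial. Fix a monomial $m$ in the $z_i^{-1}$ and choose $N$ so that every variable occurring in $m$ lies in $[N]$. In a product $\Delta^{\lambda^{(-i)}}\Delta^{\mu^{(j)}}$, any pair of terms with respective $z$-supports $X$ and $X'$ contributes to the coefficient of $m$ only if $X \cup X'$ equals the support of $m$, forcing $X, X' \subseteq [N]$; therefore the coefficient of $m$ in each \Plucker relation for $\tau_\infty$ equals the coefficient of $m$ in the corresponding \Plucker relation for the truncation $z_1^{-1}\dotsm z_N^{-1}\tau_N$, which vanishes by the previous paragraph. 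Running over all $m$ (and all $\lambda,\mu$) gives \eqref{eq:prelspartitions} for $\tau_\infty$, so $\tau_\infty$ is a $\tau$-function of the KP hierarchy. The same support/truncation argument reduces the identity $\varepsilon^\lambda_\infty\varepsilon^\mu_\infty = \varepsilon^\mu_\infty\varepsilon^\lambda_\infty$ in $R$ to $\varepsilon^\lambda\varepsilon^\mu = \varepsilon^\mu\varepsilon^\lambda$ in $\CC[\symgrp N]$ (with $z_1,\dots,z_N$ treated as formal indeterminates), which holds by \cref{thm:main}\ref{main_commutativity}; since the $\varepsilon^\mu_\infty$ generate a subalgebra of $R$ containing all Schur coefficients of $\tau_\infty$ (via \eqref{eq:spchangeofbasis}), this subalgebra is commutative.

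The argument is essentially bookkeeping, and I do not expect a genuine obstacle. The one point requiring care is the rescaling by $z_1^{-1}\dotsm z_N^{-1}$: this is exactly what makes the finite $\tau$-functions stabilize as $N \to \infty$, whereas the unrescaled $\tau_N$ do not, since the degree of the coefficient $z_{[N]\setminus X}$ attached to a fixed supported permutation $\sigma_X$ grows with $N$. Beyond that, one only needs to make precise the two layers of completion present in $R \widehat\otimes \widehat\Lambda$ — one in the variables $z_i^{-1}$ and one in the power sums $\p_k$ — and observe that every \Plucker relation, and every coefficient within it, involves only finitely many of the $z_i$.
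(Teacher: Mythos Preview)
Your proposal is correct and follows essentially the same approach as the paper: reduce to the finite case by observing that setting all but finitely many $z_i^{-1}$ to zero recovers $z_1^{-1}\dotsm z_N^{-1}\tau_N$, and then invoke \cref{thm:taufunction}. The paper states this reduction in a single sentence, whereas you spell out the monomial-by-monomial bookkeeping and the commutativity of the coefficient subalgebra explicitly.
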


\begin{proof}
As $z_1, z_2, \dots$ are formal, the result is true 
if and only if it is true
whenever all but finitely many $z_i^{-1}$ are set to zero.  Hence
the result is equivalent to \cref{thm:taufunction}.
\end{proof}

Recall from \cref{thm:KPplucker} that if $\tau\in\Lambda$ is a $\tau$-function of the KP hierarchy, then it defines a point in $\Gr(d,m)$ whenever $d$ and $m-d$ are sufficiently large. This may no longer hold when $\tau \in \widehat\Lambda$; rather, $\tau$ defines a point in the infinite-dimensional \emph{Sato Grassmannian}. The Wronskian of a point in the Sato Grassmannian does not have a determinantal definition. However, we can define it as a formal series as in \eqref{eq:pluckerwronskian}, or equivalently, as the exponential specialization $\exspec(\tau)$ as in \cref{rmk:taufunction}. It may be interesting to study $\tau_\infty$ and $\exspec(\tau_\infty)$ as analytic objects, and connect them to the inverse Wronski problem for spaces of analytic functions (rather than just polynomials):
\begin{problem}
Find an analytic version of \cref{thm:tauinfinity} when $(z_1, z_2, \dots)$ is an infinite sequence of nonzero complex numbers satisfying an appropriate convergence condition.
\end{problem}


\subsection{Higher-degree positivity for \Plucker coordinates}
\label{sec:asw}

In the case of $\Gr(1,m)$, \cref{thm:positive}\ref{positive1} can be reformulated as the following (rather obvious) statement: if $f\in\CC[u]$ is a polynomial of degree at most $m-1$ whose zeros are all are real and nonpositive, then (up to rescaling) $f$ has nonnegative real coefficients. This is a much weaker statement than the following result, which gives a complete characterization of polynomials with nonpositive real zeros:
\begin{theorem}[{Aissen, Schoenberg, and Whitney \cite{aissen_schoenberg_whitney52}}]
\label{thm:asw}
Let $f(u) = \sum_{i=0}^d a_iu^i \in\CC[u]$ be a polynomial with at least one nonnegative real coefficient. Then all the zeros of $f$ are real and nonpositive if and only if the infinite Toeplitz matrix
\begin{equation}
\label{eq:toeplitz}
\begin{pmatrix}
a_0 & 0 & 0 & \cdots \\
a_1 & a_0 & 0 & \cdots \\
a_2 & a_1 & a_0 & \cdots \\
\vdots & \vdots & \vdots & \ddots
\end{pmatrix}
\end{equation}
is totally nonnegative (i.e.\ all its finite minors are real and nonnegative).
\end{theorem}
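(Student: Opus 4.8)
The plan is to prove the two implications separately.  The forward direction (real nonpositive zeros $\Rightarrow$ total nonnegativity of \eqref{eq:toeplitz}) is essentially a corollary of \cref{thm:main}: it is the ``all $\Gr(k,m)$'' strengthening of the $\Gr(1,m)$ observation that opened this subsection.  The reverse direction is the classical converse and does not follow from our machinery (the converse of \cref{thm:positive} is false), so I would handle it by an analytic argument.

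\emph{Real nonpositive zeros $\Rightarrow$ \eqref{eq:toeplitz} is totally nonnegative.}  Write $f(u)=a_d(u+r_1)\dotsm(u+r_d)$ with $r_1,\dots,r_d\ge 0$.  The coefficients of $f/a_d$ are the elementary symmetric polynomials $e_{d-j}(r_1,\dots,r_d)\ge 0$, so the hypothesis that $f$ has a nonnegative coefficient forces $a_d\ge 0$, and hence $a_i\ge 0$ for all $i$.  For the higher minors, fix $k\ge 1$ and indices $1\le s_1<\dots<s_k$, and set $V:=\langle u^{s_1-1}f,\dots,u^{s_k-1}f\rangle$.  Using the identification \eqref{eq:isomorphism} one checks that the Pl\"ucker coordinates of $V$ are, up to positive scalar factors, exactly the $k\times k$ minors of \eqref{eq:toeplitz} with column set $\{s_1,\dots,s_k\}$; letting $k$ and $(s_1,\dots,s_k)$ vary recovers every minor of \eqref{eq:toeplitz}.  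Since $\Wr(fg_1,\dots,fg_k)=f^k\Wr(g_1,\dots,g_k)$, the Wronskian of $V$ is the monic rescaling of $f^k u^m$ for some $m\ge 0$, whose zeros are the $-r_i$ (each with multiplicity $k$) together with $0$ (with multiplicity $m$) --- all real and nonpositive.  Thus $V\in\scellnu$ for the partition $\nu$ recording the degrees of the $u^{s_i-1}f$, and $\Wr(V)=(u+z_1)\dotsm(u+z_n)$ with $z_1,\dots,z_n\in[0,\infty)$.  By \cref{thm:main}\ref{main_eigenspace} and \cref{cor:identifyscell}, $V=V_E$ for an eigenspace $E$ of $\bethenu$, and its normalized Pl\"ucker coordinates are the eigenvalues of the operators $\beta^\lambda_\nu$; by \cref{prop:betapsd}\ref{betapsd1}--\ref{betapsd2} these operators are self-adjoint and positive semidefinite, so their eigenvalues --- and hence all minors of \eqref{eq:toeplitz} --- are nonnegative.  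Equivalently, by the dual Jacobi--Trudi identity each such minor equals $a_d^k$ times a skew Schur polynomial $\s_{\lambda/\mu}(r_1,\dots,r_d)$, manifestly nonnegative; this is the $\tau$-function description of \cref{rmk:taufunction} applied to the spaces $V$ above.

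\emph{\eqref{eq:toeplitz} is totally nonnegative $\Rightarrow$ real nonpositive zeros.}  The $1\times 1$ minors of \eqref{eq:toeplitz} are the $a_i$, so all $a_i\ge 0$; hence $f(u)>0$ for $u>0$, which excludes positive zeros, and after dividing out the appropriate power $u^m$ (contributing only the zero $0$) we may assume $a_0=1$ and $a_i>0$ for all $i$.  It remains to exclude non-real zeros, which is the heart of the matter, and I would proceed in three steps.  (i) By the Gantmakher--Krein theorem on inverses of totally nonnegative matrices \cite{gantmaher_krein50}, the inverse of \eqref{eq:toeplitz} --- which is the lower-triangular Toeplitz matrix of the reciprocal series $1/f(u)$ --- has the checkerboard sign pattern, so $1/f(-u)$ has nonnegative coefficients.  (ii) By the Vivanti--Pringsheim theorem, a power series with nonnegative coefficients has a singularity at the positive real point equal to its radius of convergence; the only singularities of $1/f(-u)$ are at zeros of $f(-u)$, so $f$ has a real negative zero $-\rho$ with $\rho=\min\{|w|:f(w)=0\}$.  (iii) Extract the factor $(u+\rho)$ and induct on $\deg f$.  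Step (iii) is the main obstacle: it requires knowing that the quotient $g(u)=f(u)/(u+\rho)$ again has a totally nonnegative Toeplitz matrix, which is not transparent, since the Toeplitz matrix of $1/(u+\rho)$ has the checkerboard (not totally nonnegative) sign pattern, so dividing out a linear factor does not visibly preserve total nonnegativity.  The cleanest way around this is to invoke the structure theorem for one-sided P\'olya frequency sequences (\cite{aissen_schoenberg_whitney52}; cf.\ Edrei), which identifies the generating function of any such sequence as $Cu^m e^{\gamma u}\prod_i(1+\alpha_i u)\big/\prod_j(1-\beta_j u)$ with $C,\gamma,\alpha_i,\beta_j\ge 0$; for a polynomial this forces $\gamma=0$, no denominator factors, and only finitely many numerator factors, whence $f(u)=Cu^m\prod_i(1+\alpha_i u)$ has only real nonpositive zeros.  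Alternatively one can prove the needed lemma --- that the quotient again generates a P\'olya frequency sequence --- directly by a determinant identity, which I expect to be the most delicate point of the argument.
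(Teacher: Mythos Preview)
The paper does not prove \cref{thm:asw}; it is quoted as a classical result of Aissen, Schoenberg, and Whitney and used only as motivation for \cref{prob:asw}. So there is no ``paper's own proof'' to compare against, and your proposal should be judged on its own merits.

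Your forward direction is correct. The route through \cref{thm:main}\ref{main_eigenspace} and \cref{prop:betapsd} works: for each column set $S=\{s_1<\dots<s_k\}$ the space $V=\langle u^{s_1-1}f,\dots,u^{s_k-1}f\rangle$ has Wronskian a positive constant times $f^k u^N$, so \cref{thm:main}\ref{main_eigenspace} and \cref{prop:betapsd}\ref{betapsd2} (or equivalently \cref{cor:generalpositivenu}\ref{generalpositivenu1}, which does not need the $z_i$ distinct) give total nonnegativity of $V$, and your computation that the \Plucker coordinates of $V$ are positive multiples of the minors of \eqref{eq:toeplitz} with column set $S$ is right. That said, this is heavy machinery for a one-line fact: your parenthetical alternative via the dual Jacobi--Trudi identity, expressing each minor as $a_d^k\,\s_{\lambda/\mu}(r_1,\dots,r_d)\ge 0$, is the standard elementary argument and does not need anything from this paper.

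Your reverse direction has a genuine gap, which you yourself flag. Steps (i)--(ii) are fine (and give the correct conclusion that a zero of minimum modulus is negative real), but step (iii) requires showing that $f(u)/(u+\rho)$ again has a totally nonnegative Toeplitz matrix, and you do not prove this. Your fallback---invoking the Aissen--Schoenberg--Whitney/Edrei structure theorem for one-sided P\'olya frequency sequences---is circular: that structure theorem \emph{is} the result under discussion (the polynomial case is exactly \cref{thm:asw}), so citing it does not constitute a proof. If you want a self-contained argument, you must either supply the missing lemma that dividing a PF${}_\infty$ generating polynomial by a linear factor at a smallest root preserves PF${}_\infty$ (this can be done, but it is the nontrivial core of the Aissen--Schoenberg--Whitney paper), or give a different argument altogether.
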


Note that nonnegativity of the $1\times 1$ minors of \eqref{eq:toeplitz} is equivalent to the sequence of coefficients $(a_0, \dots, a_d)$ being nonnegative, and nonnegativity of the $2\times 2$ minors implies that the sequence is log-concave. We pose the problem of simultaneously generalizing \cref{thm:positive}\ref{positive1} (from the case of linear inequalities to higher-order inequalities) and \cref{thm:asw} (from $\Gr(1,m)$ to $\Gr(d,m)$):
\begin{problem}
\label{prob:asw}
Give a characterization of the subset of elements $V\in\Gr(d,m)$ such that all zeros of $\Wr(V)$ are real and nonpositive, in terms of polynomial inequalities in the \Plucker coordinates.
\end{problem}


\subsection{Conjectures in real Schubert calculus}
\label{sec:openconjectures}

We briefly discuss the more general forms of the Shapiro--Shapiro 
conjecture and the secant conjecture, as well as the total reality conjecture for convex curves.

\subsubsection{General form of the Shapiro--Shapiro conjecture}
\label{sec:generalssc}

The more general form of the Shapiro--Shapiro conjecture is the following
statement:

\begin{theorem}[{Mukhin, Tarasov, and Varchenko 
\cite[Corollary 6.3]{mukhin_tarasov_varchenko09b}}]
\label{thm:generalssc}
Let $\nu \vdash n$ and $\boldmu \multipartition \kappa$, where
$\kappa = (\kappa_1, \dots, \kappa_s)$ is a composition of $n$.
Also let $z_1, \dots, z_s$ be distinct real numbers.  Then the Schubert
intersection \eqref{eq:generalssc} is real and scheme-theoretically
reduced.
\end{theorem}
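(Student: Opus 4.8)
The plan is to obtain \cref{thm:generalssc} as a direct corollary of the scheme-theoretic identification in \cref{thm:nondistinct} together with the self-adjointness statement in \cref{prop:betapsd}\ref{betapsd1}; no new analytic or representation-theoretic input should be needed beyond \cref{thm:main,thm:nondistinct,prop:betapsd}. Fix $\nu\vdash n$, a composition $\kappa=(\kappa_1,\dots,\kappa_s)$ of $n$, distinct real numbers $z_1,\dots,z_s$, and $\boldmu\multipartition\kappa$, and work with the Bethe algebra $\multibethe{n}$ whose parameter list repeats each $z_i$ exactly $\kappa_i$ times; these parameters are all real. By \cref{thm:nondistinct}, the surjection $\Phi_{\nu,\boldmu}\colon S_\nu^\circ\to\multibethe{\nu,\boldmu}$, $\Delta^\lambda\mapsto\beta^\lambda_{\nu,\boldmu}$, realizes the finite-dimensional algebra $\multibethe{\nu,\boldmu}$ as the coordinate ring of the Schubert intersection \eqref{eq:generalssc}, viewed inside the Schubert cell $\scellnu$. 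So it suffices to show that $\multibethe{\nu,\boldmu}$ is reduced and that every $\CC$-algebra homomorphism $\multibethe{\nu,\boldmu}\to\CC$ sends each generator $\beta^\lambda_{\nu,\boldmu}$ to a real number.

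The key step is to exhibit $\multibethe{\nu,\boldmu}$ as an algebra of commuting self-adjoint operators on a Hermitian inner product space. Equip $\spechtnu$ with its $\Sn$-unitary inner product; since the parameters are real, \cref{prop:betapsd}\ref{betapsd1} shows that each $\beta^\lambda_\nu\in\End(\spechtnu)$ is self-adjoint. Because $\multibethe{n}$ commutes with the Young subgroup $\symgrp\kappa$ and, by \cref{prop:schubertprojections}, contains the orthogonal projections onto the summands of the isotypic decomposition \eqref{eq:spechtdecomposition}, each $\beta^\lambda_\nu$ restricts to a self-adjoint operator on the summand $\Hom_{\symgrp\kappa}(\multispecht\boldmu,\spechtnu)\otimes\multispecht\boldmu$, where it acts as $\beta^\lambda_{\nu,\boldmu}\otimes\identity$. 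Choosing a $\symgrp\kappa$-unitary inner product on $\multispecht\boldmu$ then induces a Hermitian inner product on the multiplicity space $\Hom_{\symgrp\kappa}(\multispecht\boldmu,\spechtnu)$ for which every $\beta^\lambda_{\nu,\boldmu}$ is self-adjoint. By the spectral theorem the commuting family $\{\beta^\lambda_{\nu,\boldmu}\}$ is simultaneously unitarily diagonalizable with real eigenvalues; hence $\multibethe{\nu,\boldmu}$ is a finite product of copies of $\CC$, so it is reduced, and its $\CC$-points are the evaluations at the (real) joint eigenvalues.

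Transporting this back through \cref{thm:nondistinct}: reducedness of the coordinate ring $\multibethe{\nu,\boldmu}$ is exactly the assertion that \eqref{eq:generalssc} is scheme-theoretically reduced. The closed points of \eqref{eq:generalssc} correspond to the homomorphisms $\multibethe{\nu,\boldmu}\to\CC$, i.e.\ to the joint eigenspaces $E_0$, and the normalized \Plucker coordinates of the corresponding point are the real numbers $(\beta^\lambda_{E_0}:\lambda\subseteq\nu)$ (cf.\ \cref{cor:identifyscell}); so every point of \eqref{eq:generalssc} has real \Plucker coordinates. Since $z_1,\dots,z_s$ are real, the osculating flags $F_\bullet(z_i)$ — and hence the scheme \eqref{eq:generalssc} — are defined over $\RR$, so this says precisely that \eqref{eq:generalssc} is real. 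This completes the proof, and combining it with \cref{thm:bethedimensions} recovers the sharp count of $\langle\s_\nu,\s_{\mu_1}\dotsm\s_{\mu_s}\rangle$ distinct real points.

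The main obstacle is bookkeeping rather than conceptual. One must check carefully that self-adjointness genuinely descends from $\spechtnu$ to the multiplicity space $\Hom_{\symgrp\kappa}(\multispecht\boldmu,\spechtnu)$ — using that $\beta^\lambda_\nu$ acts there as $\beta^\lambda_{\nu,\boldmu}\otimes\identity$ and that the isotypic projections are self-adjoint elements of $\multibethe{n}$ — and that the scheme isomorphism of \cref{thm:nondistinct} is compatible with the real structure on $S_\nu^\circ$ coming from the rational coefficients of the \Plucker relations and the normalization $\Delta^\nu=\tfrac{n!}{\numsyt\nu}$. Both checks are routine given the results already in hand.
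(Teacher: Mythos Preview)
Your proposal is correct and follows essentially the same route as the paper: both use \cref{thm:nondistinct} to identify the Schubert intersection with $\Spec\multibethe{\nu,\boldmu}$, and both invoke \cref{prop:betapsd}\ref{betapsd1} to obtain self-adjointness of the generators, whence semisimplicity (reducedness) and reality of eigenvalues (reality of points). The paper's proof is terser --- it simply asserts that the eigenvalues of $\beta^\lambda_{\nu,\boldmu}$ are real and that semisimplicity follows from self-adjointness --- while you spell out the descent of self-adjointness from $\spechtnu$ to the multiplicity space via the isotypic projections of \cref{prop:schubertprojections}, which is the right justification for that step.
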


The proof below is a reformulation of the argument in
\cite{mukhin_tarasov_varchenko09b}:

\begin{proof}
By \cref{prop:betapsd}\ref{betapsd1}, 
the operators 
$\beta^\lambda_\nu \in \multibethe{\nu}$ are self-adjoint, and hence the
eigenvalues of $\beta^\lambda_{\nu,\boldmu}$ are real.  This implies
that the points of the intersection \eqref{eq:generalssc} are real.  By 
\cref{thm:nondistinct}, the intersection
is scheme-theoretically reduced if and only if $\multibethe{\nu,\boldmu}$
is semisimple.  This again follows from the fact that the generators
are self-adjoint.
\end{proof}

The reality statement can also be deduced from \cref{thm:ssc},
since the Schubert intersection
\eqref{eq:generalssc} is contained in the fibre of the Wronskian
\eqref{eq:fibre}, which is a limit of fibres $\Wr^{-1}(g)$ where
$g$ has distinct real roots.  
The reducedness, however, is more subtle, and does not
readily follow by limiting arguments.

We also obtain the following generalization of \cref{thm:positive}:
\begin{corollary}[Positive Shapiro--Shapiro conjecture]
\label{cor:generalpositivenu}
Let $z_1, \dots, z_s \in \CC$ be distinct.
\begin{enumerate}[(i)]
\item\label{generalpositivenu1} If $z_1, \dots, z_s \in [0,\infty)$, then all points of the Schubert intersection~\eqref{eq:generalssc} are totally nonnegative.
\item\label{generalpositivenu2} If $z_1, \dots, z_s \in (0,\infty)$, then all points of the Schubert intersection~\eqref{eq:generalssc} are totally positive in $\scellnu$.
\end{enumerate}
\end{corollary}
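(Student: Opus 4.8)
\textbf{Proof plan for \cref{cor:generalpositivenu}.}
The plan is to run exactly the argument used to prove \cref{thm:positive}, but replacing the fibre $\Wr^{-1}(g)$ and the algebra $\bethenu$ by the Schubert intersection \eqref{eq:generalssc} and the algebra $\multibethe{\nu,\boldmu}$. First I would recall that by \cref{thm:nondistinct}, the scheme-theoretic image of the closed embedding $\Phi^*_{\nu,\boldmu} : \Spec \multibethe{\nu,\boldmu} \to \scellnu$ is precisely the Schubert intersection \eqref{eq:generalssc}, and that $\Phi_{\nu,\boldmu}$ sends $\Delta^\lambda \mapsto \beta^\lambda_{\nu,\boldmu}$. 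Consequently every point $V$ of \eqref{eq:generalssc} is of the form $V_{E_0}$ for some eigenspace $E_0 \subseteq \Hom_{\symgrp\kappa}(\multispecht{\boldmu}, \spechtnu)$ of $\multibethe{\nu,\boldmu}$, and the normalized \Plucker coordinates of $V$ are the eigenvalues of the operators $\beta^\lambda_{\nu,\boldmu}$ on $E_0$ (this normalization statement is the analogue of \cref{cor:identifyscell}, and follows from \cref{lem:identifyscell} applied to $\multibethe{\nu}$, since $\beta^\nu_{\nu}$ acts as $\frac{n!}{\numsyt\nu}$ times the identity on all of $\spechtnu$ by \cref{prop:projection}, and $\beta^\lambda_\nu = 0$ for $\lambda \not\subseteq \nu$ by \cref{prop:betapsd}\ref{betapsd4}).

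The second step is the positivity input. Since $\multibethe{\nu,\boldmu}$ is a quotient of $\multibethe{\nu}$, every operator $\beta^\lambda_{\nu,\boldmu}$ is the image of $\beta^\lambda_{\nu}$ under a $\CC$-algebra homomorphism that preserves the Hermitian structure on the relevant subquotient of $\spechtnu$. Therefore, if $z_1, \dots, z_s \in [0,\infty)$ — equivalently, if the multiset $\boldz_\kappa = (z_1, \dots, z_1, \dots, z_s, \dots, z_s)$ lies in $[0,\infty)$ — then \cref{prop:betapsd}\ref{betapsd2} tells us that each $\beta^\lambda_{\nu}$ is positive semidefinite on $\spechtnu$, hence so is each $\beta^\lambda_{\nu,\boldmu}$ on $\Hom_{\symgrp\kappa}(\multispecht{\boldmu}, \spechtnu)$; in particular its eigenvalues on $E_0$ are real and nonnegative, so $V$ is totally nonnegative. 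This proves part \ref{generalpositivenu1}. For part \ref{generalpositivenu2}, assume $z_1, \dots, z_s \in (0,\infty)$, so that $\boldz_\kappa \in (0,\infty)$; then \cref{prop:betapsd}\ref{betapsd3} gives that $\beta^\lambda_{\nu}$ is positive \emph{definite} for $\lambda \subseteq \nu$, hence $\beta^\lambda_{\nu,\boldmu}$ is positive definite and its eigenvalue on $E_0$ is strictly positive, while \cref{prop:betapsd}\ref{betapsd4} gives $\beta^\lambda_{\nu,\boldmu} = 0$ for $\lambda \not\subseteq \nu$. Thus \eqref{eq:positivenupluckers} holds for $V$, i.e.\ $V$ is totally positive in $\scellnu$.

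I expect the only genuine subtlety to be bookkeeping about inner products: one must check that the Hermitian form on $\spechtnu$ making every $\sigma \in \Sn$ unitary induces, via the $\symgrp\kappa$-isotypic decomposition \eqref{eq:spechtdecomposition} and the identification $\Hom_{\symgrp\kappa}(\multispecht{\boldmu}, \spechtnu) \otimes \multispecht{\boldmu}$, a Hermitian form on the multiplicity space $\Hom_{\symgrp\kappa}(\multispecht{\boldmu}, \spechtnu)$ with respect to which the self-adjointness/positivity of $\beta^\lambda_{\nu}$ descends to $\beta^\lambda_{\nu,\boldmu}$. This is routine: the decomposition \eqref{eq:spechtdecomposition} is orthogonal because it is a decomposition into isotypic components of the unitary $\symgrp\kappa$-action, $\multibethe{\nu}$ preserves it and acts trivially on the $\multispecht{\boldmu}$ factor, and restriction of a positive-semidefinite (resp.\ positive-definite) operator to an invariant subspace, followed by passage to a tensor factor on which it acts trivially, preserves positive-semidefiniteness (resp.\ positive-definiteness). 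Everything else is an immediate transcription of the proof of \cref{thm:positive}. Alternatively, and even more directly, one can simply observe that \eqref{eq:generalssc} is a subscheme of the fibre $\Wr^{-1}(g)$ for $g(u) = (u+z_1)^{\kappa_1}\dotsm(u+z_s)^{\kappa_s}$ by \eqref{eq:fibre}, and that total nonnegativity (resp.\ total positivity in $\scellnu$) of the points of $\Wr^{-1}(g)$ is exactly what \cref{thm:positive} asserts when the multiset of roots lies in $[0,\infty)$ (resp.\ $(0,\infty)$); since every point of \eqref{eq:generalssc} is such a point, the corollary follows at once.
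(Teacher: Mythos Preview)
Your proposal is correct, and your ``alternative'' paragraph at the end is precisely the paper's proof: the paper simply observes that \eqref{eq:generalssc} sits inside $\Wr^{-1}(g)$ by \eqref{eq:fibre} and then invokes the \emph{proof} of \cref{thm:positive}. One small wording issue: you write that total nonnegativity of the points of $\Wr^{-1}(g)$ ``is exactly what \cref{thm:positive} asserts,'' but as stated \cref{thm:positive} inherits from \cref{thm:ssc} the hypothesis that $z_1,\dots,z_n$ are distinct, whereas here the multiset $\boldz_\kappa$ has repetitions; this is why the paper is careful to cite the \emph{proof} of \cref{thm:positive} (which only uses \cref{thm:main}\ref{main_eigenspace} and \cref{prop:betapsd}, neither of which needs distinctness) rather than its statement.

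Your first, more elaborate route through $\multibethe{\nu,\boldmu}$ and \cref{thm:nondistinct} is also fine, but it is more machinery than necessary: since \eqref{eq:generalssc} is already contained in $\Wr^{-1}(g)$, every point of it is a $V_E$ for some eigenspace $E$ of $\bethenu$ by \cref{thm:main}\ref{main_eigenspace}, and one can read off positivity of the eigenvalues directly from \cref{prop:betapsd} on $\spechtnu$ without passing to the quotient algebra or worrying about the induced Hermitian form on the multiplicity space.
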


\begin{proof}
This follows from \eqref{eq:fibre} and the proof of \cref{thm:positive} in \cref{sec:conjectureproofs}.
\end{proof}

\subsubsection{General form of the secant conjecture}
\label{sec:generalsecant}

Similarly, there is the general form of the secant conjecture, as formulated
by Sottile around 2003 (see \cref{sec:secantconjecture} for a discussion of the history).
For an interval $I \subseteq \RR$, we say that a complete flag 
$F_\bullet : F_0 \subsetneq \dots \subsetneq F_m$ in $\CC^m$
is a \defn{generalized secant flag} to the moment curve $\gamma$ along $I$ if each 
subspace $F_i$ is a generalized secant to $\gamma$ along $I$.

\begin{conjecture}[Secant conjecture, general form]
\label{conj:generalsecant}
Let $\nu \vdash n$ and $\boldmu \multipartition \kappa$, where
$\kappa = (\kappa_1, \dots, \kappa_s)$ is a composition of $n$.
Let $I_1, \dots, I_s  \subseteq \RR$ be pairwise disjoint real intervals.
If $F^{(1)}_\bullet, \dots , F^{(s)}_\bullet$ are generalized secant
flags to $\gamma$ along $I_1, \dots , I_s$, respectively, 
then the Schubert intersection
\eqref{eq:generalschubertintersection} is real and scheme-theoretically
reduced.
\end{conjecture}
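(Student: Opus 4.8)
The plan is to mirror the structure of our proof of the divisor form (\cref{thm:secant,thm:positivesecant}): prove the statement in a ``base case'' of osculating flags using the positivity of the Bethe algebra, and then bridge from secant flags to osculating flags by a topological argument. The base case is already in hand. When each $F^{(j)}_\bullet$ is the osculating flag $F_\bullet(z_j)$ at a real point, \cref{thm:generalssc} gives reality and reducedness of the intersection \eqref{eq:generalschubertintersection}, and \cref{cor:generalpositivenu} upgrades this to total nonnegativity when the $z_j$ are nonnegative; all of this is controlled by the algebras $\multibethe{\nu,\boldmu}$ via \cref{thm:nondistinct}. What is missing is the analogue of \cref{lem:topological} (equivalently, of \cite[Lemma 4.15]{karp}) for Schubert conditions of arbitrary codimension.

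The first step would be to isolate and prove the correct \emph{generalized disconjugacy} statement. For a generalized secant flag $F_\bullet$ along an interval $I$, the condition $V\in X_\mu F_\bullet$ is the vanishing of certain minors of the matrix obtained by pairing a basis of $V$ against $\gamma(w_i)$ and its derivatives at the secant points $w_i\in I$; as the $w_i$ collide within $I$ the flag degenerates to an osculating flag $F_\bullet(w)$ at a real point $w\in\overline{I}$. The statement I would aim for is: if $\Wr(V)$ has only real zeros and $V$ satisfies a generalized secant Schubert condition $X_\mu F_\bullet$ along an interval $I$ such that $-I$ avoids the zeros of $\Wr(V)$, then in fact $V\in X_\mu(z)$ for some real $z\in\overline{I}$ (so that, by \cref{prop:schubertwronskian2}, $(u+z)^{|\mu|}$ divides $\Wr(V)$). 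Granting this, every real solution of \eqref{eq:generalschubertintersection} with secant flags along pairwise disjoint intervals $I_1,\dots,I_s$ lies in an osculating intersection $\svarnu\cap X_{\mu_1}(z_1)\cap\dots\cap X_{\mu_s}(z_s)$ with distinct real $z_j\in I_j$, which is the situation resolved by \cref{thm:generalssc} and \cref{cor:generalpositivenu}.

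With such a statement, I would run a monodromy argument as in \cite{eremenko15,karp}: the space of $s$-tuples of generalized secant flags along fixed disjoint intervals is connected to the space of $s$-tuples of osculating flags at real points of those intervals, and over a suitable family the solution scheme of \eqref{eq:generalschubertintersection} forms a finite cover. Transversality at a generic point (from \cref{thm:generalssc} in the osculating case) fixes the degree of this cover at $\langle\s_\nu,\s_{\mu_1}\dotsm\s_{\mu_s}\rangle$, and the generalized disconjugacy statement prevents real solutions from colliding or leaving the real points of $\Gr(d,m)$ along the deformation; hence all $\langle\s_\nu,\s_{\mu_1}\dotsm\s_{\mu_s}\rangle$ solutions stay real and distinct throughout, which also yields reducedness of the fibre.

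The main obstacle is the generalized disconjugacy statement itself. In the divisor form (codimension one), disconjugacy reduces via the Gantmakher--Krein theorem and Descartes's rule of signs to the total nonnegativity of $V$, which we obtain from \cref{thm:positive}. For codimension greater than one there is no known ``higher Descartes rule'': we do not know which polynomial inequalities in the \Plucker coordinates of $V$ are forced by $\Wr(V)$ having only real nonpositive zeros --- this is precisely \cref{prob:asw} --- nor how the Schubert-condition minors attached to a secant flag are governed by such inequalities. Even formulating the right conjecture appears to require first resolving \cref{prob:asw} in the generality needed to detect secancy. A secondary difficulty is that the monodromy argument needs the relevant Schubert intersections to remain of the expected dimension all along the deformation path, which is itself close to the conjecture and would have to be bootstrapped with care, perhaps by restricting to generic point configurations within each interval and arguing by semicontinuity from the osculating limit.
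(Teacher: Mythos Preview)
This statement is labeled as a \emph{Conjecture} in the paper, and the paper does not prove it; on the contrary, the paragraph immediately following \cref{conj:generalsecant} explains that ``neither the reality nor the reducedness statements of \cref{conj:generalsecant} follow readily from \cref{thm:secant}'' and that ``the general form of the secant conjecture remains open.'' So there is no proof in the paper to compare against, and your submission is (appropriately) not a proof either but a research plan with explicitly acknowledged gaps. In that sense, you and the paper are in agreement: the missing ingredient is a higher-codimension analogue of the disconjugacy/topological argument, and no such analogue is currently available. Your identification of \cref{prob:asw} as the relevant obstruction is also consonant with the paper's discussion.

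That said, your proposed ``generalized disconjugacy statement'' is internally inconsistent as written. You assume that $-I$ avoids the zeros of $\Wr(V)$, and then conclude that $V\in X_\mu(z)$ for some $z\in\overline{I}$, which by \cref{prop:schubertwronskian2} forces $(u+z)^{|\mu|}\mid\Wr(V)$, i.e., $-z$ is a zero of $\Wr(V)$ lying in $-\overline{I}$ --- contradicting your own hypothesis. In the divisor case, the logic of \cref{lem:topological} goes the other way: a solution to the secant problem along $I_1,\dots,I_n$ is shown to have its Wronskian roots at points $-z_i$ with $z_i\in I_i$; nothing is assumed about the roots avoiding $-I_i$. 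Any correct higher-codimension formulation would need to have this causality straightened out before one could even begin to assess whether it is plausible. The paper also notes a structural obstruction you do not mention: the secant Schubert intersections of \cref{conj:generalsecant} cannot be realized as limits of those in \cref{thm:secant}, because the disjointness of the intervals prevents the necessary collisions. This is a genuine barrier to the deformation strategy you outline, beyond the absence of a higher Descartes rule.
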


\cref{thm:secant} addresses the divisor case of \cref{conj:generalsecant}, i.e., where $\mu_i = 1$ for all $i$ in \eqref{eq:generalschubertintersection}. (We mention that the proof can be generalized in a straightforward way to handle the case where for each $i$, we have $\mu_i = 1$ or $F^{(i)}_\bullet$ is an osculating flag.)
Unfortunately, in general, neither the 
reality nor the reducedness statements of \cref{conj:generalsecant}
follow readily from \cref{thm:secant}.  
In particular, the Schubert intersections described in 
\cref{conj:generalsecant} cannot be realized as limits of
the Schubert intersections in \cref{thm:secant}, since the intervals $I_i$ are not allowed to overlap. Also, \cref{conj:generalsecant} does not appear to follow easily from Theorems~\ref{thm:disconj} or \ref{thm:positive}.
For now, therefore, the general form of the secant conjecture remains open.

\subsubsection{Total reality conjecture for convex curves}
\label{sec:totalreality}

Recall that the Shapiro--Shapiro conjecture (\cref{thm:ssc}) asserts that the solutions to the Schubert problem \eqref{eq:schubertproblem} are all real when each $W_i\in\Gr(m-d,m)$ is an osculating plane to the moment curve $\gamma$ at a real point. There is a generalization of this conjecture due to Boris Shapiro in the 1990's (cf.\ \cite[Section 1]{sedykh_shapiro05}), called the \defn{total reality conjecture for convex curves}. Namely, a real continuous curve $\rho : \RR \to \RR_{m-1}[u]$ is called \defn{(strictly) convex} if any choice of $m$ distinct points along $\rho$ are linearly independent. By continuity, this is equivalent to the condition that for all $r\ge m$ and $w_1 < \cdots < w_r$ in $\RR$,
\[
\begin{pmatrix}
\vline & & \vline \\
\rho(w_1) & \cdots & \rho(w_r) \\
\vline & & \vline
\end{pmatrix} \;\text{ represents a totally positive element of } \Gr(m,r)\,,
\]
i.e., its $m\times m$ minors all have the same (nonzero) sign. The fact that the moment curve $\gamma$ is convex follows from Vandermonde's determinantal formula.

\begin{conjecture}[Total reality conjecture for convex curves]
\label{conj:totalreality}
Let $\rho: \RR \to \RR_{m-1}[u]$ be a convex curve, and let $z_1, \dots, z_{d(m-d)}$ be distinct real numbers. For $i = 1, \dots, d(m-d)$, let $W_i\in\Gr(m-d,m)$ be the osculating $(m-d)$-plane to $\rho$ at $z_i$. Then there are exactly $\numsyt{\rectangle}$ distinct solutions to the Schubert problem \eqref{eq:schubertproblem} (with $\nu = \rectangle$), and all solutions are real.
\end{conjecture}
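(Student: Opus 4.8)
The statement is currently open, so what follows is a description of the approaches that seem most promising, together with the obstruction common to all of them. That obstruction is structural: every tool developed above — the Wronski map, the Bethe algebra $\bethe$, the operators $\beta^\lambda(t)$, and the reductions to disconjugacy and positivity in \cref{sec:conjectureproofs} — is tied to the \emph{moment} curve $\gamma$, because it is only for $\gamma$ that a Schubert condition $V\cap W_i\neq\{0\}$ becomes the polynomial condition that $-z_i$ is a zero of $\Wr(V)$ (\cref{prop:schubertwronskian}). A general convex curve carries no analogous finite morphism, and the space of convex curves modulo reparametrization and $\PGL_m$ is far larger than the $\PGL_2$-orbit of $\gamma$; consequently there is no purely algebraic substitution reducing \cref{conj:totalreality} to \cref{thm:ssc} or to \cref{thm:secant}.

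\textbf{Plan A: reduction to the secant conjecture.} The osculating $(m-d)$-plane $W_i$ to a convex curve $\gamma$ at $z_i$ is the limit, as the parameters collide, of secant $(m-d)$-planes spanned by $m-d$ points of $\gamma$ in a small interval $I_i\ni z_i$. Ordering $z_1<\dots<z_{d(m-d)}$ and choosing the $I_i$ pairwise disjoint, these approximating planes are generalized secants to $\gamma$ along disjoint intervals. If one could show that a generalized secant to a \emph{convex} curve along $I$ is also a generalized secant to the \emph{moment} curve along some interval $\tilde I$, with the $\tilde I$ pairwise disjoint, then \cref{thm:secant} — indeed its positive form \cref{thm:positivesecant} — would apply verbatim and also yield the positive analogue of \cref{conj:totalreality}. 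The difficulty is that this bridge lemma is false already for secant planes: $\gamma(w)$ is an arbitrary point of the convex curve, not a point of the moment curve, so $W_i$ is genuinely not a secant plane of $\gamma$. What would actually suffice is weaker — that the \emph{system} of Schubert conditions imposed by the $W_i$ be equivalent to one arising from the moment curve — but no such equivalence is currently available.

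\textbf{Plan B: topological degree.} The proof of \cref{thm:ssc} by Levinson--Purbhoo \cite{levinson_purbhoo21}, and in low rank by Eremenko--Gabrielov \cite{eremenko_gabrielov02}, exhibits a real structure on a family of Schubert intersections over the moment curve together with a covering map onto a real base of the correct degree $\numsyt{\rectangle}$, whose total space is connected over the real locus; reality of all solutions follows. One would attempt the same with $\gamma$ in place of the moment curve: the intersections $\svar{\rectangle}\cap X_1 F^{(1)}_\bullet\cap\dots\cap X_1 F^{(n)}_\bullet$, with $F^{(i)}_\bullet$ the osculating flag of $\gamma$ at $z_i$, still vary in a $d(m-d)$-dimensional real family, the generic fibre has $\numsyt{\rectangle}$ points by transversality, and convexity should give some control near the real locus. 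The hard part — and the reason this has not been carried out — is the degeneration analysis: for the moment curve, the behaviour as several $z_i$ collide is governed by the Wronski map and jeu-de-taquin rectification, and it is exactly this combinatorial rigidity that forces the covering to have no monodromy over $\RR$. For a convex curve one must reprove, using only convexity, that colliding points degenerate these Schubert intersections in a controlled, real way — which again seems to demand a substitute for the Wronski map.

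\textbf{Plan C and outlook.} One can connect $\gamma$ to the moment curve through convex curves (after fixing a parametrization), deform the Schubert problem \eqref{eq:schubertproblem} with $\nu=\rectangle$ along this path, and use lower semicontinuity of the number of real solutions together with the value $\numsyt{\rectangle}$ at the moment-curve endpoint (\cref{thm:ssc}); it would then suffice to rule out real solutions colliding and leaving the real locus. This, however, is essentially a restatement of the conjecture and offers no traction on its own. In all three plans the single genuine obstacle is the same — the absence, for a general convex curve, of an analogue of the Wronski map and of the self-adjoint commuting-operator model that makes the moment-curve case tractable. I expect real progress will require either constructing such an analogue (a ``convex Bethe algebra'' whose self-adjointness forces reality, in the spirit of \cref{prop:betapsd}), or a global topological argument in which convexity of $\gamma$ genuinely replaces the combinatorics of the Wronski degeneration.
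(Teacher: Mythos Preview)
Your proposal is correct: the statement is a conjecture, not a theorem, and the paper offers no proof. The paper's own discussion is much briefer than yours --- it records that the conjecture is open, that it holds for $\Gr(2,4)$ via work of Arkani-Hamed, Lam, and Spradlin, and that the authors' techniques do not obviously apply because the link between the Schubert problem and the Wronski map is specific to the moment curve. Your identification of the central obstruction (no Wronski-map analogue, hence no self-adjoint operator model, for a general convex curve) matches the paper's one-sentence explanation exactly; your Plans A--C are a more thorough elaboration of possible attacks than anything the paper attempts.
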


In analogy with the Shapiro--Shapiro conjecture, there are several natural extensions of \cref{conj:totalreality}, such as to an arbitrary Schubert cell $\scellnu$, to the case that $W_1, \dots, W_n$ are generalized secants to $\rho$ along disjoint real intervals, and to Schubert conditions of arbitrary codimension.

\cref{conj:totalreality} was long thought to be false, due to a counterexample which was only recently found to be erroneous \cite{sedykh_shapiro05,shapiro_shapiro22}. It is true for $\Gr(2,4)$, which follows from work of Arkani-Hamed, Lam, and Spradlin on scattering amplitudes \cite[Section 4]{arkani-hamed_lam_spradlin21}; cf.\ \cite[Theorem 2]{shapiro_shapiro22}. It is not immediately clear how to apply our techniques to address \cref{conj:totalreality}, because the connection between the Schubert problem \eqref{eq:schubertproblem} and the Wronski map is particular to the case that $\rho$ is the moment curve $\gamma$.

It would be interesting to explain the role played by total positivity. Namely, in the total reality conjecture, positivity is used to define the Schubert conditions; and in \cref{thm:positive,thm:positivesecant}, positivity is a property of the solutions of the Schubert problem. The fact that positivity appears in both places appears for now to be a coincidence.


\footnotesize
\makeatletter
\renewcommand\@openbib@code{\itemsep-2pt}
\makeatother
\bibliographystyle{alpha}
\bibliography{ref}

\end{document}